\newtheorem{theorem}{Theorem}
\newtheorem{lemma}[theorem]{Lemma}
\newtheorem{proposition}[theorem]{Proposition}
\newtheorem{corollary}[theorem]{Corollary}
\theoremstyle{definition}
\newtheorem{remark}[theorem]{Remark}
\title{\textbf{Anomalous smoothing effect on the incompressible Navier-Stokes-Fourier limit from Boltzmann with periodic velocity}}
\author[1]{Zhongyang Gu\thanks{zgu@ms.u-tokyo.ac.jp}}  
\author[2]{Xin Hu\thanks{k99poaaron2@gmail.com}}
\author[3]{Tsuyoshi Yoneda\thanks{t.yoneda@r.hit-u.ac.jp}}
\affil[1]{Graduate School of Mathematical Sciences, The University of Tokyo, 3-8-1 Komaba, Meguro-ku, 153-8914, Tokyo, Japan}
\affil[2]{School of Mathematics and Statistics, Wuhan University, Wuchang District, Wuhan 430070, P.\ R.\ China}
\affil[3]{Graduate School of Economics, Hitotsubashi University, 2-1 Naka, Kunitachi, 186-8601, Tokyo, Japan}
\date{}
\begin{document}
\maketitle
\begin{abstract}
Adding some nontrivial terms composed from a microstructure, we prove the existence of a global-in-time weak solution, whose enstrophy is bounded for all the time, to an incompressible 3D Navier-Stokes-Fourier type system for arbitrary initial data.
It cannot be expected to directly derive the energy inequality for this new system of equations.
The main idea is to employ the hydrodynamic limit from the Boltzmann equation with periodic velocity and a specially designed collision operator.
\end{abstract}

\begin{center}
Keywords: Incompressible Navier-Stokes-Fourier type equations, Boltzmann equation, Hydrodynamic limit, Anomalous smoothing effect.
\end{center}

\section{Introduction} 
\label{sec:intro}

In this paper, we always consider space dimension three.
The hydrodynamic limit from the Boltzmann equation attracts tremendous interests in modern research of fluid mechanics since the work of Bardos, Golse and Levermore \cite{BGL91}, \cite{BGL93}, where they derived Leray solutions to the incompressible Navier-Stokes equations from DiPerna-Lions' renormalized solutions of the Boltzmann equation with Grad's cutoff kernel \cite{DL}. 
It is now well-known that by considering a solution in the form of a fluctuation near the Maxwellian, i.e., $f_\varepsilon = \mu(1 + \varepsilon g_\varepsilon)$ with
\[
\mu(v) := (2 \pi)^{-\frac{3}{2}} \mathrm{exp}\left( - \frac{|v|^2}{2} \right), \quad v \in \mathbf{R}^3,
\]
the incompressible Navier-Stokes equations can be derived as the hydrodynamic limit from the Boltzmann equation with the Navier-Stokes type scaling
\begin{equation} \label{Boltzmann}
\varepsilon^2 \partial_t f_\varepsilon + \varepsilon v \cdot \nabla_x f_\varepsilon = C(f_\varepsilon), \quad f_\varepsilon \bigm|_{t=0} = f_{\varepsilon,0}
\end{equation}
where $\varepsilon > 0$ denotes the Knudsen number that represents the ratio of the mean free path to the macroscopic length scale, $f_\varepsilon(t,x,v)$ and $g_\varepsilon(t,x,v)$ are density distribution functions of particles having position $x \in \mathbf{R}^3$ with velocity $v \in \mathbf{R}^3$ at time $t \geq 0$ and $C(f_\varepsilon)$ is the collision operator which characterizes particle collisions. 
For the classical Boltzmann equation 
\begin{equation} \label{Boltzmann:C}
\partial_t f + v \cdot \nabla_x f = C(f), \quad f \bigm|_{t=0} = f_0
\end{equation}
which physically models the interaction of particles through collisions, the collision operator $C(f)$ is given by the formula
\begin{align} \label{Tru:Col}
C(f) = \int_{\mathbf{R}^3} \int_{\mathbf{S}^2} B(v - v_\ast, \sigma) \big( f(v'_\ast) f(v') - f(v_\ast) f(v) \big) \, d \sigma \, dv_\ast
\end{align}
where $v, v_\ast$ denote velocities of two particles before the collision and $v', v'_\ast$ denote their velocities after the collision. The non-negative cross section $B(z, \sigma)$, which is a function of $|z|$ and the inner product $\big\langle \frac{z}{|z|}, \sigma \big\rangle$ only, depends upon the intermolecular force or potential.
The derivation of Bardos, Golse and Levermore \cite{BGL93} was established in the time-discretized case under two assumptions bearing on the sequence of renormalized solutions.
In particular, these two assumptions do not necessarily hold for general Boltzmann equation.
The method of Bardos, Golse and Levermore \cite{BGL93} was extended to more general time-continuous case by Lions and Masmoudi \cite{LiMaI} under the same two assumptions.
Golse and Saint-Raymond \cite{GSR04} got rid of these two assumptions and established the convergence of DiPerna-Lions' renormalized solutions \cite{DL} to Leray solutions in the case for cutoff Maxwellian collision operator and later on, to the case for hard cutoff potentials \cite{GSR09}. 
The convergence for the case of soft potentials was established by Levermore and Masmoudi \cite{LM}. 
Furthermore, Arsenio \cite{Arse} considered this problem in the case of non-cutoff potentials.

As a simplified model of the Boltzmann equation (\ref{Boltzmann}), one can consider a different collision operator
\[
C(f_\varepsilon) = \frac{1}{\nu} \big( f_{\mathrm{eq},\varepsilon} - f_\varepsilon \big) \quad \text{with} \quad f_{\mathrm{eq},\varepsilon}(t,x,v) := \frac{R_\varepsilon(t,x)}{\big( 2 \pi T_\varepsilon(t,x) \big)^{3/2}} \, \mathrm{exp} \left( - \frac{\big| v - U_\varepsilon(t,x) \big|^2}{2 T_\varepsilon(t,x)} \right)
\]
where
\[
R_\varepsilon = \int_{\mathbf{R}^3} f_\varepsilon \, dv, \quad R_\varepsilon U_\varepsilon = \int_{\mathbf{R}^3} v f_\varepsilon \, dv, \quad R_\varepsilon U_\varepsilon^2 + 3 R_\varepsilon T_\varepsilon = \int_{\mathbf{R}^3} |v|^2 f_\varepsilon \, dv.
\]
This is so-called the BGK Boltzmann model, which is usually considered when doing numerical simulations to study the kinetic theory of particles, see e.g. \cite{Moha}.
Apart from that, it can also be used to do Large-eddy simulations to model turbulence \cite{KPS}. 
More significantly, the BGK Boltzmann model contains some basic properties of hydrodynamics which are not guaranteed by the theory of the Boltzmann equation with classical collision operator (\ref{Tru:Col}), i.e., the local conservation of momentum.
Saint-Raymond \cite{SaRay} studied the hydrodynamic limit from the BGK Boltzmann model and established that if we analogously consider solutions to the BGK Boltzmann model of the form $f_\varepsilon = \mu(1 + \varepsilon g_\varepsilon)$ where $\mu$ denotes the Maxwellian, then Leray solutions to the Navier-Stokes equations can be obtained as the scaling limit.

In order to obtain hydrodynamic limits with better regularity than Leray solutions, Jiang, Xu and Zhao \cite{JXZ} constructed a global energy estimate, which holds when the initial data is sufficiently small, that controls the $H^N( \mathbf{R}_x^3; L^2(\mathbf{R}_v^3) )$ norm of  solutions to the classical Boltzmann equation in cases for both non-cutoff and Grad's angular cutoff collision operators. 
Using this global energy estimate, the existence of a global-in-time solution to the Boltzmann equation in the space $L^\infty( [0,\infty); H^N( \mathbf{R}_x^3; L^2(\mathbf{R}_v^3) ) )$ could be established for sufficiently small initial data for $N \geq 3$. 
Then, by taking the Knudsen number $\varepsilon \to 0$, it can be deduced that the incompressible Naiver-Stokes-Fourier equations admit a global classical solution in the space $C( [0,\infty); H^{N-1}(\mathbf{R}_x^3) ) \cap L^\infty( [0,\infty); H^N(\mathbf{R}_x^3) )$ for small initial data.

In this paper, we introduce a new collision operator
\begin{align} \label{Col:ano}
C(f_\varepsilon) = - \frac{1}{\nu_\ast} \mathcal{L} (f_\varepsilon) + \frac{\varepsilon \kappa}{\nu_\ast} \mathcal{L}(f_\varepsilon^2) - \frac{\varepsilon^2 \kappa^2}{\nu_\ast} f_\varepsilon^3
\end{align}
and consider the Boltzmann equation (\ref{Boltzmann}) with collision operator (\ref{Col:ano}) in torus $\mathbf{T}^3 = \mathbf{R}^3 / \mathbf{Z}^3$ for position variable $x$ and in torus $\Omega := [-1/2,1/2]^3$ for velocity variable $v$, i.e., for $\widetilde{v} = v + \beta$ with $v \in \Omega$ and $\beta = (\beta_1, \beta_2, \beta_3) \in \mathbf{Z}^3$, we treat $\widetilde{v} \cdot \big( \nabla_x f_\varepsilon \big) (x, \widetilde{v}, t)$ in Boltzmann equation (\ref{Boltzmann}) as $v \cdot \big( \nabla_x f_\varepsilon \big) (x,v,t)$.
Coefficients $\nu_\ast, \kappa > 0$ are fixed constants which will be used later to balance the coefficients of the Navier-Stokes equations in the hydrodynamic limit.
We call the Boltzmann equation (\ref{Boltzmann}) with collision operator (\ref{Col:ano}) by the Boltzmann equation with anomalous smoothing effect.
To explain the reason why we consider operator (\ref{Col:ano}) in such a form, let us recall that if one considers a solution of the form $f = \mu + \sqrt{\mu} g$ to the classical Boltzmann equation (\ref{Boltzmann:C}) with collision operator (\ref{Tru:Col}), then the collision operator (\ref{Tru:Col}) is often decomposed as
\begin{align} \label{Decomp:Col}
C(f) = - L(g) + \Gamma(g,g)
\end{align}
with the perturbation $g$ satisfying
\begin{align*}
\partial_t g + v \cdot \nabla_x g + L(g) = \Gamma(g,g), \quad g \bigm|_{t=0} = g_0
\end{align*}
and $g_0 = \mu^{-\frac{1}{2}} f_0 - \mu^{\frac{1}{2}}$.
$L$ in decomposition (\ref{Decomp:Col}) is the linearized collision operator which is positive definite for any perturbation solution $g$; see e.g. \cite[Chap. 3]{Glassey}, \cite{GuoW}. 
The operator $\mathcal{L}$ in operator (\ref{Col:ano}) is the microscopic projection which is defined analogously as in (\cite[Chap. 3]{Glassey}, \cite{GuoW}); see Section \ref{sub:GEE}.
The microscopic projection $\mathcal{L}$ in collision operator (\ref{Col:ano}) inherits the positive definiteness of the linearized collision operator $L$ in collision operator (\ref{Decomp:Col}).
Hence, $\mathcal{L}(f_\varepsilon)$ in collision operator (\ref{Col:ano}) is compatible with $L(g)$ in collision operator (\ref{Decomp:Col}). 
On the other hand, $\mathcal{L}(f_\varepsilon^2)$ in collision operator (\ref{Col:ano}) is an approximation to the nonlinear term $\Gamma(g,g)$ in collision operator (\ref{Decomp:Col}). 
Such approximation is inspired by the fact that in the cutoff case, $L(g^2)$ equals a constant multiple of $\Gamma(g,g)$ for all $g \in \mathrm{ker}(L)$, see e.g. \cite[Proposition 1.5]{GSR04}.
Although the first two terms of collision operator (\ref{Col:ano}) loses some properties of collision operator (\ref{Decomp:Col}), such as they do not behave locally as a fractional Laplacian (\cite{ADVW}, \cite{GS}), they contain all the properties that we need to formally derive the incompressible Navier-Stokes-Fourier equations as the hydrodynamic limit.
The cubic term of collision operator (\ref{Col:ano}) is a specially designed term which has the magic power to cancel the nonlinear effect coming from the second term of (\ref{Col:ano}) in deriving the a priori energy estimate, i.e., with the help of this cubic term in (\ref{Col:ano}), the Boltzmann equation that we work with in this paper admits a perfect energy estimate which holds globally in time.
Moreover, this global energy estimate holds up to regularity $H^1( \mathbf{T}_x^3; L^2(\Omega_v) )$ without requiring any size conditions on the initial data.
Following the idea of Jiang, Xu and Zhao \cite{JXZ}, the theme of this paper is to justify the hydrodynamic limit of the Boltzmann equation (\ref{Boltzmann}) with collision operator (\ref{Col:ano}), which further implies the existence of a global $H^1$ weak solution, without requiring any smallness condition on the initial data, to some incompressible Navier-Stokes-Fourier type system.

In order to justify the hydrodynamic limit, we firstly need the solvability of the Boltzmann equation. Instead of working directly with the Boltzmann equation (\ref{Boltzmann}) with collision operator (\ref{Col:ano}), we consider its approximate equation 
\begin{equation} \label{AEtoBA}
\begin{split}
&\varepsilon^2 \partial_t \Lambda_\varepsilon (f_\varepsilon) + \varepsilon \Lambda_\varepsilon \big( v \cdot \nabla_x \Lambda_\varepsilon (f_\varepsilon) \big) = C_{\mathrm{cut}}(f_\varepsilon), \\
&C_{\mathrm{cut}}(f_\varepsilon) := - \frac{1}{\nu_\ast} \Lambda_\varepsilon \Big( \mathcal{L}^\varepsilon \big( \Lambda_\varepsilon (f_\varepsilon) \big) \Big) + \frac{\varepsilon \kappa}{\nu_\ast} \Lambda_\varepsilon \Big( \mathcal{L}^\varepsilon \big( \Lambda_\varepsilon (f_\varepsilon)^2 \big) \Big) - \frac{\varepsilon^2 \kappa^2}{\nu_\ast} \Lambda_\varepsilon \big( \Lambda_\varepsilon (f_\varepsilon)^3 \big), \\
&f_\varepsilon \bigm|_{t=0} = \Lambda_\varepsilon (f_{\varepsilon,0}) \\
\end{split}
\end{equation}
where $\Lambda_\varepsilon$ denotes an operator that does cutoff in Fourier space and $\mathcal{L}^\varepsilon$ denotes the cutoff in Fourier space version of the microscopic projection $\mathcal{L}$. 
Here we would like to direct readers to Section \ref{sub:CF} for the precise definition of $\Lambda_\varepsilon$ and $\mathcal{L}^\varepsilon$.
For simplicity of notations, we define that
\[
\| h \|_X^2 := \| h \|_{H^1( \mathbf{T}_x^3; L^2(\Omega_v) )}^2 = \sum_{|\alpha| \leq 1} \int_{\mathbf{T}^3} \int_{\Omega} \big| \partial_x^\alpha h \big|^2 \, dv \, dx
\]
for $h \in H^1( \mathbf{T}_x^3; L^2(\Omega_v) )$ where $\partial_x^\alpha$ represents the differentiation $\partial_{x_1}^{\alpha_1} \partial_{x_2}^{\alpha_2} \partial_{x_3}^{\alpha_3}$ with $\alpha$ denoting the multi-index $\alpha = (\alpha_1, \alpha_2, \alpha_3) \in \mathbf{N}_0^3$ and $\mathbf{N}_0 := \mathbf{N} \cup \{0\}$. In addition, we set that
\[
\mathcal{E}(h) := \| h \|_X, \quad \mathcal{D}_\varepsilon(h) := \| \mathcal{L}^\varepsilon(h) \|_X.
\]
Regarding the approximate equation (\ref{AEtoBA}), we establish the following global existence result.

\begin{lemma} \label{GloSol:AE}
Let $\varepsilon > 0$. For any $f_{\varepsilon,0} \in H^1( \mathbf{T}_x^3; L^2(\Omega_v) )$, the approximate Boltzmann equation with anomalous smoothing effect (\ref{AEtoBA}) admits a unique global solution
\[
f_\varepsilon \in L^\infty( [0,\infty); H^1( \mathbf{T}_x^3; L^2(\Omega_v) ) )
\]
satisfying $f_\varepsilon = \Lambda_\varepsilon(f_\varepsilon)$ and the global energy estimate
\begin{align} \label{GloEE:ep}
\sup_{t \geq 0} \, \mathcal{E}\big( f_\varepsilon \big)^2 (t) + \frac{1}{\varepsilon^2 \nu_\ast} \int_0^\infty \mathcal{D}_\varepsilon \big( f_\varepsilon \big)^2 (s) \, ds \leq \mathcal{E}\big( f_{\varepsilon,0} \big)^2
\end{align}
with $f_\varepsilon \bigm|_{t=0} = \Lambda_\varepsilon(f_{\varepsilon, 0})$.
\end{lemma}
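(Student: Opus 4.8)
The plan is to treat the approximate equation (\ref{AEtoBA}) as an ordinary differential equation on the closed subspace $V_\varepsilon := \Lambda_\varepsilon\big(H^1(\mathbf{T}_x^3;L^2(\Omega_v))\big)$ carved out by the Fourier cutoff, to establish local well-posedness by the Cauchy–Lipschitz theorem, and then to upgrade the local solution to a global one by means of the a priori bound (\ref{GloEE:ep}), whose derivation is the true content of the lemma. First I would note that every term of (\ref{AEtoBA}) carries an outer $\Lambda_\varepsilon$, so a solution issued from $\Lambda_\varepsilon(f_{\varepsilon,0})$ never leaves $V_\varepsilon$; using $\Lambda_\varepsilon^2=\Lambda_\varepsilon$ the equation restricts on $V_\varepsilon$ to
\[
\varepsilon^2\partial_t f_\varepsilon + \varepsilon\Lambda_\varepsilon\big(v\cdot\nabla_x f_\varepsilon\big) = -\tfrac{1}{\nu_\ast}\Lambda_\varepsilon\mathcal{L}^\varepsilon(f_\varepsilon) + \tfrac{\varepsilon\kappa}{\nu_\ast}\Lambda_\varepsilon\mathcal{L}^\varepsilon(f_\varepsilon^2) - \tfrac{\varepsilon^2\kappa^2}{\nu_\ast}\Lambda_\varepsilon(f_\varepsilon^3).
\]
Since $\Lambda_\varepsilon$ bandlimits in both $x$ and $v$, functions in $V_\varepsilon$ are smooth and $V_\varepsilon$ is finite–dimensional, so the maps $f_\varepsilon\mapsto\Lambda_\varepsilon\mathcal{L}^\varepsilon(f_\varepsilon^2)$ and $f_\varepsilon\mapsto\Lambda_\varepsilon(f_\varepsilon^3)$ are well-defined polynomial, hence smooth and locally Lipschitz, vector fields on $V_\varepsilon$. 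Cauchy–Lipschitz then yields a unique maximal solution on $[0,T_{\max})$ with $f_\varepsilon=\Lambda_\varepsilon f_\varepsilon$.

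The heart of the argument is the energy identity obtained by pairing the reduced equation with $f_\varepsilon$ in the inner product of $X$. Because $\Lambda_\varepsilon$ is a self-adjoint projection commuting with $\partial_x^\alpha$ and $f_\varepsilon=\Lambda_\varepsilon f_\varepsilon$, the transport term drops out: for each $|\alpha|\le 1$ and fixed $v$ one has $\int_{\mathbf{T}^3}(v\cdot\nabla_x\partial_x^\alpha f_\varepsilon)\,\partial_x^\alpha f_\varepsilon\,dx=\tfrac12\int_{\mathbf{T}^3}v\cdot\nabla_x\big((\partial_x^\alpha f_\varepsilon)^2\big)\,dx=0$ by periodicity. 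Invoking the projection structure of $\mathcal{L}^\varepsilon$ (self-adjoint, idempotent, operator norm $\le1$, commuting with $\partial_x^\alpha$) from Section \ref{sub:GEE}, the linear term contributes exactly $-\tfrac{1}{\nu_\ast}\langle\mathcal{L}^\varepsilon f_\varepsilon,f_\varepsilon\rangle_X=-\tfrac{1}{\nu_\ast}\mathcal{D}_\varepsilon(f_\varepsilon)^2$, so that
\[
\frac{\varepsilon^2}{2}\frac{d}{dt}\mathcal{E}(f_\varepsilon)^2 = -\frac{1}{\nu_\ast}\mathcal{D}_\varepsilon(f_\varepsilon)^2 + \frac{\varepsilon\kappa}{\nu_\ast}\langle\mathcal{L}^\varepsilon(f_\varepsilon^2),f_\varepsilon\rangle_X - \frac{\varepsilon^2\kappa^2}{\nu_\ast}\langle f_\varepsilon^3,f_\varepsilon\rangle_X.
\]

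The decisive step is to show the two nonlinear terms are dominated by $\tfrac{1}{2\nu_\ast}\mathcal{D}_\varepsilon(f_\varepsilon)^2$ \emph{with no smallness assumption}, which is the cancellation advertised in the introduction. I would split by $|\alpha|$. For $\alpha=0$, writing $\langle\mathcal{L}^\varepsilon(f_\varepsilon^2),f_\varepsilon\rangle=\langle\mathcal{L}^\varepsilon(f_\varepsilon^2),\mathcal{L}^\varepsilon f_\varepsilon\rangle$ and $\langle f_\varepsilon^3,f_\varepsilon\rangle=\|f_\varepsilon^2\|_{L^2}^2$, Cauchy–Schwarz (with $\|\mathcal{L}^\varepsilon\|\le1$) and Young give $\tfrac{\varepsilon\kappa}{\nu_\ast}\|f_\varepsilon^2\|_{L^2}\|\mathcal{L}^\varepsilon f_\varepsilon\|_{L^2}\le \tfrac{\varepsilon^2\kappa^2}{2\nu_\ast}\|f_\varepsilon^2\|_{L^2}^2 + \tfrac{1}{2\nu_\ast}\|\mathcal{L}^\varepsilon f_\varepsilon\|_{L^2}^2$, so the $\|f_\varepsilon^2\|_{L^2}^2$ piece is absorbed by the cubic term and only $\tfrac{1}{2\nu_\ast}\|\mathcal{L}^\varepsilon f_\varepsilon\|_{L^2}^2$ survives. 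For $\alpha=e_i$ the product rule gives $\partial_{x_i}(f_\varepsilon^2)=2f_\varepsilon\partial_{x_i}f_\varepsilon$ and $\partial_{x_i}(f_\varepsilon^3)=3f_\varepsilon^2\partial_{x_i}f_\varepsilon$, and Young with a weight tuned to the coefficients yields $\tfrac{2\varepsilon\kappa}{\nu_\ast}\langle\mathcal{L}^\varepsilon(f_\varepsilon\partial_{x_i}f_\varepsilon),\mathcal{L}^\varepsilon\partial_{x_i}f_\varepsilon\rangle\le \tfrac{3\varepsilon^2\kappa^2}{\nu_\ast}\|f_\varepsilon\partial_{x_i}f_\varepsilon\|_{L^2}^2 + \tfrac{1}{3\nu_\ast}\|\mathcal{L}^\varepsilon\partial_{x_i}f_\varepsilon\|_{L^2}^2$, whose first term \emph{exactly} cancels the cubic contribution $\tfrac{3\varepsilon^2\kappa^2}{\nu_\ast}\int f_\varepsilon^2(\partial_{x_i}f_\varepsilon)^2$. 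The mechanism is that the quadratic weight $\varepsilon\kappa$ and cubic weight $\varepsilon^2\kappa^2$ are matched so the cubic always beats the bad part of the quadratic regardless of $\|f_\varepsilon\|_X$. Summing over $\alpha$ bounds the nonlinearity by $\tfrac{1}{2\nu_\ast}\mathcal{D}_\varepsilon(f_\varepsilon)^2$, whence $\tfrac{\varepsilon^2}{2}\tfrac{d}{dt}\mathcal{E}(f_\varepsilon)^2\le -\tfrac{1}{2\nu_\ast}\mathcal{D}_\varepsilon(f_\varepsilon)^2$; integrating and multiplying by $2/\varepsilon^2$ produces (\ref{GloEE:ep}). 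Finally, the resulting uniform bound $\sup_t\mathcal{E}(f_\varepsilon)(t)\le\mathcal{E}(f_{\varepsilon,0})$ confines the finite-dimensional trajectory to a fixed ball, ruling out blow-up and forcing $T_{\max}=\infty$, with uniqueness inherited from the local theory. I expect the main obstacle to be precisely the $|\alpha|=1$ nonlinear estimate, where one must check that differentiating the quadratic and cubic terms produces structurally matched quantities so that the Young splitting closes, since this is exactly the point where a crude bound would otherwise demand smallness of the data.
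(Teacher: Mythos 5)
Your proposal is correct, and its core — the energy identity with the quadratic term absorbed by the cubic term via a tuned Young inequality, with no smallness needed — is exactly the mechanism of the paper's proof (estimate (\ref{EI}) in Lemma \ref{GSmBE}, reused verbatim for the approximate equation): the paper bounds the quadratic contribution by $\tfrac{2^{2|\alpha|}\kappa^2}{2}\|f_\varepsilon\partial_x^\alpha f_\varepsilon\|_{L^2}^2+\tfrac{1}{2\varepsilon^2}\|\partial_x^\alpha\mathcal{L}^\varepsilon(f_\varepsilon)\|_{L^2}^2$ and dominates the first piece by the cubic term's $-3^{|\alpha|}\kappa^2\|f_\varepsilon\partial_x^\alpha f_\varepsilon\|_{L^2}^2$, which is your splitting up to the choice of Young weight. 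The transport term vanishing by periodicity, the self-adjointness and idempotence of $\Lambda_\varepsilon$ and $\mathcal{L}^\varepsilon$, and the continuation-by-a-priori-bound step all match the paper.

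Where you genuinely diverge is the local theory. You observe that $\Lambda_\varepsilon$ band-limits in \emph{both} $x$ and $v$ on tori, so its range is spanned by finitely many exponentials $\mathrm{e}^{2\pi i(n\cdot x+m\cdot v)}$; the reduced equation is then a polynomial ODE on a finite-dimensional space and Cauchy--Lipschitz gives local existence, uniqueness, and (with the uniform energy bound confining the trajectory to a ball) global extension for free. This is sound and more elementary than what the paper does. The paper instead runs a Picard iteration in $L^\infty_t H^1(\mathbf{T}_x^3;L^2(\Omega_v))$ and supplies the needed local Lipschitz continuity of the nonlinearity through the Bernstein-type Lemma \ref{Re:BerL} and the multiplication rule of Proposition \ref{MRX}, never invoking finite-dimensionality. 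The trade-off is that the paper's route produces explicit $\varepsilon^{-\tau\gamma}$ constants in the product estimates, which are not needed for Lemma \ref{GloSol:AE} itself but are reused heavily in the convergence analysis of Chapters 3--4 (and in Remark \ref{Hire:fep} on higher regularity); your route buys a shorter existence proof but would still require those quantitative estimates later. One small point of care in your write-up: uniqueness from Cauchy--Lipschitz is uniqueness within the class $f_\varepsilon=\Lambda_\varepsilon(f_\varepsilon)$, which is indeed the class in which the lemma asserts it (the equation only constrains the band-limited part of a putative solution), and is what the paper's separate energy-method uniqueness argument also establishes.
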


Since we are considering velocity $v$ in the torus $\Omega$ for the approximate equation (\ref{AEtoBA}), Lemma \ref{GloSol:AE} can be proved by the standard Picard's method. 
Since the form of the collision operator (\ref{Col:ano}) is specially designed, the global energy estimate (\ref{GloEE:ep}) can be easily derived by the argument of the traditional energy method. 
Hence, it is sufficient to establish the existence and uniqueness of a local solution to the approximate equation (\ref{AEtoBA}).
To show the existence of a local solution, we consider a sequence of functions $\{ g_{\varepsilon, j}(t) \}_{j \in \mathbf{N}_0}$ defined inductively by
\begin{align*}
g_{\varepsilon, j+1}(t) 
&:= g_{\varepsilon, 0} - \frac{1}{\varepsilon} \int_0^t \Lambda_\varepsilon \big( v \cdot \nabla_x g_{\varepsilon, j}(s) \big) \, ds - \frac{1}{\varepsilon^2 \nu_\ast} \int_0^t \Lambda_\varepsilon \Big( \mathcal{L}^\varepsilon\big( g_{\varepsilon, j}(s) \big) \Big) \, ds \\
&\ \ + \frac{\kappa}{\varepsilon \nu_\ast} \int_0^t \Lambda_\varepsilon \Big( \mathcal{L}^\varepsilon\big( g_{\varepsilon, j}(s)^2 \big) \Big) \, ds - \frac{\kappa^2}{\nu_\ast} \int_0^t \Lambda_\varepsilon \big( g_{\varepsilon, j}(s)^3 \big) \, ds
\end{align*}
for $j \geq 0$ where $g_{\varepsilon, 0} = \Lambda_\varepsilon(f_{\varepsilon, 0})$. 
One of the most crucial reason why we work with the approximate equation (\ref{AEtoBA}) instead of the original Boltzmann equation (\ref{Boltzmann}) with collision operator (\ref{Col:ano}) is because we want to make use of a special property of the cutoff operator $\Lambda_\varepsilon$. 
Since $\Lambda_\varepsilon$ does the cutoff in Fourier space, we have the Bernstein-type lemma which allows us to estimate the $L^q$ norm of $\partial_{x,v}^\alpha \Lambda_\varepsilon(h)$ by a constant multiple, where the constant depends only on $\varepsilon$ and $|\alpha|$, of the $L^p$ norm of $h$ with $p < q$; see Lemma \ref{Re:BerL} in Section \ref{sub:CF}.
As a result, for any $f_1, f_2, ..., f_n \in H^1( \mathbf{T}_x^3; L^2(\Omega_v) )$, we are able to establish a multiplication  rule regarding the $X$-norm of $\prod_{i=1}^n \Lambda_\varepsilon(f_i)$, i.e., we can estimate $\| \prod_{i=1}^n \Lambda_\varepsilon(f_i) \|_X$ by $\prod_{i=1}^n \| \Lambda_\varepsilon(f_i) \|_X$ with a constant depending on $\varepsilon$ and $n$ only; see Proposition \ref{MRX} in Section \ref{sub:SAE}.
Having this tool, we can then prove by induction that the sequence $\{ g_{\varepsilon, j}(t) \}_{j \in \mathbf{N} \cup \{0\}}$ is Cauchy in $L^\infty( [0,\infty); H^1( \mathbf{T}_x^3; L^2(\Omega_v) ) )$. 
Then, the existence of a local solution can be concluded by the contraction mapping theorem and the Banach fixed point theorem. 
The uniqueness of the local solution can be easily shown by a simple energy method argument as well due to the good form of the collision operator (\ref{Col:ano}).
This completes the proof of Lemma \ref{GloSol:AE}.
Since the $X$-norm of $\Lambda_\varepsilon\big( v \cdot \nabla_x \Lambda_\varepsilon(f_\varepsilon) \big)$ is controlled due to the fact that the $L^\infty$-norm of $v$ in $\Omega$ is bounded, we can observe that $\partial_t f_\varepsilon \in L^\infty( [0,\infty); H^1( \mathbf{T}_x^3; L^2(\Omega_v) ) )$, i.e., the unique solution to the approximate equation (\ref{AEtoBA}) is indeed a strong solution.

With respect to a global solution $f_\varepsilon$ constructed in Lemma \ref{GloSol:AE}, we set 
\[
\rho^\varepsilon := \int_\Omega f_\varepsilon \, dv, \quad u^\varepsilon := \int_\Omega \mathrm{e}_1^\varepsilon f_\varepsilon \, dv, \quad \theta^\varepsilon := \int_\Omega \mathrm{e}_2^\varepsilon f_\varepsilon \, dv
\]
where $\mathrm{e}_1^\varepsilon$ and $\mathrm{e}_2^\varepsilon$ are defined by (\ref{Cf:e012}) in Section \ref{sub:CF}.
Simply speaking, $\mathrm{e}_1^\varepsilon$ is the cutoff in Fourier space version of $2 \sqrt{3} v$ and $\mathrm{e}_2^\varepsilon$ is the cutoff in Fourier space version of $6 \sqrt{5} (|v|^2 - \frac{1}{4})$.
By taking the Knudsen number $\varepsilon \to 0$ for the approximate equation (\ref{AEtoBA}) with $\kappa = \sqrt{3}$ and $\nu = \frac{\nu_\ast}{12}$, the main convergence result of this paper reads as follows.

\begin{theorem} \label{MCT}
Let $0 < \varepsilon < 1$. For any $(\rho_0, u_0, \theta_0) \in H^1(\mathbf{T}_x^3)$, let 
\[
f_{\varepsilon,0} = \Lambda_\varepsilon(\rho_0) + \Lambda_\varepsilon(2 \sqrt{3} v \cdot u_0) + \Lambda_\varepsilon\left( 6 \sqrt{5} \bigg( |v|^2 - \frac{1}{4} \bigg) \theta_0 \right)
\]
where $\Lambda_\varepsilon$ is a cutoff in Fourier space operator that will be defined in Section \ref{sub:CF}.
Let $f_\varepsilon$ be the unique solution to the approximate Boltzmann equation with anomalous smoothing effect (\ref{AEtoBA}).
Then, there exist
\[
f \in L^\infty( [0,\infty); H^1(\mathbf{T}_x^3; L^2(\Omega_v) ) ) \quad \text{and} \quad (\rho, u, \theta) \in L^\infty( [0,\infty); H^1(\mathbf{T}_x^3) )
\]
such that $f_\varepsilon \overset{\ast}{\rightharpoonup} f$ in the weak-$\ast$ topology
\begin{align*}
\sigma\big( L^\infty( [0,\infty); H^1(\mathbf{T}_x^3; L^2(\Omega_v) ) ), L^1( [0,\infty); H^{-1}(\mathbf{T}_x^3; L^2(\Omega_v) ) ) \big)
\end{align*}
and $(\rho^\varepsilon, u^\varepsilon, \theta^\varepsilon) \overset{\ast}{\rightharpoonup} (\rho, u, \theta)$ in the weak-$\ast$ topology
\[
\sigma\big( L^\infty( [0,\infty); H^1(\mathbf{T}_x^3) ), L^1( [0,\infty); H^{-1}(\mathbf{T}_x^3) ) \big)
\]
as $\varepsilon \to 0$.
Moreover, there exist $\mathcal{F}(t,x), \mathcal{G}(t,x) \in L^\infty( [0,\infty); L^2(\mathbf{T}_x^3) )$ and $\mathcal{H}(t,x), \mathcal{K}(t,x) \in L^\infty( [0,\infty); L^{\frac{3}{2}}(\mathbf{T}_x^3) )$ satisfying the global energy inequality
\begin{equation*}
 \begin{aligned}
 \| \mathcal{F} \|_{L^\infty( [0,\infty); L^2(\mathbf{T}_x^3) )} + \| \mathcal{G} \|_{L^\infty( [0,\infty); L^2(\mathbf{T}_x^3) )} &\lesssim& &\| \rho_0 \|_{H^1(\mathbf{T}_x^3)}^3 + \| u_0 \|_{H^1(\mathbf{T}_x^3)}^3 + \| \theta_0 \|_{H^1(\mathbf{T}_x^3)}^3,& \\
  \| \mathcal{H} \|_{L^\infty( [0,\infty); L^{\frac{3}{2}}(\mathbf{T}_x^3) )} + \| \mathcal{K} \|_{L^\infty( [0,\infty); L^{\frac{3}{2}}(\mathbf{T}_x^3) )} &\lesssim& &\| \rho_0 \|_{H^1(\mathbf{T}_x^3)}^2 + \| u_0 \|_{H^1(\mathbf{T}_x^3)}^2 + \| \theta_0 \|_{H^1(\mathbf{T}_x^3)}^2&
 \end{aligned}
\end{equation*}
such that 
\begin{align*}
(u, \widetilde{\theta}) \in L^\infty( [0,\infty); H^1(\mathbf{T}_x^3) ) \cap C( [0,\infty); L^2(\mathbf{T}_x^3) ), \quad \widetilde{\theta} := \theta - \frac{2 \sqrt{5}}{5} \rho
\end{align*}
is a global weak solution to the incompressible Navier-Stokes-Fourier type system
\begin{equation} \label{NSF:limit}
\left\{
 \begin{aligned}
 \partial_t u - \nu \Delta_x u + u \cdot \nabla_x u + \nabla_x p &=& &\frac{1}{\nu} \mathcal{F} + \mathcal{H} + \nu \mathcal{J},& \\ 
 \nabla_x \cdot u &=& &0,& \\
 \partial_t \widetilde{\theta} - \frac{291}{133} \nu \Delta_x \widetilde{\theta} + \frac{97}{35} u \cdot \nabla_x \widetilde{\theta} &=& &\frac{1}{\nu} \mathcal{G} + \mathcal{K},& \\
 u \bigm|_{t=0} &=& &\mathbb{P}(u_0),& \\
 \widetilde{\theta} \bigm|_{t=0} &=& &\theta_0 - \frac{2 \sqrt{5}}{5} \rho_0&
 \end{aligned}
\right.
\end{equation} 
where $\mathcal{J} := - \frac{6}{5} (\partial_{x_1}^2 u_1^\varepsilon, \partial_{x_2}^2 u_2^\varepsilon, \partial_{x_3}^2 u_3^\varepsilon)$ and $\mathbb{P}$ denotes the Helmholtz projection for the space $L^2(\mathbf{T}_x^3)$.
Furthermore, $(\rho, \widetilde{\theta})$ satisfy the Boussinesq relation
\begin{align} \label{Boussi:MCT}
\nabla_x \big( 2 \sqrt{5} \widetilde{\theta} + 19 \rho \big) = 0
\end{align}
and $(\rho, u, \widetilde{\theta})$ satisfy the global energy inequality
\begin{align*}
&\| \rho \|_{L^\infty( [0,\infty); H^1(\mathbf{T}_x^3) )} + \| u \|_{L^\infty( [0,\infty); H^1(\mathbf{T}_x^3) )} + \| \widetilde{\theta} \|_{L^\infty( [0,\infty); H^1(\mathbf{T}_x^3) )} \\
&\ \ \lesssim \| \rho_0 \|_{H^1(\mathbf{T}_x^3)} + \| u_0 \|_{H^1(\mathbf{T}_x^3)} + \| \theta_0 \|_{H^1(\mathbf{T}_x^3)}.
\end{align*}
\end{theorem}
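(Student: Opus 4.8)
The plan is to pass to the limit $\varepsilon\to 0$ in the weak formulation of (\ref{AEtoBA}), using the global energy estimate (\ref{GloEE:ep}) of Lemma \ref{GloSol:AE} as the sole source of uniform bounds. First I would observe that, since $\Lambda_\varepsilon$ is a Fourier cutoff and hence bounded on $H^1(\mathbf{T}_x^3;L^2(\Omega_v))$, the explicit form of $f_{\varepsilon,0}$ gives $\mathcal{E}(f_{\varepsilon,0})^2 \lesssim \|\rho_0\|_{H^1}^2+\|u_0\|_{H^1}^2+\|\theta_0\|_{H^1}^2$ uniformly in $\varepsilon$. Then (\ref{GloEE:ep}) furnishes two bounds: $f_\varepsilon$ is bounded in $L^\infty([0,\infty);H^1(\mathbf{T}_x^3;L^2(\Omega_v)))$, and $\int_0^\infty \|\mathcal{L}^\varepsilon(f_\varepsilon)\|_X^2\,ds \le \varepsilon^2\nu_\ast\,\mathcal{E}(f_{\varepsilon,0})^2$. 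Banach--Alaoglu then yields a weak-$\ast$ limit $f$, while the moments $(\rho^\varepsilon,u^\varepsilon,\theta^\varepsilon)$ — taken against $\mathrm{e}_1^\varepsilon,\mathrm{e}_2^\varepsilon$, which converge to $2\sqrt{3}\,v$ and $6\sqrt{5}(|v|^2-\tfrac14)$ — are bounded in $L^\infty_t H^1_x$ and converge weakly-$\ast$ to $(\rho,u,\theta)$. The second bound forces $\mathcal{L}^\varepsilon(f_\varepsilon)\to 0$ strongly in $L^2_t X$, so in the limit $\mathcal{L}(f)=0$; hence $f\in\ker\mathcal{L}$, which by the definition of the microscopic projection in Section~\ref{sub:GEE} is spanned by the cutoff-free versions of $1,v,|v|^2-\tfrac14$. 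This identifies $f=\rho+2\sqrt{3}\,v\cdot u+6\sqrt{5}(|v|^2-\tfrac14)\theta$ and ties $f$ to its macroscopic moments.

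Next I would derive the macroscopic equations by testing (\ref{AEtoBA}) against the kernel basis $\mathrm{e}_0^\varepsilon,\mathrm{e}_1^\varepsilon,\mathrm{e}_2^\varepsilon$. Because $\mathcal{L}^\varepsilon$ is self-adjoint with these functions in its kernel, both the linear term $\mathcal{L}^\varepsilon(f_\varepsilon)$ and the quadratic term $\mathcal{L}^\varepsilon(f_\varepsilon^2)$ drop out, leaving only the streaming flux and the cubic term. Splitting $f_\varepsilon = P_\varepsilon f_\varepsilon + (I-P_\varepsilon)f_\varepsilon$ into its macroscopic part $P_\varepsilon f_\varepsilon$ (the orthogonal projection onto $\ker\mathcal{L}^\varepsilon$) and its microscopic part, the flux $\langle\psi,\,v\cdot\nabla_x f_\varepsilon\rangle$ separates into an $O(1/\varepsilon)$ contribution from the macroscopic part and an $O(1)$ contribution from the microscopic part. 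Since everything is uniformly bounded, the $O(1/\varepsilon)$ terms must have vanishing weak limit, which yields the incompressibility $\nabla_x\cdot u=0$ and the Boussinesq relation (\ref{Boussi:MCT}); applying the Helmholtz projection $\mathbb{P}$ then removes the pressure gradient and the acoustic part. The microscopic part, controlled through $\mathcal{L}^\varepsilon(f_\varepsilon)/\varepsilon=O(1)$ in $L^2_t X$ and converging weakly, supplies the viscous fluxes once $\mathcal{L}$ is inverted on the orthogonal complement of its kernel. The anomalous coefficients $\nu,\ \tfrac{291}{133}\nu,\ \tfrac{97}{35}$ arise from the moments of $v^2$ and $v^4$ over the cube $\Omega=[-\tfrac12,\tfrac12]^3$, which differ from their Gaussian values; in particular $\int_\Omega v_i^4\neq 3\big(\int_\Omega v_i^2\big)^2$ breaks isotropy and produces the diagonal second-derivative term $\mathcal{J}$. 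The surviving cubic term and the cutoff commutators are collected into the forcing fields $\mathcal{F},\mathcal{G},\mathcal{H},\mathcal{K}$.

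The hard part will be passing to the limit in the nonlinear convection $u\cdot\nabla_x u$ and $u\cdot\nabla_x\widetilde{\theta}$, which demands strong rather than merely weak convergence of $u^\varepsilon$. I would obtain this by an Aubin--Lions--Simon argument: $u^\varepsilon$ is uniformly bounded in $L^\infty_t H^1_x$, and after applying $\mathbb{P}$ — which annihilates precisely the singular $O(1/\varepsilon)$ acoustic flux — the moment equation bounds $\partial_t\mathbb{P}u^\varepsilon$ uniformly in some $L^\infty_t H^{-s}_x$, so $\mathbb{P}u^\varepsilon$ is relatively compact in $L^2_{\mathrm{loc}}([0,\infty);L^2(\mathbf{T}_x^3))$ and converges strongly to $u$. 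The delicate issue is that the raw $\partial_t u^\varepsilon$ carries genuinely singular $1/\varepsilon$ streaming contributions (the fast acoustic oscillations) that do not converge, and $\mathbb{P}$ is exactly the device that suppresses them; one must verify that $\mathbb{P}$ interacts correctly with the flux computation and that the gradient part is absorbed into the pressure $p$. Producing a uniform-in-$\varepsilon$ bound on $\partial_t\mathbb{P}u^\varepsilon$ in a negative Sobolev norm, in spite of the $1/\varepsilon^2$ and $1/\varepsilon$ prefactors in (\ref{AEtoBA}), is the technical crux of the whole argument.

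Finally I would bound the forcing terms and close the energy inequality. The cubic source, a weak limit of $f_\varepsilon^3$-type expressions, is estimated in $L^\infty_t L^2_x$ by $\|\cdot\|_{L^6_x}^3\lesssim\|\cdot\|_{H^1_x}^3$ through the embedding $H^1(\mathbf{T}^3)\hookrightarrow L^6(\mathbf{T}^3)$, giving the cubic bounds on $\mathcal{F},\mathcal{G}$; the quadratic source is estimated in $L^\infty_t L^{3/2}_x$ by $\|\cdot\|_{L^3_x}^2\lesssim\|\cdot\|_{H^1_x}^2$, giving the $L^{3/2}$ bounds on $\mathcal{H},\mathcal{K}$. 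The $H^1$ energy inequality for $(\rho,u,\widetilde{\theta})$ follows from weak-$\ast$ lower semicontinuity of the norm applied to the uniform bound coming from (\ref{GloEE:ep}). The time continuity $(u,\widetilde{\theta})\in C([0,\infty);L^2(\mathbf{T}_x^3))$ and the attainment of the initial data $\mathbb{P}(u_0)$ and $\theta_0-\tfrac{2\sqrt{5}}{5}\rho_0$ then follow from the strong compactness established above together with the weak formulation of the limiting system (\ref{NSF:limit}).
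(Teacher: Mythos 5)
Your overall strategy coincides with the paper's: uniform bounds from (\ref{GloEE:ep}), weak-$\ast$ limits, identification of $f$ with its macroscopic part via $\mathcal{L}^{\varepsilon}(f_\varepsilon)\to 0$, moment equations against $\mathrm{e}_0^\varepsilon,\mathrm{e}_1^\varepsilon,\mathrm{e}_2^\varepsilon$, the Helmholtz projection to kill the acoustic part, and Sobolev embedding plus weak-$\ast$ lower semicontinuity for the bounds on $\mathcal{F},\mathcal{G},\mathcal{H},\mathcal{K}$. Two of your choices differ only cosmetically from the paper: the paper obtains strong convergence of $\mathbb{P}(u^\varepsilon)$ by proving equicontinuity in $t$ of $\|\mathbb{P}(u^\varepsilon)(t)\|_{L^2}$ directly from the time-integrated projected equation and invoking Arzel\`a--Ascoli (which is what actually delivers the $C([0,\infty);L^2)$ membership in the statement), rather than Aubin--Lions; and where you speak of ``inverting $\mathcal{L}$ on the orthogonal complement of its kernel,'' the paper instead substitutes the equation itself to write $\varepsilon^{-1}\mathcal{L}^\varepsilon(f_\varepsilon)$ as a sum of four explicit terms (equation (\ref{ReApproEq})), from which the diffusion comes out of the streaming term and, importantly, the convection $u\cdot\nabla_x u$ comes out of the \emph{quadratic collision term} $\kappa\,\mathcal{L}^\varepsilon(\Lambda_\varepsilon(f_\varepsilon^2))$ paired with $A_\varepsilon$ --- a point your sketch leaves unspecified.

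There is, however, one genuine gap. Strong convergence of $\mathbb{P}(u^\varepsilon)$ does \emph{not} suffice to pass to the limit in $\mathbb{P}\big(\nabla_x\cdot(u^\varepsilon\otimes u^\varepsilon)\big)$, because $u^\varepsilon=\mathbb{P}(u^\varepsilon)+\mathbb{Q}(u^\varepsilon)$ and the gradient part $\mathbb{Q}(u^\varepsilon)$ converges only weakly-$\ast$ (to zero); the quadratic expression $\mathbb{P}\big(\nabla_x\cdot(\mathbb{Q}(u^\varepsilon)\otimes\mathbb{Q}(u^\varepsilon))\big)$ is not a gradient, is not annihilated by $\mathbb{P}$, and is not controlled by weak convergence alone. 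Your remark that ``the gradient part is absorbed into the pressure'' fails exactly here. The paper resolves this by exhibiting the acoustic system (\ref{WeCov:QQ}) satisfied by $(\zeta^\varepsilon,\mathbb{Q}(u^\varepsilon))$ with right-hand sides vanishing in $L^1_{loc}([0,\infty);L^2(\mathbf{T}_x^3))$, and then applying the compensated compactness result of Lions and Masmoudi (Lemma \ref{RemHel:0}); the same device is needed again for $\operatorname{div}(\mathbb{Q}(u^\varepsilon)\zeta^\varepsilon)$ in the $\widetilde{\theta}$-equation. Without this (or an equivalent dispersive/averaging argument for the fast oscillations), the limit of the convection term is not identified and the proof does not close.
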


To summarize the main contribution of this paper in simple words, we establish that by adding to the $3$D incompressible Navier-Stokes-Fourier equations a nontrivial term of second order differentiations and some forcing terms that are well-controlled in the $L^2$ and $L^{\frac{3}{2}}$ sense, it becomes possible to eliminate the smallness condition required on the initial data in order to obtain the global existence of an $H^1$ weak solution.
In the title, the meaning of ``anomalous smoothing effect'' is different from the standard notion of ``smoothing effect'' which describes some gain in regularity.
We name ``anomalous smoothing effect'' to describe the capability of concluding of the existence of a global $H^1$ solution without requiring the initial data to be small.

The idea of proving Theorem \ref{MCT} is somehow standard.
Briefly speaking, with respect to the unique global solution $f_\varepsilon$, we firstly rewrite the approximate equation (\ref{AEtoBA}) in terms of $\rho^\varepsilon$, $u^\varepsilon$ and $\theta^\varepsilon$ and then use the global energy estimate (\ref{GloEE:ep}) to prove the convergence.
For the convenience of readers, we would like to elaborate more about the formal derivation of the incompressible Navier-Stokes-Fourier type system (\ref{NSF:limit}).
By taking the inner product of our approximate equation (\ref{AEtoBA}) with $1$, $\mathrm{e}_1^\varepsilon$ and $\mathrm{e}_2^\varepsilon$ in the sense of $L^2(\Omega_v)$, we can obtain that
\begin{eqnarray} \label{ConsLaw}
\left\{
\begin{array}{lcl}
\varepsilon \partial_t \rho^\varepsilon + \frac{1}{c_1^\varepsilon} \nabla_{x} \cdot u^\varepsilon = - \frac{\varepsilon \kappa^2}{\nu_\ast} \langle f_\varepsilon^3 \rangle, \\
\varepsilon \partial_t u^\varepsilon + c_1^\varepsilon \nabla_{x} \cdot \big\langle (\Lambda_\varepsilon(v) \otimes v) f_{\varepsilon} \big\rangle= - \frac{\varepsilon \kappa^2}{\nu_\ast} \langle \mathrm{e}_1^\varepsilon f_\varepsilon^3 \rangle, \\
\varepsilon \partial_t \theta^\varepsilon + \nabla_x \cdot \big\langle \mathrm{e}_2^\varepsilon v f_\varepsilon \big\rangle = - \frac{\varepsilon \kappa^2}{\nu_\ast} \langle \mathrm{e}_2^\varepsilon f_\varepsilon^3 \rangle.
\end{array}
\right.
\end{eqnarray}
Here, the notation $\langle h \rangle$ represents the integral of function $h$ in $\Omega$ with respect to $v$.
The divergence free condition of system (\ref{NSF:limit}) can be derived from the first equation of system (\ref{ConsLaw}) easily.
The first equation of system (\ref{NSF:limit}) results from the second equation of system (\ref{ConsLaw}).
By introducing a matrix $A_\varepsilon$, which is the cutoff in Fourier space version of $A := v \otimes v - \frac{|v|^2}{3} I$, we can rewrite 
\[
\nabla_x \cdot \big\langle (\Lambda_\varepsilon(v) \otimes v) f_\varepsilon \big\rangle = \nabla_x \cdot \langle A_\varepsilon f_\varepsilon \rangle + c_{1,\varepsilon} \nabla_x \theta^\varepsilon + c_{2,\varepsilon} \nabla_x \rho^\varepsilon
\]
with some constants $c_{1,\varepsilon}, c_{2,\varepsilon}$ that converge as $\varepsilon \to 0$.
Here we would like to remark that the matrix $A_\varepsilon$ is constructed so that it satisfies $A_\varepsilon \in \mathrm{ker}^\perp(\mathcal{L}^\varepsilon)$ where $\mathrm{ker}^\perp(\mathcal{L}^\varepsilon)$ denotes the annihilator of the kernel of $\mathcal{L}^\varepsilon$.
Due to this fact, $\mathcal{L}^\varepsilon(f_\varepsilon)$ can be rewritten in the form of 
\[
\mathcal{L}^\varepsilon(f_\varepsilon) = - \varepsilon^2 \nu_\ast \partial_t f_\varepsilon - \varepsilon \nu_\ast \Lambda_\varepsilon\big( v \cdot \nabla_x f_\varepsilon \big) + \varepsilon \kappa \mathcal{L}^\varepsilon\big( \Lambda_\varepsilon(f_\varepsilon^2) \big) - \varepsilon^2 \kappa^2 \Lambda_\varepsilon(f_\varepsilon^3).
\]
As a result, we can then deduce that
\begin{equation} \label{Re:Af}
\begin{split}
\frac{1}{\varepsilon} \nabla_x \cdot \langle A_\varepsilon f_\varepsilon \rangle &= - \varepsilon \nu_\ast \nabla_x \cdot \partial_t \langle A_\varepsilon f_\varepsilon \rangle - \nu_\ast \nabla_x \cdot \langle A_\varepsilon \Lambda_\varepsilon (v \cdot \nabla_x f_\varepsilon) \rangle + \kappa \nabla_x \cdot \big\langle A_\varepsilon \mathcal{L}^\varepsilon\big( \Lambda_\varepsilon(f_\varepsilon^2) \big) \big\rangle \\
&\ \ - \varepsilon \kappa^2 \nabla_x \cdot \langle A_\varepsilon \Lambda_\varepsilon(f_\varepsilon^3) \rangle.
\end{split}
\end{equation}
Due to the global energy estimate (\ref{GloEE:ep}), the first and the fourth term on the right hand side of equation (\ref{Re:Af}) can be shown to have order $\mathcal{O}(\varepsilon^{1-\tau_1})$ for some $\tau_1 \in (0,1)$. 
Hence, the convergence behavior of $\varepsilon^{-1} \nabla_x \cdot \langle A_\varepsilon f_\varepsilon \rangle$ is governed by the second and the third term on the right hand side of equation (\ref{Re:Af}) when taking the limit $\varepsilon \to 0$.
Specifically speaking, for the first equation of system (\ref{NSF:limit}), the diffusion term $\Delta_x u^\varepsilon$ is derived from the rewriting of $\nabla_x \cdot \langle A_\varepsilon \Lambda_\varepsilon( v \cdot \nabla_x f_\varepsilon) \rangle$ whereas the transport term $u^\varepsilon \cdot \nabla_x u^\varepsilon$ is derived from the rewritten of $\nabla_x \cdot \big\langle A_\varepsilon \mathcal{L}^\varepsilon\big( \Lambda_\varepsilon(f_\varepsilon^2) \big) \big\rangle$.
At this point, we can formally rewrite the second equation of system (\ref{ConsLaw}) as
\begin{align} \label{Form:NS}
\partial_t u^\varepsilon - \nu \Delta_x u^\varepsilon + u^\varepsilon \cdot \nabla_x u^\varepsilon + \frac{1}{\varepsilon} (c_1 \nabla_x \theta^\varepsilon + c_2 \nabla_x \rho^\varepsilon) + c_3 \nabla_x |u^\varepsilon|^2 = - \frac{c_4}{\nu} F^\varepsilon + H^\varepsilon + \mathcal{O}_1^\varepsilon,
\end{align}
where $F^\varepsilon$ is a forcing term resulting from $\langle \mathrm{e}_1^\varepsilon f_\varepsilon^3 \rangle$ and $\mathcal{O}_1^\varepsilon$ is a remainder term that converges to zero in the sense of distributions as $\varepsilon \to 0$.
The Boussinesq relation (\ref{Boussi:MCT}) can be obtained by multiplying $\varepsilon$ to both sides of equation (\ref{Form:NS}) and then take the limit $\varepsilon \to 0$.
In order to get rid of the terms $\nabla_x \rho^\varepsilon$ and $\nabla_x \theta^\varepsilon$ whose coefficient are constant multiples of $\varepsilon^{-1}$, we apply the Helmholtz projection $\mathbb{P}$ of $L^2(\mathbf{T}_x^3)$ to equation (\ref{Form:NS}).
Then by taking the limit $\varepsilon \to 0$, we obtain the first equation in system (\ref{NSF:limit}) as the hydrodynamic limit.

By analogous derivation of the first equation of system (\ref{NSF:limit}), we can derive the third equation of system (\ref{NSF:limit}) from the third equation of system (\ref{ConsLaw}).
Different from the usage of matrix $A_\varepsilon$, in this case by considering vector $B_\varepsilon \in \mathrm{ker}^\perp(\mathcal{L}^\varepsilon)$, which is the cutoff in Fourier space version of $B := v (|v|^2 - \frac{19}{60})$, we can deduce that
\[
\frac{1}{\varepsilon} \nabla_x \cdot \big\langle \mathrm{e}_2^\varepsilon v f_\varepsilon \big\rangle = \frac{c_{3,\varepsilon}}{\varepsilon} \nabla_x \cdot \langle B_\varepsilon f_\varepsilon \rangle + \frac{c_{4,\varepsilon}}{\varepsilon} \nabla_x \cdot u^\varepsilon
\]
and
\begin{equation} \label{Intro:Bf}
\begin{split}
\frac{1}{\varepsilon} \nabla_x \cdot \langle B_\varepsilon f_\varepsilon \rangle &= - \varepsilon \nu_\ast \nabla_x \cdot \partial_t \langle B_\varepsilon f_\varepsilon \rangle - \nu_\ast \nabla_x \cdot \big\langle B_\varepsilon \Lambda_\varepsilon\big( v \cdot \nabla_x f_\varepsilon \big) \big\rangle + \kappa \nabla_x \cdot \big\langle B_\varepsilon \mathcal{L}^\varepsilon\big( \Lambda_\varepsilon(f_\varepsilon^2) \big) \big\rangle \\
&\ \ - \varepsilon \kappa^2 \nabla_x \cdot \big\langle B_\varepsilon \Lambda_\varepsilon(f_\varepsilon^3) \big\rangle.
\end{split}
\end{equation}
Similarly, since the first and the fourth term on the right hand side of equation (\ref{Intro:Bf}) have order $\mathcal{O}(\varepsilon^{1 - \tau_2})$ for some $\tau_2 \in (0,1)$, the convergence behavior of $\varepsilon^{-1} \nabla_x \cdot \big\langle B_\varepsilon f_\varepsilon \big\rangle$ is governed by the middle two terms on the right hand side of equation (\ref{Intro:Bf}).
Specifically speaking, in the third equation of system (\ref{NSF:limit}), the term $\Delta_x \theta^\varepsilon$ is derived from $\nabla_x \cdot \langle B_\varepsilon \Lambda_\varepsilon( v \cdot \nabla_x f_\varepsilon ) \rangle$ whereas the term $\operatorname{div} (u^\varepsilon \theta^\varepsilon)$ is derived from $\nabla_x \cdot \big\langle B_\varepsilon \mathcal{L}^\varepsilon\big( \Lambda_\varepsilon(f_\varepsilon^2) \big) \big\rangle$.
Hence, the third equation of system (\ref{ConsLaw}) can be rewritten as 
\begin{align} \label{Form:theta}
\partial_t \theta^\varepsilon + \frac{c_5}{\varepsilon} \nabla_x \cdot u^\varepsilon - c_6 \nu \Delta_x \theta^\varepsilon + c_7 \operatorname{div} (u^\varepsilon \theta^\varepsilon) = - \frac{c_8}{\nu} G_\varepsilon + \mathcal{O}_{2,\varepsilon}, 
\end{align}
where $G_\varepsilon$ is a forcing term resulting from $\langle \mathrm{e}_2^\varepsilon f_\varepsilon^3 \rangle$ and $\mathcal{O}_{2,\varepsilon}$ is a remainder term that converges to zero in the sense of distributions as $\varepsilon \to 0$.
To get rid of the term $\varepsilon^{-1} \nabla_x \cdot u^\varepsilon$ in equation (\ref{Form:theta}), we subtract a constant multiple of the first equation of system (\ref{ConsLaw}) from equation (\ref{Form:theta}).
Then by taking the limit $\varepsilon \to 0$, we obtain the third equation of system (\ref{NSF:limit}).

Before we end this introduction, we would like to explain more about our motivation to do cutoff in Fourier space to the Boltzmann equation (\ref{Boltzmann}) with collision operator (\ref{Col:ano}).
In the case for working with Boltzmann equation (\ref{Boltzmann}) with the classical collision operator (\ref{Tru:Col}), we have trilinear estimates to control inner products of the nonlinear part of collision operator (\ref{Tru:Col}) with another function in cases for both cutoff (\cite{GuoW}, \cite{Guo03}) and non-cutoff collision operators (\cite{AMUXY}, \cite{GS}). 
However, in our case it is not only hard to estimate the $X$-norm of the last two terms of collision operator (\ref{Col:ano}), but also hard to estimate their inner products with another function.
Doing cutoff in Fourier space to the whole equation allows us to use the Bernstein-type lemma (Lemma \ref{Re:BerL}) to estimate the $X$-norm of products of several functions (Proposition \ref{MRX}).
Since we are considering the Boltzmann equation with periodic velocity, the existence of a local solution to the approximate equation (\ref{AEtoBA}) can therefore be established by the classical Picard's method.
The bad news of applying the Bernstein-type lemma to estimate the $X$-norm of the product of multiple functions is that we will have $\varepsilon^{-\tau}$ coming out as a coefficient for the estimate with some $\tau>0$.
Fortunately, in constructing a local solution to the approximate equation (\ref{AEtoBA}) we are considering fixed $\varepsilon>0$, and in proving the convergence as $\varepsilon \to 0$, whenever we apply the multiplication rule to estimate the $X$-norm of products of multiple functions, either the product has a coefficient $\varepsilon$ or one of the function in the product is $\mathcal{L}^\varepsilon(f_\varepsilon)$ whose $L^2_T X$-norm is a constant multiple of $\varepsilon$ according to the global energy estimate (\ref{GloEE:ep}) for any $T \in [0,\infty]$.
As a result, by considering the cutoff operator $\Lambda_\varepsilon$ in a suitable form, we can always make sure that when we apply the multiplication rule, the final coefficient can be adjusted to $\varepsilon^{1 - \tau_\ast}$ with some $0<\tau_\ast<1$. 
This guarantees all convergences to zero, either in the weak sense or the strong sense, that are needed.
On the other hand, working with the Boltzmann equation with periodic velocities causes the presence of rubbish terms $\mathcal{H}$ and $\mathcal{J}$ in the incompressible Navier-Stokes-Fourier type limit (\ref{NSF:limit}) due to the fact that 
\[
\int_\Omega v_i^4 \, dv \neq \int_\Omega v_i^2 v_j^2 \, dv \quad \text{for} \quad 1 \leq i,j \leq 3 \quad \text{with} \quad i \neq j.
\]

This paper is organized as follows. Chapter \ref{sec:GSaBE} is devoted to the global solvability of the approximate equation (\ref{AEtoBA}). 
In Section \ref{sub:GEE}, we show the global energy estimate which holds for both the Boltzmann equation with anomalous smoothing effect and its approximate equation. 
In Section \ref{sub:CF}, we define the cutoff in Fourier space operator $\Lambda_\varepsilon$ and the cutoff in Fourier space version $\mathcal{L}^\varepsilon$ of microscopic projection $\mathcal{L}$. Furthermore, we recall some properties of our cutoff operator $\Lambda_\varepsilon$ which are crucial for estimates in this paper, especially the Bernstein-type lemma.
In Section \ref{sub:SAE}, we establish the multiplication rule regarding the $X$-norm and give the proof to Lemma \ref{GloSol:AE}.
Chapter \ref{sec:ConNS} is devoted to the formal derivation from the approximate equation (\ref{AEtoBA}) to a incompressible Navier-Stokes-Fourier type system.
In Section \ref{sub:BS}, we derive the divergence free condition and prove its convergence to zero in the sense of distributions in the limit $\varepsilon \to 0$. 
In Section \ref{sub:ReMome}, we define the matrix $A_\varepsilon$, i.e., the cutoff in Fourier space version of matrix $A$, and investigate its convergence behavior as $\varepsilon \to 0$.
We then use matrix $A_\varepsilon$ to rewrite $\varepsilon^{-1} \nabla_x \cdot \big\langle (\Lambda_\varepsilon(v) \otimes v) f_\varepsilon \big\rangle$.
In Section \ref{Sub:Diff}, we derive the diffusion term $\Delta_x u^\varepsilon$ of the Navier-Stokes equations.
In Section \ref{Sub:Nonli}, we derive the transport term $u^\varepsilon \cdot \nabla_x u^\varepsilon$ of the Navier-Stokes equations.
In Section \ref{Sub:thetaeq}, we derive the equation of $\theta^\varepsilon$ in the incompressible Navier-Stokes-Fourier type system.
Chapter \ref{Sec:ConNSF} is devoted to the proof of our main convergence theorem.
In Section \ref{Sub:ConFor}, we investigate the convergence behavior for the zeroth, first and second moment of $f_\varepsilon$ in the limit $\varepsilon \to 0$.
In Section \ref{Sub:ConNSF}, we apply the Helmholtz projection to the main equation and establish some strong convergence result regarding the divergence free part of $u^\varepsilon$.
In Section \ref{sub:cvNSF}, we conclude the convergence to the incompressible Navier-Stokes-Fourier type system (\ref{NSF:limit}).

Throughout this paper, the notation $A \lesssim B$ will mean that there exists a constant $c$, which is independent of $\varepsilon$ and $\nu_\ast$, such that $A \leq c B$.

\section{Global solvability of the approximate equation} 
\label{sec:GSaBE}

The key idea here is to consider the standard energy method.

\subsection{Global energy estimate for the Boltzmann equation with anomalous smoothing effect} 
\label{sub:GEE}

Firstly, we would like to define the microscopic projection $\mathcal{L}$ in detail.
Let $\mathrm{e}_0 := 1$,
\[
\mathrm{e}_{1,i} := 2 \sqrt{3} v_i \quad \text{and} \quad \mathrm{e}_{2,i} :=  6 \sqrt{5} \left( v_i^2 - \frac{1}{12} \right) \quad \text{for} \quad i = 1,2,3
\] 
be $3$-dimensional Legendre polynomials in $\Omega$ up to power $2$.
We further set
\[
\mathrm{e}_1 := \big( \mathrm{e}_{1,1}, \mathrm{e}_{1,2}, \mathrm{e}_{1,3} \big) \quad \text{and} \quad \mathrm{e}_2 := \sum_{i=1}^3 \mathrm{e}_{2,i} = 6 \sqrt{5} \left( |v|^2 - \frac{1}{4} \right).
\]
It can be easily observed that the set $\{ \mathrm{e}_0, \mathrm{e}_{1,i}, \mathrm{e}_2 \}_{1 \leq i \leq 3}$ is orthonormal, i.e.,
\[
\int_\Omega \mathrm{e}_i \mathrm{e}_j \, dv = \delta_{ij}, \quad \int_\Omega \mathrm{e}_i \mathrm{e}_{1,k} \, dv = 0, \quad \int_\Omega \mathrm{e}_{1,k} \mathrm{e}_{1,\ell} \, dv = \delta_{k \ell} \quad \forall \; i,j \in \{0,2\}; \; k, \ell \in \{1,2,3\}.
\]
For $h \in L^2( \mathbf{T}_x^3; L^2(\Omega_v) )$, we define the macroscopic projection $\mathcal{P}$ acting on $h$ to be
\begin{align} \label{PPv}
\mathcal{P} (h) := \rho \mathrm{e}_0 + u \cdot \mathrm{e}_1 + \theta \mathrm{e}_2
\end{align}
where
\[
\rho := \int_\Omega h \mathrm{e}_0 \, dv, \quad \theta := \int_\Omega h \mathrm{e}_2 \, dv
\]
and
\[
u_i := \int_\Omega h \mathrm{e}_{1,i} \, dv \quad \text{for} \quad i = 1,2,3, \quad u := (u_1, u_2, u_3);
\]
i.e., the macroscopic projection $\mathcal{P}$ acting on $h$ is indeed the expansion of $h$ in $v$-variable in Legendre polynomials up to power $2$.
The microscopic projection $\mathcal{L}$ in the collision operator (\ref{Col:ano}) is defined to be $\mathcal{L} := I - \mathcal{P}$ where $I$ denotes the identity projection operator.
For $h \in H^1( \mathbf{T}_x^3; L^2(\Omega_v) )$, we further define that
\[
\mathcal{D}(h) := \| \mathcal{L}(h) \|_X.
\]

\begin{lemma} \label{GSmBE}
Let $\varepsilon>0$ and $T \in [0,\infty]$. Suppose that $f_\varepsilon \in L^\infty\big( [0,T); H^1( \mathbf{T}_x^3; L^2(\Omega_v) ) \big)$ is a solution to the Boltzmann equation (\ref{Boltzmann}) with collision operator (\ref{Col:ano}), then $f_\varepsilon$ satisfies the global energy estimate
\begin{align} \label{GloEE}
\sup_{t \in [0,T)} \, \mathcal{E}\big( f_\varepsilon \big)^2 (t) + \frac{1}{\varepsilon^2 \nu_\ast} \int_0^T \mathcal{D} \big( f_\varepsilon \big)^2 (s) \, ds \leq \mathcal{E} \big( f_{\varepsilon,0} \big)^2.
\end{align}
\end{lemma}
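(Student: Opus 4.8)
The plan is to run the standard energy method in the space $X = H^1(\mathbf{T}_x^3; L^2(\Omega_v))$, differentiating the equation in $x$ up to first order and testing against the solution. Concretely, for each multi-index $\alpha$ with $|\alpha|\le 1$ I would apply $\partial_x^\alpha$ to the Boltzmann equation (\ref{Boltzmann}) with collision operator (\ref{Col:ano}); since $\mathcal{P}$, and hence $\mathcal{L}=I-\mathcal{P}$, acts only on the velocity variable, it commutes with $\partial_x^\alpha$. Taking the $L^2(\mathbf{T}_x^3\times\Omega_v)$ inner product with $\partial_x^\alpha f_\varepsilon$ and summing over $|\alpha|\le 1$ produces, on the left, the time term $\frac{\varepsilon^2}{2}\frac{d}{dt}\mathcal{E}(f_\varepsilon)^2$, while the transport term $\varepsilon\langle v\cdot\nabla_x\partial_x^\alpha f_\varepsilon,\partial_x^\alpha f_\varepsilon\rangle$ vanishes: with $v$ frozen it equals $\varepsilon\int_\Omega\int_{\mathbf{T}^3} v\cdot\nabla_x\frac{|\partial_x^\alpha f_\varepsilon|^2}{2}\,dx\,dv = 0$ by periodicity in $x$.

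For the right-hand side I would treat the three pieces of (\ref{Col:ano}) separately. The linear piece is the source of the dissipation: because $\{\mathrm{e}_0,\mathrm{e}_{1,i},\mathrm{e}_2\}$ is orthonormal in $L^2(\Omega_v)$, the macroscopic projection $\mathcal{P}$ is an orthogonal projection, so $\mathcal{L}$ is self-adjoint and idempotent and $-\frac{1}{\nu_\ast}\langle\mathcal{L}\partial_x^\alpha f_\varepsilon,\partial_x^\alpha f_\varepsilon\rangle = -\frac{1}{\nu_\ast}\|\mathcal{L}\partial_x^\alpha f_\varepsilon\|_{L^2_{x,v}}^2$, summing to $-\frac{1}{\nu_\ast}\mathcal{D}(f_\varepsilon)^2$. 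The crux is to show that the quadratic and cubic pieces together do not destroy this dissipation, and to do so without any smallness assumption on $f_\varepsilon$.

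Here is where the designed cubic term enters. Using self-adjointness of $\mathcal{L}$ I would move one $\mathcal{L}$ onto the test function, writing $\langle\partial_x^\alpha\mathcal{L}(f_\varepsilon^2),\partial_x^\alpha f_\varepsilon\rangle = \langle\partial_x^\alpha(f_\varepsilon^2),\mathcal{L}\partial_x^\alpha f_\varepsilon\rangle$. For $\alpha=0$ the cubic term contributes $-\frac{\varepsilon^2\kappa^2}{\nu_\ast}\langle f_\varepsilon^3,f_\varepsilon\rangle = -\frac{\varepsilon^2\kappa^2}{\nu_\ast}\|f_\varepsilon^2\|_{L^2_{x,v}}^2$, while for $|\alpha|=1$, using $\partial_x^\alpha(f_\varepsilon^3)=3f_\varepsilon^2\partial_x^\alpha f_\varepsilon$ and $\partial_x^\alpha(f_\varepsilon^2)=2f_\varepsilon\partial_x^\alpha f_\varepsilon$, it contributes $-\frac{3\varepsilon^2\kappa^2}{\nu_\ast}\|f_\varepsilon\partial_x^\alpha f_\varepsilon\|_{L^2_{x,v}}^2$. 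Applying Cauchy--Schwarz to the quadratic term (and $\|\mathcal{L}(f_\varepsilon^2)\|\le\|f_\varepsilon^2\|$), then Young's inequality with splitting constant $\eta\sim(\varepsilon\kappa)^{-1}$, the $\|f_\varepsilon^2\|^2$ (resp. $\|f_\varepsilon\partial_x^\alpha f_\varepsilon\|^2$) term produced by Young is exactly absorbed by the cubic contribution, leaving only $\frac{c_\alpha}{\nu_\ast}\|\mathcal{L}\partial_x^\alpha f_\varepsilon\|^2$ with $c_\alpha\le\frac13$. The essential point is that the powers $\varepsilon\kappa$ and $\varepsilon^2\kappa^2$ in the two nonlinear terms are arranged precisely so that this choice of $\eta$ leaves an $\varepsilon$-independent and size-independent multiple of $\|\mathcal{L}\partial_x^\alpha f_\varepsilon\|^2$ while matching the cubic coefficient exactly; this is the ``magic'' role of the cubic term, making the estimate close for arbitrary data.

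Collecting the three pieces and summing over $|\alpha|\le1$ yields $\frac{\varepsilon^2}{2}\frac{d}{dt}\mathcal{E}(f_\varepsilon)^2 \le -\frac{c}{\nu_\ast}\mathcal{D}(f_\varepsilon)^2$ with $c\ge\frac23$, so that $\frac{2c}{\varepsilon^2\nu_\ast}\ge\frac{1}{\varepsilon^2\nu_\ast}$ and hence $\frac{d}{dt}\mathcal{E}(f_\varepsilon)^2 + \frac{1}{\varepsilon^2\nu_\ast}\mathcal{D}(f_\varepsilon)^2\le0$. Integrating in time from $0$ and taking the supremum over $t\in[0,T)$ gives (\ref{GloEE}). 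I expect the main obstacle to be exactly the third paragraph: controlling the quadratic term uniformly in the size of $f_\varepsilon$, which is impossible from the linear dissipation alone and becomes possible only by pairing it against the cubic term through the correct Young constant. A secondary technical point is justifying the integrability of $f_\varepsilon^3,f_\varepsilon^4$ and the time differentiation; this is automatic for the band-limited solutions of Lemma \ref{GloSol:AE}, but should be noted when $f_\varepsilon$ is merely a solution of the stated regularity.
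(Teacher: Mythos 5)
Your proposal is correct and follows essentially the same route as the paper: differentiate in $x$ up to first order, test against $\partial_x^\alpha f_\varepsilon$, kill the transport term by periodicity, extract the dissipation from the self-adjoint idempotent projection $\mathcal{L}$, and absorb the quadratic term into the cubic one via Young's inequality with the constant tuned so that only a fraction of $\frac{1}{\varepsilon^2\nu_\ast}\|\mathcal{L}\partial_x^\alpha f_\varepsilon\|^2$ survives. Your closing remark about integrability and time differentiation being automatic only for the band-limited solutions of Lemma \ref{GloSol:AE} is a fair observation that the paper itself glosses over.
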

\begin{proof}
Let $\alpha \in \mathbf{N}_0^3$ with $|\alpha| := \sum_{i=1}^3 \alpha_i$ be either $0$ or $1$. 
In the case where $|\alpha|=0$, $\partial_x^\alpha f_\varepsilon$ simply means $f_\varepsilon$ itself.
To obtain the energy estimate for $f_\varepsilon$, we apply the differentiation $\partial_x^\alpha$ to the Boltzmann equation (\ref{Boltzmann}) with collision operator (\ref{Col:ano}) and then take its inner product with $\partial_x^\alpha f_\varepsilon$ in the $L^2( \mathbf{T}_x^3; L^2(\Omega_v) )$ sense. By integration by parts, we observe that
\[
\int_{\mathbf{T}^3} \partial_{x_j} (\partial_x^\alpha f_\varepsilon) \partial_x^\alpha f_\varepsilon \, dx = \frac{1}{2} \int_{\mathbf{T}^3} \partial_{x_j} \big( \partial_x^\alpha f_\varepsilon \big)^2 \, dx = 0
\]
for any $1 \leq j \leq 3$. 
It can be easily verified that the microscopic projection $\mathcal{L}$ has properties that 
\[
\int_{\mathbf{T}^3} \int_\Omega \mathcal{L}(h) g \, dv \, dx = \int_{\mathbf{T}^3} \int_\Omega h \mathcal{L}(g) \, dv \, dx \quad \text{and} \quad \mathcal{L}^2(h) = \mathcal{L}(h)
\]
for any $g,h \in L^2( \mathbf{T}_x^3; L^2(\Omega_v) )$. 
Moreover, we observe that the microscopic projection $\mathcal{L}$ commutes with the differentiation $\partial_x^\alpha$.
As a result, we can deduce that
\begin{equation}\label{EI}
\begin{split}
&\frac{1}{2} \frac{\mathrm{d}}{\mathrm{d} t} \mathcal{E}(f_\varepsilon)^{2}
+\frac{1}{\varepsilon^2 \nu_\ast} \sum_{|\alpha| \leq 1}\int_{\mathbf{T}^3}\int_\Omega \mathcal{L}(\partial_x^\alpha f_\varepsilon) (\partial_x^\alpha f_\varepsilon) \, dv \, dx
=\frac{1}{2} \frac{\mathrm{d}}{\mathrm{d} t} \mathcal{E}(f_\varepsilon)^{2}
+\frac{1}{\varepsilon^2 \nu_\ast} \mathcal{D}(f_\varepsilon)^2
\\
&\ \ = \frac{1}{\nu_\ast} \sum_{|\alpha| \leq 1}\int_{\mathbf{T}^3}\int_\Omega\left( \frac{\kappa}{\varepsilon} \partial_{x}^{\alpha} (f_\varepsilon)^2 \partial_x^\alpha \mathcal{L}(f_\varepsilon) - \kappa^2 \partial_x^\alpha (f_\varepsilon)^3 \partial_x^\alpha f_\varepsilon \right) \, dv \, dx
\\
&\ \ \leq \frac{1}{\nu_\ast} \sum_{|\alpha| \leq 1} \bigg( \frac{2^{2 |\alpha|} \kappa^2}{2} \|f_\varepsilon \partial_x^\alpha f_\varepsilon\|^2_{L^2( \mathbf{T}_x^3; L^2(\Omega_v) )} + \frac{1}{2\varepsilon^{2}} \|\partial_x^\alpha \mathcal{L}(f_\varepsilon)\|^2_{L^2( \mathbf{T}_x^3; L^2(\Omega_v) )} \\
&\quad \quad \quad \quad \quad \quad \ \ - 3^{|\alpha|} \kappa^2 \|f_\varepsilon \partial_x^\alpha f_\varepsilon\|^2_{L^2( \mathbf{T}_x^3; L^2(\Omega_v) )} \bigg)
\\
&\ \ \leq \frac{1}{2\varepsilon^2 \nu_\ast} \mathcal{D}(f_\varepsilon)^2
\end{split}
\end{equation}
for any $t \geq 0$. Therefore, by the absorption principle, it holds for any $t \geq 0$ that 
\begin{equation}\label{GEB}
\mathcal{E}\big( f_\varepsilon \big)^2 (t) + \frac{1}{\varepsilon^2 \nu_\ast} \int_0^t \mathcal{D}\big( f_\varepsilon \big)^2(s) \, ds \leq \mathcal{E}(f_{\varepsilon,0})^2.
\end{equation}
This completes the proof of Lemma \ref{GSmBE}.
\end{proof}

\subsection{Cutoff in Fourier space} 
\label{sub:CF}

In this section, we investigate in detail the cutoff in Fourier space operator $\Lambda_\varepsilon$ that is needed for the analysis of approximate equation (\ref{AEtoBA}). We shall begin with its definition.

Let $\varepsilon \in (0,1)$ and $p,q \in [1,\infty]$. 
Let $\gamma>0$ be a real number which we will determine later in this paper.
For $h \in L^p(\Omega_v)$ and $1 \leq j \leq 3$, we define that
\[
\Lambda_\varepsilon^{v_j}(h) := \sum_{m_j \in \mathbf{Z}, \, |m_j| < \frac{1}{\varepsilon^\gamma}} \mathcal{F}_{v_j}\big( h \big) (m_j) \mathrm{e}^{2 \pi i m_j v_j}, \quad \mathcal{F}_{v_j}\big( h \big) (m_j) := \int_{-\frac{1}{2}}^{\frac{1}{2}} h(v) \mathrm{e}^{- 2 \pi i m_j v_j} \, dv_j,
\]
i.e., $\Lambda_\varepsilon^{v_j}(h)$ is the cutoff of $h$ in Fourier space in the sense of Fourier series with respect to the $j$-th component of $v$. 
Let us note that 
\[
\Lambda_\varepsilon^{v_j} \big( \Lambda_\varepsilon^{v_j}(h) \big) = \Lambda_\varepsilon^{v_j}(h), \quad \forall \; 1 \leq j \leq 3.
\]
Moreover, it holds that
\[
\Lambda_\varepsilon^{v_i} \big( \Lambda_\varepsilon^{v_j}(h) \big) = \Lambda_\varepsilon^{v_j}\big( \Lambda_\varepsilon^{v_i}(h) \big) \quad \forall \; 1 \leq i,j \leq 3 \quad \text{such that} \quad i \neq j.
\]
We further define that
\[
\Lambda_\varepsilon^v(h) := \Lambda_\varepsilon^{v_3}\Big( \Lambda_\varepsilon^{v_2}\big( \Lambda_\varepsilon^{v_1}(h) \big) \Big) = h \ast D_{\varepsilon^{-1},v}^3
\]
where
\[
D_{\varepsilon^{-1},v}^3 := \sum_{m \in \mathbf{Z}^3, \, |m_j| < \frac{1}{\varepsilon^\gamma}} \mathrm{e}^{2 \pi i m \cdot v}
\]
represents the square Dirichlet kernel on torus $\Omega$.

On the other hand, for $f \in L^p(\mathbf{T}_x^3)$, we define that
\[
\Lambda_\varepsilon^x(f) := \sum_{m \in \mathbf{Z}^3, \, |m| < \frac{1}{\varepsilon^\gamma}} \mathcal{F}_x\big( f \big) (m) \mathrm{e}^{2 \pi i m \cdot x}, \quad \mathcal{F}_x\big( f \big) (m) := \int_{\mathbf{T}^3} f(x) \mathrm{e}^{- 2 \pi i m \cdot x} \, dx,
\]
i.e., $\Lambda_\varepsilon^x(f)$ is the cut-off in Fourier space for $f$ in the sense of Fourier series with respect to $x$. 
Similarly, we have that
\[
\Lambda_\varepsilon^x(f) = f \ast \mathring{D}^3_{\varepsilon^{-1},x}
\]
with
\[
\mathring{D}^3_{\varepsilon^{-1},x} := \sum_{m \in \mathbf{Z}^3, \, |m| < \frac{1}{\varepsilon^\gamma}} \mathrm{e}^{2 \pi i m \cdot x}
\]
representing the spherical Dirichlet kernel on torus $\mathbf{T}^3$. 
It is easy to observe that for any $g \in L^p( \mathbf{T}_x^3; L^q(\Omega_v) )$, it holds that
\begin{align} \label{Comxv}
\Lambda_\varepsilon^x\big( \Lambda_\varepsilon^v(g) \big) = \Lambda_\varepsilon^v \big( \Lambda_\varepsilon^x(g) \big).
\end{align}
Hence, we define the cutoff in Fourier space operator 
\[
\Lambda_\varepsilon(g) := \Lambda_\varepsilon^x\big( \Lambda_\varepsilon^v(g) \big) \quad \text{for any} \quad g \in L^p( \mathbf{T}_x^3; L^q(\Omega_v) )
\]
without causing any ambiguity.
Since 
\begin{align} \label{SqCtf}
\big( \Lambda_\varepsilon^v \big)^2 = \Lambda_\varepsilon^v \quad \text{and} \quad \big( \Lambda_\varepsilon^x \big)^2 = \Lambda_\varepsilon^x,
\end{align}
from equality (\ref{Comxv}) we see that
\[
\Lambda_\varepsilon \big( \Lambda_\varepsilon (g) \big) = \Lambda_\varepsilon (g) \quad \text{for any} \quad g \in L^p( \mathbf{T}_x^3; L^q(\Omega_v) ).
\]

It is worth to mention that the cutoff in Fourier space operator in the continuous setting satisfies the Bernstein-type lemma, see e.g. \cite[Lemma 2.1]{BCD}. In the Fourier series case, we have similar estimates for cutoff operators $\Lambda_\varepsilon^x$ and $\Lambda_\varepsilon^v$.

\begin{lemma} \label{BerL}
Let $k \in \mathbf{N}_0$, $1 \leq p \leq q \leq \infty$ and $\alpha \in \mathbf{N}_0^3$ with $|\alpha|=k$. There exists a constant $C(k)$, which depends only on $k$, such that the estimate
\[
\big\| \partial_v^\alpha \Lambda_\varepsilon^v(h) \big\|_{L^q(\Omega_v)} \leq \frac{C(k)}{\varepsilon^{\gamma (k+3)}} \| h \|_{L^p(\Omega_v)}
\]
holds for any $h \in L^p(\Omega_v)$ and the estimate
\[
\big\| \partial_x^\alpha \Lambda_\varepsilon^x(f) \big\|_{L^q(\mathbf{T}_x^3)} \leq \frac{C(k)}{\varepsilon^{\gamma (k+3)}} \| f \|_{L^p(\mathbf{T}_x^3)}
\]
holds for any $f \in L^p(\mathbf{T}_x^3)$.
\end{lemma}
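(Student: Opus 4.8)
The plan is to reduce both estimates to a single crude bound on the relevant Dirichlet kernel and then invoke Young's convolution inequality. I would first record that, because $\Lambda_\varepsilon^v(h) = h \ast D_{\varepsilon^{-1},v}^3$ reproduces exactly the frequencies $|m_j| < \varepsilon^{-\gamma}$, the cutoff $\Lambda_\varepsilon^v(h)$ is a finite trigonometric series and the differentiation $\partial_v^\alpha$ may be moved entirely onto the kernel, i.e.
\[
\partial_v^\alpha \Lambda_\varepsilon^v(h) = h \ast \partial_v^\alpha D_{\varepsilon^{-1},v}^3,
\]
and similarly $\partial_x^\alpha \Lambda_\varepsilon^x(f) = f \ast \partial_x^\alpha \mathring{D}_{\varepsilon^{-1},x}^3$. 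Young's inequality on the (normalized, measure-one) torus then gives, for the exponent $r \in [1,\infty]$ determined by $\frac 1p + \frac 1r = 1 + \frac 1q$ (which indeed lies in $[1,\infty]$, the binding constraint $r \ge 1$ being exactly the hypothesis $p \le q$),
\[
\big\| \partial_v^\alpha \Lambda_\varepsilon^v(h) \big\|_{L^q(\Omega_v)} \le \| h \|_{L^p(\Omega_v)} \, \big\| \partial_v^\alpha D_{\varepsilon^{-1},v}^3 \big\|_{L^r(\Omega_v)},
\]
so that everything reduces to controlling the $L^r$ norm of the differentiated kernel.

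For that kernel estimate I would simply bound the $L^\infty$ norm by the sum of the moduli of the Fourier coefficients. Since $\partial_v^\alpha D_{\varepsilon^{-1},v}^3 = \sum_{|m_j| < \varepsilon^{-\gamma}} (2\pi i m)^\alpha \mathrm{e}^{2\pi i m \cdot v}$, each coefficient obeys $|(2\pi i m)^\alpha| = (2\pi)^{k} \prod_j |m_j|^{\alpha_j} \le (2\pi)^k \varepsilon^{-\gamma k}$, while the number of admissible frequencies is at most $(4\varepsilon^{-\gamma})^3$. Hence
\[
\big\| \partial_v^\alpha D_{\varepsilon^{-1},v}^3 \big\|_{L^\infty(\Omega_v)} \le C(k)\, \varepsilon^{-\gamma(k+3)},
\]
with $C(k)$ depending on $k$ alone. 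Because $\Omega$ has unit measure, $\| \cdot \|_{L^r(\Omega_v)} \le \| \cdot \|_{L^\infty(\Omega_v)}$ for every $r \in [1,\infty]$, so the same bound holds for the $L^r$ norm appearing above. Combining the two displays yields the claimed inequality for $\Lambda_\varepsilon^v$.

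The argument for $\Lambda_\varepsilon^x$ is verbatim the same: the spherical cutoff $|m| < \varepsilon^{-\gamma}$ still contains $\lesssim \varepsilon^{-3\gamma}$ lattice points (those inside the ball of radius $\varepsilon^{-\gamma}$), and $|m_j| \le |m| < \varepsilon^{-\gamma}$ gives the same coefficient bound $(2\pi)^k \varepsilon^{-\gamma k}$. There is no essential obstacle here, since this is a deliberately lossy Bernstein inequality; the only points demanding care are bookkeeping, namely checking that the Young exponent $r$ is admissible (which is exactly where $p \le q$ enters) and ensuring the frequency-counting constant absorbs into $C(k)$ while remaining independent of $\varepsilon$, $\gamma$, $p$ and $q$. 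Sharper powers of $\varepsilon$ could be had from a smooth Littlewood--Paley cutoff, but the crude counting bound already delivers the stated $\varepsilon^{-\gamma(k+3)}$, which is all that the later estimates require.
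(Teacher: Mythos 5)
Your proposal is correct and follows essentially the same route as the paper: Young's convolution inequality on the torus with the conjugate exponent $r$ determined by $\tfrac1p+\tfrac1r=1+\tfrac1q$, followed by a crude triangle-inequality bound on the differentiated Dirichlet kernel via the size $(2\pi)^k\varepsilon^{-\gamma k}$ of each Fourier coefficient and the $\lesssim \varepsilon^{-3\gamma}$ count of admissible frequencies. The only cosmetic difference is that you pass through the $L^\infty$ norm of the kernel and use that $\Omega$ has unit measure, whereas the paper applies the triangle inequality directly in $L^r$; the resulting bound is identical.
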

\begin{proof}
This lemma is a direct application of Young's inequality. For $h \in L^p(\Omega_v)$, it holds that
\[
\partial_v^\alpha \Lambda_\varepsilon^v(h) = h \ast \big( \partial_{v_1}^{\alpha_1} \partial_{v_2}^{\alpha_2} \partial_{v_3}^{\alpha_3} D_{\varepsilon^{-1},v}^3 \big) = h \ast \bigg( \sum_{m \in \mathbf{Z}^3, \, |m_j| < \frac{1}{\varepsilon^\gamma}} (2 \pi i)^k m_1^{\alpha_1} m_2^{\alpha_2} m_3^{\alpha_3} \mathrm{e}^{2 \pi i m \cdot v} \bigg).
\]
Hence, by Young's inequality \cite[Theorem 1.2.12]{GraC} and then the triangle inequality, we can deduce that
\begin{align*}
\big\| \partial_v^\alpha \Lambda_\varepsilon^v(h) \big\|_{L^q(\Omega_v)} &\leq \bigg\| \sum_{m \in \mathbf{Z}^3, \, |m_j| < \frac{1}{\varepsilon^\gamma}} (2 \pi i)^k m_1^{\alpha_1} m_2^{\alpha_2} m_3^{\alpha_3}\mathrm{e}^{2 \pi i m \cdot v} \bigg\|_{L^r(\Omega_v)} \| h \|_{L^p(\Omega_v)} \\
&\leq \frac{(2 \pi)^k}{\varepsilon^{\gamma k}} \bigg( \sum_{n \in \mathbf{Z}, \, |n| < \frac{1}{\varepsilon^\gamma}} 1 \bigg)^3 \| h \|_{L^p(\Omega_v)} \leq \frac{2^3 (2 \pi)^k}{\varepsilon^{\gamma (k+3)}} \| h \|_{L^p(\Omega_v)}
\end{align*}
where $r = \frac{qp}{qp+p-q}$.
On the other hand, since 
\[
\sum_{m \in \mathbf{Z}^3, \, |m|<\frac{1}{\varepsilon^\gamma}} 1 < \bigg( \sum_{n \in \mathbf{Z}, \, |n|<\frac{1}{\varepsilon^\gamma}} 1 \bigg)^3,
\]
by exactly the same derivation above, we can show that the estimate
\[
\big\| \partial_x^\alpha \Lambda_\varepsilon^x(f) \big\|_{L^q(\mathbf{T}_x^3)} \leq \frac{2^3 (2 \pi)^k}{\varepsilon^{\gamma (k+3)}} \| f \|_{L^p(\mathbf{T}_x^3)}
\]
holds for any $f \in L^p(\mathbf{T}_x^3)$. This completes the proof of Lemma \ref{BerL}.
\end{proof}
\begin{remark} \label{CommuDC}
Let $p,q \in [1,\infty]$ and $\alpha \in \mathbf{N}_0^3$.
It can be easily verified from the definition that for any $g \in L^p( \mathbf{T}_x^3; L^q(\Omega_v) )$, it holds that
\[
\partial_x^\alpha \Lambda_\varepsilon (g) = \Lambda_\varepsilon(\partial_x^\alpha g) \quad \text{and} \quad \partial_v^\alpha \Lambda_\varepsilon (g) = \Lambda_\varepsilon(\partial_v^\alpha g),
\]
i.e., differential operators $\partial_x^\alpha, \partial_v^\alpha$ commute with the cutoff operator $\Lambda_\varepsilon$. 
\end{remark}

Due to relation (\ref{SqCtf}), Lemma \ref{BerL} has a further implication.

\begin{lemma} \label{Re:BerL}
Let $k \in \mathbf{N}_0$ and $\alpha \in \mathbf{N}_0^3$ with $|\alpha|=k$. Let $p,q,r \in [1,\infty]$ with $p \leq q$. There exists a constant $C(k)$, which depends only on $k$, such that the estimate
\[
\big\| \partial_v^\alpha \Lambda_\varepsilon^v(h) \big\|_{L^q(\Omega_v)} \leq \frac{C(k)}{\varepsilon^{\gamma (k+3)}} \| \Lambda_\varepsilon^v (h) \|_{L^p(\Omega_v)}
\]
holds for any $h \in L^r(\Omega_v)$ and the estimate
\[
\big\| \partial_x^\alpha \Lambda_\varepsilon^x(f) \big\|_{L^q(\mathbf{T}_x^3)} \leq \frac{C(k)}{\varepsilon^{\gamma (k+3)}} \| \Lambda_\varepsilon^x (f) \|_{L^p(\mathbf{T}_x^3)}
\]
holds for any $f \in L^r(\mathbf{T}_x^3)$. 
\end{lemma}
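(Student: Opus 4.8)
The plan is to deduce Lemma \ref{Re:BerL} directly from Lemma \ref{BerL} by exploiting the idempotency relation (\ref{SqCtf}), which says $\big( \Lambda_\varepsilon^v \big)^2 = \Lambda_\varepsilon^v$ and $\big( \Lambda_\varepsilon^x \big)^2 = \Lambda_\varepsilon^x$. The entire difference between the two lemmas is that on the right-hand side we wish to control the derivative of the cutoff by the $L^p$-norm of the cutoff itself, rather than by the $L^p$-norm of the (possibly less regular) input $h$. The natural move is therefore to feed the already-cut-off function back into Lemma \ref{BerL}.

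Concretely, for the velocity case I would set $g := \Lambda_\varepsilon^v(h)$. Since $h \in L^r(\Omega_v)$ with $r \geq 1$, its Fourier coefficients $\mathcal{F}_{v_j}(h)(m_j)$ are well defined, so $g$ is a \emph{finite} trigonometric polynomial in $v$; in particular $g \in L^p(\Omega_v)$ for \emph{every} $p \in [1,\infty]$. This is the point that lets us invoke Lemma \ref{BerL} even though $h$ itself is only assumed integrable to the exponent $r$. Next, by the idempotency (\ref{SqCtf}) we have $\Lambda_\varepsilon^v(g) = \Lambda_\varepsilon^v\big( \Lambda_\varepsilon^v(h) \big) = \Lambda_\varepsilon^v(h) = g$, and consequently
\[
\partial_v^\alpha \Lambda_\varepsilon^v(g) = \partial_v^\alpha g = \partial_v^\alpha \Lambda_\varepsilon^v(h).
\]
Now I apply Lemma \ref{BerL} to the function $g$ with the exponent pair $(p,q)$, which is admissible because $p \leq q$. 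This gives
\[
\big\| \partial_v^\alpha \Lambda_\varepsilon^v(g) \big\|_{L^q(\Omega_v)} \leq \frac{C(k)}{\varepsilon^{\gamma (k+3)}} \| g \|_{L^p(\Omega_v)}.
\]
Rewriting both sides via $\partial_v^\alpha \Lambda_\varepsilon^v(g) = \partial_v^\alpha \Lambda_\varepsilon^v(h)$ and $g = \Lambda_\varepsilon^v(h)$ yields exactly the claimed inequality, with the same constant $C(k)$.

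The spatial estimate is handled in identical fashion: one sets $g := \Lambda_\varepsilon^x(f)$, observes that $g$ is a finite trigonometric polynomial on $\mathbf{T}^3$ hence lies in all $L^p(\mathbf{T}_x^3)$, uses $\Lambda_\varepsilon^x(g) = g$ from (\ref{SqCtf}) so that $\partial_x^\alpha \Lambda_\varepsilon^x(g) = \partial_x^\alpha \Lambda_\varepsilon^x(f)$, and applies the spatial half of Lemma \ref{BerL} to $g$. I do not anticipate any genuine obstacle here: the argument is a purely formal substitution, and the only verification worth spelling out is that the cut-off function, being a finite Fourier sum, automatically belongs to every $L^p$, so the hypothesis $h \in L^r$ (respectively $f \in L^r$) is all that is needed to make the right-hand side meaningful. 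The constant is inherited unchanged, so no tracking of powers of $\varepsilon$ beyond what Lemma \ref{BerL} already provides is required.
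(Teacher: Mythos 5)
Your proposal is correct and follows essentially the same route as the paper: both exploit the idempotency $(\Lambda_\varepsilon^v)^2 = \Lambda_\varepsilon^v$, $(\Lambda_\varepsilon^x)^2 = \Lambda_\varepsilon^x$ from (\ref{SqCtf}) to view the cut-off function as the cutoff of itself and then apply Lemma \ref{BerL} to it with exponents $(p,q)$. The only cosmetic difference is in justifying that $\Lambda_\varepsilon^v(h)$ lies in $L^p$ for all $p$ — you observe it is a finite trigonometric polynomial, while the paper argues via Lemma \ref{BerL} and H\"older's inequality — and both justifications are valid.
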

\begin{proof}
Let $h \in L^r(\Omega_v)$ and $f \in L^r(\mathbf{T}_x^3)$. 
Note that $\| \Lambda_\varepsilon^v (h) \|_{L^p(\Omega_v)}$ and $\| \Lambda_\varepsilon^x (f) \|_{L^p(\mathbf{T}_x^3)}$ are both finite regardless of the value $r$ as long as $r \in [1,\infty]$.
If $r \leq p$, then the finiteness of $\| \Lambda_\varepsilon^v (h) \|_{L^p(\Omega_v)}$ and $\| \Lambda_\varepsilon^x (f) \|_{L^p(\mathbf{T}_x^3)}$ can be shown by Lemma \ref{BerL}.
If $r > p$, then by H$\ddot{\text{o}}$lder's inequality we have that
\[
\| \Lambda_\varepsilon^v (h) \|_{L^p(\Omega_v)} \leq \| \Lambda_\varepsilon^v (h) \|_{L^r(\Omega_v)}, \quad \| \Lambda_\varepsilon^x (f) \|_{L^p(\mathbf{T}_x^3)} \leq \| \Lambda_\varepsilon^x (f) \|_{L_x^r(\mathbf{T}^3)}.
\]
The finiteness of $\| \Lambda_\varepsilon^v (h) \|_{L_v^r(\Omega)}$ and $\| \Lambda_\varepsilon^x (f) \|_{L^r(\mathbf{T}_x^3)}$ can be further guaranteed by Lemma \ref{BerL}.
Hence, it holds that $\Lambda_\varepsilon^v (h) \in L^p(\Omega_v)$ and $\Lambda_\varepsilon^x (f) \in L^p(\mathbf{T}_x^3)$ for any $1 \leq p \leq \infty$.
By relation (\ref{SqCtf}), we can view $\Lambda_\varepsilon^v (h)$ as $\Lambda_\varepsilon^v \big( \Lambda_\varepsilon^v (h) \big)$ and $\Lambda_\varepsilon^x (f)$ as $\Lambda_\varepsilon^x \big( \Lambda_\varepsilon^x (f) \big)$.
Therefore, by applying Lemma \ref{BerL} directly to $\Lambda_\varepsilon^v \big( \Lambda_\varepsilon^v (h) \big)$ and $\Lambda_\varepsilon^x \big( \Lambda_\varepsilon^x (f) \big)$, we obtain Lemma \ref{Re:BerL}.
\end{proof}

Let us note that for any $m,n \in \mathbf{Z}^3$, it holds that
\begin{align} \label{Orthexp}
\int_\Omega \mathrm{e}^{2 \pi i m \cdot v} \overline{\mathrm{e}^{2 \pi i n \cdot v}} \, dv = \delta_{mn} = \int_{\mathbf{T}^3} \mathrm{e}^{2 \pi i m \cdot x} \overline{\mathrm{e}^{2 \pi i n \cdot x}} \, dx
\end{align}
where $\overline{\mathrm{e}^{2 \pi i n \cdot v}}$ denotes the complex conjugate of $\mathrm{e}^{2 \pi i n \cdot v}$ and $\overline{\mathrm{e}^{2 \pi i n \cdot x}}$ denotes the complex conjugate of $\mathrm{e}^{2 \pi i n \cdot x}$.
As a result, we have that $\Lambda_\varepsilon^v(1) = \Lambda_\varepsilon^x(1) = 1$.
For each $1 \leq i \leq 3$, we then define that
\begin{align} \label{Cf:e012}
\mathrm{e}_0^\varepsilon := 1, \quad \mathrm{e}_{1,i}^\varepsilon := c_1^\varepsilon \Lambda_\varepsilon^{v_i} (v_i) \quad \text{and} \quad \mathrm{e}_{2,i}^\varepsilon :=  c_2^\varepsilon \Lambda_\varepsilon^{v_i} (v_i^2) - c_0^\varepsilon 
\end{align}
to be the cutoff in Fourier space version of $3$-dimensional Legendre polynomials in $\Omega$ up to power $2$.
We further set
\[
\mathrm{e}_1^\varepsilon := \big( \mathrm{e}_{1,1}^\varepsilon, \mathrm{e}_{1,2}^\varepsilon, \mathrm{e}_{1,3}^\varepsilon \big) \quad \text{and} \quad \mathrm{e}_2^\varepsilon := \sum_{i=1}^3 \mathrm{e}_{2,i}^\varepsilon.
\]
Constants $c_0^\varepsilon, c_1^\varepsilon, c_2^\varepsilon$ in definition (\ref{Cf:e012}) are specially chosen so that the set $\{ \mathrm{e}_0^\varepsilon, \mathrm{e}_{1,i}^\varepsilon, \mathrm{e}_2^\varepsilon \}_{1 \leq i \leq 3}$ is orthonormal, i.e., we determine constants $c_0^\varepsilon, c_1^\varepsilon, c_2^\varepsilon$ by requiring
\[
\int_\Omega \mathrm{e}_i^\varepsilon \mathrm{e}_j^\varepsilon \, dv = \delta_{ij}, \quad \int_\Omega \mathrm{e}_i^\varepsilon \mathrm{e}_{1,k}^\varepsilon \, dv = 0, \quad \int_\Omega \mathrm{e}_{1,k}^\varepsilon \mathrm{e}_{1,\ell}^\varepsilon \, dv = \delta_{k \ell} \quad \forall \; i,j \in \{0,2\}; \; k, \ell \in \{1,2,3\}.
\]
Since $v_i^s \in L^p(\Omega_v)$ trivially for any $1 < p < \infty$ and $s \in \mathbf{N}$, it holds that
\begin{align} \label{LpE:Cfv}
\| \Lambda_\varepsilon^{v_i}(v_i^s) - v_i^s \|_{L^p(\Omega_v)} \to 0 \quad \text{as} \quad \varepsilon \to 0
\end{align}
for any $1 < p < \infty$ and $s \in \mathbf{N}$, see e.g. \cite[Theorem 4.1.8]{GraC}.

For $h \in L^2( \mathbf{T}_x^3; L^2(\Omega_v) )$, we define the cutoff in Fourier space version of the macroscopic projection $\mathcal{P}$ acting on $h$ by
\begin{align} \label{defP}
\mathcal{P}^\varepsilon(h) := \rho^\varepsilon \mathrm{e}_0^\varepsilon + u^\varepsilon \cdot \mathrm{e}_1^\varepsilon + \theta^\varepsilon \mathrm{e}_2^\varepsilon
\end{align}
where
\[
\rho^\varepsilon := \int_\Omega h \mathrm{e}_0^\varepsilon \, dv, \quad \theta^\varepsilon := \int_\Omega h \mathrm{e}_2^\varepsilon \, dv
\]
and
\[
u_i^\varepsilon := \int_\Omega h \mathrm{e}_{1,i}^\varepsilon \, dv \quad ( i = 1,2,3 ), \quad u^\varepsilon := (u_1^\varepsilon, u_2^\varepsilon, u_3^\varepsilon).
\]
The cutoff in Fourier space version of the microscopic projection $\mathcal{L}$ in the approximate equation (\ref{AEtoBA}) is then defined as $\mathcal{L}^\varepsilon := I - \mathcal{P}^\varepsilon$.

For $f, h \in L^2( \mathbf{T}_x^3; L^2(\Omega_v) )$, we use the notation 
\[
\langle f, h \rangle_v := \int_\Omega f h \, dv
\]
to represent the inner product of $f$ and $h$ in $\Omega$ with respect to $v$.
As an end to this section, we would like to give an estimate to the $X$-norm of $\Lambda_\varepsilon (h)$ for $h \in H^1( \mathbf{T}_x^3; L^2(\Omega_v) )$.

\begin{lemma} \label{EXCut}
For any $h \in H^1( \mathbf{T}_x^3; L^2(\Omega_v) )$, it holds that
\[
\| \Lambda_\varepsilon (h) \|_X \leq \| h \|_X.
\]
\end{lemma}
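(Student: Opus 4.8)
The plan is to exploit the fact that $\Lambda_\varepsilon$, being a truncation of a Fourier series, acts as an orthogonal projection on $L^2$ and can therefore only decrease the $L^2$-norm; the $H^1_x$ structure of the $X$-norm is then handled by commuting $\partial_x^\alpha$ past $\Lambda_\varepsilon$.

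First I would expand $h$ in its double Fourier series in $(x,v)$,
\[
h(x,v) = \sum_{m,n \in \mathbf{Z}^3} \widehat{h}(m,n) \, \mathrm{e}^{2 \pi i m \cdot x} \, \mathrm{e}^{2 \pi i n \cdot v},
\]
so that, by the definition $\Lambda_\varepsilon = \Lambda_\varepsilon^x \circ \Lambda_\varepsilon^v$ together with (\ref{Comxv}),
\[
\Lambda_\varepsilon(h)(x,v) = \sum_{\substack{|m| < \varepsilon^{-\gamma} \\ |n_j| < \varepsilon^{-\gamma}}} \widehat{h}(m,n) \, \mathrm{e}^{2 \pi i m \cdot x} \, \mathrm{e}^{2 \pi i n \cdot v}.
\]
Invoking the orthonormality relations (\ref{Orthexp}) and Parseval's identity, this yields at once the $L^2$-contraction
\[
\big\| \Lambda_\varepsilon(g) \big\|_{L^2( \mathbf{T}_x^3; L^2(\Omega_v) )} \leq \| g \|_{L^2( \mathbf{T}_x^3; L^2(\Omega_v) )}
\]
for every $g \in L^2( \mathbf{T}_x^3; L^2(\Omega_v) )$, since $\Lambda_\varepsilon$ merely deletes a subset of the Fourier coefficients rather than reweighting them.

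Next I would use Remark \ref{CommuDC}, which states that $\partial_x^\alpha$ commutes with $\Lambda_\varepsilon$, to write $\partial_x^\alpha \Lambda_\varepsilon(h) = \Lambda_\varepsilon(\partial_x^\alpha h)$ for each multi-index $\alpha$ with $|\alpha| \leq 1$. Applying the contraction estimate above to $g = \partial_x^\alpha h$ and summing over $|\alpha| \leq 1$ then gives
\[
\| \Lambda_\varepsilon(h) \|_X^2 = \sum_{|\alpha| \leq 1} \big\| \Lambda_\varepsilon(\partial_x^\alpha h) \big\|_{L^2( \mathbf{T}_x^3; L^2(\Omega_v) )}^2 \leq \sum_{|\alpha| \leq 1} \big\| \partial_x^\alpha h \big\|_{L^2( \mathbf{T}_x^3; L^2(\Omega_v) )}^2 = \| h \|_X^2,
\]
which is precisely the claimed bound.

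There is essentially no hard step here: the only point requiring minor care is that the double Fourier expansion is legitimate for $h \in H^1( \mathbf{T}_x^3; L^2(\Omega_v) )$ and that $\Lambda_\varepsilon$ genuinely removes coefficients, which is immediate from its definition as a partial sum of the Fourier series. If one prefers to avoid the explicit double series, the same conclusion follows by treating $\Lambda_\varepsilon^v$ and $\Lambda_\varepsilon^x$ separately: by (\ref{Orthexp}) each is an orthogonal projection on its respective $L^2$ factor, hence a contraction, and by (\ref{Comxv}) the two commute, so their composition is a contraction on $L^2( \mathbf{T}_x^3; L^2(\Omega_v) )$.
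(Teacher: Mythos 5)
Your proposal is correct and follows essentially the same route as the paper's own proof: expand in the double Fourier series, use the orthogonality relations (\ref{Orthexp}) together with Plancherel/Parseval to see that discarding Fourier coefficients contracts the $L^2(\mathbf{T}_x^3; L^2(\Omega_v))$ norm, and then commute $\partial_x^\alpha$ with $\Lambda_\varepsilon$ via Remark \ref{CommuDC} to upgrade the bound to the $X$-norm. No gaps; the closing remark about viewing $\Lambda_\varepsilon^x$ and $\Lambda_\varepsilon^v$ as commuting orthogonal projections is a harmless reformulation of the same argument.
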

\begin{proof}
Let $h \in L^2( \mathbf{T}_x^3; L^2(\Omega_v) )$.
Note that
\begin{align*}
\Lambda_\varepsilon (h) &= (h \ast D_{\varepsilon^{-1},v}^3) \ast \mathring{D}_{\varepsilon^{-1},x}^3 \\
&= \sum_{n \in \mathbf{Z}^3, \, |n| < \frac{1}{\varepsilon^\gamma}} \, \sum_{m \in \mathbf{Z}^3, \, |m_j| < \frac{1}{\varepsilon^\gamma}} \, \mathcal{F}_x\big( \mathcal{F}_v (h) \big) (n,m) \, \mathrm{e}^{2 \pi i (m \cdot v + n \cdot x)}.
\end{align*}
Due to the orthogonality (\ref{Orthexp}) and Plancherel's identity, see e.g. \cite[Proposition 3.2.7]{GraC}, we can deduce the $L^2$ estimate for $\Lambda_\varepsilon (h)$, i.e.,
\begin{equation} \label{EL2Cut}
\begin{split}
\int_{\mathbf{T}^3} \int_\Omega \big| \Lambda_\varepsilon (h) \big|^2 \, dv \, dx &= \sum_{n \in \mathbf{Z}^3, \, |n| < \frac{1}{\varepsilon^\gamma}} \, \sum_{m \in \mathbf{Z}^3, \, |m_j| < \frac{1}{\varepsilon^\gamma}} \, \big| \mathcal{F}_x\big( \mathcal{F}_v (h) \big) (n,m) \big|^2 \\
&\leq \sum_{n \in \mathbf{Z}^3} \sum_{m \in \mathbf{Z}^3} \big| \mathcal{F}_x\big( \mathcal{F}_v (h) \big) (n,m) \big|^2 = \int_{\mathbf{T}^3} \int_\Omega |h|^2 \, dv \, dx.
\end{split}
\end{equation}
Since the differential operator $\nabla_x$ commutes with the cut-off operator $\Lambda_\varepsilon$ (see Remark \ref{CommuDC}), by replacing $h$ in estimate (\ref{EL2Cut}) by $\nabla_x h$, we obtain Lemma \ref{EXCut}.
\end{proof}

\subsection{Proof of Lemma \ref{GloSol:AE}: Local existence and uniqueness} 
\label{sub:SAE}

In order to prove Theorem \ref{GloSol:AE}, we need a multiplication rule regarding the $X$-norm.

\begin{proposition} \label{MRX}
Let $\varepsilon \in (0,1)$ and $n \in \mathbf{N}$ with $n \geq 2$. Let $f_1, f_2, ..., f_n \in H^1( \mathbf{T}_x^3; L^2(\Omega_v) )$.
There exists a constant $C>0$, which is independent of $\varepsilon$ and $f_1, f_2, ..., f_n$, such that
\[
\Big\| \prod_{i=1}^n \Lambda_\varepsilon (f_i) \Big\|_X \leq C \varepsilon^{-6n \gamma} \prod_{i=1}^n \| \Lambda_\varepsilon (f_i) \|_X.
\]
\end{proposition}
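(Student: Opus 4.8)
The plan is to reduce the whole estimate to a single $L^\infty$ bound for one cut-off function and then to peel off the factors of the product one at a time. Throughout I would write $g_i := \Lambda_\varepsilon(f_i)$; by the idempotency relations (\ref{SqCtf}) each $g_i$ satisfies $\Lambda_\varepsilon(g_i) = g_i$, so $g_i$ is simultaneously cut off in $x$ and in $v$, and the right-hand side of the asserted inequality is exactly $\prod_i \|g_i\|_X$.

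The key step is the pointwise bound
\[
\| g \|_{L^\infty(\mathbf{T}_x^3 \times \Omega_v)} \leq C \varepsilon^{-6\gamma} \| g \|_{L^2(\mathbf{T}_x^3 ; L^2(\Omega_v))},
\]
valid for every $g$ with $\Lambda_\varepsilon(g) = g$. To prove it I would first apply the velocity part of Lemma \ref{Re:BerL} (with $p=2$, $q=\infty$, $k=0$) for each fixed $x$, since $g(x,\cdot)$ is a $v$-cutoff function; this gives $\|g(x,\cdot)\|_{L^\infty_v} \le C\varepsilon^{-3\gamma}\|g(x,\cdot)\|_{L^2_v}$ and hence $\|g\|_{L^\infty_{x,v}} \le C\varepsilon^{-3\gamma}\|g\|_{L^\infty_x L^2_v}$. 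The remaining quantity is $\|g\|_{L^\infty_x L^2_v} = \|\phi\|_{L^\infty_x}^{1/2}$, where $\phi(x) := \int_\Omega |g(x,v)|^2\,dv$. The subtle point is that $x \mapsto \|g(x,\cdot)\|_{L^2_v}$ is not itself a cutoff in $x$, so Lemma \ref{Re:BerL} cannot be applied to it directly; however, expanding $g$ in its Fourier series and using the orthogonality (\ref{Orthexp}) in $v$ shows that $\phi$ has $x$-Fourier support in $\{|k| < 2\varepsilon^{-\gamma}\}$, i.e. $\phi$ is a cutoff in $x$ at the doubled frequency scale. Applying the spatial part of Lemma \ref{Re:BerL} (with $p=1$, $q=\infty$, $k=0$, at this doubled scale) then yields $\|\phi\|_{L^\infty_x} \le C\varepsilon^{-3\gamma}\|\phi\|_{L^1_x} = C\varepsilon^{-3\gamma}\|g\|_{L^2_{x,v}}^2$. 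Combining the two estimates gives the displayed bound, with the genuine exponent $-9\gamma/2$ absorbed into $-6\gamma$ since $\varepsilon \in (0,1)$. One may alternatively obtain $\|g\|_{L^\infty_x L^2_v}$ in one step from a Banach-space-valued Young's inequality in $x$, avoiding $\phi$ altogether.

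With the key bound in hand I would split $\|\prod_i g_i\|_X^2 = \|\prod_i g_i\|_{L^2_{x,v}}^2 + \sum_{j=1}^3\|\partial_{x_j}\prod_i g_i\|_{L^2_{x,v}}^2$. For the zeroth-order term I estimate $\big|\prod_i g_i\big| \le \big(\prod_{i=2}^n \|g_i\|_{L^\infty_{x,v}}\big)|g_1|$, integrate, and use $\|g_1\|_{L^2_{x,v}} \le \|g_1\|_X$ together with the key bound applied to $g_2,\dots,g_n$; this produces $C^{n-1}\varepsilon^{-6(n-1)\gamma}\prod_i\|g_i\|_X$. For the first-order terms I would expand by the Leibniz rule (legitimate since $\partial_{x_j}$ commutes with $\Lambda_\varepsilon$, Remark \ref{CommuDC}) as $\partial_{x_j}\prod_i g_i = \sum_{k=1}^n (\partial_{x_j}g_k)\prod_{i\ne k}g_i$, keep $\partial_{x_j}g_k$ in $L^2_{x,v}$ (bounded by $\|g_k\|_X$) and place the other $n-1$ factors in $L^\infty_{x,v}$; summing over $k$ and $j$ costs only constants depending on $n$. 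Collecting both contributions and using $\varepsilon^{-6(n-1)\gamma} \le \varepsilon^{-6n\gamma}$ for $\varepsilon \in (0,1)$ gives the claim with a constant $C$ depending on $n$ but not on $\varepsilon$ or the $f_i$.

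The main obstacle will be precisely the passage from the $v$-variable to the $x$-variable in the $L^\infty$ estimate: once the velocity integral has been contracted, the resulting function of $x$ is no longer band-limited in the naive sense, and the essential observation that its square stays Fourier-supported in a ball of doubled radius is what makes the scalar Bernstein lemma applicable and keeps the power of $\varepsilon$ under control.
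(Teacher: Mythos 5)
Your proof is correct, but it takes a genuinely different route from the paper. The paper never uses an $L^\infty$ bound: it works entirely with finite exponents, combining H\"older's inequality with $L^{2n}$ norms, Minkowski's integral inequality to swap the $x$- and $v$-integrations, the Sobolev embedding $H^1(\mathbf{T}_x^3)\hookrightarrow L^6(\mathbf{T}_x^3)$, and two separate applications of Lemma \ref{Re:BerL} per factor (from $L^{2n}$ to $L^1$ in $x$ and from $L^{2n}$ to $L^2$ in $v$), arriving at exactly the exponent $\varepsilon^{-6n\gamma}$. Your argument instead isolates a single band-limited $L^\infty$--$L^2$ estimate and then reduces the product bound to the Leibniz rule plus the trivial H\"older pairing $L^\infty\times\cdots\times L^\infty\times L^2$. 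The one nontrivial point in your route --- that $x\mapsto\|g(x,\cdot)\|_{L^2_v}$ is not itself band-limited, but $\phi(x)=\int_\Omega|g|^2\,dv$ has $x$-Fourier support in the doubled ball $\{|k|<2\varepsilon^{-\gamma}\}$ by the orthogonality (\ref{Orthexp}) --- is correct, and the proof of Lemma \ref{BerL} (Young's inequality plus a frequency count) applies verbatim at the doubled scale at the cost of a harmless absolute constant. Your approach is more modular, avoids the Sobolev embedding entirely, and in fact yields the slightly sharper exponent $\varepsilon^{-9(n-1)\gamma/2}\leq\varepsilon^{-6n\gamma}$; the paper's approach stays closer to the $L^p$-interpolation style used elsewhere in the manuscript and produces the stated exponent directly. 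Either way the loss is a fixed negative power of $\varepsilon$ controlled by taking $\gamma$ small, which is all that is needed downstream.
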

\begin{proof}
By H$\ddot{\text{o}}$lder's inequality, we have that
\begin{align} \label{P5:E1}
\int_{\mathbf{T}^3} \Big\| \prod_{i=1}^n \Lambda_\varepsilon (f_i) \Big\|_{L^2(\Omega_v)}^2 \, dx \leq \left( \int_{\mathbf{T}^3} \Big\| \prod_{i=1}^n \Lambda_\varepsilon (f_i) \Big\|_{L^2(\Omega_v)}^6 \, dx \right)^{\frac{1}{3}}.
\end{align}
Then, by applying Minkowski's integral inequality to the right hand side of estimate (\ref{P5:E1}), we obtain that
\[
\left( \int_{\mathbf{T}^3} \Big\| \prod_{i=1}^n \Lambda_\varepsilon (f_i) \Big\|_{L^2(\Omega_v)}^6 \, dx \right)^{\frac{1}{3}} \leq \int_\Omega \Big\| \prod_{i=1}^n \Lambda_\varepsilon (f_i) \Big\|_{L^6(\mathbf{T}_x^3)}^2 \, dv.
\]
Since the Sobolev space $H^1(\mathbf{T}_x^3)$ is continuously embedded in $L^6(\mathbf{T}_x^3)$, we thus deduce that
\[
\int_{\mathbf{T}^3} \Big\| \prod_{i=1}^n \Lambda_\varepsilon (f_i) \Big\|_{L^2(\Omega_v)}^2 \, dx \lesssim \int_\Omega \Big\| \prod_{i=1}^n \Lambda_\varepsilon (f_i) \Big\|_{H^1(\mathbf{T}_x^3)}^2 \, dv.
\]

Next, we shall firstly focus on estimating the integral of $\big\| \prod_{i=1}^n \Lambda_\varepsilon (f_i) \big\|_{L^2(\mathbf{T}_x^3)}^2$ in $\Omega$ with respect to $v$.
By H$\ddot{\text{o}}$lder's inequality, we have that
\[
\Big\| \prod_{i=1}^n \Lambda_\varepsilon (f_i) \Big\|_{L^2(\mathbf{T}_x^3)} \leq \prod_{i=1}^n \| \Lambda_\varepsilon (f_i) \|_{L^{2n}(\mathbf{T}_x^3)}.
\]
By Lemma \ref{Re:BerL}, we see that
\[
\| \Lambda_\varepsilon (f_i) \|_{L^{2n}(\mathbf{T}_x^3)} \lesssim \varepsilon^{-3 \gamma} \| \Lambda_\varepsilon (f_i) \|_{L^1(\mathbf{T}_x^3)}, \quad \forall \; 1 \leq i \leq n.
\]
By considering H$\ddot{\text{o}}$lder's inequality and then Minkowski's integral inequality, we further deduce that
\begin{equation} \label{P5:E2}
\begin{split}
\int_\Omega \prod_{i=1}^n \| \Lambda_\varepsilon (f_i) \|_{L^{2n}(\mathbf{T}_x^3)}^2 \, dv &\lesssim \varepsilon^{-6n \gamma} \prod_{i=1}^n \left( \int_\Omega \| \Lambda_\varepsilon (f_i) \|_{L^1(\mathbf{T}_x^3)}^{2n} \, dv \right)^{\frac{1}{n}} \\
&\lesssim \varepsilon^{-6n \gamma} \prod_{i=1}^n \left( \int_{\mathbf{T}^3} \| \Lambda_\varepsilon (f_i) \|_{L^{2n}(\Omega_v)} \, dx \right)^2. \\
\end{split}
\end{equation}
By Lemma \ref{Re:BerL} again, we have that
\begin{align} \label{P5:E3}
\| \Lambda_\varepsilon (f_i) \|_{L^{2n}(\Omega_v)} \lesssim \varepsilon^{-3 \gamma} \| \Lambda_\varepsilon (f_i) \|_{L^2(\Omega_v)}, \quad \forall \; 1 \leq i \leq n.
\end{align}
Substituting estimates (\ref{P5:E3}) into estimate (\ref{P5:E2}) and then applying H$\ddot{\text{o}}$lder's inequality once more, we obtain that
\begin{align} \label{L2Lfh}
\int_\Omega \Big\| \prod_{i=1}^n \Lambda_\varepsilon (f_i) \Big\|_{L^2(\mathbf{T}_x^3)}^2 \, dv \lesssim \varepsilon^{-12n \gamma} \prod_{i=1}^n \int_{\mathbf{T}^3} \int_\Omega \big| \Lambda_\varepsilon (f_i) \big|^2 \, dv \, dx.
\end{align}

Since the differential operator $\nabla_x$ commutes with the cutoff operator $\Lambda_\varepsilon$ (see Remark \ref{CommuDC}), the estimate for the integral of $\big\| \nabla_x \big( \prod_{i=1}^n \Lambda_\varepsilon (f_i) \big) \big\|_{L^2(\mathbf{T}_x^3)}^2$ in $\Omega$ with respect to $v$ follows directly from inequality (\ref{L2Lfh}). By chain rule, we observe that
\begin{align*}
\Big\| \nabla_x \Big( \prod_{i=1}^n \Lambda_\varepsilon (f_i) \Big) \Big\|_{L^2(\mathbf{T}_x^3)}^2 &\lesssim \sum_{i=1}^n   \Big\| \Lambda_\varepsilon (\nabla_x f_i) \prod_{j \neq i} \Lambda_\varepsilon (f_j) \Big\|_{L^2(\mathbf{T}_x^3)}^2.
\end{align*}
Hence, by estimate (\ref{L2Lfh}) we can directly conclude that
\begin{align*}
&\int_\Omega \Big\| \Lambda_\varepsilon (\nabla_x f_i) \prod_{j \neq i} \Lambda_\varepsilon (f_j) \Big\|_{L^2(\mathbf{T}_x^3)}^2 \, dv \\
&\ \ \lesssim \varepsilon^{-12n \gamma} \bigg( \int_{\mathbf{T}^3} \int_\Omega \big| \Lambda_\varepsilon (\nabla_x f_i) \big|^2 \, dv \, dx \bigg) \prod_{j \neq i} \int_{\mathbf{T}^3} \int_\Omega \big| \Lambda_\varepsilon (f_j) \big|^2 \, dv \, dx
\end{align*}
for any $1 \leq i \leq n$.
This completes the proof of Proposition \ref{MRX}.
\end{proof}
\begin{remark} \label{Rk:AEMR}
We would like to emphasize that if we invoke Lemma \ref{Re:BerL} to estimate
\[
\big\| \nabla_x \Lambda_\varepsilon (f_i) \big\|_{L^{2n}(\mathbf{T}_x^3)} \lesssim \varepsilon^{-4 \gamma} \| \Lambda_\varepsilon (f_i) \|_{L^1(\mathbf{T}_x^3)}, \quad \forall \; 1 \leq i \leq n
\]
and then follow the derivation of inequality (\ref{L2Lfh}), we can indeed deduce that there exists a constant $C>0$, which is independent of $\varepsilon$ and $f_1, f_2, ..., f_n$, such that the estimate
\[
\Big\| \prod_{i=1}^n \Lambda_\varepsilon (f_i) \Big\|_X \leq C \varepsilon^{-7n \gamma} \prod_{i=1}^n \| \Lambda_\varepsilon (f_i) \|_{L^2( \mathbf{T}_x^3; L^2(\Omega_v) )}
\]
holds for any $f_1, f_2, ..., f_n \in L^p( \mathbf{T}_x^3; L^q(\Omega_v) )$ with $p,q \in [1,\infty]$.
\end{remark}

Furthermore, we would like to give an estimate regarding the microscopic projection $\mathcal{L}^\varepsilon$.

\begin{proposition} \label{BoLo}
Let $\varepsilon \in (0,1)$. Then $\mathcal{L}^\varepsilon : H^1( \mathbf{T}_x^3; L^2(\Omega_v) ) \to H^1( \mathbf{T}_x^3; L^2(\Omega_v) )$ is a bounded linear operator. For any $h \in H^1( \mathbf{T}_x^3; L^2(\Omega_v) )$, it holds that
\[
\| \mathcal{L}^\varepsilon (h) \|_X \leq \| h \|_X.
\]
\end{proposition}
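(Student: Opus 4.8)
The plan is to exploit the fact that, for each fixed $x \in \mathbf{T}^3$, the operator $\mathcal{P}^\varepsilon$ acts in the $v$-variable as the orthogonal projection in $L^2(\Omega_v)$ onto the finite-dimensional subspace spanned by the family $\{ \mathrm{e}_0^\varepsilon, \mathrm{e}_{1,1}^\varepsilon, \mathrm{e}_{1,2}^\varepsilon, \mathrm{e}_{1,3}^\varepsilon, \mathrm{e}_2^\varepsilon \}$. Indeed, by the definition (\ref{defP}) the coefficients $\rho^\varepsilon, u_i^\varepsilon, \theta^\varepsilon$ are precisely the inner products $\langle h, \mathrm{e}_0^\varepsilon \rangle_v$, $\langle h, \mathrm{e}_{1,i}^\varepsilon \rangle_v$, $\langle h, \mathrm{e}_2^\varepsilon \rangle_v$, and since the constants $c_0^\varepsilon, c_1^\varepsilon, c_2^\varepsilon$ were chosen so that this family is orthonormal in $L^2(\Omega_v)$, the map $h \mapsto \mathcal{P}^\varepsilon(h)$ is the standard orthonormal-expansion projection. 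In particular, for each fixed $x$, the Pythagorean decomposition $\| h \|_{L^2(\Omega_v)}^2 = \| \mathcal{P}^\varepsilon(h) \|_{L^2(\Omega_v)}^2 + \| \mathcal{L}^\varepsilon(h) \|_{L^2(\Omega_v)}^2$ yields the pointwise-in-$x$ estimate $\| \mathcal{L}^\varepsilon(h)(x,\cdot) \|_{L^2(\Omega_v)} \leq \| h(x,\cdot) \|_{L^2(\Omega_v)}$.

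Linearity of $\mathcal{L}^\varepsilon = I - \mathcal{P}^\varepsilon$ is immediate once one observes that the coefficients $\rho^\varepsilon, u_i^\varepsilon, \theta^\varepsilon$ depend linearly on $h$, being integrals of $h$ against fixed $v$-functions. The key structural point for the $X$-norm is that the basis functions $\mathrm{e}_0^\varepsilon, \mathrm{e}_{1,i}^\varepsilon, \mathrm{e}_2^\varepsilon$ depend on $v$ only. Consequently, differentiation in $x$ passes through the $v$-integrals defining the coefficients: for any multi-index $\alpha$ with $|\alpha| \leq 1$ one has $\partial_x^\alpha \rho^\varepsilon = \langle \partial_x^\alpha h, \mathrm{e}_0^\varepsilon \rangle_v$ and similarly for $u_i^\varepsilon, \theta^\varepsilon$, whence $\partial_x^\alpha \mathcal{P}^\varepsilon(h) = \mathcal{P}^\varepsilon(\partial_x^\alpha h)$ and therefore $\partial_x^\alpha \mathcal{L}^\varepsilon(h) = \mathcal{L}^\varepsilon(\partial_x^\alpha h)$. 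I would record this commutation identity as the one nontrivial ingredient of the proof.

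With the commutation in hand, the desired $X$-norm bound follows by assembling the pointwise $L^2(\Omega_v)$ estimates. Writing out the definition of the $X$-norm and applying the commutation identity gives
\[
\| \mathcal{L}^\varepsilon(h) \|_X^2 = \sum_{|\alpha| \leq 1} \int_{\mathbf{T}^3} \int_\Omega \big| \mathcal{L}^\varepsilon(\partial_x^\alpha h) \big|^2 \, dv \, dx \leq \sum_{|\alpha| \leq 1} \int_{\mathbf{T}^3} \int_\Omega \big| \partial_x^\alpha h \big|^2 \, dv \, dx = \| h \|_X^2,
\]
where the inequality is the integration over $x \in \mathbf{T}^3$ of the pointwise bound applied to $\partial_x^\alpha h$ in place of $h$. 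This simultaneously proves the norm estimate and, together with linearity, the boundedness of $\mathcal{L}^\varepsilon$ as an operator from $H^1(\mathbf{T}_x^3; L^2(\Omega_v))$ to itself.

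I do not anticipate a serious obstacle here: the whole argument rests on two elementary facts, namely the orthonormality of the family $\{\mathrm{e}_i^\varepsilon\}$, which holds by the very construction of $c_0^\varepsilon, c_1^\varepsilon, c_2^\varepsilon$, and the $x$-independence of these basis functions. The only point deserving a modicum of care is verifying rigorously that differentiation in $x$ may be interchanged with the $v$-integration defining the coefficients; this is justified by the regularity $h \in H^1(\mathbf{T}_x^3; L^2(\Omega_v))$, either via dominated convergence or by working first on the dense subspace of smooth functions and passing to the limit.
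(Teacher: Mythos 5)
Your proposal is correct and follows essentially the same route as the paper: the paper expands $\mathcal{L}^\varepsilon(h) = h - \rho^\varepsilon\mathrm{e}_0^\varepsilon - u^\varepsilon\cdot\mathrm{e}_1^\varepsilon - \theta^\varepsilon\mathrm{e}_2^\varepsilon$ and uses the orthonormality of the family $\{\mathrm{e}_0^\varepsilon,\mathrm{e}_{1,i}^\varepsilon,\mathrm{e}_2^\varepsilon\}$ to get $\int\!\int|\mathcal{L}^\varepsilon(h)|^2 = \int\!\int|h|^2 - \|\rho^\varepsilon\|_{L^2}^2 - \|u^\varepsilon\|_{L^2}^2 - \|\theta^\varepsilon\|_{L^2}^2 \leq \int\!\int|h|^2$, which is precisely your Pythagorean identity, and then repeats the argument for $\nabla_x h$ using the same commutation $\nabla_x\mathcal{L}^\varepsilon = \mathcal{L}^\varepsilon\nabla_x$ that you identify as the key structural point.
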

\begin{proof}
Let $h \in H^1( \mathbf{T}_x^3; L^2(\Omega_v) )$. By the definition of the microscopic projection $\mathcal{L}^\varepsilon$, we have that
\begin{align} \label{Ex:Lh}
\mathcal{L}^\varepsilon(h) = h - \rho^\varepsilon \mathrm{e}_0^\varepsilon - u^\varepsilon \cdot \mathrm{e}_1^\varepsilon - \theta^\varepsilon \mathrm{e}_2^\varepsilon.
\end{align}
Since $\mathrm{e}_i^\varepsilon$ and $\mathrm{e}_j^\varepsilon$ are orthogonal for any $i \neq j$, by substituting $\mathcal{L}^\varepsilon (h)$ using expression (\ref{Ex:Lh}), we can deduce that
\begin{align*}
0 \leq \int_{\mathbf{T}^3} \int_\Omega \mathcal{L}^\varepsilon(h)^2 \, dv \, dx &= \int_{\mathbf{T}^3} \int_\Omega h^2 \, dv \, dx - \| \rho^\varepsilon \|_{L^2(\mathbf{T}_x^3)}^2 - \| u^\varepsilon \|_{L^2(\mathbf{T}_x^3)}^2 - \| \theta^\varepsilon \|_{L^2(\mathbf{T}_x^3)}^2 \\
&\leq \int_{\mathbf{T}^3} \int_\Omega h^2 \, dv \, dx.
\end{align*}
Continuing to manipulate expression (\ref{Ex:Lh}), it is easy to observe that the differential operator $\nabla_x$ commutes with the microscopic projection $\mathcal{L}^\varepsilon$, i.e., we have that
\begin{align} \label{Ex:GLh}
\nabla_x \mathcal{L}^\varepsilon(h) = \mathcal{L}^\varepsilon (\nabla_x h) = \nabla_x h - \mathrm{e}_0^\varepsilon \nabla_x \rho^\varepsilon - (\nabla_x u^\varepsilon) \cdot \mathrm{e}_1^\varepsilon - \mathrm{e}_2^\varepsilon \nabla_x \theta^\varepsilon.
\end{align}
Similarly, due to the orthogonality between $\mathrm{e}_i^\varepsilon$ and $\mathrm{e}_j^\varepsilon$ for $0 \leq i,j \leq 2$ with $i \neq j$, by substituting $\nabla_x \mathcal{L}^\varepsilon (h)$ using expression (\ref{Ex:GLh}), it can be deduced that
\begin{align*}
&\int_{\mathbf{T}^3} \int_\Omega \big| \nabla_x \mathcal{L}^\varepsilon(h) \big|^2 \, dv \, dx \\
&\ \ = \int_{\mathbf{T}^3} \int_\Omega | \nabla_x h |^2 \, dv \, dx - \| \nabla_x \rho^\varepsilon \|_{L^2(\mathbf{T}_x^3)}^2 - \| \nabla_x u^\varepsilon \|_{L^2(\mathbf{T}_x^3)}^2 - \| \nabla_x \theta^\varepsilon \|_{L^2(\mathbf{T}_x^3)}^2 \\
&\ \ \leq \int_{\mathbf{T}^3} \int_\Omega | \nabla_x h |^2 \, dv \, dx.
\end{align*}
This completes the proof of Proposition \ref{BoLo}.
\end{proof}

Now we are ready to give a proof to Lemma \ref{GloSol:AE}.

\begin{proof}[Proof of Lemma \ref{GloSol:AE} (Local existence):]
Let $f_{\varepsilon,0} \in H^1( \mathbf{T}_x^3; L^2(\Omega_v) )$.
Since we have properties
\[
\big\langle \Lambda_\varepsilon (h), g \big\rangle_v = \big\langle h, \Lambda_\varepsilon (g) \big\rangle_v \quad \text{and} \quad \Lambda_\varepsilon \big( \Lambda_\varepsilon (h) \big) = \Lambda_\varepsilon (h)
\]
for any $h, g \in L^2( \mathbf{T}_x^3; L^2(\Omega_v) )$, the global energy estimate (\ref{GloEE:ep}) can be derived by exactly the same proof of Lemma \ref{GSmBE}.
Hence, it is sufficient to construct a local solution formally to the approximate equation \eqref{AEtoBA} that is compactly supported in Fourier space.
For simplicity of notations, within this proof we denote $g_\varepsilon = \Lambda_\varepsilon (f_\varepsilon)$.
Taking the integration of our approximate equation (\ref{AEtoBA}) with respect to time, we have that 
\begin{equation}\label{IAmBE}
\begin{split}
g_\varepsilon(t) &= g_{\varepsilon, 0} - \int_0^t \left( \frac{1}{\varepsilon} \Lambda_\varepsilon \big( v \cdot \nabla_x g_\varepsilon \big) + \frac{1}{\varepsilon^2 \nu_\ast} \Lambda_\varepsilon \big( \mathcal{L}^\varepsilon(g_\varepsilon) \big) \right) \, ds \\
&\ \ + \int_0^t \left( \frac{\kappa}{\varepsilon \nu_\ast} \Lambda_\varepsilon \big( \mathcal{L}^\varepsilon( g_\varepsilon^2 ) \big) - \frac{\kappa^2}{\nu_\ast} \Lambda_\varepsilon( g_\varepsilon^3 ) \right) \, ds
\end{split}
\end{equation}
where $g_{\varepsilon, 0} = \Lambda_\varepsilon (f_{\varepsilon,0})$. Let us consider a sequence of functions $\{g_{\varepsilon, j}\}_{j \in \mathbf{N}_0}$ which is constructed inductively as follows. We set
\begin{equation} \label{def:gj}
\begin{split}
g_{\varepsilon, j+1}(t) 
&:= g_{\varepsilon, 0}
- \int_0^t \left( \frac{1}{\varepsilon} \Lambda_\varepsilon \big( v \cdot \nabla_x g_{\varepsilon, j}(s) \big) + \frac{1}{\varepsilon^2 \nu_\ast} \Lambda_\varepsilon \Big( \mathcal{L}^\varepsilon\big( g_{\varepsilon, j}(s) \big) \Big) \right) \, ds \\
&\ \ + \int_0^t \left( \frac{\kappa}{\varepsilon \nu_\ast} \Lambda_\varepsilon \Big( \mathcal{L}^\varepsilon\big( g_{\varepsilon, j}(s)^2 \big) \Big) - \frac{\kappa^2}{\nu_\ast} \Lambda_\varepsilon \big( g_{\varepsilon, j}(s)^3 \big) \right) \, ds
\end{split}
\end{equation}
for $j \geq 0$. The key idea here to construct a local solution is to show that the sequence $\{ g_{\varepsilon, j} \}_{j \in \mathbf{N}_0}$ is Cauchy in $L^\infty( [0,T]; H^1( \mathbf{T}_x^3; L^2(\Omega_v) ) )$ for some small $T>0$.
By Minkowski's integral inequality, for any $j \geq 1$ we have that
\begin{equation} \label{DXgj1}
\begin{split}
\| g_{\varepsilon, j+1} - g_{\varepsilon, j} \|_X &\leq \frac{1}{\varepsilon} \int_0^t \big\| \Lambda_\varepsilon \big( v \cdot \nabla_x ( g_{\varepsilon, j} - g_{\varepsilon, j-1} ) \big) \big\|_X \, ds \\
&\ \ + \frac{1}{\varepsilon^2 \nu_\ast} \int_0^t \big\| \Lambda_\varepsilon \big( \mathcal{L}^\varepsilon(g_{\varepsilon, j} - g_{\varepsilon, j-1}) \big) \big\|_X \, ds \\
&\ \ + \frac{\kappa}{\varepsilon \nu_\ast} \int_0^t \big\| \Lambda_\varepsilon \big( \mathcal{L}^\varepsilon( g_{\varepsilon, j}^2 - g_{\varepsilon, j-1}^2 ) \big) \big\|_X \, ds + \frac{\kappa^2}{\nu_\ast} \int_0^t \big\| \Lambda_\varepsilon ( g_{\varepsilon, j}^3 - g_{\varepsilon, j-1}^3 ) \big\|_X \, ds.
\end{split}
\end{equation}
By Lemma \ref{EXCut} and then Lemma \ref{Re:BerL}, we observe that
\begin{align} \label{XE:vdgg}
\big\| \Lambda_\varepsilon \big( v \cdot \nabla_x ( g_{\varepsilon, j} - g_{\varepsilon, j-1} ) \big) \big\|_X \leq \| v \cdot \nabla_x ( g_{\varepsilon, j} - g_{\varepsilon, j-1} ) \|_X \lesssim \varepsilon^{-4 \gamma} \| g_{\varepsilon, j} - g_{\varepsilon, j-1} \|_X.
\end{align}
By Lemma \ref{EXCut} and then Proposition \ref{BoLo}, we have that
\[
\big\| \Lambda_\varepsilon \big( \mathcal{L}^\varepsilon (g_{\varepsilon, j} - g_{\varepsilon, j-1}) \big) \big\|_X \leq \| \mathcal{L}^\varepsilon( g_{\varepsilon, j} - g_{\varepsilon, j-1} ) \|_X \leq \| g_{\varepsilon, j} - g_{\varepsilon, j-1} \|_X.
\]
Furthermore, by applying Lemma \ref{EXCut} with Proposition \ref{BoLo} again, we deduce by Proposition \ref{MRX} that
\begin{align*}
\big\| \Lambda_\varepsilon \big( \mathcal{L}^\varepsilon( g_{\varepsilon, j}^2 - g_{\varepsilon, j-1}^2 ) \big) \big\|_X \lesssim \varepsilon^{-12 \gamma} \big( \| g_{\varepsilon, j} \|_X + \| g_{\varepsilon, j-1} \|_X \big) \| g_{\varepsilon, j} - g_{\varepsilon, j-1} \|_X
\end{align*}
and 
\begin{align*}
\big\| \Lambda_\varepsilon( g_{\varepsilon, j}^3 - g_{\varepsilon, j-1}^3 ) \big\|_X \lesssim \varepsilon^{-18 \gamma} \big( \| g_{\varepsilon, j} \|_X^2 + \| g_{\varepsilon, j} \|_X \| g_{\varepsilon, j-1} \|_X + \| g_{\varepsilon, j-1} \|_X^2 \big) \| g_{\varepsilon, j} - g_{\varepsilon, j-1} \|_X.
\end{align*}
Substituting the above four estimates into estimate (\ref{DXgj1}), we then obtain that
\begin{equation} \label{EDfj}
\begin{split}
&\| g_{\varepsilon, j+1} - g_{\varepsilon, j} \|_X \\
&\ \ \lesssim \bigg( \frac{1}{\varepsilon^{4 \gamma +1}} + \frac{1}{\varepsilon^2 \nu_\ast} \bigg) \int_0^t \| g_{\varepsilon, j} - g_{\varepsilon, j-1} \|_X \, ds \\
&\ \ \ \ + \frac{\kappa}{\varepsilon^{12 \gamma + 1} \nu_\ast} \int_0^t \big( \| g_{\varepsilon, j} \|_X + \| g_{\varepsilon, j-1} \|_X \big) \| g_{\varepsilon, j} - g_{\varepsilon, j-1} \|_X \, ds \\
&\ \ \ \ + \frac{\kappa^2}{\varepsilon^{18 \gamma} \nu_\ast} \int_0^t \big( \| g_{\varepsilon, j} \|^2_X + \| g_{\varepsilon, j} \|_X \| g_{\varepsilon, j-1} \|_X + \| g_{\varepsilon, j-1} \|^2_X \big) \| g_{\varepsilon, j} - g_{\varepsilon, j-1} \|_X \, ds.
\end{split}
\end{equation} 

Next, we shall prove by induction that there exists $t>0$ sufficiently small so that for any $j \geq 1$, it holds that
\begin{equation} \label{UEfj}
\begin{split}
\| g_{\varepsilon, j} \|_{L_t^\infty X} &:= \sup_{0 \leq s \leq t} \| g_{\varepsilon, j}(s) \|_X \\
&\lesssim \| f_{\varepsilon, 0} \|_X + 2t E(\varepsilon) \big( \| f_{\varepsilon, 0} \|_X + \| f_{\varepsilon, 0} \|_X^2 + \| f_{\varepsilon, 0} \|_X^3 \big) =: C(f_{\varepsilon, 0}, t)
\end{split}
\end{equation}
where
\[
E(\varepsilon) := \frac{1}{\varepsilon^{4 \gamma +1}} + \frac{1}{\varepsilon^2 \nu_\ast} + \frac{\kappa}{\varepsilon^{12 \gamma + 1} \nu_\ast} + \frac{\kappa^2}{\varepsilon^{18 \gamma} \nu_\ast}
\]
and for any $j \geq 2$, it holds that
\begin{align} \label{DfjA}
\| g_{\varepsilon, j} - g_{\varepsilon, j-1} \|_{L_t^\infty X} \leq \frac{1}{2} \| g_{\varepsilon, j-1} - g_{\varepsilon, j-2} \|_{L_t^\infty X}.
\end{align}
Let $k \in \mathbf{N}$ with $k \geq 2$. Suppose that estimate (\ref{UEfj}) and estimate (\ref{DfjA}) hold simultaneously for all $1 \leq j \leq k$. Then by estimate (\ref{EDfj}), we have that
\begin{align*}
&\| g_{\varepsilon, k+1} - g_{\varepsilon, k} \|_{L_t^\infty X} \lesssim t E(\varepsilon) \| g_{\varepsilon, k} - g_{\varepsilon, k-1} \|_{L_t^\infty X} \\
&\ \ \times \Big( 1 + \| g_{\varepsilon, k} \|_{L_t^\infty X} + \| g_{\varepsilon, k-1} \|_{L_t^\infty X} + \| g_{\varepsilon, k} \|_{L_t^\infty X}^2 + \| g_{\varepsilon, k} \|_{L_t^\infty X} \| g_{\varepsilon, k-1} \|_{L_t^\infty X} + \| g_{\varepsilon, k-1} \|_{L_t^\infty X}^2 \Big).
\end{align*}
Using assumption (\ref{UEfj}) for cases where $j=k$ and $j=k-1$, we deduce that
\[
\| g_{\varepsilon, k+1} - g_{\varepsilon, k} \|_{L_t^\infty X} \lesssim t E(\varepsilon) \Big( 1 + 2 C(f_{\varepsilon, 0}, t) + 3 C(f_{\varepsilon, 0}, t)^2 \Big) \| g_{\varepsilon, k} - g_{\varepsilon, k-1} \|_{L_t^\infty X}.
\]
It is easy to observe that there exists $T_\ast > 0$ sufficiently small such that
\[
T_\ast E(\varepsilon) \Big( 1 + 2 C(f_{\varepsilon, 0}, T_\ast) + 3 C(f_{\varepsilon, 0}, T_\ast)^2 \Big) \leq \frac{1}{2}, 
\]
where $T_\ast$ is independent of $k$ and dependent only on $\varepsilon$, $\nu_\ast$, $\kappa$ and $f_{\varepsilon, 0}$. Thus, we obtain that
\begin{align} \label{Dgkp1}
\| g_{\varepsilon, k+1} - g_{\varepsilon, k} \|_{L_{T_\ast}^\infty X} \leq \frac{1}{2} \| g_{\varepsilon, k} - g_{\varepsilon, k-1} \|_{L_{T_\ast}^\infty X}.
\end{align}
Working directly with expression (\ref{def:gj}) in the case where $j=0$, we can show by similar derivations as in the above paragraph that
\[
\| g_{\varepsilon, 1} - g_{\varepsilon, 0} \|_{L_t^\infty X} \leq t E(\varepsilon) \big( \| f_{\varepsilon, 0} \|_X + \| f_{\varepsilon, 0} \|_X^2 + \| f_{\varepsilon, 0} \|_X^3 \big)
\]
for any $t \geq 0$.
Using assumption (\ref{DfjA}) together with estimate (\ref{Dgkp1}), we can control $\| g_{\varepsilon, j} - g_{\varepsilon, j-1} \|_{L_{T_\ast}^\infty X}$ for all $j$ from $2$ to $k+1$, i.e., we have that
\begin{align*}
\| g_{\varepsilon, k+1} \|_{L_{T_\ast}^\infty X} &\leq \| f_{\varepsilon, 0} \|_X + \sum_{j = 1}^{k+1} \| g_{\varepsilon, j} - g_{\varepsilon, j-1} \|_{L_{T_\ast}^\infty X} \leq \| f_{\varepsilon, 0} \|_X + \bigg( \sum_{j=1}^{k+1} \frac{1}{2^{j-1}} \bigg) \| g_{\varepsilon, 1} - g_{\varepsilon, 0} \|_{L_{T_\ast}^\infty X} \\
&\leq \| f_{\varepsilon, 0} \|_X + 2 \| g_{\varepsilon, 1} - g_{\varepsilon, 0} \|_{L_{T_\ast}^\infty X} \leq C(f_{\varepsilon, 0}, T_\ast).
\end{align*}
This completes the proof of the induction. 
By the contraction mapping principle, estimate (\ref{DfjA}) implies that the sequence $\{ g_{\varepsilon, j} \}_{\mathbf{N}_0}$ is indeed Cauchy in $L^\infty( [0,T_\ast]; H^1( \mathbf{T}_x^3; L^2(\Omega_v) ) )$. Taking the limit as $j \to \infty$, we obtain a local solution to the approximate equation (\ref{AEtoBA}).
\end{proof}
\begin{remark} \label{Ctf:fe}
In the proof of Lemma \ref{GloSol:AE}, the existence of a local solution $f_\varepsilon$ to the approximate equation (\ref{AEtoBA}) is concluded by the Banach fixed point theorem. Hence, for any $t \in [0,T_\ast]$, it holds that 
\[
f_\varepsilon(x,v,t) = \Lambda_\varepsilon\big( f_\varepsilon \big) (x,v,t).
\]
Since we have the global energy estimate (\ref{GloEE:ep}) which is independent of $\varepsilon$, by the standard continuous induction argument, we can extend this local solution $f_\varepsilon$ to a global solution $\widetilde{f_\varepsilon} \in L^\infty( [0,\infty); H^1( \mathbf{T}_x^3; L^2(\Omega_v) ) )$ satisfying
\[
\widetilde{f_\varepsilon}(x,v,t) = \Lambda_\varepsilon\big( \widetilde{f_\varepsilon} \big)(x,v,t), \quad \forall \; t \in [0,\infty).
\]
Hence, without loss of generality, we may always assume that $f_\varepsilon$ satisfies $f_\varepsilon = \Lambda_\varepsilon(f_\varepsilon)$ whenever we consider a global solution $f_\varepsilon$ constructed in Lemma \ref{GloSol:AE}.
\end{remark}
\begin{proof}[Proof of Lemma \ref{GloSol:AE} (Uniqueness):]
To prove the uniqueness of the global solution, we appeal the standard energy method as in the derivation of the global energy estimate.
Let $f_\varepsilon, h_\varepsilon \in L^\infty\big( [0,T]; H^1( \mathbf{T}_x^3; L^2(\Omega_v) ) \big)$ be two global solutions to the approximate equation (\ref{AEtoBA}) satisfying the same initial data
\[
f_\varepsilon \bigm|_{t=0} = \Lambda_\varepsilon(f_{\varepsilon, 0}) = h_\varepsilon \bigm|_{t=0}.
\]
Noting Remark \ref{Ctf:fe}, it can be easily observed that $f_\varepsilon - h_\varepsilon = \Lambda_\varepsilon(f_\varepsilon - h_\varepsilon)$ satisfies the equation
\begin{equation} \label{Uniq:AE}
\begin{split}
&\varepsilon^2 \partial_t (f_\varepsilon - h_\varepsilon) + \varepsilon \Lambda_\varepsilon\big( v \cdot \nabla_x (f_\varepsilon - h_\varepsilon) \big) + \frac{1}{\nu_\ast} \Lambda_\varepsilon\big( \mathcal{L}^\varepsilon(f_\varepsilon - h_\varepsilon) \big) \\
&\ \ = \frac{\varepsilon \kappa}{\nu_\ast} \Lambda_\varepsilon\big( \mathcal{L}^\varepsilon(f_\varepsilon^2 - h_\varepsilon^2) \big) - \frac{\varepsilon^2 \kappa^2}{\nu_\ast} \Lambda_\varepsilon(f_\varepsilon^3 - h_\varepsilon^3)
\end{split}
\end{equation}
with initial data $(f_\varepsilon - h_\varepsilon) \bigm|_{t=0} = 0$.
Taking the inner product of equation (\ref{Uniq:AE}) with $f_\varepsilon - h_\varepsilon$ in the $L^2( \mathbf{T}_x^3; L^2(\Omega_v) )$ sense, we deduce that
\begin{align*}
&\frac{1}{2} \frac{\mathrm{d}}{\mathrm{d} t} \| f_\varepsilon - h_\varepsilon \|_{L^2( \mathbf{T}_x^3; L^2(\Omega_v) )}^2 + \frac{1}{\varepsilon^2 \nu_\ast} \| \mathcal{L}^\varepsilon(f_\varepsilon - h_\varepsilon) \|_{L^2( \mathbf{T}_x^3; L^2(\Omega_v) )}^2 \\
&\ \ = \int_{\mathbf{T}^3} \int_\Omega \frac{\kappa}{\varepsilon \nu_\ast} (f_\varepsilon + h_\varepsilon) (f_\varepsilon - h_\varepsilon) \mathcal{L}^\varepsilon(f_\varepsilon - h_\varepsilon) - \frac{\kappa^2}{\nu_\ast} (f_\varepsilon^2 + f_\varepsilon h_\varepsilon + h_\varepsilon^2) (f_\varepsilon - h_\varepsilon)^2 \, dv \, dx \\
&\ \ \leq \frac{\kappa^2}{2 \nu_\ast} \int_{\mathbf{T}^3} \int_\Omega (f_\varepsilon + h_\varepsilon)^2 (f_\varepsilon - h_\varepsilon)^2 \, dv \, dx + \frac{1}{2 \varepsilon^2 \nu_\ast} \int_{\mathbf{T}^3} \int_\Omega \mathcal{L}^\varepsilon(f_\varepsilon - h_\varepsilon)^2 \, dv \, dx \\
&\ \ \ \ - \frac{\kappa^2}{\nu_\ast} \int_{\mathbf{T}^3} \int_\Omega (f_\varepsilon^2 + f_\varepsilon h_\varepsilon + h_\varepsilon^2) (f_\varepsilon - h_\varepsilon)^2 \, dv \, dx \\
&\ \ \leq \frac{1}{2 \varepsilon^2 \nu_\ast} \int_{\mathbf{T}^3} \int_\Omega \mathcal{L}^\varepsilon(f_\varepsilon - h_\varepsilon)^2 \, dv \, dx
\end{align*}
for any $t \geq 0$.
Hence, by the absorption principle, it holds that
\[
\| f_\varepsilon(t) - h_\varepsilon(t) \|_{L^2( \mathbf{T}_x^3; L^2(\Omega_v) )}^2 + \frac{1}{\varepsilon^2 \nu_\ast} \int_0^t \big\| \mathcal{L}^\varepsilon\big( f_\varepsilon - h_\varepsilon \big) (s) \big\|_{L^2( \mathbf{T}_x^3; L^2(\Omega_v) )}^2 \, ds \leq 0
\]
for any $t \geq 0$, i.e., $f_\varepsilon = h_\varepsilon$ almost everywhere.
This completes the proof of Lemma \ref{GloSol:AE}.
\end{proof}
\begin{remark} \label{SolAE:stg}
Different from the standard theory where one considers the velocity $v$ for all $v \in \mathbf{R}^3$, restricting $v$ to a periodic torus $\Omega$ allows us to control the $X$-norm of $\Lambda_\varepsilon\big( v \cdot \nabla_x \Lambda_\varepsilon(f_\varepsilon) \big)$, see estimate (\ref{XE:vdgg}).
As a result, for a global solution $f_\varepsilon$ constructed in Lemma \ref{GloSol:AE}, we can actually show that $\partial_t f_\varepsilon \in L^\infty( [0,\infty); H^1( \mathbf{T}_x^3; L^2(\Omega_v) ) )$, i.e., $f_\varepsilon$ satisfies the approximate equation (\ref{AEtoBA}) globally in the strong sense.
\end{remark}

As a direct application of Lemma \ref{GloSol:AE}, we have the following implication.

\begin{corollary} \label{Conv:RUT}
Let $\{ \varepsilon_n \}_{n \in \mathbf{N}} \subset (0,1)$ be a sequence that converges to zero as $n \to \infty$.
With respect to each $n$, let $f_{\varepsilon_n}$ be the unique global solution to the approximate Boltzmann equation with anomalous smoothing effect (\ref{AEtoBA}) constructed in Lemma \ref{GloSol:AE} with initial value $\Lambda_\varepsilon(f_{\varepsilon_n, 0})$.
Suppose that
\[
\sup_{n \in \mathbf{N}} \| f_{\varepsilon_n, 0} \|_X < \infty.
\]
Let 
\[
\rho^{\varepsilon_n} = \langle f_{\varepsilon_n}, \mathrm{e}_0^{\varepsilon_n} \rangle_v, \quad u^{\varepsilon_n} = \langle f_{\varepsilon_n}, \mathrm{e}_1^{\varepsilon_n} \rangle_v, \quad \theta^{\varepsilon_n} = \langle f_{\varepsilon_n}, \mathrm{e}_2^{\varepsilon_n} \rangle_v.
\]
Then, for each $n \in \mathbf{N}$, it holds that
\begin{align} \label{GEE:RUT}
\| \rho^{\varepsilon_n} \|_{L^\infty( [0,\infty); H^1(\mathbf{T}_x^3) )} + \| u^{\varepsilon_n} \|_{L^\infty( [0,\infty); H^1(\mathbf{T}_x^3) )} + \| \theta^{\varepsilon_n} \|_{L^\infty( [0,\infty); H^1(\mathbf{T}_x^3) )} \leq \sup_{n \in \mathbf{N}} \| f_{\varepsilon_n, 0} \|_X.
\end{align}
By suppressing subsequences, there exist 
\[
f \in L^\infty( [0,\infty); H^1( \mathbf{T}_x^3; L^2(\Omega_v) ) ) \quad \text{and} \quad \rho, u, \theta \in L^\infty( [0,\infty); H^1(\mathbf{T}_x^3) ) 
\]
such that
\[
f_{\varepsilon_n} \overset{\ast}{\rightharpoonup} f \quad \text{and} \quad (\rho^{\varepsilon_n}, u^{\varepsilon_n}, \theta^{\varepsilon_n}) \overset{\ast}{\rightharpoonup} (\rho, u, \theta) \quad \text{as} \quad n \to \infty.
\]
Moreover, we have the convergence
\begin{align} \label{Conv:Lf}
\mathcal{L}^{\varepsilon_n}(f_{\varepsilon_n}) \to 0 \quad \text{in} \quad L^2( [0,\infty); H^1( \mathbf{T}_x^3; L^2(\Omega_v) ) ) \quad \text{as} \quad n \to \infty.
\end{align}
\end{corollary}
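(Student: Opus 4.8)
The plan is to read off all three conclusions directly from the two quantitative inputs already in hand: the global energy estimate (\ref{GloEE:ep}) of Lemma \ref{GloSol:AE}, and the orthonormality of the cutoff Legendre basis $\{ \mathrm{e}_0^\varepsilon, \mathrm{e}_{1,i}^\varepsilon, \mathrm{e}_2^\varepsilon \}_{1 \leq i \leq 3}$ recorded in Section \ref{sub:CF}. No genuinely new idea is needed; the task is to package these estimates correctly.

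First I would establish the uniform bound (\ref{GEE:RUT}). Because the basis is orthonormal in $L^2(\Omega_v)$, the macroscopic projection $\mathcal{P}^{\varepsilon_n}$ defined in (\ref{defP}) is, for each fixed $x$, the orthogonal projection onto its span; this is exactly the computation already carried out in the proof of Proposition \ref{BoLo}, which gives
\[
\| \rho^{\varepsilon_n} \|_{L^2(\mathbf{T}_x^3)}^2 + \| u^{\varepsilon_n} \|_{L^2(\mathbf{T}_x^3)}^2 + \| \theta^{\varepsilon_n} \|_{L^2(\mathbf{T}_x^3)}^2 \leq \int_{\mathbf{T}^3} \int_\Omega |f_{\varepsilon_n}|^2 \, dv \, dx
\]
together with the corresponding inequality for the $x$-derivatives, using that $\nabla_x$ commutes with $\mathcal{P}^{\varepsilon_n}$ as in (\ref{Ex:GLh}). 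Adding these two inequalities yields $\| \rho^{\varepsilon_n} \|_{H^1}^2 + \| u^{\varepsilon_n} \|_{H^1}^2 + \| \theta^{\varepsilon_n} \|_{H^1}^2 \leq \mathcal{E}(f_{\varepsilon_n})^2(t)$ for almost every $t$. The global energy estimate (\ref{GloEE:ep}) bounds the right-hand side by $\mathcal{E}(f_{\varepsilon_n, 0})^2 = \| f_{\varepsilon_n, 0} \|_X^2$, so taking the supremum over $t$ controls each of the three $L^\infty( [0,\infty); H^1 )$-norms by $\| f_{\varepsilon_n, 0} \|_X \leq \sup_n \| f_{\varepsilon_n, 0} \|_X$, which is (\ref{GEE:RUT}).

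Next I would extract the weak-$*$ limits. The same energy estimate gives $\sup_t \mathcal{E}(f_{\varepsilon_n})^2(t) \leq \big( \sup_n \| f_{\varepsilon_n, 0} \|_X \big)^2$, so $\{ f_{\varepsilon_n} \}$ is bounded in $L^\infty( [0,\infty); H^1(\mathbf{T}_x^3; L^2(\Omega_v)) )$ and, by the previous step, $\{ (\rho^{\varepsilon_n}, u^{\varepsilon_n}, \theta^{\varepsilon_n}) \}$ is bounded in $L^\infty( [0,\infty); H^1(\mathbf{T}_x^3) )$. Each of these spaces is the dual of the separable predual obtained by replacing $H^1$ with $H^{-1}$ and $L^\infty$ with $L^1$, the duality $\big( L^1(I; Y) \big)^* = L^\infty(I; Y^*)$ being valid here since $Y^* = H^1$ is a Hilbert space, hence reflexive and enjoying the Radon--Nikodym property. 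Sequential Banach--Alaoglu then furnishes, after passing to a subsequence, the claimed weak-$*$ limits $f$ and $(\rho, u, \theta)$ in the respective topologies. Finally, the strong convergence (\ref{Conv:Lf}) is immediate from the dissipation term in (\ref{GloEE:ep}): since $\mathcal{D}_{\varepsilon_n}(f_{\varepsilon_n}) = \| \mathcal{L}^{\varepsilon_n}(f_{\varepsilon_n}) \|_X$, that estimate reads
\[
\int_0^\infty \| \mathcal{L}^{\varepsilon_n}(f_{\varepsilon_n}) \|_X^2 \, ds \leq \varepsilon_n^2 \nu_\ast \| f_{\varepsilon_n, 0} \|_X^2 \leq \varepsilon_n^2 \nu_\ast \Big( \sup_n \| f_{\varepsilon_n, 0} \|_X \Big)^2,
\]
and the right-hand side tends to $0$ as $n \to \infty$ because $\varepsilon_n \to 0$, which is precisely convergence to $0$ in $L^2( [0,\infty); H^1(\mathbf{T}_x^3; L^2(\Omega_v)) )$.

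The statement is thus a harvest of estimates already proved, and the only point demanding real care is the functional-analytic justification of the weak-$*$ compactness, namely identifying $L^\infty( [0,\infty); H^1 )$ as a dual space with separable predual so that sequential extraction is legitimate. I would emphasize that the corollary as stated does not require identifying $(\rho, u, \theta)$ with the moments of $f$; should that be wanted later, one would combine the weak-$*$ convergence of $f_{\varepsilon_n}$ with the norm convergence $\mathrm{e}_i^{\varepsilon_n} \to \mathrm{e}_i$ coming from (\ref{LpE:Cfv}), but this passage is not needed here.
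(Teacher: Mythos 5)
Your proposal is correct and follows essentially the same route as the paper: the orthonormality of $\{ \mathrm{e}_0^{\varepsilon_n}, \mathrm{e}_{1,i}^{\varepsilon_n}, \mathrm{e}_2^{\varepsilon_n} \}$ turns $\int_\Omega f_{\varepsilon_n}^2\,dv$ into a sum of squares of the moments plus $\mathcal{L}^{\varepsilon_n}(f_{\varepsilon_n})^2$, the global energy estimate (\ref{GloEE:ep}) then gives both (\ref{GEE:RUT}) and the vanishing of the dissipation integral, and weak-$\ast$ compactness is obtained exactly as in the paper by viewing $L^\infty([0,\infty);H^1)$ as the dual of the separable space $L^1([0,\infty);H^{-1})$ (the paper cites \cite[Corollary 3.30]{Brezis} for this). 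The only cosmetic discrepancy is that, as you implicitly note, the sum-of-squares bound yields each individual norm $\leq \sup_n \| f_{\varepsilon_n,0}\|_X$ rather than the sum of the three norms; this harmless constant is glossed over in the paper's own statement as well.
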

\begin{proof}
Let $n \in \mathbf{N}$.
By considering the equality
\[
\langle f_{\varepsilon_n}, \mathrm{e}_i^{\varepsilon_n} \rangle_v = \big\langle \rho^{\varepsilon_n} \mathrm{e}_0^{\varepsilon_n} + u^{\varepsilon_n} \cdot \mathrm{e}_1^{\varepsilon_n} + \theta^{\varepsilon_n} \mathrm{e}_2^{\varepsilon_n} + \mathcal{L}^{\varepsilon_n} (f_{\varepsilon_n}), \mathrm{e}_i^{\varepsilon_n} \big\rangle_v
\]
for each $1 \leq i \leq 3$, the orthogonality of Legendre polynomials $\{\mathrm{e}_0^{\varepsilon_n}, \mathrm{e}_{1,i}^{\varepsilon_n}, \mathrm{e}_{2,j}^{\varepsilon_n} \}_{1 \leq i,j \leq 3}$ implies that
\[
\big\langle \mathcal{L}^{\varepsilon_n}(f_{\varepsilon_n}), \mathrm{e}_i^{\varepsilon_n} \big\rangle_v = 0 \quad \text{for any} \quad 1 \leq i \leq 3.
\]
As a result, it holds that
\begin{align*}
\int_\Omega f_{\varepsilon_n}^2 \, dv &= \big\langle \rho^{\varepsilon_n} \mathrm{e}_0^{\varepsilon_n} + u^{\varepsilon_n} \cdot \mathrm{e}_1^{\varepsilon_n} + \theta^{\varepsilon_n} \mathrm{e}_2^{\varepsilon_n} + \mathcal{L}^{\varepsilon_n}(f_{\varepsilon_n}), \; \rho^{\varepsilon_n} \mathrm{e}_0^{\varepsilon_n} + u^{\varepsilon_n} \cdot \mathrm{e}_1^{\varepsilon_n} + \theta^{\varepsilon_n} \mathrm{e}_2^{\varepsilon_n} + \mathcal{L}^{\varepsilon_n}(f_{\varepsilon_n}) \big\rangle_v \\
&= (\rho^{\varepsilon_n})^2 + |u^{\varepsilon_n}|^2 + (\theta^{\varepsilon_n})^2 + \mathcal{L}^{\varepsilon_n}(f_{\varepsilon_n})^2.
\end{align*}
Since $\nabla_x$ commutes with $\mathcal{L}^\varepsilon$, we also have that
\[
\int_\Omega (\nabla_x f_{\varepsilon_n})^2 \, dv = |\nabla_x \rho^{\varepsilon_n}|^2 + |\nabla_x u^{\varepsilon_n}|^2 + |\nabla_x \theta^{\varepsilon_n}|^2 + \big| \mathcal{L}^{\varepsilon_n}(\nabla_x f_{\varepsilon_n}) \big|^2.
\]
Hence, by Remark \ref{Ctf:fe}, we observe that estimate (\ref{GEE:RUT}) follows trivially from the global energy estimate (\ref{GloEE:ep}) and Lemma \ref{EXCut}.

By estimate (\ref{GEE:RUT}), the global energy estimate (\ref{GloEE:ep}) and Lemma \ref{EXCut}, it can be observed that sequences
\begin{equation}
\begin{array}{cc}
\big\{ \| f_{\varepsilon_n} \|_{L^\infty( [0,\infty); H^1( \mathbf{T}_x^3; L^2(\Omega_v) ) )} \big\}_{n \in \mathbf{N}}, & \big\{ \| \rho^{\varepsilon_n} \|_{L^\infty( [0,\infty); H^1(\mathbf{T}_x^3) )} \big\}_{n \in \mathbf{N}}, \\
\big\{ \| u^{\varepsilon_n} \|_{L^\infty( [0,\infty); H^1(\mathbf{T}_x^3) )} \big\}_{n \in \mathbf{N}}, & \big\{ \| \theta^{\varepsilon_n} \|_{L^\infty( [0,\infty); H^1(\mathbf{T}_x^3) )} \big\}_{n \in \mathbf{N}}
\end{array}
\end{equation}
are all uniformly bounded.
Since both $L^1( [0,\infty); H^{-1}( \mathbf{T}_x^3; L^2(\Omega_v) ) )$ and $L^1( [0,\infty); H^{-1}(\mathbf{T}_x^3) )$ are separable, by \cite[Corollary 3.30]{Brezis} we can conclude by suppressing subsequences that there exist
\[
f \in L^\infty\big( [0,\infty); H^1( \mathbf{T}_x^3; L^2(\Omega_v) ) \big) \quad \text{and} \quad \rho, u, \theta \in L^\infty( [0,\infty); H^1(\mathbf{T}_x^3) ) 
\]
such that
\[
f_{\varepsilon_n} \overset{\ast}{\rightharpoonup} f \quad \text{and} \quad (\rho^{\varepsilon_n}, u^{\varepsilon_n}, \theta^{\varepsilon_n}) \overset{\ast}{\rightharpoonup} (\rho, u, \theta) \quad \text{as} \quad n \to \infty.
\]
Finally, we also know from the global energy estimate (\ref{GloEE:ep}) and Lemma \ref{EXCut} that
\[
\int_0^\infty \big\| \mathcal{L}^{\varepsilon_n}(f_{\varepsilon_n}) \big\|_X^2 (s) \, ds = \int_0^\infty \mathcal{D}_{\varepsilon_n}\big( f_{\varepsilon_n} \big)^2 (s) \, ds \leq \varepsilon_n^2 \nu_\ast \sup_{n \in \mathbf{N}} \| f_{\varepsilon_n, 0} \|_X \to 0
\]
as $n \to \infty$, we obtain convergence (\ref{Conv:Lf}).
This completes the proof of Corollary \ref{Conv:RUT}.
\end{proof}
\begin{remark} \label{Bdd:RUT}
Following the context of Corollary \ref{Conv:RUT}, we can conclude by the uniform estimate (\ref{GEE:RUT}) that
\begin{align*}
&\| \rho \|_{L^\infty( [0,\infty); H^1(\mathbf{T}_x^3) )} \leq \underset{n \to \infty}{\operatorname{liminf}} \, \| \rho^{\varepsilon_n} \|_{L^\infty( [0,\infty); H^1(\mathbf{T}_x^3) )} \leq \sup_{n \in \mathbf{N}} \| f_{\varepsilon_n, 0} \|_X, \\
&\| u \|_{L^\infty( [0,\infty); H^1(\mathbf{T}_x^3) )} \leq \underset{n \to \infty}{\operatorname{liminf}} \, \| u^{\varepsilon_n} \|_{L^\infty( [0,\infty); H^1(\mathbf{T}_x^3) )} \leq \sup_{n \in \mathbf{N}} \| f_{\varepsilon_n, 0} \|_X, \\
&\| \theta \|_{L^\infty( [0,\infty); H^1(\mathbf{T}_x^3) )} \leq \underset{n \to \infty}{\operatorname{liminf}} \, \| \theta^{\varepsilon_n} \|_{L^\infty( [0,\infty); H^1(\mathbf{T}_x^3) )} \leq \sup_{n \in \mathbf{N}} \| f_{\varepsilon_n, 0} \|_X,
\end{align*}
since $(\rho^{\varepsilon_n}, u^{\varepsilon_n}, \theta^{\varepsilon_n}) \overset{\ast}{\rightharpoonup} (\rho, u, \theta)$ in the weak-$\ast$ topology 
\begin{align*}
\sigma\big( L^\infty( [0,\infty); H^1(\mathbf{T}_x^3) ), L^1( [0,\infty); H^{-1}(\mathbf{T}_x^3) ) \big)
\end{align*}
as $n \to \infty$; see e.g. \cite[Proposition 3.13]{Brezis}.
\end{remark}
\begin{remark} \label{Hire:fep}
We would like to emphasize that the solution $f_\varepsilon$ constructed in Lemma \ref{GloSol:AE} has higher regularity if time is considered locally. Indeed, to prove this claim we apply the differentiation $\partial_x^\alpha$ to the sequence $\{ g_{\varepsilon, j} \}_{j \in \mathbf{N}_0}$. Then, we switch the computation order of $\partial_x^\alpha$ with the microscopic projection $\mathcal{L}^\varepsilon$ and the cutoff operator $\Lambda_\varepsilon$. Since Lemma \ref{Re:BerL} allows us to estimate arbitrary number of times of differentiation of $\{ g_{\varepsilon, j} \}_{j \in \mathbf{N}_0}$, we can follow similar arguments as in the proof of the local existence part of Lemma \ref{GloSol:AE} to show that
\[
f_\varepsilon \in L^\infty( [0,T]; H^N( \mathbf{T}_x^3; L^2(\Omega_v) ) )
\]
for any fixed $T>0$ and $N \geq 1$.
\end{remark}

\section{Formal derivation of the Navier-Stokes-Fourier type system} \label{sec:ConNS} 

Let $\varepsilon \in (0,1)$. For simplicity of notations, we set $v_i^\varepsilon := \Lambda_\varepsilon^{v_i} (v_i)$ for $1 \leq i \leq 3$. Note that $v_i^\varepsilon$ is odd in $v_i$ with respect to the origin.
We let $v_\varepsilon := (v^\varepsilon_1, v^\varepsilon_2, v^\varepsilon_3)$.
For any $h \in L^2(\Omega_v)$, we define the notation $\langle h \rangle$ by 
\[
\langle h \rangle := \int_\Omega h \, dv.
\]
In addition, we define that
\[
v_\varepsilon^2 := \sum_{i=1}^3 \Lambda_\varepsilon^{v_i} (v_i^2).
\]
In contrast with $v_i^\varepsilon$, $\Lambda_\varepsilon^{v_i} (v_i^2)$ is even in $v_i$ with respect to the origin.

The basic setting of this chapter is as follows.
Let $f_0 \in H^1( \mathbf{T}_x^3; L^2(\Omega_v) )$. 
For each $\varepsilon \in (0,1)$, let $f_\varepsilon$ be the unique global solution to the approximate equation (\ref{AEtoBA}) satisfying $f_\varepsilon = \Lambda_\varepsilon(f_\varepsilon)$, $f_\varepsilon \bigm|_{t=0} = \Lambda_\varepsilon(f_0)$ and the global energy inequality
\begin{align*}
\sup_{t \geq 0} \mathcal{E}\big( f_\varepsilon \big)^2 (t) + \frac{1}{\varepsilon^2 \nu_\ast} \int_0^\infty \mathcal{D}_\varepsilon \big( f_\varepsilon \big)^2 (s) \, ds \leq \| f_0 \|_X^2.
\end{align*}
Taking the $L^2$ inner product with respect to $v$ of the approximate equation (\ref{AEtoBA}) with $\mathrm{e}_0^\varepsilon$, $\mathrm{e}_1^\varepsilon$ and $\mathrm{e}_2^\varepsilon$, we obtain that
\begin{eqnarray} \label{ipAE1v}
\left\{
\begin{array}{lcl}
\varepsilon \partial_{t} \big\langle \mathrm{e}_0^\varepsilon f_\varepsilon \big\rangle + \nabla_{x} \cdot \big\langle \mathrm{e}_0^\varepsilon v f_\varepsilon \big\rangle = - \frac{\varepsilon \kappa^2}{\nu_\ast} \big\langle \mathrm{e}_0^\varepsilon f_\varepsilon^3 \big\rangle, \\
\varepsilon \partial_{t} \big\langle \mathrm{e}_1^\varepsilon f_\varepsilon \big\rangle + \nabla_{x} \cdot \big\langle (\mathrm{e}_1^\varepsilon \otimes v) f_\varepsilon \big\rangle= - \frac{\varepsilon \kappa^2}{\nu_\ast} \big\langle \mathrm{e}_1^\varepsilon f_\varepsilon^3 \big\rangle, \\
\varepsilon \partial_t \big\langle \mathrm{e}_2^\varepsilon f_\varepsilon \big\rangle + \nabla_x \cdot \big\langle \mathrm{e}_2^\varepsilon v f_\varepsilon \big\rangle = - \frac{\varepsilon \kappa^2}{\nu_\ast} \big\langle \mathrm{e}_2^\varepsilon f_\varepsilon^3 \big\rangle.
\end{array}
\right.
\end{eqnarray}
Since $\mathrm{e}_0^\varepsilon = 1$ and
\[
\langle v, f_\varepsilon \rangle_v = \langle v, \Lambda_\varepsilon (f_\varepsilon) \rangle_v = \langle \Lambda_\varepsilon (v), f_\varepsilon \rangle_v,
\]
i.e., it holds that $\big\langle \mathrm{e}_0^\varepsilon v f_\varepsilon \big\rangle = \langle v_\varepsilon f_\varepsilon \rangle$.
By definitions of $\rho^\varepsilon$, $u^\varepsilon$ and $\theta^\varepsilon$, we can see that system (\ref{ipAE1v}) can be further rewritten as
\begin{eqnarray} \label{ruipAE}
\left\{
\begin{array}{lcl}
\varepsilon \partial_{t} \rho^\varepsilon + \frac{1}{c_1^\varepsilon} \nabla_{x} \cdot u^\varepsilon = - \frac{\varepsilon \kappa^2}{\nu_\ast} \langle f_\varepsilon^3 \rangle, \\
\varepsilon \partial_{t} u^\varepsilon + c_1^\varepsilon \nabla_{x} \cdot \big\langle (v_\varepsilon \otimes v) f_\varepsilon \big\rangle= - \frac{\varepsilon c_1^\varepsilon \kappa^2}{\nu_\ast} \big\langle v_\varepsilon f_\varepsilon^3 \big\rangle, \\
\varepsilon \partial_t \theta^\varepsilon + \nabla_x \cdot \big\langle \mathrm{e}_2^\varepsilon v f_\varepsilon \big\rangle = - \frac{\varepsilon \kappa^2}{\nu_\ast} \big\langle \mathrm{e}_2^\varepsilon f_\varepsilon^3 \big\rangle.
\end{array}
\right.
\end{eqnarray}

\subsection{Derivation of the divergence free condition} 
\label{sub:BS}

Taking the limit $\varepsilon \to 0$, the first equation of system (\ref{ruipAE}) implies that the limit function of $u^\varepsilon$ in the sense of Corollary \ref{Conv:RUT} is divergence free. 
Suppose that $\psi \in L^\infty( [0,\infty); L^2(\mathbf{T}_x^3) )$ and $\psi \in W^{1,1}( [0,\infty); L^2(\mathbf{T}_x^3) )$.
By the global estimate (\ref{GEE:RUT}), we have that
\begin{align} \label{E2:divfree}
\left| \int_0^\infty \int_{\mathbf{T}^3} \rho^{\varepsilon}(x,t) \big( \partial_t \psi \big) (x,t) \, dx \, dt \right| \leq \| f_0 \|_X \| \partial_t \psi \|_{L^1( [0, \infty); L^2(\mathbf{T}_x^3) )}.
\end{align}
By Lemma \ref{Re:BerL}, we see that
\begin{align} \label{E1:divfree}
\big| \langle f_\varepsilon^3 \rangle \big| \leq \| f_\varepsilon \|_{L^3(\Omega_v)}^3 \lesssim \frac{1}{\varepsilon^{9 \gamma}} \| f_\varepsilon \|_{L^1(\Omega_v)}^3.
\end{align}
Hence, by applying H$\ddot{\text{o}}$lder's inequality, followed by estimate (\ref{E1:divfree}) and then Minkowski's integral inequality and finally the Sobolev embedding of $H^1(\mathbf{T}_x^3)$ in $L^6(\mathbf{T}_x^3)$, it can be deduced that for any $t \in [0,\infty)$,
\begin{equation} \label{E:fep3}
\begin{split}
\left| \int_{\mathbf{T}^3} \langle f_\varepsilon^3 \rangle \psi(x,t) \, dx \right| &\lesssim \frac{1}{\varepsilon^{9 \gamma}} \left( \int_{\mathbf{T}^3} \| f_\varepsilon(x,t) \|_{L^1(\Omega_v)}^6 \, dx \right)^{\frac{1}{2}} \| \psi(t) \|_{L^2(\mathbf{T}_x^3)} \\
&\leq \frac{1}{\varepsilon^{9 \gamma}} \left( \int_{\Omega} \| f_\varepsilon(v,t) \|_{L^6(\mathbf{T}_x^3)} \, dv \right)^3 \| \psi(t) \|_{L^2(\mathbf{T}_x^3)} \\
&\lesssim \frac{1}{\varepsilon^{9 \gamma}} \left( \int_{\Omega} \| f_\varepsilon(v,t) \|_{H^1(\mathbf{T}_x^3)} \, dv \right)^3 \| \psi(t) \|_{L^2(\mathbf{T}_x^3)} \\
&\lesssim \frac{1}{\varepsilon^{9 \gamma}} \| f_0 \|_X^3 \| \psi(t) \|_{L^2(\mathbf{T}_x^3)}.
\end{split}
\end{equation}
Hence, by the first equation of system (\ref{ruipAE}), we deduce that
\begin{equation} \label{Est:tdivu}
\begin{split}
&\left| \int_0^\infty \int_{\mathbf{T}^3} \psi \operatorname{div} u^\varepsilon \, dx \, dt \right| \\
&\ \ \lesssim \varepsilon \left| \int_{\mathbf{T}^3} \rho^\varepsilon(0) \psi(0) \, dx \right| + \varepsilon \left| \int_0^\infty \int_{\mathbf{T}^3} \rho^\varepsilon \partial_t \psi \, dx \, dt \right| + \varepsilon \left| \int_0^\infty \int_{\mathbf{T}^3} \langle f_\varepsilon^3 \rangle \psi(x,t) \, dx \, dt \right| \\
&\ \ \lesssim \varepsilon \| f_0 \|_X \big( \| \psi \|_{L^\infty( [0,\infty); L^2(\mathbf{T}_x^3) )} + \| \partial_t \psi \|_{L^1( [0,\infty); L^2(\mathbf{T}_x^3) )} \big) + \varepsilon^{1 - 9 \gamma} \| f_0 \|_X^3 \| \psi \|_{L^1( [0,\infty); L^2(\mathbf{T}_x^3) )}.
\end{split}
\end{equation}
We take $\gamma < \frac{1}{9}$. Since $u^\varepsilon \overset{\ast}{\rightharpoonup} u$ in $L^\infty( [0,\infty); H^1(\mathbf{T}_x^3) )$, it holds that
\begin{align} \label{divu:cv}
\int_0^\infty \int_{\mathbf{T}^3} \psi \operatorname{div} (u^\varepsilon - u) \, dx \, dt = - \int_0^\infty \int_{\mathbf{T}^3} (u^\varepsilon - u) \cdot \nabla_x \psi \, dx \, dt \to 0
\end{align}
as $\varepsilon \to 0$ since $\nabla_x \psi \in L^1( [0,\infty); H^{-1}(\mathbf{T}^3) )$.
Combining (\ref{Est:tdivu}) with (\ref{divu:cv}), for any 
\[
\psi \in L^\infty( [0,\infty); H^1(\mathbf{T}_x^3) ) \cap W^{1,1}( [0,\infty); H^1(\mathbf{T}_x^3) ),
\]
we conclude that
\begin{align} \label{divu:0}
- \int_0^\infty \int_{\mathbf{T}^3} u \cdot \nabla_x \psi \, dx \, dt = \int_0^\infty \int_{\mathbf{T}^3} \psi \operatorname{div} u \, dx \, dt = 0. 
\end{align}
\begin{remark} \label{divuep:0}
It is easy to observe by (\ref{Est:tdivu}) that
\begin{equation*}
\nabla_x \cdot u^\varepsilon \to 0 \quad \text{as} \quad \varepsilon \to 0
\end{equation*}
in the sense of distributions.
\end{remark}
%

\subsection{Rewriting the key term $\varepsilon^{-1} \nabla_x \cdot \langle (v_\varepsilon \otimes v) f_\varepsilon \rangle$} 
\label{sub:ReMome}

We shall begin with the definition of the cutoff in Fourier space version of matrix $A$.
Note that $\operatorname{ker}(\mathcal{L}^\varepsilon) = \operatorname{span} \{ 1, v_\varepsilon, v_\varepsilon^2 \}$ where
\[
\operatorname{span} \{ 1, v_\varepsilon, v_\varepsilon^2 \} := \{ r_1 + r_2 v_1^\varepsilon + r_3 v_2^\varepsilon + r_4 v_3^\varepsilon + r_5 v_\varepsilon^2 \bigm| r_1, r_2, r_3, r_4, r_5 \in \mathbf{R} \}.
\]

\begin{lemma} \label{Ma:Aep}
Let $\varepsilon>0$ be sufficiently small. Then, there exist constants $a_\varepsilon$ and $b_\varepsilon$ such that the matrix
\[
A_\varepsilon := v_\varepsilon \otimes v - (a_\varepsilon v_\varepsilon^2 + b_\varepsilon) \mathrm{I} \in \mathrm{ker}^\perp(\mathcal{L}^\varepsilon)
\]
where $\mathrm{I}$ denotes the identity matrix and $\mathrm{ker}^\perp(\mathcal{L}^\varepsilon)$ represents the annihilator of $\mathrm{ker}(\mathcal{L}^\varepsilon)$, i.e., we can find constants $a_\varepsilon$ and $b_\varepsilon$ such that
\begin{align} \label{Con:aebe}
\langle A_\varepsilon, 1 \rangle_v = \langle A_\varepsilon, v_\varepsilon^2 \rangle_v = 0 \quad \text{and} \quad \langle A_\varepsilon, v_i^\varepsilon \rangle_v = 0 \quad \forall \quad 1 \leq i \leq 3.
\end{align}
In particular, it holds that
\[
a_\varepsilon \to \frac{1}{3} \quad \text{and} \quad b_\varepsilon \to 0 \quad \text{as} \quad \varepsilon \to 0.
\]
\end{lemma}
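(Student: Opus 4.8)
The plan is to exploit the parity structure of the cutoff Legendre-type functions to show that the matrix conditions (\ref{Con:aebe}) collapse to a single scalar $2\times 2$ linear system for the pair $(a_\varepsilon,b_\varepsilon)$, and then to check that this system is nondegenerate in the limit $\varepsilon\to 0$.

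First I would reduce the number of genuine conditions by a parity argument. Writing $A_\varepsilon$ entrywise, the off-diagonal $(k,\ell)$-entry ($k\neq\ell$) is $v_k^\varepsilon v_\ell$, while the correction $(a_\varepsilon v_\varepsilon^2+b_\varepsilon)\mathrm{I}$ only touches the diagonal. Recall that each $v_i^\varepsilon=\Lambda_\varepsilon^{v_i}(v_i)$ is odd in $v_i$ and independent of the other variables, while $v_\varepsilon^2=\sum_m\Lambda_\varepsilon^{v_m}(v_m^2)$ is even in every variable, and $\Omega$ is symmetric about the origin. Using the tensor-product structure of the integrals over $\Omega$, one checks that in every off-diagonal entry at least one single-variable integral is of an odd function over a symmetric interval, so each off-diagonal entry integrates to zero against $1$, against every $v_j^\varepsilon$, and against $v_\varepsilon^2$. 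The same parity reasoning shows every diagonal entry is orthogonal to each $v_j^\varepsilon$. Hence all of (\ref{Con:aebe}) holds automatically except $\langle (A_\varepsilon)_{ii},1\rangle_v=0$ and $\langle (A_\varepsilon)_{ii},v_\varepsilon^2\rangle_v=0$, and by invariance under permuting the coordinates these two conditions are independent of $i$.

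Next I would evaluate the relevant scalar integrals and assemble the linear system. Since the cutoff preserves the zeroth Fourier mode, $\int_{-1/2}^{1/2}\Lambda_\varepsilon^{v_m}(v_m^2)\,dv_m=\int_{-1/2}^{1/2}v_m^2\,dv_m=\tfrac1{12}$, so $\langle v_\varepsilon^2\rangle=\tfrac14$ exactly and $\langle (v_\varepsilon^2)^2\rangle=3R_\varepsilon+\tfrac1{24}$, where $R_\varepsilon:=\int_{-1/2}^{1/2}(\Lambda_\varepsilon^{v_1}(v_1^2))^2\,dv_1$ and the $\tfrac1{24}$ collects the six cross terms. Setting $P_\varepsilon:=\int_{-1/2}^{1/2}v_1^\varepsilon v_1\,dv_1=\|\Lambda_\varepsilon^{v_1}(v_1)\|_{L^2}^2$ and $Q_\varepsilon:=\int_{-1/2}^{1/2}v_1^\varepsilon v_1\,\Lambda_\varepsilon^{v_1}(v_1^2)\,dv_1$, a direct computation (the two indices $m\neq i$ contribute $\tfrac{P_\varepsilon}{12}$ each) gives $\langle v_i^\varepsilon v_i,v_\varepsilon^2\rangle_v=\tfrac16 P_\varepsilon+Q_\varepsilon$. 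The two surviving conditions therefore read
\[
\tfrac14 a_\varepsilon+b_\varepsilon=P_\varepsilon,\qquad \big(3R_\varepsilon+\tfrac1{24}\big)a_\varepsilon+\tfrac14 b_\varepsilon=\tfrac16 P_\varepsilon+Q_\varepsilon .
\]

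Finally I would solve and pass to the limit. The determinant of this system is $D_\varepsilon=\tfrac1{16}-3R_\varepsilon-\tfrac1{24}$. By (\ref{LpE:Cfv}), $\Lambda_\varepsilon^{v_1}(v_1^s)\to v_1^s$ in $L^2(\Omega_v)$, hence $P_\varepsilon\to\tfrac1{12}$, $Q_\varepsilon\to\int_{-1/2}^{1/2}v^4\,dv=\tfrac1{80}$ and $R_\varepsilon\to\tfrac1{80}$, so $D_\varepsilon\to\tfrac1{16}-\tfrac3{80}-\tfrac1{24}=-\tfrac1{60}\neq 0$. Thus for $\varepsilon$ sufficiently small $D_\varepsilon\neq 0$, the system has a unique solution $(a_\varepsilon,b_\varepsilon)$, and taking $\varepsilon\to 0$ yields $a_\varepsilon\to\tfrac13$ together with $b_\varepsilon=P_\varepsilon-\tfrac14 a_\varepsilon\to\tfrac1{12}-\tfrac1{12}=0$. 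I expect the main obstacle to be the careful bookkeeping of the parity argument that eliminates all but two conditions, and the accurate evaluation of the limiting integrals so that the nonvanishing of the limiting determinant is firmly established.
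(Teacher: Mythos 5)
Your proposal is correct and follows essentially the same route as the paper: a parity argument reduces the orthogonality conditions to a single $2\times 2$ linear system for $(a_\varepsilon,b_\varepsilon)$, the limiting determinant is computed to be $\pm\tfrac{1}{60}\neq 0$ so the system is uniquely solvable for small $\varepsilon$, and the limits $a_\varepsilon\to\tfrac13$, $b_\varepsilon\to 0$ follow from the $L^2$-convergence (\ref{LpE:Cfv}) of the cutoff polynomials. The only (harmless) difference is that you exploit the exact identity $\langle\Lambda_\varepsilon^{v_m}(v_m^2)\rangle=\tfrac1{12}$ (the cutoff preserves the zeroth Fourier mode), whereas the paper only uses that this quantity converges to $\tfrac1{12}$.
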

\begin{proof}
Note that for $i \neq j$, the function $v_i^\varepsilon v_j \Lambda_\varepsilon (v_k^2)$ is odd for any $1 \leq k \leq 3$, i.e., it holds that
\[
\big\langle v_i^\varepsilon v_j, \Lambda_\varepsilon (v_k^2) \big\rangle_v = 0 \quad \forall \; 1 \leq k \leq 3 \quad \text{for} \quad i \neq j.
\]
Moreover, the value of $\big\langle v_i^\varepsilon v_i, \Lambda_\varepsilon (v_j^2) \big\rangle_v$ is independent of $1 \leq i,j \leq 3$ as long as $i \neq j$ and the value of $\big\langle v_i^\varepsilon v_i, \Lambda_\varepsilon (v_i^2) \big\rangle_v$ is independent of $1 \leq i \leq 3$.
Since $v_i^\varepsilon \Lambda_\varepsilon (v_k^2)$ is odd in $v$ for any $1 \leq k \leq 3$, we have that
\[
\langle A_\varepsilon, v_i^\varepsilon \rangle_v = 0 \quad \forall \; 1 \leq k \leq 3.
\]
Hence, to find constants $a_\varepsilon$ and $b_\varepsilon$ which satisfy relation (\ref{Con:aebe}), it is sufficient to solve the system of equations 
\begin{align*}
3 a_\varepsilon \big( \big\langle \Lambda_\varepsilon (v_i^2), \Lambda_\varepsilon (v_i^2) \big\rangle_v + 2 \big\langle \Lambda_\varepsilon (v_i^2), \Lambda_\varepsilon (v_j^2) \big\rangle_v \big) + 3 b_\varepsilon \big\langle \Lambda_\varepsilon (v_i^2) \big\rangle &= \big\langle v_i^\varepsilon v_i, \Lambda_\varepsilon (v_i^2) \big\rangle_v + 2 \big\langle v_i^\varepsilon v_i, \Lambda_\varepsilon (v_j^2) \big\rangle_v, \\
3 a_\varepsilon \big\langle \Lambda_\varepsilon (v_i^2) \big\rangle + b_\varepsilon &= \langle v_i^\varepsilon v_i \rangle
\end{align*}
simultaneously for $i \neq j$. 

By convergence (\ref{LpE:Cfv}), we can easily deduce that
\begin{align*}
\big| \big\langle \Lambda_\varepsilon (v_i^2), \Lambda_\varepsilon (v_j^2) \big\rangle_v -  \langle v_i^2, v_j^2 \rangle_v \big| &\leq \big| \big\langle \Lambda_\varepsilon (v_i^2) - v_i^2, \Lambda_\varepsilon (v_j^2) \big\rangle_v \big| + \big| \big\langle v_i^2, \Lambda_\varepsilon (v_j^2) - v_j^2 \big\rangle_v \big| \\
&\leq \big\| \Lambda_\varepsilon (v_i^2) - v_i^2 \big\|_{L^2(\Omega_v)} \Big( \big\| \Lambda_\varepsilon (v_i^2) \big\|_{L^2(\Omega_v)} + \| v_i^2 \|_{L^2(\Omega_v)} \Big) \to 0
\end{align*}
and
\begin{align*}
\big| \big\langle \Lambda_\varepsilon (v_i^2) - v_i^2 \big\rangle \big| &\leq \big\| \Lambda_\varepsilon (v_i^2) - v_i^2 \big\|_{L^2(\Omega_v)} \to 0
\end{align*}
as $\varepsilon \to 0$ for any $1 \leq i,j \leq 3$.
We define that
\[
D_\varepsilon :=
\begin{pmatrix}
3 \big\langle \Lambda_\varepsilon (v_i^2), \Lambda_\varepsilon (v_i^2) \big\rangle_v + 6 \big\langle \Lambda_\varepsilon (v_i^2), \Lambda_\varepsilon (v_j^2) \big\rangle_v & 3 \big\langle \Lambda_\varepsilon (v_i^2) \big\rangle \\
3 \big\langle \Lambda_\varepsilon (v_i^2) \big\rangle & 1
\end{pmatrix}. 
\]
Since $\langle v_i^2, v_j^2 \rangle_v = \langle v_i^2 \rangle^2$ for $i \neq j$, we observe that
\begin{align*}
|\mathrm{det} (D_\varepsilon)| &= \big| 3 \big\langle \Lambda_\varepsilon (v_i^2), \Lambda_\varepsilon (v_i^2) \big\rangle_v + 6 \big\langle \Lambda_\varepsilon (v_i^2), \Lambda_\varepsilon (v_j^2) \big\rangle_v - 9 \big\langle \Lambda_\varepsilon (v_i^2) \big\rangle^2 \big| \\
&\to 3 | \langle v_i^2, v_i^2 \rangle_v - \langle v_i^2 \rangle^2 | = \frac{1}{60} \quad \text{as} \quad \varepsilon \to 0
\end{align*}
where $\mathrm{det} (D_\varepsilon)$ denotes the determinant of $D_\varepsilon$, i.e., the matrix $D_\varepsilon$ is invertible for $\varepsilon$ sufficiently small. 
As a result, if $\varepsilon$ is sufficiently small, then constants $a_\varepsilon$ and $b_\varepsilon$ can be uniquely determined by the inversion formula
\[
\binom{a_\varepsilon}{b_\varepsilon} = D_\varepsilon^{-1} \cdot \binom{ \big\langle v_i^\varepsilon v_i, \Lambda_\varepsilon (v_i^2) \big\rangle_v + 2 \big\langle v_i^\varepsilon v_i, \Lambda_\varepsilon (v_j^2) \big\rangle_v}{\langle v_i^\varepsilon v_i \rangle}
\]
where 
\[
D_\varepsilon^{-1} = \frac{1}{\mathrm{det}(D_\varepsilon)}
\begin{pmatrix}
1 & -3 \big\langle \Lambda_\varepsilon (v_i^2) \big\rangle \\
-3 \big\langle \Lambda_\varepsilon (v_i^2) \big\rangle & 3 \big\langle \Lambda_\varepsilon (v_i^2), \Lambda_\varepsilon (v_i^2) \big\rangle_v + 6 \big\langle \Lambda_\varepsilon (v_i^2), \Lambda_\varepsilon (v_j^2) \big\rangle_v
\end{pmatrix}.
\]

For $j \neq i$, we have that
\[
\big\langle v_i^\varepsilon v_i, \Lambda_\varepsilon (v_j^2) \big\rangle_v = \big\langle v_i^\varepsilon v_i, \Lambda_\varepsilon^{v_j} (v_j^2) \big\rangle_v = \big\langle \Lambda_\varepsilon^{v_i} (v_i^2) \big\rangle \langle v_i^\varepsilon v_i \rangle.
\]
Since 
\[
| \langle v_i^\varepsilon v_i - v_i^2 \rangle | \leq \| v_i \|_{L^\infty(\Omega_v)} \| v_i^\varepsilon - v_i \|_{L^2(\Omega_v)} \to 0
\]
as $\varepsilon \to 0$, we deduce by convergence (\ref{LpE:Cfv}) that
\[
\big| \big\langle v_i^\varepsilon v_i, \Lambda_\varepsilon (v_j^2) \big\rangle_v \big| \to \frac{1}{144} \quad \text{as} \quad \varepsilon \to 0.
\]
On the other hand, we have that
\begin{align*}
\big| \big\langle v_i^\varepsilon v_i, \Lambda_\varepsilon (v_i^2) - v_i^2 \big\rangle_v \big| \leq \| v_i \|_{L^\infty(\Omega_v)} \| v_i^\varepsilon \|_{L^2(\Omega_v)} \big\| \Lambda_\varepsilon (v_i^2) - v_i^2 \big\|_{L^2(\Omega_v)}.
\end{align*}
By convergence (\ref{LpE:Cfv}) again, it can be concluded that
\[
\big| \big\langle v_i^\varepsilon v_i, \Lambda_\varepsilon (v_i^2) \big\rangle_v \big| \to | \langle v_i^4 \rangle | = \frac{1}{80} \quad \text{as} \quad \varepsilon \to 0.
\]
As a result, we show that
\[
D_\varepsilon^{-1} \to 
\begin{pmatrix}
60 & -15 \\
-15 & \frac{19}{4}
\end{pmatrix}
\quad \text{and} \quad
\binom{ \big\langle v_i^\varepsilon v_i, \Lambda_\varepsilon (v_i^2) \big\rangle_v + 2 \big\langle v_i^\varepsilon v_i, \Lambda_\varepsilon (v_j^2) \big\rangle_v}{\langle v_i^\varepsilon v_i \rangle} \to \binom{\frac{19}{720}}{\frac{1}{12}}
\]
as $\varepsilon \to 0$, i.e., $a_\varepsilon \to \frac{1}{3}$ and $b_\varepsilon \to 0$ as $\varepsilon \to 0$.
\end{proof}

Next, we decompose $\nabla_x \cdot \big\langle (v_\varepsilon \otimes v) f_\varepsilon \big\rangle$ with respect to matrix $A_\varepsilon$, i.e., we have that
\begin{align*}
\nabla_x \cdot \big\langle (v_\varepsilon \otimes v) f_\varepsilon \big\rangle &= \nabla_x \cdot \langle A_\varepsilon f_\varepsilon \rangle + \nabla_x \cdot \big\langle f_\varepsilon (a_\varepsilon v_\varepsilon^2 + b_\varepsilon) I \big\rangle \\
&= \nabla_x \cdot \langle A_\varepsilon f_\varepsilon \rangle + \frac{a_\varepsilon}{c_2^\varepsilon} \nabla_x \theta^\varepsilon + \bigg( \frac{3 a_\varepsilon c_0^\varepsilon}{c_2^\varepsilon} + b_\varepsilon \bigg) \nabla_x \rho^\varepsilon.
\end{align*}
Since $A_\varepsilon \in \mathrm{ker}^\perp(\mathcal{L}^\varepsilon)$, it holds that
\begin{align} \label{InnP:ALfe}
\frac{1}{\varepsilon} \nabla_x \cdot \langle A_\varepsilon f_\varepsilon \rangle = \nabla_x \cdot \left\langle A_\varepsilon \frac{1}{\varepsilon} \mathcal{L}^\varepsilon(f_\varepsilon) \right\rangle.
\end{align}
In order to rewrite the right hand side of equation (\ref{InnP:ALfe}), we need the following tool.
\begin{lemma} \label{CommuCP}
For $h \in L^2( \mathbf{T}_x^3; L^2(\Omega_v) )$, it holds that
\[
\Lambda_\varepsilon \big( \mathcal{P}^\varepsilon (h) \big) = \mathcal{P}^\varepsilon \big( \Lambda_\varepsilon (h) \big) \quad \text{and} \quad \Lambda_\varepsilon \big( \mathcal{L}^\varepsilon (h) \big) = \mathcal{L}^\varepsilon \big( \Lambda_\varepsilon (h) \big),
\]
i.e., the cutoff in Fourier space operator $\Lambda_\varepsilon$ commutes with both the macroscopic projection $\mathcal{P}^\varepsilon$ and the microscopic projection $\mathcal{L}^\varepsilon$.
\end{lemma}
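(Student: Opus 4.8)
The plan is to reduce the statement to the single commutation identity $\Lambda_\varepsilon \circ \mathcal{P}^\varepsilon = \mathcal{P}^\varepsilon \circ \Lambda_\varepsilon$ on $L^2( \mathbf{T}_x^3; L^2(\Omega_v) )$; the corresponding identity for $\mathcal{L}^\varepsilon = I - \mathcal{P}^\varepsilon$ then follows at once from the linearity of $\Lambda_\varepsilon$, since $\Lambda_\varepsilon(\mathcal{L}^\varepsilon(h)) = \Lambda_\varepsilon(h) - \Lambda_\varepsilon(\mathcal{P}^\varepsilon(h)) = \Lambda_\varepsilon(h) - \mathcal{P}^\varepsilon(\Lambda_\varepsilon(h)) = \mathcal{L}^\varepsilon(\Lambda_\varepsilon(h))$. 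The starting observation is that each basis function $\mathrm{e}_0^\varepsilon, \mathrm{e}_{1,i}^\varepsilon, \mathrm{e}_2^\varepsilon$ depends on $v$ alone and is left invariant by the velocity cutoff, i.e. $\Lambda_\varepsilon^v(\mathrm{e}_j^\varepsilon) = \mathrm{e}_j^\varepsilon$. Indeed $\mathrm{e}_0^\varepsilon = 1$ and $\Lambda_\varepsilon^v(1) = 1$; for $\mathrm{e}_{1,i}^\varepsilon = c_1^\varepsilon \Lambda_\varepsilon^{v_i}(v_i)$ one combines the idempotency $(\Lambda_\varepsilon^{v_i})^2 = \Lambda_\varepsilon^{v_i}$ with the fact that $\Lambda_\varepsilon^{v_j}$ fixes any function independent of $v_j$; the argument for $\mathrm{e}_{2,i}^\varepsilon = c_2^\varepsilon \Lambda_\varepsilon^{v_i}(v_i^2) - c_0^\varepsilon$ is identical. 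Equivalently, each $\mathrm{e}_j^\varepsilon$ is a finite Fourier sum in $v$ supported in the cutoff cube $\{ |m_j| < \varepsilon^{-\gamma} \}$.

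First I would evaluate the left-hand side. Writing $\Lambda_\varepsilon = \Lambda_\varepsilon^x \big( \Lambda_\varepsilon^v( \cdot ) \big)$ and recalling $\mathcal{P}^\varepsilon(h) = \rho^\varepsilon \mathrm{e}_0^\varepsilon + u^\varepsilon \cdot \mathrm{e}_1^\varepsilon + \theta^\varepsilon \mathrm{e}_2^\varepsilon$, where the coefficients $\rho^\varepsilon, u^\varepsilon, \theta^\varepsilon$ are functions of $x$ only, the velocity cutoff $\Lambda_\varepsilon^v$ acts solely on the $v$-factors $\mathrm{e}_j^\varepsilon$ and hence leaves $\mathcal{P}^\varepsilon(h)$ invariant by the previous paragraph. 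What remains is $\Lambda_\varepsilon^x$ acting on the $x$-dependent coefficients, so that $\Lambda_\varepsilon(\mathcal{P}^\varepsilon(h)) = \Lambda_\varepsilon^x(\rho^\varepsilon) \mathrm{e}_0^\varepsilon + \Lambda_\varepsilon^x(u^\varepsilon) \cdot \mathrm{e}_1^\varepsilon + \Lambda_\varepsilon^x(\theta^\varepsilon) \mathrm{e}_2^\varepsilon$.

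Next I would compute the macroscopic coefficients of $\Lambda_\varepsilon(h)$ and match them against this expression. The density coefficient is $\langle \Lambda_\varepsilon(h), \mathrm{e}_0^\varepsilon \rangle_v = \langle \Lambda_\varepsilon^x( \Lambda_\varepsilon^v h ), \mathrm{e}_0^\varepsilon \rangle_v$. Since $\Lambda_\varepsilon^x$ is a Fourier multiplier in $x$ (convolution with $\mathring{D}^3_{\varepsilon^{-1},x}$), it commutes with the $dv$-integration by Fubini, producing $\Lambda_\varepsilon^x \big( \langle \Lambda_\varepsilon^v h, \mathrm{e}_0^\varepsilon \rangle_v \big)$; then the self-adjointness of $\Lambda_\varepsilon^v$ on $L^2(\Omega_v)$ (the $v$-analogue of the identity $\langle \Lambda_\varepsilon(h), g \rangle_v = \langle h, \Lambda_\varepsilon(g) \rangle_v$ already used in the proof of Lemma \ref{GloSol:AE}, valid because the $v$-cutoff is convolution with the real, even Dirichlet kernel $D_{\varepsilon^{-1},v}^3$), together with $\Lambda_\varepsilon^v(\mathrm{e}_0^\varepsilon) = \mathrm{e}_0^\varepsilon$, gives $\langle \Lambda_\varepsilon^v h, \mathrm{e}_0^\varepsilon \rangle_v = \langle h, \mathrm{e}_0^\varepsilon \rangle_v = \rho^\varepsilon$. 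Hence the density coefficient of $\Lambda_\varepsilon(h)$ equals $\Lambda_\varepsilon^x(\rho^\varepsilon)$, and the same computation with $\mathrm{e}_{1,i}^\varepsilon$ and $\mathrm{e}_2^\varepsilon$ yields the velocity and temperature coefficients $\Lambda_\varepsilon^x(u^\varepsilon)$ and $\Lambda_\varepsilon^x(\theta^\varepsilon)$. Substituting into the definition of $\mathcal{P}^\varepsilon$ reproduces exactly the expression for $\Lambda_\varepsilon(\mathcal{P}^\varepsilon(h))$ from the previous paragraph, proving the first identity, and the second follows by subtraction.

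The only genuinely load-bearing point, and the step I would record most carefully, is the invariance $\Lambda_\varepsilon^v(\mathrm{e}_j^\varepsilon) = \mathrm{e}_j^\varepsilon$; everything else is bookkeeping with the commutativity of the $x$- and $v$-operations and the symmetry of the Fourier truncation. Once that invariance is in hand, no estimates enter and the proof is a short symmetric-operator computation.
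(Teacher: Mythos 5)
Your proof is correct and follows essentially the same route as the paper's: both arguments hinge on the invariance $\Lambda_\varepsilon^v(\mathrm{e}_j^\varepsilon)=\mathrm{e}_j^\varepsilon$, the splitting of $\Lambda_\varepsilon=\Lambda_\varepsilon^x\Lambda_\varepsilon^v$ over products of an $x$-coefficient and a $v$-basis function, and the self-adjointness of the $v$-cutoff to move it from $h$ onto the basis functions when computing the macroscopic coefficients of $\Lambda_\varepsilon(h)$. The paper phrases these steps as identities for convolutions with the Dirichlet kernels $D_{\varepsilon^{-1},v}^3$ and $\mathring{D}_{\varepsilon^{-1},x}^3$, but the content is identical, and your explicit reduction of the $\mathcal{L}^\varepsilon$ identity to the $\mathcal{P}^\varepsilon$ one by linearity is exactly what the paper leaves implicit.
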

\begin{proof}
Let $1 \leq i \leq 3$ and $u_i^\varepsilon = \langle \mathrm{e}_{1,i}^\varepsilon h \rangle$.
By a direct calculation, we see that
\begin{align} \label{E1:CommuCP}
\big( (\mathrm{e}_{1,i}^\varepsilon u_i^\varepsilon) \ast D_{\varepsilon^{-1}, v}^3 \big) \ast \mathring{D}_{\varepsilon^{-1},x}^3 = \big( \mathrm{e}_{1,i}^\varepsilon \ast D_{\varepsilon^{-1},v}^3 \big) \big( u_i^\varepsilon \ast \mathring{D}_{\varepsilon^{-1},x}^3 \big).
\end{align}
Since $\mathrm{e}_{1,i}^\varepsilon \ast D_{\varepsilon^{-1},v}^3 = \Lambda_\varepsilon^v(\mathrm{e}_{1,i}^\varepsilon)$ is indeed $\mathrm{e}_{1,i}^\varepsilon$ itself, equality (\ref{E1:CommuCP}) reads as
\[
\Lambda_\varepsilon \big( \mathrm{e}_{1,i}^\varepsilon u_i^\varepsilon \big) = \mathrm{e}_{1,i}^\varepsilon \big( u_i^\varepsilon \ast \mathring{D}_{\varepsilon^{-1},x}^3 \big).
\]
Similarly, we can show that
\[
\Lambda_\varepsilon \big( \mathrm{e}_0^\varepsilon \rho^\varepsilon \big) = \mathrm{e}_0^\varepsilon \big( \rho^\varepsilon \ast \mathring{D}_{\varepsilon^{-1},x}^3 \big) \quad \text{and} \quad \Lambda_\varepsilon \big( \mathrm{e}_2^\varepsilon \theta^\varepsilon \big) = \mathrm{e}_2^\varepsilon \big( \theta^\varepsilon \ast \mathring{D}_{\varepsilon^{-1},x}^3 \big)
\]
with $\rho^\varepsilon = \langle \mathrm{e}_0^\varepsilon h \rangle$ and $\theta^\varepsilon = \langle \mathrm{e}_2^\varepsilon h \rangle$.
Hence, we obtain that
\[
\Lambda_\varepsilon \big( \mathcal{P}^\varepsilon (h) \big) = \mathrm{e}_0^\varepsilon \big( \rho^\varepsilon \ast \mathring{D}_{\varepsilon^{-1},x}^3 \big) + \mathrm{e}_1^\varepsilon \cdot \big( u^\varepsilon \ast \mathring{D}_{\varepsilon^{-1},x}^3 \big) +  \mathrm{e}_2^\varepsilon \big( \theta^\varepsilon \ast \mathring{D}_{\varepsilon^{-1},x}^3 \big).
\]

On the other hand, for $1 \leq i \leq 3$ we have that
\begin{align*}
\int_\Omega \mathrm{e}_{1,i}^\varepsilon \Big( \big( h \ast D_{\varepsilon^{-1},v}^3 \big) \ast \mathring{D}_{\varepsilon^{-1},x}^3 \Big) \, dv &= \int_\Omega \big( \mathrm{e}_{1,i}^\varepsilon \ast D_{\varepsilon^{-1},v}^3 \big) \big( h \ast \mathring{D}_{\varepsilon^{-1},x}^3 \big) \, dv \\
&= \int_\Omega \mathrm{e}_{1,i}^\varepsilon \big( h \ast \mathring{D}_{\varepsilon^{-1},x}^3 \big) \, dv = u_i^\varepsilon \ast \mathring{D}_{\varepsilon^{-1},x}^3.
\end{align*}
Similarly, we can show that 
\[
\int_\Omega \mathrm{e}_0^\varepsilon \Big( \big( h \ast D_{\varepsilon^{-1},v}^3 \big) \ast \mathring{D}_{\varepsilon^{-1},x}^3 \Big) \, dv = \rho^\varepsilon \ast \mathring{D}_{\varepsilon^{-1},x}^3
\]
and
\[
\int_\Omega \mathrm{e}_2^\varepsilon \Big( \big( h \ast D_{\varepsilon^{-1},v}^3 \big) \ast \mathring{D}_{\varepsilon^{-1},x}^3 \Big) \, dv = \theta^\varepsilon \ast \mathring{D}_{\varepsilon^{-1},x}^3.
\]
Hence, we have that
\begin{align*}
\mathcal{P}^\varepsilon \big( \Lambda_\varepsilon (h) \big) &= \mathcal{P}^\varepsilon \Big( \big( h \ast D_{\varepsilon^{-1},v}^3 \big) \ast \mathring{D}_{\varepsilon^{-1},x}^3 \Big) \\
&= \mathrm{e}_0^\varepsilon \big( \rho^\varepsilon \ast \mathring{D}_{\varepsilon^{-1},x}^3 \big) + \mathrm{e}_1^\varepsilon \cdot \big( u^\varepsilon \ast \mathring{D}_{\varepsilon^{-1},x}^3 \big) +  \mathrm{e}_2^\varepsilon \big( \theta^\varepsilon \ast \mathring{D}_{\varepsilon^{-1},x}^3 \big).
\end{align*}
This completes the proof of Lemma \ref{CommuCP}.
\end{proof}

Since $f_\varepsilon = \Lambda_\varepsilon (f_\varepsilon)$, by Lemma \ref{CommuCP} we can easily observe that 
\[
\Lambda_\varepsilon \big( \mathcal{P}^\varepsilon(f_\varepsilon) \big) = \mathcal{P}^\varepsilon(f_\varepsilon) \quad \text{and} \quad \Lambda_\varepsilon \big( \mathcal{L}^\varepsilon(f_\varepsilon) \big) = \mathcal{L}^\varepsilon(f_\varepsilon).
\]
As a result, our approximation equation (\ref{AEtoBA}) can be rewritten as 
\begin{align} \label{Re:Lf}
\mathcal{L}^\varepsilon(f_\varepsilon) = - \varepsilon^2 \nu_\ast \partial_t f_\varepsilon - \varepsilon \nu_\ast \Lambda_\varepsilon \big( v \cdot \nabla_x f_\varepsilon \big) + \varepsilon \kappa \mathcal{L}^\varepsilon\big( \Lambda_\varepsilon (f_\varepsilon^2) \big) - \varepsilon^2 \kappa^2 \Lambda_\varepsilon (f_\varepsilon^3).
\end{align}
By substituting expression (\ref{Re:Lf}) of $\mathcal{L}^\varepsilon(f_\varepsilon)$ into equation (\ref{InnP:ALfe}), we obtain that
\begin{equation} \label{ReApproEq}
\begin{split}
\frac{1}{\varepsilon} \nabla_x \cdot \langle A_\varepsilon f_\varepsilon \rangle &= -\varepsilon \nu_\ast \nabla_x \cdot \partial_t \langle A_\varepsilon f_\varepsilon \rangle - \nu_\ast \nabla_x \cdot \big\langle A_\varepsilon \Lambda_\varepsilon \big( v \cdot \nabla_x f_\varepsilon \big) \big\rangle + \kappa \nabla_x \cdot \big\langle A_\varepsilon  \mathcal{L}^\varepsilon\big( \Lambda_\varepsilon ( f_\varepsilon^2 ) \big) \big\rangle \\
&\ \ - \varepsilon \kappa^2 \nabla_x \cdot \big\langle A_\varepsilon \Lambda_\varepsilon ( f_\varepsilon^3 ) \big\rangle.
\end{split}
\end{equation}
Let $\Phi \in C^\infty\big( [0,T] \times \mathbf{T}^3 \big)$ with $0<T<\infty$.
Since
\[
\big| \big\langle A_\varepsilon f_\varepsilon \big\rangle \big| \leq \| A_\varepsilon \|_{L^2(\Omega_v)} \| f_\varepsilon \|_{L^2(\Omega_v)},
\]
by the global energy estimate (\ref{GloEE:ep}) we have that
\begin{align*}
\left| \int_0^T \int_{\mathbf{T}^3} \Big( \nabla_x \cdot \partial_t \big\langle A_\varepsilon f_\varepsilon \big\rangle \Big) \Phi \, dx \, dt \right| \leq T^{\frac{1}{2}} \| A_\varepsilon \|_{L^2(\Omega_v)} \| \partial_t \nabla_x \Phi \|_{L^2( [0,T]; L^2(\mathbf{T}_x^3) )} \| f_0 \|_X. 
\end{align*}
Since $\| A_\varepsilon \|_{L^2(\Omega_v)}$ converges to $\| A \|_{L^2(\Omega_v)}$ as $\varepsilon \to 0$, we can conclude that the term $\varepsilon \nu_\ast \nabla_x \cdot \partial_t \big\langle A_\varepsilon f_\varepsilon \big\rangle$ converges to zero in the sense of distributions as $\varepsilon \to 0$.
Similarly, by Plancherel's identity (see e.g. \cite[Proposition 3.2.7]{GraC}), we can deduce that
\begin{align*}
&\left| \int_0^T \int_{\mathbf{T}^3} \Big( \nabla_x \cdot \big\langle A_\varepsilon \Lambda_\varepsilon(f_\varepsilon^3) \big\rangle \Big) \Phi \, dx \, dt \right| \\
&\ \ \leq \| A_\varepsilon \|_{L^2(\Omega_v)} \| \nabla_x \Phi \|_{L^2( [0,T]; L^2(\mathbf{T}_x^3) )} \left( \int_0^T \| f_\varepsilon^3 \|_{L^2( \mathbf{T}_x^3; L^2(\Omega_v) )}^2 \, dt \right)^{\frac{1}{2}}. 
\end{align*}
By Lemma \ref{Re:BerL} and Minkowski's integral inequality, we have that
\begin{align*}
\| f_\varepsilon^3 \|_{L^2( \mathbf{T}_x^3; L^2(\Omega_v) )} &= \left( \int_\Omega \| f_\varepsilon \|_{L^6(\mathbf{T}_x^3)}^6 \, dv \right)^{\frac{1}{2}} \lesssim \varepsilon^{-9 \gamma} \left( \int_\Omega \| f_\varepsilon \|_{L^2(\mathbf{T}_x^3)}^6 \, dv \right)^{\frac{1}{2}} \\
&\leq \varepsilon^{- 9 \gamma} \left( \int_{\mathbf{T}^3} \| f_\varepsilon \|_{L^6(\Omega_v)}^2 \, dx \right)^{\frac{3}{2}} \lesssim \varepsilon^{-18 \gamma} \| f_\varepsilon \|_{L^2( \mathbf{T}_x^3; L^2(\Omega_v) )}^3.
\end{align*}
Hence, by the global energy estimate (\ref{GloEE:ep}), we obtain that
\[
\left( \int_0^T \| f_\varepsilon^3 \|_{L^2( \mathbf{T}_x^3; L^2(\Omega_v) )}^2 \, dt \right)^{\frac{1}{2}} \lesssim T^{\frac{1}{2}} \varepsilon^{-18 \gamma} \| f_0 \|_X^3.
\]
If $\gamma < \frac{1}{18}$, then the term $\varepsilon \kappa^2 \nabla_x \cdot \big\langle A_\varepsilon \Lambda_\varepsilon(f_\varepsilon^3) \big\rangle$ converges to zero in the sense of distributions as $\varepsilon \to 0$.
As a result, the convergence behavior of $\varepsilon^{-1} \nabla_x \cdot \big\langle A_\varepsilon f_\varepsilon \big\rangle$ is mainly governed by the middle two terms on the right hand side of equation (\ref{ReApproEq}).

\subsection{Derivation of the diffusion term $\Delta_x u^\varepsilon$} 
\label{Sub:Diff}

In this section, we shall investigate the convergence behavior of the term 
\[
\nabla_x \cdot \big\langle A_\varepsilon \Lambda_\varepsilon \big( v \cdot \nabla_x f_\varepsilon \big) \big\rangle.
\]
Since 
\[
\big\langle A_\varepsilon \Lambda_\varepsilon \big( v \cdot \nabla_x f_\varepsilon \big) \big\rangle = \big\langle A_\varepsilon, \Lambda_\varepsilon \big( v \cdot \nabla_x f_\varepsilon \big) \big\rangle_v = \big\langle \Lambda_\varepsilon^v  (A_\varepsilon), v \cdot \nabla_x f_\varepsilon \big\rangle_v,
\]
it is sufficient to consider the convergence behavior of $\nabla_x \cdot \big\langle \Lambda_\varepsilon^v (A_\varepsilon) \big( v \cdot \nabla_x f_\varepsilon \big) \big\rangle$.
By further decomposing $f_\varepsilon$ into the sum of $\mathcal{P}^\varepsilon(f_\varepsilon)$ and $\mathcal{L}^\varepsilon(f_\varepsilon)$, we see that the main contribution comes from the term $\nabla_x \cdot \big\langle \Lambda_\varepsilon^v (A_\varepsilon) \big( v \cdot \nabla_x \mathcal{P}^\varepsilon(f_\varepsilon) \big) \big\rangle$ since the term $\mathcal{L}^\varepsilon(f_\varepsilon)$ is of order $\mathcal{O}(\varepsilon)$ according to the global energy estimate (\ref{GloEE:ep}).
Indeed, by the Plancherel's identity (see e.g. \cite[Proposition 3.2.7.(1)]{GraC}), we have by convergence (\ref{LpE:Cfv}) that
\begin{align} \label{L2:Ctf:vev}
\big\| \Lambda_\varepsilon^v (v_i^\varepsilon v_i) - \Lambda_\varepsilon^v (v_i^2) \big\|_{L^2(\Omega_v)} \leq \| v_i^\varepsilon v_i - v_i^2 \|_{L^2(\Omega_v)} \leq \| v_i^\varepsilon - v_i \|_{L^4(\Omega_v)} \| v_i \|_{L^4(\Omega_v)} \to 0
\end{align}
as $\varepsilon \to 0$. 
Hence, by manipulating H$\ddot{\text{o}}$lder's inequality and the triangle inequality, we can deduce that
\begin{align} \label{L2:Ctf:Ae}
\big\| \Lambda_\varepsilon^v(A_\varepsilon) - A \big\|_{L^2(\Omega_v)} \to 0 \quad \text{as} \quad \varepsilon \to 0.
\end{align}
As a result, it holds that
\begin{align*}
\big| \big\langle \Lambda_\varepsilon^v (A_\varepsilon) \big( v \cdot \nabla_x \mathcal{L}^\varepsilon(f_\varepsilon) \big) \big\rangle \big| \leq \| v \|_{L^\infty(\Omega_v)} \big\| \Lambda_\varepsilon^v(A_\varepsilon) \big\|_{L^2(\Omega_v)} \| \nabla_x \mathcal{L}^\varepsilon(f_\varepsilon) \|_{L^2(\Omega_v)}.
\end{align*}
Then, by the global energy estimate (\ref{GloEE:ep}), we deduce that
\begin{align*}
&\left| \int_0^T \int_{\mathbf{T}^3} \Big( \nabla_x \cdot \big\langle \Lambda_\varepsilon^v (A_\varepsilon) \big( v \cdot \nabla_x \mathcal{L}^\varepsilon(f_\varepsilon) \big) \big\rangle \Big) \Phi \, dx \, dt \right| \\
&\ \ \lesssim \left( \int_0^T \mathcal{D}_\varepsilon\big( f_\varepsilon \big)^2 (s) \, ds \right)^{\frac{1}{2}} \| \nabla_x \Phi \|_{L^2( [0,T]; L^2(\mathbf{T}_x^3) )} \leq \varepsilon \sqrt{\nu_\ast} \| f_0 \|_X \| \nabla_x \Phi \|_{L^2( [0,T]; L^2(\mathbf{T}_x^3) )}
\end{align*}
for any $\Phi \in C^\infty( [0,T] \times \mathbf{T}^3 )$ with $0<T<\infty$, i.e., the term $\nabla_x \cdot \big\langle \Lambda_\varepsilon^v (A_\varepsilon) \big( v \cdot \nabla_x \mathcal{L}^\varepsilon(f_\varepsilon) \big) \big\rangle$ converges to zero in the sense of distributions as $\varepsilon \to 0$.

\begin{lemma}[(Derivation of the diffusion term)] \label{Diff}
For $1 \leq i \leq 3$, it holds that
\begin{align*}
\left\{ \nabla_x \cdot \big\langle \Lambda_\varepsilon^v (A_\varepsilon) \big( v \cdot \nabla_x \mathcal{P}^\varepsilon(f_\varepsilon) \big) \big\rangle \right\}_i &= \frac{\sqrt{3}}{72} \Delta_x u_i^\varepsilon - \frac{\sqrt{3}}{60} \partial_{x_i}^2 u_i^\varepsilon + D_{ns, i}^\varepsilon
\end{align*}
where $D_{ns, i}^\varepsilon$ is some remainder function of $x$ and $t$ which converges to zero in the sense of distributions as $\varepsilon \to 0$, i.e., for any $0<T<\infty$ and $\Phi \in C^\infty([0,T] \times \mathbf{T}^3)$, it holds that
\[
\left| \int_0^T \int_{\mathbf{T}^3} D_{ns,i}^\varepsilon (x,t) \Phi(x,t) \, dx \, dt \right| \to 0 \quad \text{as} \quad \varepsilon \to 0.
\]
\end{lemma}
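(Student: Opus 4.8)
The plan is to exploit that the velocity weights $\mathrm{e}_0^\varepsilon=1$, $\mathrm{e}_1^\varepsilon$, $\mathrm{e}_2^\varepsilon$ in $\mathcal{P}^\varepsilon(f_\varepsilon)=\rho^\varepsilon+u^\varepsilon\cdot\mathrm{e}_1^\varepsilon+\theta^\varepsilon\mathrm{e}_2^\varepsilon$ carry no $x$-dependence. Writing $v\cdot\nabla_x\mathcal{P}^\varepsilon(f_\varepsilon)=\sum_m v_m\big(\partial_{x_m}\rho^\varepsilon+\sum_n\partial_{x_m}u_n^\varepsilon\,\mathrm{e}_{1,n}^\varepsilon+\partial_{x_m}\theta^\varepsilon\,\mathrm{e}_2^\varepsilon\big)$ and pulling the $x$-derivatives outside the $v$-integral, the $i$-th component of $\nabla_x\cdot\langle\Lambda_\varepsilon^v(A_\varepsilon)(v\cdot\nabla_x\mathcal{P}^\varepsilon(f_\varepsilon))\rangle$ becomes a finite sum of the $\varepsilon$-dependent constants $\langle(\Lambda_\varepsilon^v(A_\varepsilon))_{i\ell}\,v_m\rangle$, $c_1^\varepsilon\langle(\Lambda_\varepsilon^v(A_\varepsilon))_{i\ell}\,v_m v_n^\varepsilon\rangle$, $\langle(\Lambda_\varepsilon^v(A_\varepsilon))_{i\ell}\,v_m\mathrm{e}_2^\varepsilon\rangle$ multiplied respectively by $\partial_{x_\ell}\partial_{x_m}\rho^\varepsilon$, $\partial_{x_\ell}\partial_{x_m}u_n^\varepsilon$, $\partial_{x_\ell}\partial_{x_m}\theta^\varepsilon$. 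First I would dispose of the $\rho^\varepsilon$ and $\theta^\varepsilon$ contributions: recalling that $v_i^\varepsilon$ is odd in $v_i$ while $\Lambda_\varepsilon^{v_i}(v_i^2)$ and $\mathrm{e}_2^\varepsilon$ are even, one checks by parity that every coefficient $\langle(\Lambda_\varepsilon^v(A_\varepsilon))_{i\ell}v_m\rangle$ and $\langle(\Lambda_\varepsilon^v(A_\varepsilon))_{i\ell}v_m\mathrm{e}_2^\varepsilon\rangle$ vanishes, since in each case the $v$-integrand is odd in at least one variable over the symmetric domain $\Omega$.

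Next I would carry out the same parity bookkeeping for the surviving $u^\varepsilon$-terms. The diagonal entry $(\Lambda_\varepsilon^v(A_\varepsilon))_{ii}=\Lambda_\varepsilon^v(v_i^\varepsilon v_i)-a_\varepsilon\Lambda_\varepsilon^v(v_\varepsilon^2)-b_\varepsilon$ is even in all variables, so $\langle(\Lambda_\varepsilon^v(A_\varepsilon))_{ii}v_m v_n^\varepsilon\rangle\neq0$ forces $m=n$; the off-diagonal entry $(\Lambda_\varepsilon^v(A_\varepsilon))_{i\ell}=\Lambda_\varepsilon^v(v_i^\varepsilon v_\ell)$ with $i\neq\ell$ is odd in $v_i$ and in $v_\ell$, so $\langle(\Lambda_\varepsilon^v(A_\varepsilon))_{i\ell}v_m v_n^\varepsilon\rangle\neq0$ forces $(m,n)\in\{(i,\ell),(\ell,i)\}$. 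Thus the $i$-th component reduces to
\[
c_1^\varepsilon\sum_m\langle(\Lambda_\varepsilon^v(A_\varepsilon))_{ii}v_m v_m^\varepsilon\rangle\,\partial_{x_i}\partial_{x_m}u_m^\varepsilon+c_1^\varepsilon\sum_{\ell\neq i}\Big(\langle(\Lambda_\varepsilon^v(A_\varepsilon))_{i\ell}v_i v_\ell^\varepsilon\rangle\,\partial_{x_i}\partial_{x_\ell}u_\ell^\varepsilon+\langle(\Lambda_\varepsilon^v(A_\varepsilon))_{i\ell}v_\ell v_i^\varepsilon\rangle\,\partial_{x_\ell}^2 u_i^\varepsilon\Big).
\]

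Then I would pass to the limit in the constant coefficients and collect the remainder into $D_{ns,i}^\varepsilon$. By the $L^2(\Omega_v)$-convergences $\Lambda_\varepsilon^v(A_\varepsilon)\to A$ (estimate (\ref{L2:Ctf:Ae})), $v_n^\varepsilon\to v_n$ (from (\ref{LpE:Cfv})) and $c_1^\varepsilon\to2\sqrt3$, each coefficient above converges to the corresponding moment of $A=v\otimes v-\frac{|v|^2}{3}\mathrm{I}$ against the true Legendre weights. The difference (coefficient minus its limit) multiplied by a second derivative of $u^\varepsilon$ tends to zero in the sense of distributions, because $u^\varepsilon$ is bounded in $L^\infty([0,\infty);H^1(\mathbf{T}_x^3))$ by (\ref{GEE:RUT}): testing against $\Phi\in C^\infty$ and integrating by parts twice leaves a bounded integral times an $o(1)$ constant. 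Computing the moments with $\int_\Omega v_i^2\,dv=\tfrac1{12}$, $\int_\Omega v_i^4\,dv=\tfrac1{80}$ and $\int_\Omega v_i^2v_\ell^2\,dv=\tfrac1{144}$ for $i\neq\ell$ gives $2\sqrt3\langle A_{ii}v_i^2\rangle=\tfrac{2\sqrt3}{270}$, $2\sqrt3\langle A_{ii}v_m^2\rangle=-\tfrac{2\sqrt3}{540}$ for $m\neq i$, and $2\sqrt3\langle v_i^2v_\ell^2\rangle=\tfrac{\sqrt3}{72}$ for $\ell\neq i$. Finally I would use the algebraic identities $\sum_{m\neq i}\partial_{x_i}\partial_{x_m}u_m^\varepsilon=\partial_{x_i}(\nabla_x\cdot u^\varepsilon)-\partial_{x_i}^2u_i^\varepsilon$ and $\sum_{\ell\neq i}\partial_{x_\ell}^2u_i^\varepsilon=\Delta_x u_i^\varepsilon-\partial_{x_i}^2u_i^\varepsilon$; since $\nabla_x\cdot u^\varepsilon\to0$ in the sense of distributions (Remark (\ref{divuep:0})), the terms containing $\partial_{x_i}(\nabla_x\cdot u^\varepsilon)$ are also absorbed into $D_{ns,i}^\varepsilon$. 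Collecting the surviving contributions then yields exactly $\tfrac{\sqrt3}{72}\Delta_x u_i^\varepsilon-\tfrac{\sqrt3}{60}\partial_{x_i}^2u_i^\varepsilon$.

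The main obstacle is the combinatorial bookkeeping: correctly enumerating which index combinations survive the parity screening, and then checking that the several limiting moments assemble into the exact rational coefficients $\tfrac{\sqrt3}{72}$ and $-\tfrac{\sqrt3}{60}$, the latter appearing only after the divergence-free identities merge the diagonal and off-diagonal $\partial_{x_i}^2u_i^\varepsilon$ contributions. A secondary technical point, routine given the uniform $H^1$ bound on $u^\varepsilon$ and the distributional vanishing of $\nabla_x\cdot u^\varepsilon$, is to verify that every discarded piece genuinely defines a remainder $D_{ns,i}^\varepsilon$ converging to zero against smooth test functions.
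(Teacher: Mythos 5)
Your proposal is correct and follows essentially the same route as the paper: parity screening of the velocity moments to isolate the surviving index combinations, passage to the limit in the $\varepsilon$-dependent coefficients using (\ref{LpE:Cfv})--(\ref{L2:Ctf:Ae}) and the uniform $H^1$ bound on $u^\varepsilon$, and absorption of the $\partial_{x_i}(\nabla_x\cdot u^\varepsilon)$ terms into the remainder via Remark \ref{divuep:0}. The only cosmetic difference is that you pair $\Lambda_\varepsilon^v(A_\varepsilon)$ directly against $v_m v_n^\varepsilon$ (killing the $\rho^\varepsilon$ term by parity), whereas the paper first splits $v_m v_n^\varepsilon$ into $(A_\varepsilon)_{mn}$ plus a $\mathrm{ker}(\mathcal{L}^\varepsilon)$ part and uses $A_\varepsilon\in\mathrm{ker}^\perp(\mathcal{L}^\varepsilon)$; the resulting coefficients agree, and your limiting values $\tfrac{\sqrt3}{72}$ and $-\tfrac{\sqrt3}{60}$ check out.
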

\begin{proof}
By the definition of the macroscopic projection $\mathcal{P}^\varepsilon$, we have the expansion
\begin{align*}
v \cdot \nabla_x \mathcal{P}^\varepsilon(f_\varepsilon) &= v \cdot \nabla_x \big( \rho^\varepsilon \mathrm{e}_0^\varepsilon + u^\varepsilon \cdot \mathrm{e}_1^\varepsilon + \theta^\varepsilon \mathrm{e}_2^\varepsilon \big) \\
&= (\mathrm{e}_0^\varepsilon v) \cdot \nabla_x \rho^\varepsilon + c_1^\varepsilon \sum_{i,j} v_i v_j^\varepsilon \partial_i u_j^\varepsilon + (\mathrm{e}_2^\varepsilon v) \cdot \nabla_x \theta^\varepsilon \\
&= (\mathrm{e}_0^\varepsilon v) \cdot \nabla_x \rho^\varepsilon + c_1^\varepsilon (a_\varepsilon v_\varepsilon^2 + b_\varepsilon) \sum_{ij} \delta_{ij} \partial_i u_j^\varepsilon + c_1^\varepsilon \sum_{ij} (A_\varepsilon)_{ij} \partial_i u_j^\varepsilon + (\mathrm{e}_2^\varepsilon v) \cdot \nabla_x \theta^\varepsilon.
\end{align*}
Since $\mathrm{e}_0^\varepsilon = 1$ and $A_\varepsilon \in \mathrm{ker}^\perp(\mathcal{L}^\varepsilon)$, it holds that
\begin{align*}
&\Big\langle \Lambda_\varepsilon^v (A_\varepsilon), (\mathrm{e}_0^\varepsilon v) \cdot \nabla_x \rho^\varepsilon + c_1^\varepsilon (a_\varepsilon v_\varepsilon^2 + b_\varepsilon) \sum_{ij} \delta_{ij} \partial_i u_j^\varepsilon \Big\rangle_v \\
&\ \ = \Big\langle A_\varepsilon, v_\varepsilon \cdot \nabla_x \rho^\varepsilon + c_1^\varepsilon (a_\varepsilon v_\varepsilon^2 + b_\varepsilon) \sum_{ij} \delta_{ij} \partial_i u_j^\varepsilon \Big\rangle_v = 0.
\end{align*}
Since $\Lambda_\varepsilon^v (v_i^\varepsilon v_j) = v_i^\varepsilon v_j^\varepsilon$ for $i \neq j$, it can be easily deduced that $\Lambda_\varepsilon^v (v_i^\varepsilon v_j)$ is odd in $v$-variable for $i \neq j$.
On the other hand, since $v_i^\varepsilon$ is odd in $v_i$, we have that $v_i^\varepsilon v_i$ is even in $v_i$ and thus $\Lambda_\varepsilon^v (v_i^\varepsilon v_i)$ is even in $v$-variable.
As a result, we deduce that $\Lambda_\varepsilon^v(A_\varepsilon)_{ij} \mathrm{e}_2^\varepsilon v_k$ is always odd in $v$-variable for any $1 \leq i,j,k \leq 3$, i.e., it holds that
\[
\big\langle \Lambda_\varepsilon^v (A_\varepsilon), (\mathrm{e}_2^\varepsilon v) \cdot \nabla_x \theta^\varepsilon \big\rangle_v = 0.
\]
Hence, for any $1 \leq i \leq 3$, it holds that
\begin{align*}
\big\{ \nabla_x \cdot \big\langle \Lambda_\varepsilon^v (A_\varepsilon) \big( v \cdot \nabla_x \mathcal{P}^\varepsilon(f_\varepsilon) \big) \big\rangle \big\}_i = c_1^\varepsilon \sum_{j,k,\ell} \big\langle \Lambda_\varepsilon^v (A_\varepsilon)_{ij} (A_\varepsilon)_{k \ell} \big\rangle \partial_{x_j} \partial_{x_k} u_{\ell}^\varepsilon. 
\end{align*}

Considering the parity of $\Lambda_\varepsilon^v (A_\varepsilon)$ that we have discussed in the above paragraph, we can observe that $\Lambda_\varepsilon^v (A_\varepsilon)_{ij} (A_\varepsilon)_{k \ell}$ is odd in $v$-variable if any index among $i,j,k,\ell$ appears for an odd number of times.
Hence, we can deduce that $\big\langle \Lambda_\varepsilon^v (A_\varepsilon)_{ij} (A_\varepsilon)_{k \ell} \big\rangle$ does not vanish only in the following four cases where
\[
i=j=k=\ell \quad \text{or} \quad i=j, k=\ell, j \neq k \quad \text{or} \quad i=k,j=\ell, k \neq j \quad \text{or} \quad i=\ell,j=k, \ell \neq j,
\]
i.e., it holds that
\begin{equation} \label{E1:Diff}
\begin{split}
&\sum_{j,k,\ell} \big\langle \Lambda_\varepsilon^v (A_\varepsilon)_{ij} (A_\varepsilon)_{k \ell} \big\rangle \partial_{x_j} \partial_{x_k} u_{\ell}^\varepsilon \\
&\ \ =  \big\langle \Lambda_\varepsilon^v (A_\varepsilon)_{ii} (A_\varepsilon)_{ii} \big\rangle \partial_{x_i}^2 u_i^\varepsilon + \sum_{s \neq i} \big\langle \Lambda_\varepsilon^v (A_\varepsilon)_{ii} (A_\varepsilon)_{ss} \big\rangle \partial_{x_i} \partial_{x_s} u_s^\varepsilon \\
&\ \ \ \ + \sum_{s \neq i} \big\langle v_i^\varepsilon v_s^\varepsilon v_i^\varepsilon v_s \big\rangle \partial_{x_s} \partial_{x_i} u_s^\varepsilon + \sum_{s \neq i} \big\langle v_i^\varepsilon v_s^\varepsilon v_s^\varepsilon v_i \big\rangle \partial_{x_s}^2 u_i^\varepsilon.
\end{split}
\end{equation}
Since Lemma \ref{Ma:Aep} guarantees that $a_\varepsilon \to \frac{1}{3}$ and $b_\varepsilon \to 0$ as $\varepsilon \to 0$, by considering the difference with matrix $A$, we can rewrite the right hand side of equation (\ref{E1:Diff}) as 
\[
\sum_{j,k,\ell} \big\langle \Lambda_\varepsilon^v (A_\varepsilon)_{ij} (A_\varepsilon)_{k \ell} \big\rangle \partial_{x_j} \partial_{x_k} u_{\ell}^\varepsilon = \frac{1}{144} \Delta_x u_i^\varepsilon - \frac{1}{120} \partial_{x_i}^2 u_i^\varepsilon + D_{ns,i}^\varepsilon
\]
where 
\begin{align*}
D_{ns,i}^\varepsilon &= \langle A_{ii} A_{ss} \rangle \partial_{x_i} \big( \nabla_x \cdot u^\varepsilon \big) + \big\langle v_i^2 v_s^2 \big\rangle \partial_{x_i} \big( \nabla_x \cdot u^\varepsilon \big) + \big\langle \Lambda_\varepsilon^v (A_\varepsilon)_{ii} (A_\varepsilon)_{ii} - A_{ii} A_{ii} \big\rangle \partial_{x_i} \partial_{x_i} u_i^\varepsilon \\
&\ \ + \sum_{s \neq i} \big\langle \Lambda_\varepsilon^v (A_\varepsilon)_{ii} (A_\varepsilon)_{ss} - A_{ii} A_{ss} \big\rangle \partial_{x_i} \partial_{x_s} u_s^\varepsilon + \sum_{s \neq i} \big\langle (v_i^\varepsilon)^2 v_s^\varepsilon v_s - v_i^2 v_s^2 \big\rangle \partial_{x_s} \partial_{x_i} u_s^\varepsilon \\
&\ \ + \sum_{s \neq i} \big\langle (v_s^\varepsilon)^2 v_i^\varepsilon v_i - v_i^2 v_s^2 \big\rangle \partial_{x_s}^2 u_i^\varepsilon.
\end{align*}

Manipulating with the triangle inequality and H$\ddot{\text{o}}$lder's inequality, we can show by convergence (\ref{LpE:Cfv}) and (\ref{L2:Ctf:Ae}) that if $i \neq s$, then all of the following three terms
\[
\big| \big\langle \Lambda_\varepsilon^v (A_\varepsilon)_{ii} (A_\varepsilon)_{ii} - A_{ii} A_{ii} \big\rangle \big|, \quad \big| \big\langle \Lambda_\varepsilon^v (A_\varepsilon)_{ii} (A_\varepsilon)_{ss} - A_{ii} A_{ss} \big\rangle \big|, \quad \big| \big\langle (v_i^\varepsilon)^2 v_s^\varepsilon v_s - v_i^2 v_s^2 \big\rangle \big|
\]
converges to zero as $\varepsilon \to 0$.
Since $\| u^\varepsilon \|_{L^\infty( [0,\infty); H^1(\mathbf{T}_x^3) )}$ is controlled by $\| f_0 \|_X$ according to Corollary \ref{Conv:RUT}, for any fixed $T>0$, it can be easily deduced that
\[
\left| \int_0^T \int_{\mathbf{T}^3} D_{ns,i}^\varepsilon (x,t) \Phi(x,t) \, dx \,dt \right| \to 0 \quad \text{as} \quad \varepsilon \to 0
\]
for any $\Phi \in C^\infty([0,T] \times \mathbf{T}^3)$, i.e., $D_{ns,i}^\varepsilon$ is a remainder term that converges to zero as $\varepsilon \to 0$ in the sense of distributions. Therefore, we can conclude that 
\[
\big\{ \nabla_x \cdot \big\langle \Lambda_\varepsilon^v (A_\varepsilon) \big( v \cdot \nabla_x \mathcal{P}^\varepsilon(f_\varepsilon) \big) \big\rangle \big\}_i
\]
shares the same limit with $\frac{\sqrt{3}}{72} \Delta_x u_i^\varepsilon - \frac{\sqrt{3}}{60} \partial_{x_i}^2 u_i^\varepsilon$ as $\varepsilon \to 0$ for any $1 \leq i \leq 3$.
This completes the proof of Lemma \ref{Diff}.
\end{proof}

\subsection{Derivation of the transportation term $u^\varepsilon \cdot \nabla_x u^\varepsilon$} 
\label{Sub:Nonli}

In this section, we shall investigate the convergence behavior of the term 
\[
\nabla_x \cdot \big\langle A_\varepsilon  \mathcal{L}^\varepsilon\big( \Lambda_\varepsilon ( f_\varepsilon^2 ) \big) \big\rangle.
\]
Since $A_\varepsilon \in \mathrm{ker}^\perp(\mathcal{L}^\varepsilon)$ (see Lemma \ref{Ma:Aep}), it holds that
\[
\big\langle A_\varepsilon \mathcal{L}^\varepsilon\big( \Lambda_\varepsilon(f_\varepsilon^2) \big) \big\rangle = \langle A_\varepsilon \Lambda_\varepsilon(f_\varepsilon^2) \rangle.
\]
Hence, it is sufficient to investigate the convergence behavior of $\nabla_x \cdot \big\langle \Lambda_\varepsilon(A_\varepsilon) f_\varepsilon^2 \big\rangle$.
We next decompose 
\[
f_\varepsilon^2 = \big( \mathcal{P}^\varepsilon(f_\varepsilon) + \mathcal{L}^\varepsilon(f_\varepsilon) \big)^2 = \mathcal{P}^\varepsilon(f_\varepsilon)^2 + 2 \mathcal{P}^\varepsilon(f_\varepsilon) \mathcal{L}^\varepsilon(f_\varepsilon) + \mathcal{L}^\varepsilon(f_\varepsilon)^2.
\]
Since $\mathcal{L}^\varepsilon(f_\varepsilon)$ is of order $\mathcal{O}(\varepsilon)$ according to the global energy estimate (\ref{GloEE:ep}), the main contribution is governed by
\[
\nabla_x \cdot \big\langle \Lambda_\varepsilon(A_\varepsilon) \mathcal{P}^\varepsilon(f_\varepsilon)^2 \big\rangle
\]
provided that $\varepsilon$ is sufficiently small.
Indeed, by Lemma \ref{Re:BerL}, we observe that
\begin{align*}
\big| \big\langle \Lambda_\varepsilon(A_\varepsilon) \mathcal{P}^\varepsilon(f_\varepsilon) \mathcal{L}^\varepsilon(f_\varepsilon) \big\rangle \big| &\leq \big\| \Lambda_\varepsilon^v(A_\varepsilon) \big\|_{L^\infty(\Omega_v)} \| \mathcal{P}^\varepsilon(f_\varepsilon) \|_{L^2(\Omega_v)} \| \mathcal{L}^\varepsilon(f_\varepsilon) \|_{L^2(\Omega_v)} \\
&\lesssim \varepsilon^{-3 \gamma} \| \Lambda_\varepsilon^v(A_\varepsilon) \|_{L^2(\Omega_v)} \| \mathcal{P}^\varepsilon(f_\varepsilon) \|_{L^2(\Omega_v)} \| \mathcal{L}^\varepsilon(f_\varepsilon) \|_{L^2(\Omega_v)} 
\end{align*}
and
\begin{align*}
\big| \big\langle \Lambda_\varepsilon(A_\varepsilon) \mathcal{L}^\varepsilon(f_\varepsilon)^2 \big\rangle \big| &\leq \big\| \Lambda_\varepsilon^v(A_\varepsilon) \big\|_{L^\infty(\Omega_v)} \| \mathcal{L}^\varepsilon(f_\varepsilon) \|_{L^2(\Omega_v)}^2 \\
&\lesssim \varepsilon^{-3 \gamma} \| \Lambda_\varepsilon^v(A_\varepsilon) \|_{L^2(\Omega_v)} \| \mathcal{L}^\varepsilon(f_\varepsilon) \|_{L^2(\Omega_v)}^2.
\end{align*}
Thus, by the global energy estimate (\ref{GloEE:ep}), we can deduce that
\begin{align*}
&\left| \int_0^T \int_{\mathbf{T}^3} \Big( \nabla_x \cdot \big\langle \Lambda_\varepsilon(A_\varepsilon) \mathcal{P}^\varepsilon(f_\varepsilon) \mathcal{L}^\varepsilon(f_\varepsilon) \big\rangle \Big) \Phi \, dx \, dt \right| \\
&\ \ \lesssim \varepsilon^{-3 \gamma} \sqrt{T} \| \Lambda_\varepsilon^v(A_\varepsilon) \|_{L^2(\Omega_v)} \sup_{t \in [0,T]} \mathcal{E} \big( f_\varepsilon \big) (t) \left( \int_0^T \mathcal{D}_\varepsilon\big( f_\varepsilon \big)^2 (t) \, dt \right)^{\frac{1}{2}} \| \nabla_x \Phi \|_{L^\infty( [0,T]; L^\infty(\mathbf{T}_x^3) )} \\
&\ \ \leq \varepsilon^{1 - 3 \gamma} \sqrt{\nu_\ast T} \| \Lambda_\varepsilon^v(A_\varepsilon) \|_{L^2(\Omega_v)} \| f_0 \|_X^2 \| \nabla_x \Phi \|_{L^\infty( [0,T]; L^\infty(\mathbf{T}_x^3) )}
\end{align*}
and
\begin{align*}
&\left| \int_0^T \int_{\mathbf{T}^3} \Big( \nabla_x \cdot \big\langle \Lambda_\varepsilon(A_\varepsilon) \mathcal{L}^\varepsilon(f_\varepsilon)^2 \big\rangle \Big) \Phi \, dx \, dt \right| \\
&\ \ \lesssim \varepsilon^{-3 \gamma} \| \Lambda_\varepsilon^v(A_\varepsilon) \|_{L^2(\Omega_v)} \int_0^T \mathcal{D}_\varepsilon\big( f_\varepsilon \big)^2 (t) \, dt \| \nabla_x \Phi \|_{L^\infty( [0,T]; L^\infty(\mathbf{T}_x^3) )} \\
&\ \ \leq \varepsilon^{2 - 3 \gamma} \nu_\ast \| \Lambda_\varepsilon^v(A_\varepsilon) \|_{L^2(\Omega_v)} \| f_0 \|_X^2 \| \nabla_x \Phi \|_{L^\infty( [0,T]; L^\infty(\mathbf{T}_x^3) )}
\end{align*}
for any $\Phi \in C^\infty( [0,T] \times \mathbf{T}_x^3 )$ with $0<T<\infty$.
Recalling convergence (\ref{L2:Ctf:Ae}), we can conclude that if $\gamma < \frac{2}{3}$, then both 
\[
\nabla_x \cdot \big\langle \Lambda_\varepsilon(A_\varepsilon) \mathcal{P}^\varepsilon(f_\varepsilon) \mathcal{L}^\varepsilon(f_\varepsilon) \big\rangle \quad \text{and} \quad \nabla_x \cdot \big\langle \Lambda_\varepsilon(A_\varepsilon) \mathcal{L}^\varepsilon(f_\varepsilon)^2 \big\rangle
\]
converges to zero in the sense of distributions as $\varepsilon \to 0$.

\begin{lemma}[Derivation of nonlinear terms] \label{Dtp}
For $1 \leq i \leq 3$, it holds that
\[
\left\{ \nabla_x \cdot \big\langle \Lambda_\varepsilon(A_\varepsilon) \mathcal{P}^\varepsilon (f_\varepsilon)^2 \big\rangle \right\}_i = \frac{1}{6} \left\{ \nabla_x \cdot ( u^\varepsilon \otimes u^\varepsilon ) \right\}_i - \frac{1}{10} \partial_{x_i} (u_i^\varepsilon)^2 + \frac{2}{5} \partial_{x_i} |u^\varepsilon|^2 + N_{ns,i}^\varepsilon
\]
where $N_{ns, i}^\varepsilon$ is some remainder function of $x$ and $t$ which converges to zero in the sense of distributions as $\varepsilon \to 0$, i.e., for any $0<T<\infty$ and $\Phi \in C^\infty([0,T] \times \mathbf{T}^3)$, it holds that
\[
\left| \int_0^T \int_{\mathbf{T}^3} N_{ns,i}^\varepsilon (x,t) \Phi(x,t) \, dx \, dt \right| \to 0 \quad \text{as} \quad \varepsilon \to 0.
\]
\end{lemma}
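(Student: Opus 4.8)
The plan is to expand the square and integrate it against the matrix $\Lambda_\varepsilon^v(A_\varepsilon)$, tracking which monomials survive. Since $A_\varepsilon$ depends only on $v$ and $f_\varepsilon=\Lambda_\varepsilon(f_\varepsilon)$, the reduction $\langle A_\varepsilon\,\Lambda_\varepsilon(f_\varepsilon^2)\rangle=\langle\Lambda_\varepsilon^v(A_\varepsilon)f_\varepsilon^2\rangle$ already recorded above lets me replace $A_\varepsilon$ by $\Lambda_\varepsilon^v(A_\varepsilon)$ throughout. Using $\mathrm{e}_0^\varepsilon=1$ and the definition (\ref{defP}) of $\mathcal{P}^\varepsilon$, I would first write
\[
\mathcal{P}^\varepsilon(f_\varepsilon)^2 = (\rho^\varepsilon)^2 + 2\rho^\varepsilon\, u^\varepsilon \cdot \mathrm{e}_1^\varepsilon + 2\rho^\varepsilon \theta^\varepsilon \mathrm{e}_2^\varepsilon + \sum_{k,\ell} u_k^\varepsilon u_\ell^\varepsilon\, \mathrm{e}_{1,k}^\varepsilon \mathrm{e}_{1,\ell}^\varepsilon + 2\theta^\varepsilon (u^\varepsilon \cdot \mathrm{e}_1^\varepsilon) \mathrm{e}_2^\varepsilon + (\theta^\varepsilon)^2 (\mathrm{e}_2^\varepsilon)^2,
\]
and pair each term with the entry $\Lambda_\varepsilon^v(A_\varepsilon)_{ij}$ before applying $\sum_j\partial_{x_j}$.

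The second step is to discard everything except the $u^\varepsilon\otimes u^\varepsilon$ term. The $(\rho^\varepsilon)^2$ and $\rho^\varepsilon\theta^\varepsilon$ contributions carry coefficients $\langle\Lambda_\varepsilon^v(A_\varepsilon)_{ij}\rangle$ and $\langle\Lambda_\varepsilon^v(A_\varepsilon)_{ij}\mathrm{e}_2^\varepsilon\rangle$; since $\Lambda_\varepsilon^v$ is self-adjoint and fixes the band-limited functions $1$ and $\mathrm{e}_2^\varepsilon$, these equal the corresponding entries of $\langle A_\varepsilon,1\rangle_v$ and $\langle A_\varepsilon,\mathrm{e}_2^\varepsilon\rangle_v$, both vanishing identically because $A_\varepsilon\in\mathrm{ker}^\perp(\mathcal{L}^\varepsilon)$ (Lemma \ref{Ma:Aep}, relation (\ref{Con:aebe})). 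The cross terms $\rho^\varepsilon u^\varepsilon$ and $u^\varepsilon\theta^\varepsilon$ each carry exactly one odd factor $\mathrm{e}_{1,k}^\varepsilon$; as every entry of $\Lambda_\varepsilon^v(A_\varepsilon)$ has a definite parity (the Fourier cutoff region is symmetric) and $\mathrm{e}_2^\varepsilon$ is even, the integrand is odd in at least one component of $v$ and vanishes identically. The $(\theta^\varepsilon)^2$ term is the only delicate one: its off-diagonal coefficients again vanish by parity, while the diagonal coefficients are all equal by the permutation symmetry of $\Omega$ and sum to $\langle\Lambda_\varepsilon^v(\operatorname{tr}A_\varepsilon)(\mathrm{e}_2^\varepsilon)^2\rangle$; since $\operatorname{tr}A_\varepsilon\to\operatorname{tr}A=0$ in $L^2(\Omega_v)$ by (\ref{LpE:Cfv}), each diagonal coefficient tends to $0$, so this term will be placed into $N_{ns,i}^\varepsilon$ rather than cancelled exactly.

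For the surviving term the coefficient is $(c_1^\varepsilon)^2\langle\Lambda_\varepsilon^v(A_\varepsilon)_{ij}v_k^\varepsilon v_\ell^\varepsilon\rangle$, which converges to $12\langle A_{ij}v_kv_\ell\rangle$ by (\ref{LpE:Cfv}), (\ref{L2:Ctf:Ae}) and $c_1^\varepsilon\to2\sqrt{3}$. Parity restricts the nonzero limiting moments to the four patterns $i=j=k=\ell$, $\{i=j,\ k=\ell\neq i\}$, $\{i=k,\ j=\ell\}$ and $\{i=\ell,\ j=k\}$, whose values follow from $\langle v_i^2\rangle=\tfrac1{12}$, $\langle v_i^4\rangle=\tfrac1{80}$ and $\langle v_i^2 v_j^2\rangle=\tfrac1{144}$ $(i\neq j)$. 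Feeding these into $\sum_j\partial_{x_j}\sum_{k,\ell}(\cdots)u_k^\varepsilon u_\ell^\varepsilon$ reassembles the transport structure $\tfrac16\{\nabla_x\cdot(u^\varepsilon\otimes u^\varepsilon)\}_i$, the anomalous diagonal term $-\tfrac1{10}\partial_{x_i}(u_i^\varepsilon)^2$ produced precisely by the asymmetry $\langle v_i^4\rangle\neq\langle v_i^2 v_j^2\rangle$, and a pure gradient in $\partial_{x_i}|u^\varepsilon|^2$ (which, being a gradient, is ultimately harmless because it is absorbed into the pressure after the Helmholtz projection).

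Finally I would collect into $N_{ns,i}^\varepsilon$ the $(\theta^\varepsilon)^2$ contribution together with every difference between a finite-$\varepsilon$ coefficient and its limit multiplied by $u_k^\varepsilon u_\ell^\varepsilon$; each scalar coefficient tends to its limit by (\ref{LpE:Cfv}), (\ref{L2:Ctf:Ae}), while $u_k^\varepsilon u_\ell^\varepsilon$ and $(\theta^\varepsilon)^2$ are bounded in $L^\infty([0,\infty);L^3(\mathbf{T}_x^3))$ through $H^1\hookrightarrow L^6$ and the uniform estimate of Corollary \ref{Conv:RUT}, so integrating $\partial_{x_j}$ by parts against any $\Phi\in C^\infty$ sends each piece to $0$. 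The main obstacle I anticipate is the bookkeeping of the moment tensor $\langle A_{ij}v_kv_\ell\rangle$ and, above all, the handling of the $(\theta^\varepsilon)^2$ term: unlike the other discarded terms it does not vanish identically at fixed $\varepsilon$, so its control demands both the trace-and-symmetry identification of its coefficient and a uniform $L^3$ bound on $(\theta^\varepsilon)^2$ to relegate it to the distributional remainder.
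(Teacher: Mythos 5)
Your proposal follows essentially the same route as the paper: expand $\mathcal{P}^\varepsilon(f_\varepsilon)^2$, kill the $\rho^\varepsilon$-terms via $A_\varepsilon\in\mathrm{ker}^\perp(\mathcal{L}^\varepsilon)$ and the single-$\mathrm{e}_1^\varepsilon$ cross terms by parity, observe that the $(\theta^\varepsilon)^2$ coefficient tends to zero (your trace argument is a cleaner way of seeing the paper's identity $\langle A_{ii}(|v|^2-\tfrac14)^2\rangle=0$), extract the transport and anomalous diagonal terms from the fourth moments $\langle v_i^4\rangle$, $\langle v_i^2v_j^2\rangle$, and place every $\varepsilon$-dependent coefficient difference into a remainder controlled by the uniform $H^1$ bounds of Corollary \ref{Conv:RUT}. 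The only differences are organizational --- the paper rewrites $(\mathrm{e}_1^\varepsilon\cdot u^\varepsilon)^2$ through $A_\varepsilon$ plus a kernel piece $J^\varepsilon$ and a correction $K^\varepsilon$ with $\Lambda_\varepsilon\big(v_i^\varepsilon(v_j^\varepsilon-v_j)\big)=0$, whereas you work directly with the raw moments $\langle\Lambda_\varepsilon^v(A_\varepsilon)_{ij}v_k^\varepsilon v_\ell^\varepsilon\rangle$ --- and you leave the coefficient of the pure-gradient term $\partial_{x_i}|u^\varepsilon|^2$ unevaluated, which is immaterial since, as you note, that term is annihilated by the Helmholtz projection in the subsequent limit argument.
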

\begin{proof}
Firstly, we expand $\mathcal{P}^\varepsilon(f_\varepsilon)^2$ by considering the definition of $\mathcal{P}^\varepsilon(f_\varepsilon)$. Noting that $\mathrm{e}_0^\varepsilon = 1$, we have that
\begin{equation} \label{Exp:Pf2}
\begin{split}
\mathcal{P}^\varepsilon(f_\varepsilon)^2 &= \Big( \rho^\varepsilon \mathrm{e}_0^\varepsilon + \sum_i u_i^\varepsilon \mathrm{e}_{1,i}^\varepsilon + \theta^\varepsilon \mathrm{e}_2^\varepsilon \Big)^2 \\
&= (c_1^\varepsilon)^2 \sum_{i,j} (A_\varepsilon)_{ij} u_i^\varepsilon u_j^\varepsilon + \big( \theta^\varepsilon \mathrm{e}_2^\varepsilon \big)^2 + 2 \mathrm{e}_2^\varepsilon \theta^\varepsilon (\mathrm{e}_1^\varepsilon \cdot u^\varepsilon) + J^\varepsilon + K^\varepsilon
\end{split}
\end{equation}
where 
\begin{align*}
J^\varepsilon := (\rho^\varepsilon)^2 + 2 \rho^\varepsilon (\mathrm{e}_1^\varepsilon \cdot u^\varepsilon) + 2 \rho^\varepsilon \theta^\varepsilon \mathrm{e}_2^\varepsilon + (c_1^\varepsilon)^2 \left( \frac{a_\varepsilon}{c_2^\varepsilon} \mathrm{e}_2^\varepsilon + \frac{3  c_0^\varepsilon a_\varepsilon}{c_2^\varepsilon} + b_\varepsilon \right) |u^\varepsilon|^2
\end{align*}
and
\begin{align*}
K^\varepsilon := (c_1^\varepsilon)^2 \sum_{i \neq j} v_i^\varepsilon (v_j^\varepsilon - v_j) u_i^\varepsilon u_j^\varepsilon.
\end{align*}
It is easy to see that $J^\varepsilon$ is a linear combination of $1$, $v_i^\varepsilon$ and $v_\varepsilon^2$, i.e., $J^\varepsilon \in \mathrm{ker}(\mathcal{L}^\varepsilon)$.
Since $A_\varepsilon$ is a matrix in $v$ only, it holds that
\[
\big\langle \Lambda_\varepsilon(A_\varepsilon) J^\varepsilon \big\rangle = \big\langle \Lambda_\varepsilon^v(A_\varepsilon) J^\varepsilon \big\rangle = \big\langle A_\varepsilon, \Lambda_\varepsilon^v(J^\varepsilon) \big\rangle_v = \big\langle A_\varepsilon, J^\varepsilon \big\rangle_v = 0.
\]
Moreover, for $i \neq j$ we observe that
\begin{align*}
\big\langle \Lambda_\varepsilon(A_\varepsilon) v_i^\varepsilon (v_j^\varepsilon - v_j) \big\rangle &= \big\langle \Lambda_\varepsilon(A_\varepsilon), v_i^\varepsilon (v_j^\varepsilon - v_j) \big\rangle_v = \big\langle A_\varepsilon, \Lambda_\varepsilon \big( v_i^\varepsilon (v_j^\varepsilon - v_j) \big) \big\rangle_v \\
&= \big\langle A_\varepsilon, \Lambda_\varepsilon^{v_i}(v_i^\varepsilon) \Lambda_\varepsilon^{v_j}(v_j^\varepsilon - v_j) \big\rangle_v = 0
\end{align*}
as $\Lambda_\varepsilon^{v_j}(v_j^\varepsilon - v_j) = v_j^\varepsilon - v_j^\varepsilon = 0$. Hence, we obtain that 
\[
\big\langle \Lambda_\varepsilon(A_\varepsilon) K^\varepsilon \big\rangle = 0.
\]
For any $1 \leq i \leq 3$, $(A_\varepsilon)_{ii}$ is even in $v$-variable, correspondingly $\Lambda_\varepsilon(A_\varepsilon)_{ii}$ is also even in $v$-variable.
In the case when $i \neq j$, it holds that
\[
\Lambda_\varepsilon(A_\varepsilon)_{ij} = \Lambda_\varepsilon(v_i^\varepsilon v_j) = v_i^\varepsilon v_j^\varepsilon
\]
with $v_i^\varepsilon$ being odd in $v_i$ and $v_j^\varepsilon$ being odd in $v_j$.
Since $\mathrm{e}_{1,k}^\varepsilon$ is odd in $v$-variable for any $1 \leq k \leq 3$ and $\mathrm{e}_2^\varepsilon$ is even in $v$-variable, we conclude that the term $\Lambda_\varepsilon(A_\varepsilon)_{ij} \mathrm{e}_2^\varepsilon \mathrm{e}_{1,k}^\varepsilon$ is always odd in $v$-variable for any $1 \leq i,j,k \leq 3$, i.e.,
\[
\big\langle \Lambda_\varepsilon(A_\varepsilon) \mathrm{e}_2^\varepsilon \mathrm{e}_{1,k}^\varepsilon \big\rangle = 0 \quad \forall \; 1 \leq k \leq 3.
\]
As a result, for any $1 \leq i \leq 3$, the convergence behavior of $\left\{ \nabla_x \cdot \big\langle \Lambda_\varepsilon(A_\varepsilon) \mathcal{P}^\varepsilon(f^\varepsilon)^2 \big\rangle \right\}_i$ is governed by
\[
(c_1^\varepsilon)^2 \sum_{j,k,\ell} \big\langle \Lambda_\varepsilon(A_\varepsilon)_{ji} (A_\varepsilon)_{k \ell} \big\rangle \partial_{x_j} \big( u_k^\varepsilon u_\ell^\varepsilon \big) \quad \text{and} \quad \sum_j \big\langle \Lambda_\varepsilon(A_\varepsilon)_{ji} \big( \mathrm{e}_2^\varepsilon \big)^2 \big\rangle \partial_{x_j} \big( \theta^\varepsilon \big)^2.
\]

By expanding $\Lambda_\varepsilon(A_\varepsilon)_{ji} (A_\varepsilon)_{k \ell}$ using the definition of matrix $A_\varepsilon$, we have that
\begin{align*}
\big\langle \Lambda_\varepsilon(A_\varepsilon)_{ji} (A_\varepsilon)_{k \ell} \big\rangle &= \big\langle \big( \Lambda_\varepsilon(v_j^\varepsilon v_i) - (a_\varepsilon v_\varepsilon^2 + b_\varepsilon) \delta_{ji} \big) \big( v_k^\varepsilon v_\ell - (a_\varepsilon v_\varepsilon^2 + b_\varepsilon) \delta_{k \ell} \big) \big\rangle \\
&= \big\langle \Lambda_\varepsilon(v_j^\varepsilon v_i) v_k^\varepsilon v_\ell \big\rangle - \big\langle \big( a_\varepsilon v_\varepsilon^2 + b_\varepsilon) \delta_{ji} v_k^\varepsilon v_\ell \big\rangle - \big\langle (a_\varepsilon v_\varepsilon^2 + b_\varepsilon) \delta_{k \ell} \Lambda_\varepsilon(v_j^\varepsilon v_i) \big\rangle \\
&\ \ + \big\langle (a_\varepsilon v_\varepsilon^2 + b_\varepsilon)^2 \delta_{ji} \delta_{k \ell} \big\rangle.
\end{align*}
It is obvious to observe that
\begin{align*}
\sum_{j,k,\ell} \big\langle (a_\varepsilon v_\varepsilon^2 + b_\varepsilon) \delta_{ji} v_k^\varepsilon v_\ell \big\rangle \partial_{x_j} \big( u_k^\varepsilon u_\ell^\varepsilon \big) &= \sum_{k,\ell} \big\langle (a_\varepsilon v_\varepsilon^2 + b_\varepsilon) v_k^\varepsilon v_\ell \big\rangle \partial_{x_i} \big( u_k^\varepsilon u_\ell^\varepsilon \big).
\end{align*}
Since $a_\varepsilon v_\varepsilon^2 + b_\varepsilon$ is even in $v$-variable, $k \neq \ell$ would imply that $\big\langle (a_\varepsilon v_\varepsilon^2 + b_\varepsilon) v_k^\varepsilon v_\ell \big\rangle = 0$.
Thus, it can be deduced that 
\begin{align*}
\sum_{j,k,\ell} \big\langle (a_\varepsilon v_\varepsilon^2 + b_\varepsilon) \delta_{ji} v_k^\varepsilon v_\ell \big\rangle \partial_{x_j} \big( u_k^\varepsilon u_\ell^\varepsilon \big) &= \sum_k \big\langle (a_\varepsilon v_\varepsilon^2 + b_\varepsilon) v_k^\varepsilon v_k \big\rangle \partial_{x_i} \big( u_k^\varepsilon \big)^2 \\
&= \sum_k \left\langle v_k^2 \frac{|v|^2}{3} \right\rangle \partial_{x_i} \big( u_k^\varepsilon \big)^2 + r_1^\varepsilon
\end{align*}
where
\[
r_1^\varepsilon := \sum_k \left\langle (a_\varepsilon v_\varepsilon^2 + b_\varepsilon) v_k^\varepsilon v_k - \frac{|v|^2}{3} v_k^2 \right\rangle \partial_{x_i} \big( u_k^\varepsilon \big)^2.
\]
Let $0<T<\infty$ and $\Phi \in C^\infty( [0,T] \times \mathbf{T}^3 )$.
For any $t \in [0,T]$, we have that
\begin{align*}
\left| \int_{\mathbf{T}^3} r_1^\varepsilon \Phi \, dx \right| &\leq \sum_k \left| \left\langle (a_\varepsilon v_\varepsilon^2 + b_\varepsilon) v_k^\varepsilon v_k - \frac{|v|^2}{3} v_k^2 \right\rangle \right| \cdot \left| \int_{\mathbf{T}^3} \big( u_k^\varepsilon \big)^2 \big( \partial_{x_i} \Phi \big) \, dx \right| \\
&\leq \| \partial_{x_i} \Phi \|_{L^\infty(\mathbf{T}_x^3)} \sum_k \left| \left\langle (a_\varepsilon v_\varepsilon^2 + b_\varepsilon) v_k^\varepsilon v_k - \frac{|v|^2}{3} v_k^2 \right\rangle \right| \cdot \| u_k^\varepsilon \|_{L^2(\mathbf{T}_x^3)}^2.
\end{align*}
Since
\[
\left| \left\langle (a_\varepsilon v_\varepsilon^2 + b_\varepsilon) v_k^\varepsilon v_k - \frac{|v|^2}{3} v_k^2 \right\rangle \right| \to 0 \quad \text{as} \quad \varepsilon \to 0
\]
for any $1 \leq k \leq 3$, we can show that
\[
\left| \int_0^T \int_{\mathbf{T}^3} r_1^\varepsilon \Phi \, dx \, dt \right| \to 0 \quad \text{as} \quad \varepsilon \to 0.
\]
Since values of 
\[
\left\langle v_k^2 \frac{|v|^2}{3} \right\rangle \quad \text{and} \quad \left\langle (a_\varepsilon v_\varepsilon^2 + b_\varepsilon) v_k^\varepsilon v_k - \frac{|v|^2}{3} v_k^2 \right\rangle
\]
are independent of $1 \leq k \leq 3$, we conclude that
\[
\sum_{j,k,\ell} \big\langle (a_\varepsilon v_\varepsilon^2 + b_\varepsilon) \delta_{ji} v_k^\varepsilon v_\ell \big\rangle \partial_{x_j} \big( u_k^\varepsilon u_\ell^\varepsilon \big) = \left\langle v_i^2 \frac{|v|^2}{3} \right\rangle \partial_{x_i} | u^\varepsilon |^2 + r_1^\varepsilon
\]
with
\begin{align} \label{r2eps}
r_1^\varepsilon := \left\langle (a_\varepsilon v_\varepsilon^2 + b_\varepsilon) v_k^\varepsilon v_k - \frac{|v|^2}{3} v_k^2 \right\rangle \partial_{x_i} | u^\varepsilon |^2.
\end{align}

In the mean time, since 
\[
\big\langle (a_\varepsilon v_\varepsilon^2 + b_\varepsilon) \Lambda_\varepsilon(v_i^\varepsilon v_i) \big\rangle = \big\langle a_\varepsilon v_\varepsilon^2 + b_\varepsilon, \Lambda_\varepsilon(v_i^\varepsilon v_i) \big\rangle_v = \big\langle \Lambda_\varepsilon(a_\varepsilon v_\varepsilon^2 + b_\varepsilon), v_i^\varepsilon v_i \big\rangle_v = \big\langle a_\varepsilon v_\varepsilon^2 + b_\varepsilon, v_i^\varepsilon v_i \big\rangle_v,
\]
by exactly the same derivation as above, we can show that
\[
\sum_{j,k,\ell} \big\langle (a_\varepsilon v_\varepsilon^2 + b_\varepsilon) \delta_{k \ell} \Lambda_\varepsilon(v_j^\varepsilon v_i) \big\rangle \partial_{x_j} \big( u_k^\varepsilon u_\ell^\varepsilon \big) = \left\langle v_i^2 \frac{|v|^2}{3} \right\rangle \partial_{x_i} | u^\varepsilon |^2 + r_1^\varepsilon
\]
where $r_1^\varepsilon$ is defined by expression (\ref{r2eps}).
Furthermore, we can show that
\begin{align*}
\sum_{j,k,\ell} \big\langle (a_\varepsilon v_\varepsilon^2 + b_\varepsilon)^2 \delta_{ji} \delta_{k \ell} \big\rangle \partial_{x_j} \big( u_k^\varepsilon u_\ell^\varepsilon \big) &= \left\langle \frac{|v|^4}{9} \right\rangle \partial_{x_i} |u^\varepsilon|^2 + r_2^\varepsilon
\end{align*}
where
\[
r_2^\varepsilon := \left\langle (a_\varepsilon v_\varepsilon^2 + b_\varepsilon)^2 - \frac{|v|^4}{9} \right\rangle \partial_{x_i} |u^\varepsilon|^2.
\]
Since 
\[
\left\| a_\varepsilon v_\varepsilon^2 + b_\varepsilon - \frac{|v|^2}{3} \right\|_{L^2(\Omega_v)} \to 0 \quad \text{as} \quad \varepsilon \to 0,
\]
we can easily show that
\[
\left| \int_0^T \int_{\mathbf{T}^3} r_2^\varepsilon \Phi \, dx \, dt \right| \to 0 \quad \text{as} \quad \varepsilon \to 0.
\]

On the other hand, analogous to what we have already discussed in the proof of Lemma \ref{Diff}, the value of $\big\langle \Lambda_\varepsilon(v_j^\varepsilon v_i) v_k^\varepsilon v_\ell \big\rangle$ is not zero only in the following four cases where
\[
i=j=k=\ell \quad \text{or} \quad i=j, k=\ell, j \neq k \quad \text{or} \quad i=k,j=\ell, k \neq j \quad \text{or} \quad i=\ell,j=k, \ell \neq j.
\]
Hence, we have that
\begin{align*}
&\sum_{j,k,\ell} \big\langle \Lambda_\varepsilon(v_j^\varepsilon v_i) v_k^\varepsilon v_\ell \big\rangle \partial_{x_j} \big( u_k^\varepsilon u_\ell^\varepsilon \big) \\
&\ \ = \big\langle \Lambda_\varepsilon(v_i^\varepsilon v_i) v_i^\varepsilon v_i \big\rangle \partial_{x_i} \big( u_i^\varepsilon \big)^2 + \sum_{k \neq i} \big\langle \Lambda_\varepsilon(v_i^\varepsilon v_i) v_k^\varepsilon v_k \big\rangle \partial_{x_i} \big( u_k^\varepsilon \big)^2 \\
&\ \ \ \ + \sum_{k \neq j} \big\langle v_j^\varepsilon (v_k^\varepsilon)^2 v_j \big\rangle \partial_{x_j} \big( u_k^\varepsilon u_j^\varepsilon \big) + \sum_{j \neq i} \big\langle v_j^\varepsilon v_i^\varepsilon v_j^\varepsilon v_i \big\rangle \partial_{x_j} \big( u_j^\varepsilon u_i^\varepsilon \big) \\
&\ \ = \frac{1}{144} \big( \nabla_x |u^\varepsilon|^2 \big)_i + \frac{1}{72} \big( \nabla_x \cdot ( u^\varepsilon \otimes u^\varepsilon ) \big)_i - \frac{1}{120} \partial_{x_i} \big( u_i^\varepsilon \big)^2 + r_3^\varepsilon
\end{align*}
where
\begin{align*}
r_3^\varepsilon &:= \big\langle \Lambda_\varepsilon(v_i^\varepsilon v_i) v_i^\varepsilon v_i - v_i^4 \big\rangle \partial_{x_i} \big( u_i^\varepsilon \big)^2 + \sum_{k \neq i} \big\langle \Lambda_\varepsilon(v_i^\varepsilon v_i) v_k^\varepsilon v_k - v_i^2 v_k^2 \big\rangle \partial_{x_i} \big( u_k^\varepsilon \big)^2 \\
&\ \ + \sum_{k \neq j} \big\langle v_j^\varepsilon (v_k^\varepsilon)^2 v_j - v_j^2 v_k^2 \big\rangle \partial_{x_k} \big( u_k^\varepsilon u_i^\varepsilon \big) + \sum_{j \neq i} \big\langle v_j^\varepsilon v_i^\varepsilon v_j^\varepsilon v_i - v_i^2 v_j^2 \big\rangle \partial_{x_j} \big( u_i^\varepsilon u_j^\varepsilon \big).
\end{align*}
By the triangle inequality and H$\ddot{\text{o}}$lder's inequality, we can deduce that
\begin{align*}
\big| \big\langle \Lambda_\varepsilon(v_i^\varepsilon v_i) v_i^\varepsilon v_i - v_i^4 \big\rangle \big| &\lesssim \int_\Omega \big| \Lambda_\varepsilon(v_i^\varepsilon v_i) v_i^\varepsilon - v_i^3 \big| \, dv \lesssim \int_\Omega \big| \Lambda_\varepsilon(v_i^\varepsilon v_i) - v_i^2 \big| \cdot |v_i^\varepsilon| \, dv + \int_\Omega |v_i^\varepsilon - v_i| \, dv \\
&\leq \big\| \Lambda_\varepsilon(v_i^\varepsilon v_i) - v_i^2 \big\|_{L^2(\Omega_v)} \| v_i^\varepsilon \|_{L^2(\Omega_v)} + \| v_i^\varepsilon - v_i \|_{L^2(\Omega_v)}.
\end{align*}
Since $\| v_i^\varepsilon - v_i \|_{L^2(\Omega_v)}$ tends to zero as $\varepsilon \to 0$, $\| v_i^\varepsilon \|_{L^2(\Omega_v)}$ is certainly bounded for $\varepsilon$ sufficiently small.
Then, by Plancherel's identity, see e.g. \cite[Proposition 3.2.7]{GraC}, we observe that
\begin{align*}
\big\| \Lambda_\varepsilon(v_i^\varepsilon v_i) - v_i^2 \big\|_{L^2(\Omega_v)} &\leq \big\| \Lambda_\varepsilon(v_i^\varepsilon v_i) - \Lambda_\varepsilon(v_i^2) \big\|_{L^2(\Omega_v)} + \big\| \Lambda_\varepsilon(v_i^2) - v_i^2 \big\|_{L^2(\Omega_v)} \\
&\lesssim \| v_i^\varepsilon - v_i \|_{L^2(\Omega_v)} + \big\| \Lambda_\varepsilon(v_i^2) - v_i^2 \big\|_{L^2(\Omega_v)} \to 0 
\end{align*}
as $\varepsilon \to 0$. Thus, we conclude that $\big| \big\langle \Lambda_\varepsilon(v_i^\varepsilon v_i) v_i^\varepsilon v_i - v_i^4 \big\rangle \big| \to 0$ as $\varepsilon \to 0$. By almost the same derivation, we also have that $\big| \big\langle \Lambda_\varepsilon(v_i^\varepsilon v_i) v_k^\varepsilon v_k - v_i^2 v_k^2 \big\rangle \big| \to 0$ as $\varepsilon \to 0$ for $k \neq i$.
On the other hand, by convergence (\ref{LpE:Cfv}), we have that 
\begin{align*}
\big| \big\langle v_j^\varepsilon (v_k^\varepsilon)^2 v_j - v_j^2 v_k^2 \big\rangle \big| &\lesssim \int_\Omega \big| v_j^\varepsilon (v_k^\varepsilon)^2 - v_j v_k^2 \big| \, dv \\
&\lesssim \| v_j^\varepsilon - v_j \|_{L^2(\Omega_v)} \| v_k^\varepsilon \|_{L^4(\Omega_v)}^2 + \| v_k^\varepsilon - v_k \|_{L^2(\Omega_v)} (\| v_k^\varepsilon \|_{L^2(\Omega_v)} + 1) \to 0
\end{align*}
as $\varepsilon \to 0$. 
As a result, we can then show that
\[
\left| \int_0^T \int_{\mathbf{T}^3} r_3^\varepsilon \Phi \, dx \, dt \right| \to 0 \quad \text{as} \quad \varepsilon \to 0
\]
for any $0<T<\infty$ and $\Phi \in C^\infty([0,T] \times \mathbf{T}^3)$.

Finally, we note that if $j \neq i$, then $v_j^\varepsilon v_i^\varepsilon \big( \mathrm{e}_2^\varepsilon \big)^2$ is always odd in $v$-variable, in this case where we have that
\[
\big\langle \Lambda_\varepsilon(A_\varepsilon)_{ji} \big( \mathrm{e}_2^\varepsilon \big)^2 \big\rangle = 0.
\]
Hence,
\[
\sum_j \big\langle \Lambda_\varepsilon(A_\varepsilon)_{ji} \big( \mathrm{e}_2^\varepsilon \big)^2 \big\rangle \partial_{x_j} \big( \theta^\varepsilon \big)^2 = 60 \left\langle A_{ii} \left( |v|^2 - \frac{1}{4} \right)^2 \right\rangle \partial_{x_i} \big( \theta^\varepsilon \big)^2 + r_4^\varepsilon = r_4^\varepsilon
\]
where
\[
r_4^\varepsilon := \left\langle \Lambda_\varepsilon(A_\varepsilon)_{ii} \big( \mathrm{e}_2^\varepsilon \big)^2 - 60 A_{ii} \left( |v|^2 - \frac{1}{4} \right)^2 \right\rangle \partial_{x_i} \big( \theta^\varepsilon \big)^2.
\]
Since $\| \mathrm{e}_2^\varepsilon - \mathrm{e}_2 \|_{L^4(\Omega_v)}$ and $\| \mathrm{e}_2^\varepsilon - \mathrm{e}_2 \|_{L^2(\Omega_v)}$ converges to zero as $\varepsilon \to 0$, we can deduce that
\[
\left| \left\langle \Lambda_\varepsilon(A_\varepsilon)_{ii} \big( \mathrm{e}_2^\varepsilon \big)^2 - 60 A_{ii} \left( |v|^2 - \frac{1}{4} \right)^2 \right\rangle \right| \to 0 \quad \text{as} \quad \varepsilon \to 0.
\]
Hence, we can show that
\[
\left| \int_0^T \int_{\mathbf{T}^3} r_4^\varepsilon \Phi \, dx \, dt \right| \to 0 \quad \text{as} \quad \varepsilon \to 0
\]
for any $0<T<\infty$ and $\Phi \in C^\infty([0,T] \times \mathbf{T}^3)$. By setting 
\[
N_{ns,i}^\varepsilon := \sum_{i=1}^4 r_i^\varepsilon, 
\]
we obtain Lemma \ref{Dtp}.
\end{proof}

\subsection{Derivation of the equation of $\theta^\varepsilon$} 
\label{Sub:thetaeq}

In this section, we rewrite the third equation of system (\ref{ruipAE}) in terms of $\rho^\varepsilon$, $u^\varepsilon$ and $\theta^\varepsilon$. 
Analogous to the usage of matrix $A_\varepsilon$ in the derivation of the second equation of system (\ref{Re:Conser}), in this case we need a vector $B_\varepsilon(v) \in \mathrm{ker}^\perp(\mathcal{L}^\varepsilon)$ to rewrite the term $\varepsilon^{-1} \nabla_x \cdot \langle \mathrm{e}_2^\varepsilon v f_\varepsilon \rangle$.

\begin{lemma} \label{vec:Bep}
There exists a constant $c_\varepsilon$ such that the vector 
\[
B_\varepsilon(v) := v (v_\varepsilon^2 - c_\varepsilon) \in \mathrm{ker}^\perp(\mathcal{L}^\varepsilon).
\]
In particular,
\[
c_\varepsilon \to \frac{19}{60} \quad \text{as} \quad \varepsilon \to 0.
\]
\end{lemma}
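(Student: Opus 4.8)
The plan is to exploit the parity of the cutoff Legendre-type functions to reduce the five orthogonality conditions defining $\mathrm{ker}^\perp(\mathcal{L}^\varepsilon)$ to a single scalar equation that pins down $c_\varepsilon$. Recall from the discussion preceding Lemma \ref{Ma:Aep} that $\operatorname{ker}(\mathcal{L}^\varepsilon) = \operatorname{span}\{1, v_1^\varepsilon, v_2^\varepsilon, v_3^\varepsilon, v_\varepsilon^2\}$, so that requiring $B_\varepsilon = v(v_\varepsilon^2 - c_\varepsilon) \in \mathrm{ker}^\perp(\mathcal{L}^\varepsilon)$ amounts to demanding, for each component $B_{\varepsilon,i} = v_i(v_\varepsilon^2 - c_\varepsilon)$ and each $1 \le i \le 3$, that
\[
\langle B_{\varepsilon,i}, 1\rangle_v = 0, \qquad \langle B_{\varepsilon,i}, v_j^\varepsilon\rangle_v = 0 \ \ (1 \le j \le 3), \qquad \langle B_{\varepsilon,i}, v_\varepsilon^2\rangle_v = 0.
\]

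First I would record the relevant parities. As noted at the start of Section \ref{sec:ConNS}, each $v_i^\varepsilon = \Lambda_\varepsilon^{v_i}(v_i)$ is odd in $v_i$ while each $\Lambda_\varepsilon^{v_j}(v_j^2)$ is even in $v_j$; since $\Lambda_\varepsilon^{v_j}$ acts only on the $j$-th variable, $v_\varepsilon^2 = \sum_{j=1}^3 \Lambda_\varepsilon^{v_j}(v_j^2)$ is even in each variable separately. Consequently $B_{\varepsilon,i}$ is odd in $v_i$ and even in $v_j$ for $j \ne i$. This gives three of the conditions for free, independently of $c_\varepsilon$: the integrands of $\langle B_{\varepsilon,i}, 1\rangle_v$ and of $\langle B_{\varepsilon,i}, v_\varepsilon^2\rangle_v$ are odd in $v_i$, and for $j \ne i$ the integrand of $\langle B_{\varepsilon,i}, v_j^\varepsilon\rangle_v$ is also odd in $v_i$ (it carries one factor $v_i$ while $v_j^\varepsilon$ is even in $v_i$). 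Each such integral over the symmetric cube $\Omega$ therefore vanishes. The only surviving constraint is the diagonal one $\langle v_i v_i^\varepsilon (v_\varepsilon^2 - c_\varepsilon)\rangle = 0$, whose integrand is even in every variable.

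This determines
\[
c_\varepsilon = \frac{\langle v_i v_i^\varepsilon v_\varepsilon^2 \rangle}{\langle v_i v_i^\varepsilon \rangle},
\]
and the key point is that a single scalar can satisfy all three diagonal equations at once. This follows from the invariance of $\Omega$ and of the operators $\Lambda_\varepsilon^{v_j}$ under permutation of the coordinate axes, which forces both $\langle v_i v_i^\varepsilon\rangle$ and $\langle v_i v_i^\varepsilon v_\varepsilon^2\rangle$ to be independent of $i$. To see the expression is well defined and to compute the limit, I would argue exactly as in the proof of Lemma \ref{Ma:Aep}, using convergence (\ref{LpE:Cfv}) together with Hölder's inequality. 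The denominator satisfies $\langle v_i v_i^\varepsilon\rangle \to \langle v_i^2\rangle = \tfrac{1}{12}$, hence stays bounded away from $0$ for small $\varepsilon$, so $B_\varepsilon$ is well defined. For the numerator I would split $v_\varepsilon^2 = \sum_j \Lambda_\varepsilon^{v_j}(v_j^2)$: the diagonal term $\langle v_i v_i^\varepsilon \Lambda_\varepsilon^{v_i}(v_i^2)\rangle$ converges to $\langle v_i^4\rangle = \tfrac{1}{80}$, while each of the two off-diagonal terms ($j \ne i$) factorizes across distinct variables and converges to $\langle v_i v_i^\varepsilon\rangle\,\langle \Lambda_\varepsilon^{v_j}(v_j^2)\rangle \to \tfrac{1}{12}\cdot\tfrac{1}{12} = \tfrac{1}{144}$. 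Thus the numerator tends to $\tfrac{1}{80} + \tfrac{2}{144} = \tfrac{19}{720}$, giving $c_\varepsilon \to \tfrac{19/720}{1/12} = \tfrac{19}{60}$.

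I expect the main obstacle to be purely bookkeeping rather than conceptual: the two points requiring care are the justification that one constant $c_\varepsilon$ simultaneously annihilates all three diagonal conditions (the coordinate-permutation symmetry of $\Omega$ and of the $\Lambda_\varepsilon^{v_j}$) and the control of the triple products under the limit via (\ref{LpE:Cfv}) and Hölder's inequality. Both steps mirror the corresponding arguments in Lemma \ref{Ma:Aep}, so no genuinely new difficulty arises.
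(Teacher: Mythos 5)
Your proposal is correct and follows essentially the same route as the paper: parity of $B_{\varepsilon,i}$ in each variable kills all orthogonality conditions except the diagonal one $\langle v_i v_i^\varepsilon(v_\varepsilon^2-c_\varepsilon)\rangle=0$, which determines $c_\varepsilon=\langle v_i v_i^\varepsilon v_\varepsilon^2\rangle/\langle v_i v_i^\varepsilon\rangle$, and convergence (\ref{LpE:Cfv}) then yields $c_\varepsilon\to\frac{19/720}{1/12}=\frac{19}{60}$. The only cosmetic difference is that you compute the numerator's limit by splitting $v_\varepsilon^2$ into diagonal and factorizing off-diagonal pieces, whereas the paper bounds $|\langle v_i v_i^\varepsilon v_\varepsilon^2-v_i^2|v|^2\rangle|$ directly; both give the same result, and your explicit remark that permutation symmetry makes the quotient independent of $i$ is a point the paper leaves implicit.
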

\begin{proof}
Indeed, since $B_\varepsilon$ is always odd in $v$-variable, it trivially holds that
\[
\langle B_\varepsilon, 1 \rangle_v = \langle B_\varepsilon, v_\varepsilon^2 \rangle_v = 0.
\]
Since $v_\varepsilon^2 - c_\varepsilon$ is even in $v$-variable, in the case when $i \neq j$, we have that
\[
\langle v_i (v_\varepsilon^2 - c_\varepsilon), v_j^\varepsilon \rangle_v = 0.
\]
Hence, it is sufficient to solve the equation
\[
c_\varepsilon \langle v_i v_i^\varepsilon \rangle = \langle v_i v_i^\varepsilon v_\varepsilon^2 \rangle
\]
for $c_\varepsilon$.
Since $\langle v_i v_i^\varepsilon \rangle = \langle v_i^\varepsilon, v_i^\varepsilon \rangle_v$, we observe that
\[
| \langle v_i v_i^\varepsilon \rangle | \leq \| v_i^\varepsilon \|_{L^2(\Omega_v)}^2 \to \| v_i \|_{L^2(\Omega_v)}^2 = \frac{1}{12} \quad \text{as} \quad \varepsilon \to 0.
\]
On the other hand, by the triangle inequality and H$\ddot{\text{o}}$lder's inequality, we deduce that 
\begin{align*}
| \langle v_i v_i^\varepsilon v_\varepsilon^2 - v_i^2 |v|^2 \rangle | &\lesssim \int_\Omega \big| v_i^\varepsilon v_\varepsilon^2 - v_i |v|^2 \big| \, dv \leq \int_\Omega |v_i^\varepsilon - v_i| \cdot |v_\varepsilon^2| \, dv + \int_\Omega |v_i| \cdot \big| v_\varepsilon^2 - |v|^2 \big| \, dv \\
&\leq \| v_i^\varepsilon - v_i \|_{L^2(\Omega_v)} \| v_\varepsilon^2 \|_{L^2(\Omega_v)} + \| v_i \|_{L^2(\Omega_v)} \big\| v_\varepsilon^2 - |v|^2 \big\|_{L^2(\Omega_v)}.
\end{align*}
Since
\[
\big\| v_\varepsilon^2 - |v|^2 \big\|_{L^2(\Omega_v)} \leq \sum_{i=1}^3 \| \Lambda_\varepsilon^{v_i} (v_i^2) - v_i^2 \|_{L^2(\Omega_v)} \to 0 \quad \text{as} \quad \varepsilon \to 0.
\]
and $\| v_i^\varepsilon - v_i \|_{L^2(\Omega_v)} \to 0$ as $\varepsilon \to 0$, we obtain that
\[
| \langle v_i v_i^\varepsilon v_\varepsilon^2 \rangle| \to | \langle v_i^2 |v|^2 \rangle | = \frac{19}{720} \quad \text{as} \quad \varepsilon \to 0.
\]
This completes the proof of Lemma \ref{vec:Bep}.
\end{proof}

Making use of this vector $B_\varepsilon$, we rewrite
\begin{align*}
\frac{1}{\varepsilon} \nabla_x \cdot \big\langle \mathrm{e}_2^\varepsilon v f_\varepsilon \big\rangle &= \frac{c_2^\varepsilon}{\varepsilon} \nabla_x \cdot \Big\langle v \Big( v_\varepsilon^2 - \frac{3 c_0^\varepsilon}{c_2^\varepsilon} \Big) f_\varepsilon \Big\rangle \\
&= \frac{c_2^\varepsilon}{\varepsilon} \nabla_x \cdot \big\langle B_\varepsilon f_\varepsilon \big\rangle + \frac{c_2^\varepsilon c_\varepsilon - 3 c_0^\varepsilon}{\varepsilon} \nabla_x \cdot \langle v f_\varepsilon \rangle.
\end{align*}
Since $f_\varepsilon = \Lambda_\varepsilon(f_\varepsilon)$, we have that 
\[
\langle v f_\varepsilon \rangle = \frac{1}{c_1^\varepsilon} u^\varepsilon.
\]
Since $B_\varepsilon \in \mathrm{ker}^\perp(\mathcal{L}^\varepsilon)$, using expression (\ref{Re:Lf}) for $\mathcal{L}^\varepsilon(f_\varepsilon)$, we deduce that
\begin{equation} \label{MC:thetaeq}
\begin{split}
&\frac{1}{\varepsilon} \nabla_x \cdot \big\langle B_\varepsilon f_\varepsilon \big\rangle = \nabla_x \cdot \Big\langle B_\varepsilon \frac{1}{\varepsilon} \mathcal{L}^\varepsilon(f_\varepsilon) \Big\rangle \\
&\ \ = - \varepsilon \nu_\ast \nabla_x \cdot \partial_t \big\langle B_\varepsilon f_\varepsilon \big\rangle - \nu_\ast \nabla_x \cdot \big\langle B_\varepsilon \Lambda_\varepsilon \big( v \cdot \nabla_x f_\varepsilon \big) \big\rangle + \kappa \nabla_x \cdot \big\langle B_\varepsilon \mathcal{L}^\varepsilon\big( \Lambda_\varepsilon (f_\varepsilon^2) \big) \big\rangle \\
&\quad \quad \quad - \varepsilon \kappa^2 \nabla_x \cdot \big\langle B_\varepsilon \Lambda_\varepsilon (f_\varepsilon^3) \big\rangle.
\end{split}
\end{equation}
By exactly the same proof as in the case of rewriting $\varepsilon^{-1} \nabla_x \cdot \langle A_\varepsilon f_\varepsilon \rangle$ in terms of equality (\ref{ReApproEq}), we can show that if $\gamma < \frac{1}{18}$, then the term $\varepsilon \nabla_x \cdot \partial_t \langle B_\varepsilon f_\varepsilon \rangle$ and the term $\varepsilon \nabla_x \cdot \langle B_\varepsilon \Lambda_\varepsilon(f_\varepsilon^3) \rangle$ both converge to zero in the sense of distributions as $\varepsilon \to 0$.
As a result, the convergence behavior of $\varepsilon^{-1} \nabla_x \cdot \langle B_\varepsilon f_\varepsilon \rangle$ is mainly dominated by the middle two terms on the right hand side of equation (\ref{MC:thetaeq}).

\begin{lemma} \label{thetaeq:Diff}
It holds that
\begin{align*}
\nabla_x \cdot \langle B_\varepsilon \Lambda_\varepsilon ( v \cdot \nabla_x f_\varepsilon ) \rangle = \frac{97 \sqrt{5}}{12600} \Delta_x \theta^\varepsilon + R_{\theta,D}^\varepsilon
\end{align*}
where $R_{\theta,D}^\varepsilon$ is some remainder function of $x$ and $t$ which converges to zero in the sense of distributions as $\varepsilon \to 0$, i.e., for any $0<T<\infty$ and $\Phi \in C^\infty( [0,T] \times \mathbf{T}^3 )$, it holds that
\[
\left| \int_0^T \int_{\mathbf{T}^3} \big( R_{\theta,D}(\varepsilon) \big) (x,t) \Phi(x,t) \, dx \, dt \right| \to 0 \quad \text{as} \quad \varepsilon \to 0.
\]
\end{lemma}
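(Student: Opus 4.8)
The plan is to mirror the proof of Lemma \ref{Diff} essentially verbatim, replacing the even matrix $A_\varepsilon$ by the odd vector $B_\varepsilon$ and tracking the parity changes this substitution induces. First I would use the self-adjointness of the cutoff operator to transfer the $v$-cutoff onto $B_\varepsilon$, writing $\langle B_\varepsilon \Lambda_\varepsilon(v \cdot \nabla_x f_\varepsilon) \rangle = \langle \Lambda_\varepsilon^v(B_\varepsilon)(v \cdot \nabla_x f_\varepsilon) \rangle$, and then split $f_\varepsilon = \mathcal{P}^\varepsilon(f_\varepsilon) + \mathcal{L}^\varepsilon(f_\varepsilon)$. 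The microscopic part $\nabla_x \cdot \langle \Lambda_\varepsilon^v(B_\varepsilon)(v \cdot \nabla_x \mathcal{L}^\varepsilon(f_\varepsilon)) \rangle$ is absorbed into $R_{\theta,D}^\varepsilon$: bounding $\|\Lambda_\varepsilon^v(B_\varepsilon)\|_{L^2(\Omega_v)}$ uniformly (via the analogue of convergence (\ref{L2:Ctf:Ae}), which I would establish from convergence (\ref{LpE:Cfv}) applied to $B = v(|v|^2 - \frac{19}{60})$) and using $v \in L^\infty(\Omega_v)$ together with the global energy estimate (\ref{GloEE:ep}), one sees this term is of order $\mathcal{O}(\varepsilon)$ in the sense of distributions, exactly as in the passage preceding Lemma \ref{Diff}.

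The heart of the argument is the macroscopic part. Expanding $v \cdot \nabla_x \mathcal{P}^\varepsilon(f_\varepsilon) = (\mathrm{e}_0^\varepsilon v)\cdot\nabla_x\rho^\varepsilon + c_1^\varepsilon\sum_{i,j}v_iv_j^\varepsilon\partial_{x_i}u_j^\varepsilon + (\mathrm{e}_2^\varepsilon v)\cdot\nabla_x\theta^\varepsilon$ as in Lemma \ref{Diff}, I would run the parity bookkeeping using that $B_\varepsilon$, hence $\Lambda_\varepsilon^v(B_\varepsilon)$, is odd in $v$ (being $v$ times the even function $v_\varepsilon^2 - c_\varepsilon$). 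The $\rho^\varepsilon$-contribution vanishes by the orthogonality $\langle (B_\varepsilon)_k, v_j^\varepsilon\rangle_v = 0$ guaranteed by $B_\varepsilon \in \mathrm{ker}^\perp(\mathcal{L}^\varepsilon)$ (Lemma \ref{vec:Bep}); every $u^\varepsilon$-contribution $\langle (\Lambda_\varepsilon^v B_\varepsilon)_k v_iv_j^\varepsilon\rangle$ vanishes by parity (whether $i=j$ or $i\neq j$, the product is odd in at least one coordinate); and among the $\theta^\varepsilon$-contributions $\langle (\Lambda_\varepsilon^v B_\varepsilon)_k \mathrm{e}_2^\varepsilon v_l\rangle$ only the diagonal $k=l$ survives, the off-diagonal terms being odd in $v_k$. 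This isolates the main term $\sum_k \partial_{x_k}\big(\langle (\Lambda_\varepsilon^v B_\varepsilon)_k \mathrm{e}_2^\varepsilon v_k\rangle \partial_{x_k}\theta^\varepsilon\big)$, whose coefficient is independent of $k$ by the permutation symmetry of $\Omega$, so that in the limit one genuinely obtains the scalar Laplacian.

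To pin down the constant I would replace $\Lambda_\varepsilon^v(B_\varepsilon)$ and $\mathrm{e}_2^\varepsilon$ by their limits $B$ and $\mathrm{e}_2 = 6\sqrt{5}(|v|^2-\frac{1}{4})$ and compute $\langle B_k \mathrm{e}_2 v_k\rangle = 6\sqrt{5}\,\langle v_k^2(|v|^2-\frac{19}{60})(|v|^2-\frac{1}{4})\rangle$, which evaluates via the one-dimensional moments $\langle v_k^2\rangle = \frac{1}{12}$, $\langle v_k^4\rangle = \frac{1}{80}$, $\langle v_k^6\rangle=\frac{1}{448}$ to $\frac{97\sqrt{5}}{12600}$; the difference between the $\varepsilon$-dependent coefficient and this limit, multiplied by $\partial_{x_k}^2\theta^\varepsilon$, joins the remainder. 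Collecting all error pieces, namely the microscopic part, the coefficient discrepancies, and the differences $\Lambda_\varepsilon^v(B_\varepsilon)-B$ and $\mathrm{e}_2^\varepsilon-\mathrm{e}_2$, into $R_{\theta,D}^\varepsilon$, each is shown to vanish against a test function $\Phi\in C^\infty([0,T]\times\mathbf{T}^3)$ by integrating the $x$-derivatives onto $\Phi$ and using the uniform bound on $\|\theta^\varepsilon\|_{L^\infty([0,\infty);H^1(\mathbf{T}_x^3))}$ from Corollary \ref{Conv:RUT} together with the $L^2(\Omega_v)$-convergences from (\ref{LpE:Cfv}). I expect the main obstacle to be the parity-plus-orthogonality bookkeeping that certifies that no $\rho^\varepsilon$- or $u^\varepsilon$-gradient survives, so that the limiting operator is the isotropic $\Delta_x\theta^\varepsilon$ rather than an anisotropic second-order operator; verifying the exact rational coefficient is then a routine but delicate moment computation.
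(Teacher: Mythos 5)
Your proposal is correct and follows essentially the same route as the paper: split $f_\varepsilon$ into macroscopic and microscopic parts, absorb the $\mathcal{L}^\varepsilon(f_\varepsilon)$ contribution into the remainder via the global energy estimate (\ref{GloEE:ep}), kill the $\rho^\varepsilon$- and $u^\varepsilon$-gradient terms by parity and the orthogonality $B_\varepsilon \in \mathrm{ker}^\perp(\mathcal{L}^\varepsilon)$, and reduce the surviving diagonal $\theta^\varepsilon$-term to a scalar Laplacian. Your direct evaluation of the coefficient as $\langle B_k \mathrm{e}_2 v_k\rangle = 6\sqrt{5}\,\langle B_k^2\rangle = \frac{97\sqrt{5}}{12600}$ agrees with the paper's computation via $c_2^\varepsilon \langle \Lambda_\varepsilon(B_\varepsilon)_i (B_\varepsilon)_i\rangle \to 6\sqrt{5}\cdot\frac{97}{75600}$.
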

\begin{proof}
By decomposing $f_\varepsilon$ into the sum of $\mathcal{P}^\varepsilon(f_\varepsilon)$ and $\mathcal{L}^\varepsilon(f_\varepsilon)$, we have that
\begin{align*}
v \cdot \nabla_x f_\varepsilon &= v \cdot \nabla_x \rho^\varepsilon + c_1^\varepsilon \sum_{1\leq i,j \leq 3} v_i^\varepsilon v_j \partial_{x_j} u_i^\varepsilon + (v \mathrm{e}_2^\varepsilon) \cdot \nabla_x \theta^\varepsilon + v \cdot \nabla_x \mathcal{L}^\varepsilon(f_\varepsilon) \\
&= c_1^\varepsilon A_\varepsilon : \nabla_x u^\varepsilon + c_2^\varepsilon B_\varepsilon \cdot \nabla_x \theta^\varepsilon + v \cdot \nabla_x \mathcal{L}^\varepsilon(f_\varepsilon) + M^\varepsilon
\end{align*}
where
\[
M^\varepsilon := v \cdot \nabla_x \rho^\varepsilon + c_1^\varepsilon (a_\varepsilon v_\varepsilon^2 + b_\varepsilon) \nabla_x \cdot u^\varepsilon + (c_2^\varepsilon c_\varepsilon - 3 c_0^\varepsilon) v \cdot \nabla_x \theta^\varepsilon.
\]
Since the differentiation $\nabla_x$ commutes with the cutoff operator $\Lambda_\varepsilon$ and $f_\varepsilon = \Lambda_\varepsilon(f_\varepsilon)$, we observe that
\[
\Lambda_\varepsilon(M^\varepsilon) = v_\varepsilon \cdot \nabla_x \rho^\varepsilon + c_1^\varepsilon (a_\varepsilon v_\varepsilon^2 + b_\varepsilon) \nabla_x \cdot u^\varepsilon + (c_2^\varepsilon c_\varepsilon - 3 c_0^\varepsilon) v_\varepsilon \cdot \nabla_x \theta^\varepsilon \in \mathrm{ker}^\perp(\mathcal{L}^\varepsilon),
\]
which further implies that
\[
\langle B_\varepsilon \Lambda_\varepsilon(M^\varepsilon) \rangle = 0.
\]

Let $0<T<\infty$ and $\Phi \in C^\infty( [0,T] \times \mathbf{T}_x^3 )$. For any $t \in [0,T]$, we have that
\begin{align*}
\left| \int_{\mathbf{T}^3} \Big( \nabla_x \cdot \big\langle \Lambda_\varepsilon(B_\varepsilon) \big( v \cdot \nabla_x \mathcal{L}^\varepsilon(f_\varepsilon) \big) \big\rangle \Big) \Phi \, dx \right| \leq \int_{\mathbf{T}^3} \big| \big\langle \Lambda_\varepsilon(B_\varepsilon) \big( v \cdot \nabla_x \mathcal{L}^\varepsilon(f_\varepsilon) \big) \big\rangle \big| \cdot \big| \nabla_x \Phi \big| \, dx.
\end{align*}
Note that
\[
\big| \big\langle \Lambda_\varepsilon(B_\varepsilon) \big( v \cdot \nabla_x \mathcal{L}^\varepsilon(f_\varepsilon) \big) \big\rangle \big| \lesssim \| \Lambda_\varepsilon(B_\varepsilon) \|_{L^2(\Omega_v)} \big\| \nabla_x \mathcal{L}^\varepsilon(f_\varepsilon) \big\|_{L^2(\Omega_v)}.
\]
By Plancherel's identity, see e.g. \cite[Proposition 3.2.7]{GraC}, we deduce that
\[
\| \Lambda_\varepsilon(B_\varepsilon) \|_{L^2(\Omega_v)} \leq \| B_\varepsilon \|_{L^2(\Omega_v)} \lesssim \| v_\varepsilon^2 - c_\varepsilon \|_{L^2(\Omega_v)} \to \Big\| |v|^2 - \frac{19}{60} \Big\|_{L^2(\Omega_v)}
\]
as $\varepsilon \to 0$.
Since the $L^2$ norm of $|v|^2 - \frac{19}{60}$ in $\Omega$ is finite and independent of $\varepsilon$, it holds that
\[
\int_{\mathbf{T}^3} \big| \big\langle \Lambda_\varepsilon(B_\varepsilon) \big( v \cdot \nabla_x \mathcal{L}^\varepsilon(f_\varepsilon) \big) \big\rangle \big| \cdot \big| \nabla_x \Phi \big| \, dx \lesssim \big\| \nabla_x \mathcal{L}^\varepsilon(f_\varepsilon) \big\|_{L^2\big( \mathbf{T}_x^3; L^2(\Omega_v) \big)} \| \nabla_x \Phi \|_{L^2(\mathbf{T}_x^3)}.
\]
Hence, we obtain by Corollary \ref{Conv:RUT} that
\begin{align*}
&\left| \int_0^T \int_{\mathbf{T}^3} \Big( \nabla_x \cdot \big\langle \Lambda_\varepsilon(B_\varepsilon) \big( v \cdot \nabla_x \mathcal{L}^\varepsilon(f_\varepsilon) \big) \big\rangle \Big) \Phi \, dx \, dt \right| \\
&\ \ \lesssim \left( \int_0^T \| \mathcal{L}^\varepsilon(f_\varepsilon) \|_X \, dt \right)^{\frac{1}{2}} \| \nabla_x \Phi \|_{L^2\big( [0,T]; L^2(\mathbf{T}_x^3) \big)} \to 0 \quad \text{as} \quad \varepsilon \to 0.
\end{align*}

Since $\Lambda_\varepsilon(B_\varepsilon)_k (A_\varepsilon)_{ij}$ is odd in $v$-variable for any $1 \leq i,j,k \leq 3$, it holds that
\[
\big\langle B_\varepsilon \Lambda_\varepsilon\big( A_\varepsilon : \nabla_x u^\varepsilon \big) \big\rangle = 0.
\]
Thus, the convergence behavior of $\nabla_x \cdot \big\langle B_\varepsilon \Lambda_\varepsilon\big( v \cdot \nabla_x f_\varepsilon \big) \big\rangle$ is governed by 
\[
c_2^\varepsilon \nabla_x \cdot \big\langle \Lambda_\varepsilon(B_\varepsilon) \big( B_\varepsilon \cdot \nabla_x \theta^\varepsilon \big) \big\rangle.
\]
Since $\Lambda_\varepsilon(B_\varepsilon)_i$ is odd with respect to $v_i$ and even with respect to $v_j$ for $j \neq i$, the term $\Lambda_\varepsilon(B_\varepsilon)_i (B_\varepsilon)_j$ is always odd in $v$-variable for $i \neq j$.
Thus, 
\[
\big\langle \Lambda_\varepsilon(B_\varepsilon) \big( B_\varepsilon \cdot \nabla_x \theta^\varepsilon \big) \big\rangle_i = \big\langle \Lambda_\varepsilon(B_\varepsilon)_i (B_\varepsilon)_i \big\rangle \partial_{x_i} \theta^\varepsilon
\]
for any $1 \leq i \leq 3$ and
\[
\nabla_x \cdot \big\langle \Lambda_\varepsilon(B_\varepsilon) \big( B_\varepsilon \cdot \nabla_x \theta^\varepsilon \big) \big\rangle = \big\langle \Lambda_\varepsilon(B_\varepsilon)_i (B_\varepsilon)_i \big\rangle \Delta_x \theta^\varepsilon.
\]
By Plancherel's identity, see e.g. \cite[Proposition 3.2.7]{GraC}, it can be easily shown that
\begin{align*}
\big| \big\langle \Lambda_\varepsilon(B_\varepsilon)_i (B_\varepsilon)_i \big\rangle \big| \to | \langle B_i^2 \rangle | = \frac{97}{75600}
\end{align*}
as $\varepsilon \to 0$.
This completes the proof of Lemma \ref{thetaeq:Diff}.
\end{proof}

We next rewrite the term $\nabla_x \cdot \big\langle B_\varepsilon \mathcal{L}^\varepsilon\big( \Lambda_\varepsilon(f_\varepsilon^2) \big) \big\rangle$ in terms of $u^\varepsilon$ and $\theta^\varepsilon$.
Since $B_\varepsilon \in \mathrm{ker}^\perp(\mathcal{L}^\varepsilon)$, we have that
\[
\nabla_x \cdot \big\langle B_\varepsilon \mathcal{L}^\varepsilon\big( \Lambda_\varepsilon(f_\varepsilon^2) \big) \big\rangle = \nabla_x \cdot \big\langle B_\varepsilon \Lambda_\varepsilon(f_\varepsilon^2) \big\rangle.
\]

\begin{lemma} \label{thetaeq:Tr}
It holds that
\[
\nabla_x \cdot \big\langle B_\varepsilon \Lambda_\varepsilon(f_\varepsilon^2) \big\rangle = \frac{97 \sqrt{15}}{3150} \operatorname{div} (u^\varepsilon \theta^\varepsilon) + R_{\theta, U}^\varepsilon
\]
where $R_{\theta, U}$ is some remainder function of $x$ and $t$ which converges to zero in the sense of distributions as $\varepsilon \to 0$, i.e., for any $0<T<\infty$ and $\Phi \in C^\infty( [0,T] \times \mathbf{T}^3 )$, it holds that
\[
\left| \int_0^T \int_{\mathbf{T}^3} R_{\theta, U}^\varepsilon(x,t) \Phi(x,t) \, dx \, dt \right| \to 0 \quad \text{as} \quad \varepsilon \to 0.
\]
\end{lemma}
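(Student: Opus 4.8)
The plan is to mirror the derivation of Lemma \ref{Dtp}, replacing the matrix $A_\varepsilon$ by the vector $B_\varepsilon$ of Lemma \ref{vec:Bep}. First, using the self-adjointness of $\Lambda_\varepsilon$ together with the fact that $B_\varepsilon$ depends on $v$ only (so that $\Lambda_\varepsilon(B_\varepsilon)=\Lambda_\varepsilon^v(B_\varepsilon)$), I would rewrite
\[
\big\langle B_\varepsilon \Lambda_\varepsilon(f_\varepsilon^2) \big\rangle = \big\langle \Lambda_\varepsilon^v(B_\varepsilon) f_\varepsilon^2 \big\rangle,
\]
and then split $f_\varepsilon = \mathcal{P}^\varepsilon(f_\varepsilon)+\mathcal{L}^\varepsilon(f_\varepsilon)$ to obtain $f_\varepsilon^2 = \mathcal{P}^\varepsilon(f_\varepsilon)^2 + 2\mathcal{P}^\varepsilon(f_\varepsilon)\mathcal{L}^\varepsilon(f_\varepsilon) + \mathcal{L}^\varepsilon(f_\varepsilon)^2$. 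Exactly as in the paragraph preceding Lemma \ref{Dtp}, the two terms containing $\mathcal{L}^\varepsilon(f_\varepsilon)$ are negligible: bounding $\|\Lambda_\varepsilon^v(B_\varepsilon)\|_{L^\infty(\Omega_v)} \lesssim \varepsilon^{-3\gamma}\|\Lambda_\varepsilon^v(B_\varepsilon)\|_{L^2(\Omega_v)}$ by Lemma \ref{Re:BerL} and using $\|\mathcal{L}^\varepsilon(f_\varepsilon)\|_{L^2}=\mathcal{O}(\varepsilon)$ from the global energy estimate (\ref{GloEE:ep}), their space-time integrals against a test function are $\mathcal{O}(\varepsilon^{1-3\gamma})$ and $\mathcal{O}(\varepsilon^{2-3\gamma})$, hence vanish distributionally once $\gamma$ is small. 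Thus the limit is governed by $\nabla_x \cdot \langle \Lambda_\varepsilon^v(B_\varepsilon)\,\mathcal{P}^\varepsilon(f_\varepsilon)^2 \rangle$.

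Next I would expand $\mathcal{P}^\varepsilon(f_\varepsilon)^2$ as in (\ref{Exp:Pf2}) and test each summand against $\Lambda_\varepsilon^v(B_\varepsilon)_j$, whose parity (odd in $v_j$, even in $v_k$ for $k\neq j$) is inherited from $B_\varepsilon$. The purely even-in-$v$ pieces $(\rho^\varepsilon)^2$, $\rho^\varepsilon\theta^\varepsilon\mathrm{e}_2^\varepsilon$, $(\theta^\varepsilon)^2(\mathrm{e}_2^\varepsilon)^2$ pair with the odd $\Lambda_\varepsilon^v(B_\varepsilon)_j$ to give zero; the purely $u$-quadratic piece $(u^\varepsilon\cdot\mathrm{e}_1^\varepsilon)^2$ vanishes because pairing it with $\Lambda_\varepsilon^v(B_\varepsilon)_j$ produces an integrand that is a product of three factors each odd in a single coordinate, so at least one coordinate appears to an odd power and the $v$-integral vanishes. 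The $\rho^\varepsilon u^\varepsilon$ cross term $2\rho^\varepsilon(u^\varepsilon\cdot\mathrm{e}_1^\varepsilon)$ survives the parity test only through $\langle \Lambda_\varepsilon^v(B_\varepsilon)_j\, \mathrm{e}_{1,j}^\varepsilon\rangle = c_1^\varepsilon\langle (B_\varepsilon)_j, v_j^\varepsilon\rangle_v$, but this vanishes \emph{identically} since $v_j^\varepsilon\in\ker(\mathcal{L}^\varepsilon)$ and $B_\varepsilon\in\ker^\perp(\mathcal{L}^\varepsilon)$ by Lemma \ref{vec:Bep}. Hence only the $u^\varepsilon\theta^\varepsilon$ cross term $2\theta^\varepsilon(u^\varepsilon\cdot\mathrm{e}_1^\varepsilon)\mathrm{e}_2^\varepsilon$ contributes, and
\[
\nabla_x \cdot \big\langle \Lambda_\varepsilon^v(B_\varepsilon)\,\mathcal{P}^\varepsilon(f_\varepsilon)^2 \big\rangle = \sum_j \partial_{x_j}\!\Big( 2\langle \Lambda_\varepsilon^v(B_\varepsilon)_j\,\mathrm{e}_{1,j}^\varepsilon\,\mathrm{e}_2^\varepsilon\rangle\, u_j^\varepsilon\theta^\varepsilon \Big).
\]

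Because $\Omega$ is invariant under permutations of the coordinates and $B_\varepsilon,\mathrm{e}_1^\varepsilon,\mathrm{e}_2^\varepsilon$ are built symmetrically, the scalar $C_\varepsilon := \langle \Lambda_\varepsilon^v(B_\varepsilon)_j\,\mathrm{e}_{1,j}^\varepsilon\,\mathrm{e}_2^\varepsilon\rangle$ is independent of $j$, so the right-hand side equals $2C_\varepsilon \operatorname{div}(u^\varepsilon\theta^\varepsilon)$. Using convergence (\ref{LpE:Cfv}) together with $\|v_j^\varepsilon-v_j\|_{L^2(\Omega_v)}\to 0$, one passes $\Lambda_\varepsilon^v(B_\varepsilon)_j\to B_j$, $\mathrm{e}_{1,j}^\varepsilon\to 2\sqrt3\,v_j$ and $\mathrm{e}_2^\varepsilon\to 6\sqrt5(|v|^2-\tfrac14)$ in $L^2(\Omega_v)$, and evaluates the limiting moment $C_\varepsilon \to 12\sqrt{15}\,\langle v_j^2(|v|^2-\tfrac{19}{60})(|v|^2-\tfrac14)\rangle = \tfrac{97\sqrt{15}}{6300}$, whence $2C_\varepsilon\to\tfrac{97\sqrt{15}}{3150}$. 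Writing $2C_\varepsilon = \tfrac{97\sqrt{15}}{3150} + (2C_\varepsilon - \tfrac{97\sqrt{15}}{3150})$ and collecting into $R_{\theta,U}^\varepsilon$ both the error $(2C_\varepsilon - \tfrac{97\sqrt{15}}{3150})\operatorname{div}(u^\varepsilon\theta^\varepsilon)$ and the discarded $\mathcal{L}^\varepsilon$-terms, distributional convergence to zero of $R_{\theta,U}^\varepsilon$ follows: $\operatorname{div}(u^\varepsilon\theta^\varepsilon)$ is controlled through the uniform $H^1$ bounds of Corollary \ref{Conv:RUT} and the Sobolev embedding $H^1(\mathbf{T}_x^3)\hookrightarrow L^6(\mathbf{T}_x^3)$, while $2C_\varepsilon - \tfrac{97\sqrt{15}}{3150}\to 0$.

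The main obstacle is not any single estimate—each is routine given Lemma \ref{Re:BerL}, the energy estimate (\ref{GloEE:ep}) and convergence (\ref{LpE:Cfv})—but the careful bookkeeping that isolates the single surviving term. The delicate point is the \emph{exact} cancellation of the $\rho^\varepsilon u^\varepsilon$ contribution: had $\langle (B_\varepsilon)_j, v_j^\varepsilon\rangle_v$ failed to vanish, the limit would contain a spurious $\operatorname{div}(\rho^\varepsilon u^\varepsilon)$ term destroying the clean form of the statement, so the construction $B_\varepsilon\in\ker^\perp(\mathcal{L}^\varepsilon)$ of Lemma \ref{vec:Bep} is used crucially here; the symmetry argument giving the $j$-independence of $C_\varepsilon$, which is what allows the sum to be recombined into a genuine divergence, must likewise be checked with care.
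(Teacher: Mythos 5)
Your proposal is correct and follows essentially the same route as the paper: decompose $f_\varepsilon^2$ via $\mathcal{P}^\varepsilon$ and $\mathcal{L}^\varepsilon$, kill the $\mathcal{L}^\varepsilon$-terms with Lemma \ref{Re:BerL} and the energy estimate (\ref{GloEE:ep}), eliminate everything in $\mathcal{P}^\varepsilon(f_\varepsilon)^2$ except the $\theta^\varepsilon(u^\varepsilon\cdot\mathrm{e}_1^\varepsilon)\mathrm{e}_2^\varepsilon$ cross term by parity and by $B_\varepsilon\in\ker^\perp(\mathcal{L}^\varepsilon)$, and compute the limiting moment. The only cosmetic difference is that the paper reaches the coefficient by splitting $\mathrm{e}_1^\varepsilon\mathrm{e}_2^\varepsilon$ into a $B_\varepsilon$-part plus a $v$-part (using $\langle\Lambda_\varepsilon(B_\varepsilon)_i(B_\varepsilon)_i\rangle\to\tfrac{97}{75600}$ and $\langle\Lambda_\varepsilon(B_\varepsilon)_iv_i\rangle\to 0$), whereas you evaluate $\langle B_j\,\mathrm{e}_{1,j}\,\mathrm{e}_2\rangle$ directly — the same computation, and your value $\tfrac{97\sqrt{15}}{6300}$ for the half-coefficient checks out.
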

\begin{proof}
Firstly, we expand 
\[
f_\varepsilon^2 = \mathcal{P}^\varepsilon(f_\varepsilon)^2 + 2 \mathcal{P}^\varepsilon(f_\varepsilon) \mathcal{L}^\varepsilon(f_\varepsilon) + \mathcal{L}^\varepsilon(f_\varepsilon)^2.
\]
Let $0<T<\infty$ and $\Phi \in C^\infty( [0,T] \times \mathbf{T}^3 )$. 
Since the macroscopic projection $\mathcal{P}^\varepsilon$ commutes with the cutoff operator $\Lambda_\varepsilon$ and $f_\varepsilon = \Lambda_\varepsilon(f_\varepsilon)$, by Lemma \ref{Re:BerL} we deduce that
\begin{align*}
\big| \big\langle \Lambda_\varepsilon(B_\varepsilon) \mathcal{L}^\varepsilon(f_\varepsilon)^2 \big\rangle \big| \leq \| \Lambda_\varepsilon(B_\varepsilon) \|_{L^2(\Omega_v)} \| \mathcal{L}^\varepsilon(f_\varepsilon) \|_{L^4(\Omega_v)}^2 \lesssim \varepsilon^{- 6 \gamma} \| \Lambda_\varepsilon(B_\varepsilon) \|_{L^2(\Omega_v)} \| \mathcal{L}^\varepsilon(f_\varepsilon) \|_{L^2(\Omega_v)}^2.
\end{align*}
By the global energy estimate (\ref{GloEE:ep}), it holds that
\begin{align*}
&\left| \int_0^T \int_{\mathbf{T}^3} \Big( \nabla_x \cdot \big\langle \Lambda_\varepsilon(B_\varepsilon) \mathcal{L}^\varepsilon(f_\varepsilon)^2 \big\rangle \Big) \Phi \, dx \, dt \right| \\
&\ \ \lesssim \varepsilon^{-6 \gamma} \| \Lambda_\varepsilon(B_\varepsilon) \|_{L^2(\Omega_v)} \| \nabla_x \Phi \|_{L^\infty(\mathbf{T}_x^3)} \int_0^T \| \mathcal{L}^\varepsilon(f_\varepsilon) \|_X^2 \, dt \\
&\ \ \leq \varepsilon^{2 - 6 \gamma} \| \Lambda_\varepsilon(B_\varepsilon) \|_{L^2(\Omega_v)} \| \nabla_x \Phi \|_{L^\infty(\mathbf{T}_x^3)} \| f_0 \|_X^2.
\end{align*}
Since $\| \Lambda_\varepsilon(B_\varepsilon) \|_{L^2(\Omega_v)}$ is bounded for $\varepsilon$ sufficiently small (see proof of Lemma \ref{thetaeq:Diff}), if $\gamma < \frac{2}{6}$ then we observe that
\[
\left| \int_0^T \int_{\mathbf{T}^3} \Big( \nabla_x \cdot \big\langle \Lambda_\varepsilon(B_\varepsilon) \mathcal{L}^\varepsilon(f_\varepsilon)^2 \big\rangle \Big) \Phi \, dx \, dt \right| \to 0 \quad \text{as} \quad \varepsilon \to 0.
\]
This shows that $\nabla_x \cdot \big\langle \Lambda_\varepsilon(B_\varepsilon) \mathcal{L}^\varepsilon(f_\varepsilon)^2 \big\rangle$ converges to zero in the sense of distributions if $\gamma < \frac{2}{6}$.
Furthermore, by Lemma \ref{Re:BerL} again we have that
\begin{align*}
\big| \big\langle \Lambda_\varepsilon(B_\varepsilon) \mathcal{P}^\varepsilon(f_\varepsilon) \mathcal{L}^\varepsilon(f_\varepsilon) \big\rangle \big| &\leq \| \Lambda_\varepsilon(B_\varepsilon) \|_{L^2(\Omega_v)} \| \mathcal{P}^\varepsilon(f_\varepsilon) \|_{L^4(\Omega_v)} \| \mathcal{L}^\varepsilon(f_\varepsilon) \|_{L^4(\Omega_v)} \\
&\lesssim \varepsilon^{-6 \gamma} \| \Lambda_\varepsilon(B_\varepsilon) \|_{L^2(\Omega_v)} \| \mathcal{P}^\varepsilon(f_\varepsilon) \|_{L^2(\Omega_v)} \| \mathcal{L}^\varepsilon(f_\varepsilon) \|_{L^2(\Omega_v)}.
\end{align*}
Since $\| \mathcal{P}^\varepsilon(f_\varepsilon) \|_X$ is controlled by $\| f_\varepsilon \|_X$ (see Proposition \ref{BoLo}), by the global energy estimate (\ref{GloEE:ep}) again, we can deduce that
\begin{align*}
&\left| \int_0^T \int_{\mathbf{T}^3} \Big( \nabla_x \cdot \big\langle \Lambda_\varepsilon(B_\varepsilon) \mathcal{P}^\varepsilon(f_\varepsilon) \mathcal{L}^\varepsilon(f_\varepsilon) \big\rangle \Big) \Phi \, dx \, dt \right| \\
&\ \ \lesssim \varepsilon^{-6 \gamma} \| \Lambda_\varepsilon(B_\varepsilon) \|_{L^2(\Omega_v)} \| \nabla_x \Phi \|_{L^\infty(\mathbf{T}_x^3)} \left( \int_0^T \| \mathcal{P}^\varepsilon(f_\varepsilon) \|_X^2 \, dt \right)^{\frac{1}{2}} \left( \int_0^T \| \mathcal{L}^\varepsilon(f_\varepsilon) \|_X^2 \, dt \right)^{\frac{1}{2}} \\
&\ \ \leq \varepsilon^{1 - 6 \gamma} T \| \Lambda_\varepsilon(B_\varepsilon) \|_{L^2(\Omega_v)} \| \nabla_x \Phi \|_{L^\infty(\mathbf{T}_x^3)} \| f_0 \|_X^2.
\end{align*}
In this case, if $\gamma < \frac{1}{6}$, we have that
\[
\left| \int_0^T \int_{\mathbf{T}^3} \Big( \nabla_x \cdot \big\langle \Lambda_\varepsilon(B_\varepsilon) \mathcal{P}^\varepsilon(f_\varepsilon) \mathcal{L}^\varepsilon(f_\varepsilon) \big\rangle \Big) \Phi \, dx \, dt \right| \to 0 \quad \text{as} \quad \varepsilon \to 0.
\]
This shows that $\nabla_x \cdot \big\langle \Lambda_\varepsilon(B_\varepsilon) \mathcal{P}^\varepsilon(f_\varepsilon) \mathcal{L}^\varepsilon(f_\varepsilon) \big\rangle$ also converges to zero in the sense of distributions if $\gamma < \frac{1}{6}$.
As a result, the convergence behavior of $\nabla_x \cdot \big\langle B_\varepsilon \Lambda_\varepsilon(f_\varepsilon^2) \big\rangle$ is dominated by the term $\nabla_x \cdot \big\langle B_\varepsilon \Lambda_\varepsilon\big( \mathcal{P}^\varepsilon(f_\varepsilon)^2 \big) \big\rangle$.

Next, we directly work with the expansion (\ref{Exp:Pf2}) of $\mathcal{P}^\varepsilon(f_\varepsilon)^2$. 
Since
\[
\Lambda_\varepsilon(J^\varepsilon) = \Lambda_\varepsilon^x\big( (\rho^\varepsilon)^2 \big) + 2 \mathrm{e}_1^\varepsilon \cdot \Lambda_\varepsilon^x(\rho^\varepsilon u^\varepsilon) + 2 \mathrm{e}_2^\varepsilon \Lambda_\varepsilon^x(\rho^\varepsilon \theta^\varepsilon) + (c_1^\varepsilon)^2 \left( \frac{a_\varepsilon}{c_2^\varepsilon} \mathrm{e}_2^\varepsilon + \frac{3  c_0^\varepsilon a_\varepsilon}{c_2^\varepsilon} + b_\varepsilon \right) \Lambda_\varepsilon(|u^\varepsilon|^2)
\]
belongs to $\mathrm{ker}^\perp(\mathcal{L}^\varepsilon)$, it holds that 
\[
\big\langle B_\varepsilon \Lambda_\varepsilon(J^\varepsilon) \big\rangle = 0.
\]
Similar as in the proof of Lemma \ref{Dtp}, we have that
\[
\Lambda_\varepsilon\big( v_i^\varepsilon (v_j^\varepsilon - v_j) u_i^\varepsilon u_j^\varepsilon \big) = v_i^\varepsilon \Lambda_\varepsilon^{v_j}(v_j^\varepsilon - v_j) \Lambda_\varepsilon^x(u_i^\varepsilon u_j^\varepsilon) = 0
\]
for $i \neq j$. Hence, 
\[
\big\langle B_\varepsilon \Lambda_\varepsilon(K^\varepsilon) \big\rangle = 0.
\]
As we have already discussed in the proof of Lemma \ref{thetaeq:Diff}, $\Lambda_\varepsilon(B_\varepsilon)_k (A_\varepsilon)_{ij}$ is always odd in $v$-variable for any $1 \leq i,j,k \leq 3$, due to this fact, we deduce that
\[
\big\langle B_\varepsilon \Lambda_\varepsilon\big( A_\varepsilon : u^\varepsilon \otimes u^\varepsilon \big) \big\rangle = 0.
\]
Since $\mathrm{e}_2^\varepsilon$ is even in $v$-variable, certainly $(\mathrm{e}_2^\varepsilon)^2$ is also even in $v$-variable. Since $\Lambda_\varepsilon(B_\varepsilon)$ is odd in $v$-variable,
\[
\big\langle \Lambda_\varepsilon(B_\varepsilon) (\mathrm{e}_2^\varepsilon \theta^\varepsilon)^2 \big\rangle = 0.
\]
Note that
\[
\mathrm{e}_1^\varepsilon \mathrm{e}_2^\varepsilon = c_1^\varepsilon (v_\varepsilon - v) ( c_2^\varepsilon v_\varepsilon^2 - 3 c_0^\varepsilon ) + c_1^\varepsilon c_2^\varepsilon B_\varepsilon + (c_1^\varepsilon c_2^\varepsilon c_\varepsilon - 3 c_0^\varepsilon c_1^\varepsilon ) v.
\]
Since 
\[
\big| \big\langle \Lambda_\varepsilon(B_\varepsilon) (v_\varepsilon - v) (c_2^\varepsilon v_\varepsilon^2 - 3 c_0^\varepsilon) \big\rangle \big| \to 0 \quad \text{as} \quad \varepsilon \to 0,
\]
we can easily show that the term $\nabla_x \cdot \big\langle \Lambda_\varepsilon(B_\varepsilon) (v_\varepsilon - v) (c_2^\varepsilon v_\varepsilon^2 - 3 c_0^\varepsilon) \cdot (\theta^\varepsilon u^\varepsilon)\big\rangle$ converges to zero in the sense of distributions.
Since $\Lambda_\varepsilon(B_\varepsilon)_i$ is odd with respect to $v_i$ for any $1 \leq i \leq 3$, we observe that
\[
\nabla_x \cdot \big\langle \Lambda_\varepsilon(B_\varepsilon) \theta^\varepsilon \big( B_\varepsilon \cdot u^\varepsilon \big) \big\rangle = \big\langle \Lambda_\varepsilon(B_\varepsilon)_i (B_\varepsilon)_i \big\rangle \nabla_x \cdot (\theta^\varepsilon u^\varepsilon)
\]
and
\[
\nabla_x \cdot \big\langle \Lambda_\varepsilon(B_\varepsilon) \theta^\varepsilon (v \cdot u^\varepsilon) \big\rangle = \big\langle \Lambda_\varepsilon(B_\varepsilon)_i v_i \big\rangle \nabla_x \cdot (\theta^\varepsilon u^\varepsilon).
\]
Finally, by noting that
\[
\big\langle \Lambda_\varepsilon(B_\varepsilon)_i (B_\varepsilon)_i \big\rangle \to \frac{97}{75600} \quad \text{and} \quad \big\langle \Lambda_\varepsilon(B_\varepsilon)_i v_i \big\rangle \to 0
\]
as $\varepsilon \to 0$, we obtain Lemma \ref{thetaeq:Tr}.
\end{proof}

\section{The hydrodynamic limit: Proof of Theorem \ref{MCT}} 
\label{Sec:ConNSF}

We set 
\[
\mu_1^\varepsilon := \frac{a_\varepsilon c_1^\varepsilon}{c_2^\varepsilon}, \quad \mu_2^\varepsilon := c_1^\varepsilon \Big( \frac{3 a_\varepsilon c_0^\varepsilon}{c_2^\varepsilon} + b_\varepsilon \Big), \quad \mu_3^\varepsilon := \frac{\mu_5^\varepsilon}{c_1^\varepsilon}, \quad \mu_4^\varepsilon := \frac{\mu_2^\varepsilon}{c_1^\varepsilon} + \mu_1^\varepsilon \mu_3^\varepsilon, \quad \mu_5^\varepsilon := c_2^\varepsilon c_\varepsilon - 3 c_0^\varepsilon.
\]

\subsection{Convergence behavior of the forcing term} 
\label{Sub:ConFor}

We next expand
\[
f_\varepsilon^3 = \mathcal{P}^\varepsilon(f_\varepsilon)^3 + 3 \mathcal{P}^\varepsilon(f_\varepsilon)^2 \mathcal{L}^\varepsilon(f_\varepsilon) + 3 \mathcal{P}^\varepsilon(f_\varepsilon) \mathcal{L}^\varepsilon(f_\varepsilon)^2 + \mathcal{L}^\varepsilon(f_\varepsilon)^3
\]
to rewrite $\big\langle \mathrm{e}_i^\varepsilon f_\varepsilon^3 \big\rangle$ in terms of $\rho^\varepsilon$, $u^\varepsilon$ and $\theta^\varepsilon$.
We set 
\begin{align*}
\mathcal{R}_{\mathcal{L}^\varepsilon, 3} := 3 \mathcal{P}^\varepsilon(f_\varepsilon)^2 \mathcal{L}^\varepsilon(f_\varepsilon) + 3 \mathcal{P}^\varepsilon(f_\varepsilon) \mathcal{L}^\varepsilon(f_\varepsilon)^2 + \mathcal{L}^\varepsilon(f_\varepsilon)^3.
\end{align*}
\begin{lemma} \label{PuCov:L3}
Let $\varepsilon \in (0,1)$ be sufficiently small. Let $0 \leq t_1 \leq t_2 < \infty$.
Then, for any $\Phi \in L^\infty( [0,\infty); L^2(\mathbf{T}_x^3) )$ and $i \in \{0, 1, 2\}$, it holds that
\begin{align*}
\left| \int_{t_1}^{t_2} \int_{\mathbf{T}^3} \big\langle \mathrm{e}_i^\varepsilon \mathcal{R}_{\mathcal{L}^\varepsilon, 3} \big\rangle \Phi \, dx \, dt \right| \lesssim \sqrt{\nu_\ast (t_2 - t_1)} \varepsilon^{1 - 18 \gamma} \| f_0 \|_X^3 \| \Phi \|_{L^\infty( [0,\infty); L^2(\mathbf{T}_x^3) )}.
\end{align*}
\end{lemma}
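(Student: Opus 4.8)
The plan is to exploit the fact that every summand of $\mathcal{R}_{\mathcal{L}^\varepsilon, 3}$ carries at least one factor $\mathcal{L}^\varepsilon(f_\varepsilon)$, whose space-time dissipation is controlled by the global energy estimate (\ref{GloEE:ep}). First I would peel off $\Phi$: for each fixed $t$, Cauchy--Schwarz in $x$ gives
\[
\Big| \int_{\mathbf{T}^3} \big\langle \mathrm{e}_i^\varepsilon \mathcal{R}_{\mathcal{L}^\varepsilon, 3} \big\rangle \Phi \, dx \Big| \leq \big\| \big\langle \mathrm{e}_i^\varepsilon \mathcal{R}_{\mathcal{L}^\varepsilon, 3} \big\rangle \big\|_{L^2(\mathbf{T}_x^3)} \| \Phi(t) \|_{L^2(\mathbf{T}_x^3)},
\]
and a further Cauchy--Schwarz in $v$ together with the boundedness of $\| \mathrm{e}_i^\varepsilon \|_{L^2(\Omega_v)}$ (indeed the orthonormality built into (\ref{Cf:e012}) makes each of these norms equal to $1$) reduces matters to bounding $\| \mathcal{R}_{\mathcal{L}^\varepsilon, 3} \|_{L^2(\mathbf{T}_x^3; L^2(\Omega_v))}$ at each fixed time.

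Second, I would estimate $\| \mathcal{R}_{\mathcal{L}^\varepsilon, 3} \|_{L^2(\mathbf{T}_x^3; L^2(\Omega_v))}$ term by term via the multiplication rule. The crucial preliminary observation is that, since $f_\varepsilon = \Lambda_\varepsilon(f_\varepsilon)$, Lemma \ref{CommuCP} gives $\mathcal{P}^\varepsilon(f_\varepsilon) = \Lambda_\varepsilon(\mathcal{P}^\varepsilon(f_\varepsilon))$ and $\mathcal{L}^\varepsilon(f_\varepsilon) = \Lambda_\varepsilon(\mathcal{L}^\varepsilon(f_\varepsilon))$, so each of the three cubic summands is a genuine product of three $\Lambda_\varepsilon$-cutoff functions. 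Applying Proposition \ref{MRX} with $n=3$ (and bounding the $L^2(\mathbf{T}_x^3; L^2(\Omega_v))$-norm trivially by the $X$-norm) yields a factor $\varepsilon^{-18\gamma}$, while Proposition \ref{BoLo} together with (\ref{GloEE:ep}) gives $\| \mathcal{P}^\varepsilon(f_\varepsilon) \|_X \leq \| f_\varepsilon \|_X \leq \| f_0 \|_X$. Extracting exactly one factor $\mathcal{D}_\varepsilon(f_\varepsilon) = \| \mathcal{L}^\varepsilon(f_\varepsilon) \|_X$ from each summand and absorbing any remaining $\mathcal{L}^\varepsilon(f_\varepsilon)$ factors again by $\| f_0 \|_X$ (using $\mathcal{D}_\varepsilon(f_\varepsilon) \leq \| f_0 \|_X$), all three terms collapse into
\[
\| \mathcal{R}_{\mathcal{L}^\varepsilon, 3} \|_{L^2(\mathbf{T}_x^3; L^2(\Omega_v))} \lesssim \varepsilon^{-18\gamma} \| f_0 \|_X^2 \, \mathcal{D}_\varepsilon(f_\varepsilon).
\]

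Third, I would integrate in time. Combining the two previous steps,
\[
\Big| \int_{t_1}^{t_2} \int_{\mathbf{T}^3} \big\langle \mathrm{e}_i^\varepsilon \mathcal{R}_{\mathcal{L}^\varepsilon, 3} \big\rangle \Phi \, dx \, dt \Big| \lesssim \varepsilon^{-18\gamma} \| f_0 \|_X^2 \| \Phi \|_{L^\infty( [0,\infty); L^2(\mathbf{T}_x^3) )} \int_{t_1}^{t_2} \mathcal{D}_\varepsilon(f_\varepsilon)(t) \, dt,
\]
and a Cauchy--Schwarz in $t$ bounds $\int_{t_1}^{t_2} \mathcal{D}_\varepsilon(f_\varepsilon) \, dt \leq \sqrt{t_2 - t_1} \big( \int_0^\infty \mathcal{D}_\varepsilon(f_\varepsilon)^2 \, dt \big)^{1/2}$. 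The global energy estimate (\ref{GloEE:ep}) together with Lemma \ref{EXCut} gives $\int_0^\infty \mathcal{D}_\varepsilon(f_\varepsilon)^2 \, dt \leq \varepsilon^2 \nu_\ast \| f_0 \|_X^2$, so the last integral is at most $\varepsilon \sqrt{\nu_\ast} \sqrt{t_2 - t_1}\, \| f_0 \|_X$. This produces precisely the claimed bound $\sqrt{\nu_\ast(t_2 - t_1)}\, \varepsilon^{1 - 18\gamma} \| f_0 \|_X^3 \| \Phi \|_{L^\infty( [0,\infty); L^2(\mathbf{T}_x^3) )}$.

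There is no genuinely hard step here; the only thing requiring care is the bookkeeping of powers of $\varepsilon$. The potentially dangerous $\varepsilon^{-18\gamma}$ generated by the three applications of the Bernstein-type Lemma \ref{Re:BerL} hidden inside Proposition \ref{MRX} is exactly compensated, up to one spare power, by the single $\varepsilon$ gained from the $L^2_t$ smallness of $\mathcal{L}^\varepsilon(f_\varepsilon)$; this is why one should keep only one factor $\mathcal{D}_\varepsilon(f_\varepsilon)$ per summand (rather than its square or cube) and absorb the remaining dissipation factors into $\| f_0 \|_X$. Provided $\gamma$ is taken small enough, consistently with the constraints imposed elsewhere in the paper, the resulting bound is $\mathcal{O}(\varepsilon^{1-18\gamma})$ and hence vanishes as $\varepsilon \to 0$.
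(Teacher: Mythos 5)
Your proposal is correct and follows essentially the same route as the paper: Cauchy--Schwarz to peel off $\Phi$ and $\mathrm{e}_i^\varepsilon$, the multiplication rule (Proposition \ref{MRX}) to produce the $\varepsilon^{-18\gamma}$ factor and the product of $X$-norms, retention of exactly one dissipation factor $\mathcal{D}_\varepsilon(f_\varepsilon)$ per summand, and Cauchy--Schwarz in time with the global energy estimate (\ref{GloEE:ep}) to gain $\varepsilon\sqrt{\nu_\ast(t_2-t_1)}$. The only cosmetic difference is that the paper handles the pure term $\mathcal{L}^\varepsilon(f_\varepsilon)^3$ by a direct Bernstein-plus-Sobolev argument yielding the sharper power $\varepsilon^{1-9\gamma}$, whereas you apply the multiplication rule uniformly to all three summands, which still gives the stated $\varepsilon^{1-18\gamma}$ bound.
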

\begin{proof}
Since the cutoff operator $\Lambda_\varepsilon$ commutes with both the macroscopic projection $\mathcal{P}^\varepsilon$ and the microscopic projection $\mathcal{L}^\varepsilon$, we observe by Lemma \ref{Re:BerL} that
\[
\| \mathcal{L}^\varepsilon(f_\varepsilon)^3 \|_{L^2(\Omega_v)}^2 = \| \mathcal{L}^\varepsilon(f_\varepsilon) \|_{L^6(\Omega_v)}^6 \lesssim \varepsilon^{-18 \gamma} \| \mathcal{L}^\varepsilon(f_\varepsilon) \|_{L^1(\Omega_v)}^6.
\]
By Minkowski's integral inequality and then the Sobolev embedding $H^1(\mathbf{T}_x^3) \hookrightarrow L^6(\mathbf{T}_x^3)$, we obtain that
\begin{align*}
\left( \int_{\mathbf{T}^3} \| \mathcal{L}^\varepsilon(f_\varepsilon) \|_{L^1(\Omega_v)}^6 \, dx \right)^{\frac{1}{6}} \leq \int_\Omega \| \mathcal{L}^\varepsilon(f_\varepsilon) \|_{H^1(\mathbf{T}_x^3)} \, dv \leq \| \mathcal{L}^\varepsilon(f_\varepsilon) \|_X.
\end{align*}
Hence, by Proposition \ref{BoLo} and the global energy estimate (\ref{GloEE:ep}), we deduce that
\begin{align*}
&\left| \int_{t_1}^{t_2} \int_{\mathbf{T}^3} \big\langle \mathrm{e}_i^\varepsilon \mathcal{L}^\varepsilon(f_\varepsilon)^3 \big\rangle \Phi \, dx \, dt \right| \\
&\ \ \lesssim \varepsilon^{-9 \gamma} \| \mathrm{e}_i^\varepsilon \|_{L^2(\Omega_v)} \| \Phi \|_{L^\infty( [t_1, t_2]; L^2(\mathbf{T}_x^3) )} \int_{t_1}^{t_2} \| \mathcal{L}^\varepsilon(f_\varepsilon) \|_X^3 \, dt \\
&\ \ \leq \varepsilon^{- 9 \gamma} \| \mathrm{e}_i^\varepsilon \|_{L^2(\Omega_v)} \| \Phi \|_{L^\infty( [0,T]; L^2(\mathbf{T}_x^3) )} \sup_{t \in [t_1, t_2]} \big\| \mathcal{L}^\varepsilon\big( f_\varepsilon \big) (t) \big\|_X^2 \int_{t_1}^{t_2} \| \mathcal{L}^\varepsilon(f_\varepsilon) \|_X \, dt \\
&\ \ \leq \sqrt{\nu_\ast (t_2 - t_1)} \varepsilon^{1 - 9 \gamma} \| \mathrm{e}_i^\varepsilon \|_{L^2(\Omega_v)} \| \Phi \|_{L^\infty( [0,T]; L^2(\mathbf{T}_x^3) )} \| f_0 \|_X^3.
\end{align*}
For $\varepsilon$ sufficiently small, we may assume without loss of generality that
\begin{align*}
\| \mathrm{e}_i^\varepsilon \|_{L^2(\Omega_v)} < \| \mathrm{e}_i \|_{L^2(\Omega_v)} + 1 < \infty
\end{align*}
since $\| \mathrm{e}_i^\varepsilon \|_{L^2(\Omega_v)}$ converges to $\| \mathrm{e}_i \|_{L^2(\Omega_v)}$ as $\varepsilon \to 0$.

On the other hand, we observe by H$\ddot{\text{o}}$lder's inequality that
\[
\big| \big\langle \mathrm{e}_i^\varepsilon \mathcal{P}^\varepsilon(f_\varepsilon) \mathcal{L}^\varepsilon(f_\varepsilon)^2 \big\rangle \big| \leq \| \mathrm{e}_i^\varepsilon \|_{L^2(\Omega_v)} \| \mathcal{P}^\varepsilon(f_\varepsilon) \mathcal{L}^\varepsilon(f_\varepsilon)^2 \|_{L^2(\Omega_v)}.
\]
Hence, by applying Proposition \ref{MRX} and the global energy inequality (\ref{GloEE:ep}), we deduce that
\begin{align*}
&\left| \int_{t_1}^{t_2} \int_{\mathbf{T}^3} \big\langle \mathrm{e}_i^\varepsilon \mathcal{P}^\varepsilon(f_\varepsilon) \mathcal{L}^\varepsilon(f_\varepsilon)^2 \big\rangle \Phi \, dx \, dt \right| \\
&\ \ \leq \| \mathrm{e}_i^\varepsilon \|_{L^2(\Omega_v)} \| \Phi \|_{L^\infty( [t_1, t_2]; L^2(\mathbf{T}_x^3) )} \int_{t_1}^{t_2} \| \mathcal{P}^\varepsilon(f_\varepsilon) \mathcal{L}^\varepsilon(f_\varepsilon)^2 \|_{L^2( \mathbf{T}_x^3; L^2(\Omega_v) )} \, dt \\
&\ \ \lesssim \varepsilon^{-18 \gamma} \| \mathrm{e}_i^\varepsilon \|_{L^2(\Omega_v)} \| \Phi \|_{L^\infty( [0,T]; L^2(\mathbf{T}_x^3) )} \sup_{t \in [t_1, t_2]} \| \mathcal{P}^\varepsilon(f_\varepsilon) \|_X \sup_{t \in [t_1, t_2]} \| \mathcal{L}^\varepsilon(f_\varepsilon) \|_X \int_{t_1}^{t_2} \| \mathcal{L}^\varepsilon(f_\varepsilon) \|_X \, dt \\
&\ \ \lesssim \sqrt{\nu_\ast (t_2 - t_1)} \varepsilon^{1 - 18 \gamma} \| \mathrm{e}_i^\varepsilon \|_{L^2(\Omega_v)} \| \Phi \|_{L^\infty( [0,T]; L^2(\mathbf{T}_x^3) )} \| f_0 \|_X^3,
\end{align*}
provided that $\varepsilon$ is sufficiently small.
Analogously, we estimate by H$\ddot{\text{o}}$lder's inequality that
\[
\big| \big\langle \mathrm{e}_i^\varepsilon \mathcal{P}^\varepsilon(f_\varepsilon)^2 \mathcal{L}^\varepsilon(f_\varepsilon) \big\rangle \big| \leq \| \mathrm{e}_i^\varepsilon \|_{L^2(\Omega_v)} \| \mathcal{P}^\varepsilon(f_\varepsilon)^2 \mathcal{L}^\varepsilon(f_\varepsilon) \|_{L^2(\Omega_v)}.
\]
Thus, in this case it holds that
\begin{align*}
&\left| \int_{t_1}^{t_2} \int_{\mathbf{T}^3} \big\langle \mathrm{e}_i^\varepsilon \mathcal{P}^\varepsilon(f_\varepsilon)^2 \mathcal{L}^\varepsilon(f_\varepsilon) \big\rangle \Phi \, dx \, dt \right| \\
&\ \ \lesssim \varepsilon^{-18 \gamma} \| \mathrm{e}_i^\varepsilon \|_{L^2(\Omega_v)} \| \Phi \|_{L^\infty( [t_1, t_2]; L^2(\mathbf{T}_x^3) )} \left( \sup_{t \in [t_1, t_2]} \| \mathcal{P}^\varepsilon(f_\varepsilon) \|_X \right)^2 \int_{t_1}^{t_2} \| \mathcal{L}^\varepsilon(f_\varepsilon) \|_X \, dt \\
&\ \ \leq \varepsilon^{1 - 18 \gamma} \sqrt{\nu_\ast (t_2 - t_1)} \| \mathrm{e}_i^\varepsilon \|_{L^2(\Omega_v)} \| \Phi \|_{L^\infty( [0,T]; L^2(\mathbf{T}_x^3) )} \| f_0 \|_X^3
\end{align*}
if $\varepsilon$ is sufficiently small.
This completes the proof of Lemma \ref{PuCov:L3}.
\end{proof}
\begin{remark} \label{WeCov:L3}
Lemma \ref{PuCov:L3} also says that for any $\Phi \in C^\infty( [0,T] \times \mathbf{T}^3 )$ with $0<T<\infty$ and $i \in \{0, 1, 2\}$,
\begin{align*}
\left| \int_0^T \int_{\mathbf{T}^3} \big\langle \mathrm{e}_i^\varepsilon \mathcal{R}_{\mathcal{L}^\varepsilon, 3} \big\rangle \Phi \, dx \, dt \right| \to 0 \quad \text{as} \quad \varepsilon \to 0
\end{align*}
if $\gamma < \frac{1}{18}$, i.e., $\big\langle \mathrm{e}_i^\varepsilon \mathcal{R}_{\mathcal{L}^\varepsilon, 3} \big\rangle$ converges to zero in the sense of distributions as $\varepsilon \to 0$ if $\gamma < \frac{1}{18}$.
\end{remark}

By a direct calculation, we have that
\begin{align*}
\mathcal{P}^\varepsilon(f_\varepsilon)^3 &= (\rho^\varepsilon)^3 + (\mathrm{e}_1^\varepsilon \cdot u^\varepsilon)^3 + (\mathrm{e}_2^\varepsilon \theta^\varepsilon)^3 + 3 (\rho^\varepsilon)^2 (\mathrm{e}_1^\varepsilon \cdot u^\varepsilon) + 3 \rho^\varepsilon (\mathrm{e}_1^\varepsilon \cdot u^\varepsilon)^2 + 3 (\mathrm{e}_1^\varepsilon \cdot u^\varepsilon)^2 (\mathrm{e}_2^\varepsilon \theta^\varepsilon) \\
&\ \ + 3 (\mathrm{e}_1^\varepsilon \cdot u^\varepsilon) (\mathrm{e}_2^\varepsilon \theta^\varepsilon)^2 + 3 (\rho^\varepsilon)^2 (\mathrm{e}_2^\varepsilon \theta^\varepsilon) + 3 \rho^\varepsilon (\mathrm{e}_2^\varepsilon \theta^\varepsilon)^2 + 6 \rho^\varepsilon \mathrm{e}_2^\varepsilon \theta^\varepsilon (\mathrm{e}_1^\varepsilon \cdot u^\varepsilon).
\end{align*}
Since $\mathrm{e}_1^\varepsilon$ is odd and $\mathrm{e}_2^\varepsilon$ is even with respect to $0$ in $v$-variable, we observe that
\begin{align*}
\big\langle \mathrm{e}_1^\varepsilon \mathcal{P}^\varepsilon(f_\varepsilon)^3 \big\rangle &= \big\langle \mathrm{e}_1^\varepsilon (\mathrm{e}_1^\varepsilon \cdot u^\varepsilon)^3 \big\rangle + 3 (\rho^\varepsilon)^2 \big\langle \mathrm{e}_1^\varepsilon (\mathrm{e}_1^\varepsilon \cdot u^\varepsilon) \big\rangle + 3 (\theta^\varepsilon)^2 \big\langle \mathrm{e}_1^\varepsilon (\mathrm{e}_2^\varepsilon)^2 (\mathrm{e}_1^\varepsilon \cdot u^\varepsilon) \big\rangle + 6 \rho^\varepsilon \theta^\varepsilon \big\langle \mathrm{e}_1^\varepsilon \mathrm{e}_2^\varepsilon (\mathrm{e}_1^\varepsilon \cdot u^\varepsilon) \big\rangle.
\end{align*}
For any $1 \leq i \leq 3$, it holds that
\begin{align*}
\big\langle \mathrm{e}_{1,i}^\varepsilon (\mathrm{e}_1^\varepsilon \cdot u^\varepsilon)^3 \big\rangle &= \big\langle (\mathrm{e}_{1,i}^\varepsilon)^4 \big\rangle (u_i^\varepsilon)^3 + 3 \big\langle (\mathrm{e}_{1,i}^\varepsilon \mathrm{e}_{1,j}^\varepsilon)^2 \big\rangle u_i^\varepsilon (u_j^\varepsilon)^2 + 3 \big\langle (\mathrm{e}_{1,i}^\varepsilon \mathrm{e}_{1,k}^\varepsilon)^2 \big\rangle u_i^\varepsilon (u_k^\varepsilon)^2 \\
&= \frac{9}{5} (u_i^\varepsilon)^3 + 3 u_i^\varepsilon (u_j^\varepsilon)^2 + 3 u_i^\varepsilon (u_k^\varepsilon)^2 + R_{3,i}^{\varepsilon,1}(x,t)
\end{align*}
where
\[
R_{3,i}^{\varepsilon,1}(x,t) := \big\langle (\mathrm{e}_{1,i}^\varepsilon)^4 - \mathrm{e}_{1,i}^4 \big\rangle (u_i^\varepsilon)^3 + 3 \big\langle (\mathrm{e}_{1,i}^\varepsilon \mathrm{e}_{1,j}^\varepsilon)^2 - \mathrm{e}_{1,i}^2 \mathrm{e}_{1,j}^2 \big\rangle u_i^\varepsilon (u_j^\varepsilon)^2 + 3 \big\langle (\mathrm{e}_{1,i}^\varepsilon \mathrm{e}_{1,k}^\varepsilon)^2 - \mathrm{e}_{1,i}^2 \mathrm{e}_{1,k}^2 \big\rangle u_i^\varepsilon (u_k^\varepsilon)^2.
\]
Furthermore, we rewrite
\begin{equation*}
 \begin{aligned}
 3 (\rho^\varepsilon)^2 \big\langle \mathrm{e}_{1,i}^\varepsilon (\mathrm{e}_1^\varepsilon \cdot u^\varepsilon) \big\rangle &=& &3 (\rho^\varepsilon)^2 u_i^\varepsilon + R_{3,i}^{\varepsilon,2},& \quad R_{3,i}^{\varepsilon,2} &:=& &3 \big\langle \mathrm{e}_{1,i}^\varepsilon \mathrm{e}_{1,i}^\varepsilon - \mathrm{e}_{1,i}^2 \big\rangle u_i^\varepsilon (\rho^\varepsilon)^2,& \\
 3 (\theta^\varepsilon)^2 \big\langle \mathrm{e}_{1,i}^\varepsilon (\mathrm{e}_2^\varepsilon)^2 (\mathrm{e}_1^\varepsilon \cdot u^\varepsilon) \big\rangle &=& &\frac{75}{7} (\theta^\varepsilon)^2 u_i^\varepsilon + R_{3,i}^{\varepsilon,3},& \quad R_{3,i}^{\varepsilon,3} &:=& &3 \big\langle (\mathrm{e}_{1,i}^\varepsilon \mathrm{e}_2^\varepsilon)^2 - (\mathrm{e}_{1,i} \mathrm{e}_2)^2 \big\rangle u_i^\varepsilon (\theta^\varepsilon)^2,& \\
 6 \rho^\varepsilon \theta^\varepsilon \big\langle \mathrm{e}_{1,i}^\varepsilon \mathrm{e}_2^\varepsilon (\mathrm{e}_1^\varepsilon \cdot u^\varepsilon) \big\rangle &=& &\frac{12 \sqrt{5}}{5} \rho^\varepsilon \theta^\varepsilon u_i^\varepsilon + R_{3,i}^{\varepsilon,4},& \quad R_{3,i}^{\varepsilon,4} &:=& &6 \big\langle (\mathrm{e}_{1,i}^\varepsilon)^2 \mathrm{e}_2^\varepsilon - \mathrm{e}_{1,i}^2 \mathrm{e}_2 \big\rangle u_i^\varepsilon \rho^\varepsilon \theta^\varepsilon&
 \end{aligned}
\end{equation*}
Hence, for any $1 \leq i \leq 3$, 
\begin{align*}
\big\langle \mathrm{e}_{1,i}^\varepsilon \mathcal{P}^\varepsilon(f_\varepsilon)^3 \big\rangle = F_i^\varepsilon + R_{3,i}^\varepsilon
\end{align*}
where
\[
F_i^\varepsilon := - \frac{6}{5} (u_i^\varepsilon)^3 + 3 u_i^\varepsilon |u^\varepsilon|^2 + 3 (\rho^\varepsilon)^2 u_i^\varepsilon + \frac{75}{7} (\theta^\varepsilon)^2 u_i^\varepsilon + \frac{12 \sqrt{5}}{5} \rho^\varepsilon \theta^\varepsilon u_i^\varepsilon
\]
and
\[
R_{3,i}^\varepsilon := \sum_{j=1}^4 R_{3,i}^{\varepsilon,j}.
\]

Analogously, we have that
\begin{align*}
\big\langle \mathrm{e}_2^\varepsilon \mathcal{P}^\varepsilon(f_\varepsilon)^3 \big\rangle &= \big\langle (\mathrm{e}_2^\varepsilon)^4 \big\rangle (\theta^\varepsilon)^3 + 3 \rho^\varepsilon \big\langle \mathrm{e}_2^\varepsilon (\mathrm{e}_1^\varepsilon \cdot u^\varepsilon)^2 \big\rangle + 3 \theta^\varepsilon \big\langle (\mathrm{e}_2^\varepsilon)^2 (\mathrm{e}_1^\varepsilon \cdot u^\varepsilon)^2 \big\rangle + 3 (\rho^\varepsilon)^2 \theta^\varepsilon \big\langle (\mathrm{e}_2^\varepsilon)^2 \big\rangle \\
&\ \ + 3 \rho^\varepsilon (\theta^\varepsilon)^2 \big\langle (\mathrm{e}_2^\varepsilon)^3 \big\rangle.
\end{align*}
We rewrite
\begin{equation*}
 \begin{aligned}
 \big\langle (\mathrm{e}_2^\varepsilon)^4 \big\rangle (\theta^\varepsilon)^3 &=& &\frac{171}{7} (\theta^\varepsilon)^3 + Q_3^{\varepsilon,1},& \quad Q_3^{\varepsilon,1} &:=& &\big\langle (\mathrm{e}_2^\varepsilon)^4 - \mathrm{e}_2^4 \big\rangle (\theta^\varepsilon)^3,& \\
 3 \rho^\varepsilon \big\langle \mathrm{e}_2^\varepsilon (\mathrm{e}_1^\varepsilon \cdot u^\varepsilon)^2 \big\rangle &=& &\frac{6 \sqrt{5}}{5} \rho^\varepsilon |u^\varepsilon|^2 + Q_3^{\varepsilon,2},& \quad Q_3^{\varepsilon,2} &:=& &3 \big\langle \mathrm{e}_2^\varepsilon (\mathrm{e}_{1,i}^\varepsilon)^2 - \mathrm{e}_2 \mathrm{e}_{1,i}^2 \big\rangle \rho^\varepsilon |u^\varepsilon|^2,& \\
 3 \theta^\varepsilon \big\langle (\mathrm{e}_2^\varepsilon)^2 (\mathrm{e}_1^\varepsilon \cdot u^\varepsilon)^2 \big\rangle &=& &\frac{15}{7} \theta^\varepsilon |u^\varepsilon|^2 + Q_3^{\varepsilon,3},& \quad Q_3^{\varepsilon,3} &:=& &3 \big\langle (\mathrm{e}_2^\varepsilon)^2 (\mathrm{e}_{1,i}^\varepsilon)^2 - \mathrm{e}_2^2 \mathrm{e}_{1,i}^2 \big\rangle \theta^\varepsilon |u^\varepsilon|^2,& \\
 3 (\rho^\varepsilon)^2 \theta^\varepsilon \big\langle (\mathrm{e}_2^\varepsilon)^2 \big\rangle &=& &9 \theta^\varepsilon (\rho^\varepsilon)^2 + Q_3^{\varepsilon,4},& \quad Q_3^{\varepsilon,4} &:=& &3 \big\langle (\mathrm{e}_2^\varepsilon)^2 - \mathrm{e}_2^2 \big\rangle (\rho^\varepsilon)^2 \theta^\varepsilon,& \\
 3 \rho^\varepsilon (\theta^\varepsilon)^2 \big\langle (\mathrm{e}_2^\varepsilon)^3 \big\rangle &=& &\frac{18 \sqrt{5}}{7} \rho^\varepsilon (\theta^\varepsilon)^2 + Q_3^{\varepsilon,5},& \quad Q_3^{\varepsilon,5} &:=& &3 \big\langle (\mathrm{e}_2^\varepsilon)^3 - \mathrm{e}_2^3 \big\rangle \rho^\varepsilon (\theta^\varepsilon)^2.&
 \end{aligned}
\end{equation*}
Thus, we obtain that
\begin{align*}
\big\langle \mathrm{e}_2^\varepsilon \mathcal{P}^\varepsilon(f_\varepsilon)^3 \big\rangle = G_\varepsilon + Q_3^\varepsilon
\end{align*}
where
\[
G_\varepsilon:= \frac{171}{7} (\theta^\varepsilon)^3 + \frac{6 \sqrt{5}}{5} \rho^\varepsilon |u^\varepsilon|^2 + \frac{15}{7} \theta^\varepsilon |u^\varepsilon|^2 + 9 \theta^\varepsilon (\rho^\varepsilon)^2 + \frac{18 \sqrt{5}}{7} \rho^\varepsilon (\theta^\varepsilon)^2
\]
and
\[
Q_3^\varepsilon := \sum_{j=1}^5 Q_3^{\varepsilon, j}.
\]

On the other hand, we also have that
\begin{align*}
\big\langle \mathcal{P}^\varepsilon(f_\varepsilon)^3 \big\rangle = (\rho^\varepsilon)^3 + \big\langle (\mathrm{e}_2^\varepsilon \theta^\varepsilon)^3 \big\rangle + 3 \rho^\varepsilon \big\langle (\mathrm{e}_1^\varepsilon \cdot u^\varepsilon)^2 \big\rangle + 3 \theta^\varepsilon \big\langle (\mathrm{e}_1^\varepsilon \cdot u^\varepsilon)^2 \mathrm{e}_2^\varepsilon \big\rangle + 3 \rho^\varepsilon \big\langle (\mathrm{e}_2^\varepsilon \theta^\varepsilon)^2 \big\rangle.
\end{align*}
Then, we rewrite
\begin{equation*}
 \begin{aligned}
 \big\langle (\mathrm{e}_2^\varepsilon \theta^\varepsilon)^3 \big\rangle &=& &\frac{6 \sqrt{5}}{7} (\theta^\varepsilon)^3 + W_3^{\varepsilon,1},& \quad W_3^{\varepsilon,1} &:=& &\big\langle (\mathrm{e}_2^\varepsilon)^3 - \mathrm{e}_2^3 \big\rangle (\theta^\varepsilon)^3,& \\
 3 \rho^\varepsilon \big\langle (\mathrm{e}_1^\varepsilon \cdot u^\varepsilon)^2 \big\rangle &=& &3 \rho^\varepsilon |u^\varepsilon|^2 + W_3^{\varepsilon,2},& \quad W_3^{\varepsilon,2} &:=& &3 \big\langle (\mathrm{e}_{1,i}^\varepsilon)^2 - \mathrm{e}_{1,i}^2 \big\rangle \rho^\varepsilon |u^\varepsilon|^2,& \\
 3 \theta^\varepsilon \big\langle (\mathrm{e}_1^\varepsilon \cdot u^\varepsilon)^2 \mathrm{e}_2^\varepsilon \big\rangle &=& &\frac{6 \sqrt{5}}{5} \theta^\varepsilon |u^\varepsilon|^2 + W_3^{\varepsilon,3},& \quad W_3^{\varepsilon,3} &:=& &3 \big\langle \mathrm{e}_2^\varepsilon (\mathrm{e}_{1,i}^\varepsilon)^2 - \mathrm{e}_2 \mathrm{e}_{1,i}^2 \big\rangle \theta^\varepsilon |u^\varepsilon|^2,& \\
 3 \rho^\varepsilon \big\langle (\mathrm{e}_2^\varepsilon \theta^\varepsilon)^2 \big\rangle &=& &9 \rho^\varepsilon (\theta^\varepsilon)^2 + W_3^{\varepsilon,4},& \quad W_3^{\varepsilon,4} &:=& &3 \big\langle (\mathrm{e}_2^\varepsilon)^2 - \mathrm{e}_2^2 \big\rangle \rho^\varepsilon (\theta^\varepsilon)^2.&
 \end{aligned}
\end{equation*}
Hence, we have that
\begin{align*}
\big\langle \mathcal{P}^\varepsilon(f_\varepsilon)^3 \big\rangle = E_\varepsilon + W_3^\varepsilon
\end{align*}
where
\[
E_\varepsilon := (\rho^\varepsilon)^3 + \frac{6 \sqrt{5}}{7} (\theta^\varepsilon)^3 + 3 \rho^\varepsilon |u^\varepsilon|^2 + \frac{6 \sqrt{5}}{5} \theta^\varepsilon |u^\varepsilon|^2 + 9 \rho^\varepsilon (\theta^\varepsilon)^2
\]
and
\[
W_3^\varepsilon := \sum_{j=1}^4 W_3^{\varepsilon, j}.
\]

\begin{remark} \label{L2Bd:RUT}
By the Sobolev embedding $H^1(\mathbf{T}_x^3) \hookrightarrow L^6(\mathbf{T}_x^3)$ and Corollary \ref{Conv:RUT}, we have that
\begin{equation} \label{LfL2:RUT}
\begin{split}
\| \rho^\varepsilon u^\varepsilon \theta^\varepsilon \|_{L^\infty( [0,\infty); L^2(\mathbf{T}_x^3) )} &\leq \| \rho^\varepsilon \|_{L^\infty( [0,\infty); H^1(\mathbf{T}_x^3) )} \| u^\varepsilon \|_{L^\infty( [0,\infty); H^1(\mathbf{T}_x^3) )} \| \theta^\varepsilon \|_{L^\infty( [0,\infty); H^1(\mathbf{T}_x^3) )} \\
&\leq \| f_0 \|_X^3.
\end{split}
\end{equation}
By analogous estimates as (\ref{LfL2:RUT}), we can deduce that $\{ F_i^\varepsilon \}_\varepsilon$ ($\forall \; 1 \leq i \leq 3$), $\{ G_\varepsilon \}_\varepsilon$ and $\{ E_\varepsilon \}_\varepsilon$ are all uniformly bounded in $L^\infty( [0,\infty); L^2(\mathbf{T}_x^3) )$.
Moreover, it can be easily observed that $R_{3,i}^\varepsilon$ ($\forall \; 1 \leq i \leq 3$), $Q_3^\varepsilon$ and $W_3^\varepsilon$ all converge to $0$ in norm $L^\infty( [0,\infty); L^2( \mathbf{T}_x^3; L^2(\Omega_v) ) )$ as $\varepsilon \to 0$.
\end{remark}
%

\subsection{Application of the $L^2$ Helmholtz projection on the main equation} 
\label{Sub:ConNSF}

We define that 
\[
F^\varepsilon := (F_1^\varepsilon, F_2^\varepsilon, F_3^\varepsilon), \quad H^\varepsilon := (H_1^\varepsilon, H_2^\varepsilon, H_3^\varepsilon), \quad J^\varepsilon := (J_1^\varepsilon, J_2^\varepsilon, J_3^\varepsilon)
\]
with
\[
J_i^\varepsilon := - \frac{1}{10} \partial_{x_i}^2 u_i^\varepsilon \quad \text{and} \quad H_i^\varepsilon := \frac{\sqrt{3} \kappa}{5} \partial_{x_i} (u_i^\varepsilon)^2 \quad \forall \; 1 \leq i \leq 3.
\]
System (\ref{ruipAE}) now turns into
\begin{eqnarray} \label{Re:Conser}
\left\{
\begin{array}{lcl}
\partial_{t} \rho^\varepsilon + \frac{1}{c_1^\varepsilon \varepsilon} \nabla_{x} \cdot u^\varepsilon = - \frac{\kappa^2}{\nu_\ast} E_\varepsilon + \mathcal{O}_{0,\varepsilon}, \\
\partial_{t} u^\varepsilon + \frac{1}{\varepsilon} \nabla_x \big( \mu_1^\varepsilon \theta^\varepsilon + \mu_2^\varepsilon \rho^\varepsilon \big) - \frac{\nu_\ast}{12} \Delta_x u^\varepsilon + \frac{\sqrt{3} \kappa}{3} \nabla_x \cdot (u^\varepsilon \otimes u^\varepsilon) + \frac{4 \sqrt{3} \kappa}{5} \nabla_x |u^\varepsilon|^2 \\
\ \ = - \frac{\kappa^2}{\nu_\ast} F^\varepsilon + H^\varepsilon + \nu_\ast J^\varepsilon + \mathcal{O}_1^\varepsilon, \\
\partial_t \theta^\varepsilon + \frac{\mu_5^\varepsilon}{c_1^\varepsilon \varepsilon} \nabla_x \cdot u^\varepsilon - \frac{97 \nu_\ast}{420} \Delta_x \theta^\varepsilon + \frac{97 \sqrt{3} \kappa}{105} \operatorname{div} (u^\varepsilon \theta^\varepsilon) = - \frac{\kappa^2}{\nu_\ast} G_\varepsilon + \mathcal{O}_{2,\varepsilon}
\end{array}
\right.
\end{eqnarray}
where $\mathcal{O}_{0,\varepsilon}, \mathcal{O}_1^\varepsilon, \mathcal{O}_{2,\varepsilon}$ are remainder terms that converge to zero in the sense of distributions as $\varepsilon \to 0$.
Rearrange the second equation system (\ref{Re:Conser}), we observe that
\begin{equation} \label{Boussi:ep}
\begin{split}
&\nabla_x (\mu_2^\varepsilon \rho^\varepsilon + \mu_1^\varepsilon \theta^\varepsilon) \\
&\ \ \approx \varepsilon \left\{ - \partial_t u^\varepsilon + \frac{\nu_\ast}{12} \Delta_x u^\varepsilon - \frac{\sqrt{3} \kappa}{3} \nabla_x \cdot (u^\varepsilon \otimes u^\varepsilon) - \frac{4 \sqrt{3} \kappa}{5} \nabla_x |u^\varepsilon|^2 - \frac{\kappa^2}{\nu_\ast} F^\varepsilon + H^\varepsilon + \nu_\ast J^\varepsilon + \mathcal{O}_1^\varepsilon \right\}.
\end{split}
\end{equation}
It can be easily shown that the right hand side of equation (\ref{Boussi:ep}) converges to zero in the sense of distributions as $\varepsilon \to 0$. Combining Corollary \ref{Conv:RUT} with the facts that $\mu_1^\varepsilon \to \frac{\sqrt{15}}{45}$ and $\mu_2^\varepsilon \to \frac{\sqrt{3}}{6}$ as $\varepsilon \to 0$, we obtain the Boussinesq relation
\begin{align} \label{Boussi:Eq}
\nabla_x (3 \sqrt{5} \rho + 2 \theta) = 0.
\end{align}

To get rid of the term $\nabla_x (3 \sqrt{5} \rho^\varepsilon + 2 \theta^\varepsilon)$ whose coefficient is a constant multiple of $\varepsilon^{-1}$ in the second equation of system (\ref{Re:Conser}), we consider the Helmholtz decomposition for $L^2(\mathbf{T}_x^3)^3$. For any $h \in L^2(\mathbf{T}_x^3)^3$, there exists a unique decomposition of the form $h = h_0 + \nabla_x \pi$ where 
\begin{align*}
h_0 \in L_\sigma^2(\mathbf{T}_x^3) &:= \{ f \in L^2(\mathbf{T}_x^3)^3 \bigm| \operatorname{div} f = 0 \; \; \text{in} \; \;  \mathbf{T}^3 \}, \\
\nabla_x \pi \in G^2(\mathbf{T}_x^3) &:= \{ \nabla_x \pi \in L^2(\mathbf{T}_x^3)^3 \bigm| \pi \in L^2(\mathbf{T}_x^3) \}.
\end{align*}
Moreover, the estimate
\begin{align} \label{L2Est:Helm}
\| h_0 \|_{L^2(\mathbf{T}_x^3)} + \| \nabla_x \pi \|_{L^2(\mathbf{T}_x^3)} \leq 2 \| h \|_{L^2(\mathbf{T}_x^3)}
\end{align}
holds. The Helmholtz projection, denoted by $\mathbb{P}$, is the projection that maps $h$ to $h_0$, i.e., we have that $\mathbb{P}(h) = h_0$. 
Let us recall that the Helmholtz projection on a torus is given by 
\begin{align} \label{L2HelmDe:T}
\mathbb{P}\big( h \big) (x) = \sum_{k \in \mathbf{Z}^3, \, k \neq 0} \left( \widehat{h}(k) - \frac{\widehat{h}(k) \cdot k}{|k|^2} k \right) \mathrm{e}^{2 \pi i k \cdot x}, \quad \widehat{h}(k) = \int_{\mathbf{T}^3} h(x) \mathrm{e}^{- 2 \pi i k \cdot x} \, dx
\end{align}
for $h \in L^2(\mathbf{T}_x^3)^3$.
Moreover, $\mathbb{P}$ has properties
\[
\mathbb{P}(f_0) = f_0 \quad \forall \; f_0 \in L_\sigma^2(\mathbf{T}_x^3) \quad \text{and} \quad \mathbb{P}(\nabla_x p) = 0 \quad \forall \; \nabla_x p \in G^2(\mathbf{T}_x^3).
\]
Suppose that $h \in H^1(\mathbf{T}_x^3)^3$ and $h = h_0 + \nabla_x \pi$ be the Helmholtz decomposition of $h$ in $L^2(\mathbf{T}_x^3)$. It can be easily observed from (\ref{L2HelmDe:T}) that the differentiation $\partial_x$ commutes with $\mathbb{P}$.
As a result, $\partial_x h = \partial_x h_0 + \nabla_x (\partial_x h)$ is indeed the Helmholtz decomposition of $\partial_x h$ in $L^2(\mathbf{T}_x^3)^3$.
By applying the Helmholtz projection $\mathbb{P}$ to both sides of the second equation in system (\ref{Re:Conser}), we obtain that
\begin{equation} \label{HeReinNS}
\begin{split}
\partial_t \mathbb{P} (u^\varepsilon) - \frac{\nu_\ast}{12} \Delta_x \mathbb{P} (u^\varepsilon) + \frac{\sqrt{3} \kappa}{3} \mathbb{P} \big( \nabla_x \cdot (u^\varepsilon \otimes u^\varepsilon) \big) = - \frac{\kappa^2}{\nu_\ast} \mathbb{P} (F^\varepsilon) + \mathbb{P}(H^\varepsilon) + \mathbb{P}(\nu_\ast J^\varepsilon) + \mathbb{P}(\mathcal{O}_1^\varepsilon).
\end{split}
\end{equation}

Since the sequence $\{ u^\varepsilon \}_\varepsilon$ is uniformly bounded in $L^\infty( [0,\infty); H^1(\mathbf{T}_x^3) )$ (see Corollary \ref{Conv:RUT}), by interchanging the differentiation $\partial_x$ with the Helmholtz projection $\mathbb{P}$, we observe by estimate (\ref{L2Est:Helm}) that $\{ \mathbb{P}(u^\varepsilon) \}_\varepsilon$ is also uniformly bounded in $L^\infty( [0,\infty); H^1(\mathbf{T}_x^3) )$.
Hence, by suppressing subsequences again, it can be concluded that there exists $\widetilde{u} \in L^\infty([0, \infty); H^1(\mathbf{T}_x^3) )$ such that $\mathbb{P}(u^\varepsilon) \overset{\ast}{\rightharpoonup} \widetilde{u}$, i.e., for any $\Psi \in L^1([0, \infty); H^{-1}(\mathbf{T}_x^3) )$, it holds that
\begin{align} \label{Cv:ut:A}
\int_0^\infty \int_{\mathbf{T}^3} \mathbb{P}(u^\varepsilon) \cdot \Psi \, dx \, dt \to \int_0^\infty \int_{\mathbf{T}^3} \widetilde{u} \cdot \Psi \, dx \, dt \quad \text{as} \quad \varepsilon \to 0.
\end{align}
On the other hand, for any $\Psi \in L^1([0, \infty); L^2(\mathbf{T}_x^3) )$, it holds that
\begin{align} \label{Cv:Pu:A}
\int_0^\infty \int_{\mathbf{T}^3} \mathbb{P}(u^\varepsilon - u) \cdot \Psi \, dx \, dt = \int_0^\infty \int_{\mathbf{T}^3} (u^\varepsilon - u) \cdot \mathbb{P}(\Psi) \, dx \, dt \to 0 \quad \text{as} \quad \varepsilon \to 0.
\end{align}
Combining convergence (\ref{Cv:ut:A}) and (\ref{Cv:Pu:A}), we deduce that
\begin{align*}
\int_0^\infty \int_{\mathbf{T}^3} \widetilde{u} \cdot \Psi \, dx \, dt = \int_0^\infty \int_{\mathbf{T}^3} \mathbb{P}(u) \cdot \Psi \, dx \, dt, \quad \forall \; \Psi \in L^1( [0, \infty); L^2(\mathbf{T}_x^3) ).
\end{align*}
Since there exists $\pi \in H^1(\mathbf{T}_x^3)$ such that $\mathbb{P}(u) = u - \nabla_x \pi$, we conclude by (\ref{divu:0}) that 
\begin{align} \label{Qu:0}
\int_0^\infty \int_{\mathbf{T}^3} (\widetilde{u} - u) \cdot \Psi \, dx \, dt = 0, \quad \forall \; \Psi \in L^\infty( [0,\infty); H^1(\mathbf{T}_x^3) ) \cap W^{1,1}( [0,\infty); H^1(\mathbf{T}_x^3) ).
\end{align}
\begin{lemma} \label{StCon:Pu}
Suppressing subsequences, $\mathbb{P}(u^\varepsilon) \to u$ strongly in $C( [0,\infty); L^2(\mathbf{T}_x^3) )$, i.e., it holds that
\begin{align*}
\| \mathbb{P}(u^\varepsilon) - u \|_{L^\infty( [0,\infty); L^2(\mathbf{T}_x^3) )} \to 0 \quad \text{as} \quad \varepsilon \to 0.
\end{align*}
\end{lemma}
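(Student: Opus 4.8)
The plan is to run an Aubin--Lions--Simon compactness argument on the Helmholtz-projected momentum equation (\ref{HeReinNS}), identify the strong limit with $u$, and finally upgrade the local-in-time compactness to the global uniform statement. First I would read (\ref{HeReinNS}) as an evolution equation $\partial_t\mathbb{P}(u^\varepsilon)=\Theta^\varepsilon$ and check that $\Theta^\varepsilon$ is bounded in $L^\infty([0,\infty);H^{-1}(\mathbf{T}_x^3))$ uniformly in $\varepsilon$. The diffusion $\frac{\nu_\ast}{12}\Delta_x\mathbb{P}(u^\varepsilon)$ lies in $L^\infty_tH^{-1}$ by the uniform $L^\infty_tH^1$ bound of Corollary \ref{Conv:RUT}; the term $\mathbb{P}\big(\nabla_x\cdot(u^\varepsilon\otimes u^\varepsilon)\big)$ lies in $L^\infty_tH^{-1}$ since $u^\varepsilon\otimes u^\varepsilon\in L^\infty_tL^3\subset L^\infty_tL^2$ via $H^1\hookrightarrow L^6$ and $\mathbb{P}$ is a bounded Fourier multiplier on $H^{-1}$; the forcings $\mathbb{P}(F^\varepsilon)$, $\mathbb{P}(H^\varepsilon)$, $\nu_\ast\mathbb{P}(J^\varepsilon)$ are controlled by Remark \ref{L2Bd:RUT} and the same embeddings; and the remainder $\mathbb{P}(\mathcal{O}_1^\varepsilon)$ is bounded in $L^\infty_tH^{-1}$ (with most pieces of order $\mathcal{O}(\varepsilon^{1-\tau})$) by the global energy estimate (\ref{GloEE:ep}) and the strong-solution regularity of Remark \ref{SolAE:stg}. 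Hence $\partial_t\mathbb{P}(u^\varepsilon)$ is uniformly bounded in $L^\infty([0,\infty);H^{-1})$, and in particular in $L^2([0,T];H^{-1})$ for every finite $T$.

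Since $\{\mathbb{P}(u^\varepsilon)\}$ is uniformly bounded in $L^\infty([0,\infty);H^1(\mathbf{T}_x^3))$, the compact embedding $H^1(\mathbf{T}_x^3)\hookrightarrow\hookrightarrow L^2(\mathbf{T}_x^3)$ together with $L^2\hookrightarrow H^{-1}$ and the $L^2_tH^{-1}$ bound on $\partial_t\mathbb{P}(u^\varepsilon)$ lets me invoke the Aubin--Lions--Simon lemma to get relative compactness of $\{\mathbb{P}(u^\varepsilon)\}$ in $C([0,T];L^2(\mathbf{T}_x^3))$ for each $T$. Every subsequential limit is divergence free and, by (\ref{Qu:0}) and (\ref{divu:0}), must coincide with $u$, so the whole sequence converges to $u$ in $C([0,T];L^2)$. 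A diagonal extraction over $T\in\mathbf{N}$ yields a single subsequence converging to $u$ in $C([0,T];L^2)$ for every finite $T$; in particular $\mathbb{P}(u^\varepsilon)(t)\to u(t)$ strongly in $L^2(\mathbf{T}_x^3)$ for each fixed $t\geq0$.

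For the global statement I would extract a horizon-independent modulus of continuity in time. Writing $\mathbb{P}(u^\varepsilon)(t+h)-\mathbb{P}(u^\varepsilon)(t)=\int_t^{t+h}\Theta^\varepsilon(\tau)\,d\tau$ and using the uniform $L^\infty_tH^{-1}$ bound from the first step gives $\|\mathbb{P}(u^\varepsilon)(t+h)-\mathbb{P}(u^\varepsilon)(t)\|_{H^{-1}}\lesssim h$ for all $t\geq0$, $h\geq0$ and all $\varepsilon$. Interpolating this bound against the uniform $L^\infty_tH^1$ bound through $\|g\|_{L^2}\leq\|g\|_{H^{-1}}^{1/2}\|g\|_{H^1}^{1/2}$ produces a modulus $\|\mathbb{P}(u^\varepsilon)(t+h)-\mathbb{P}(u^\varepsilon)(t)\|_{L^2}\lesssim h^{1/2}$, valid on all of $[0,\infty)$ and uniformly in $\varepsilon$. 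Combined with the pointwise-in-$t$ strong $L^2$ convergence and the Rellich precompactness of $\{\mathbb{P}(u^\varepsilon)(t)\}$ in $L^2$ at each fixed $t$, this should yield $\sup_{t\geq0}\|\mathbb{P}(u^\varepsilon)(t)-u(t)\|_{L^2}\to0$.

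The hard part is exactly this last upgrade from locally uniform to globally uniform convergence: a uniform time-modulus together with pointwise convergence does not by itself forbid the supremum in $t$ from escaping to $t=\infty$. What makes the upgrade plausible is that every estimate above is genuinely horizon-independent --- the equicontinuity comes from the time-uniform bounds of Corollary \ref{Conv:RUT} and the global-in-time dissipation $\int_0^\infty\varepsilon^{-2}\mathcal{D}_\varepsilon(f_\varepsilon)^2\,ds\lesssim\nu_\ast\|f_0\|_X^2$, not from any finite window. To close the argument I would argue by contradiction: if $\|\mathbb{P}(u^{\varepsilon_n})(t_n)-u(t_n)\|_{L^2}\geq\delta$ along a subsequence, the bounded-$t$ case is excluded by the locally uniform convergence already proved, while the case $t_n\to\infty$ is handled by the uniform modulus (which forces the gap to persist on an interval $[t_n,t_n+\eta]$ of fixed length) and a time-shift compactness step for the shifted profiles $\mathbb{P}(u^{\varepsilon_n})(\cdot+t_n)$, whose data and forcings are bounded uniformly in the shift. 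This shift-compactness and the identification of the shifted limit constitute the delicate ingredient of the proof.
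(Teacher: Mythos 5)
Your overall strategy (a uniform-in-$\varepsilon$ temporal modulus of continuity with values in $L^2$, combined with the pointwise compactness coming from the uniform $H^1$ bound, then Arzel\`a--Ascoli) matches the paper's, but the route you take to the modulus contains a step that fails. You claim the right-hand side of (\ref{HeReinNS}) is uniformly bounded in $L^\infty([0,\infty);H^{-1}(\mathbf{T}_x^3))$. It is not: the remainder $\mathcal{O}_1^\varepsilon$ contains the term $\varepsilon\nu_\ast\,\nabla_x\cdot\partial_t\langle A_\varepsilon f_\varepsilon\rangle$ coming from the rewriting (\ref{ReApproEq}), and once $\partial_t f_\varepsilon$ is substituted from the equation this reintroduces $\varepsilon^{-1}\langle A_\varepsilon\mathcal{L}^\varepsilon(f_\varepsilon)\rangle$. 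The global energy estimate (\ref{GloEE:ep}) controls $\varepsilon^{-1}\mathcal{L}^\varepsilon(f_\varepsilon)$ only in $L^2$ in time, not in $L^\infty$ in time, so the Lipschitz bound $\|\mathbb{P}(u^\varepsilon)(t+h)-\mathbb{P}(u^\varepsilon)(t)\|_{H^{-1}}\lesssim h$ that you interpolate against is not available. The $L^2_tH^{-1}$ bound you fall back on for Aubin--Lions does survive, and it still yields a uniform H\"older modulus ($h^{1/2}$ in $H^{-1}$, hence $h^{1/4}$ in $L^2$ after interpolating against the uniform $H^1$ bound), so the argument is repairable --- but the modulus must be stated with that weaker exponent, and it cannot be extracted from (\ref{HeReinNS}) in the clean form you wrote.

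The paper avoids all of this by not using the rewritten equation: it applies $\mathbb{P}$ to the raw second moment equation of (\ref{ruipAE}), obtaining (\ref{StCon:t:u}), whose only stiff term is $\frac{c_1^\varepsilon}{\varepsilon}\mathbb{P}\big(\nabla_x\cdot\langle A_\varepsilon\mathcal{L}^\varepsilon(f_\varepsilon)\rangle\big)$. Integrating over $[t_1,t_2]$ and pairing with the increment $\mathbb{P}(u^\varepsilon(t_2))-\mathbb{P}(u^\varepsilon(t_1))$, Cauchy--Schwarz in time together with $\varepsilon^{-1}\|\mathcal{L}^\varepsilon(f_\varepsilon)\|_{L^2_tX}\le\sqrt{\nu_\ast}\|f_0\|_X$ gives $\|\mathbb{P}(u^\varepsilon(t_2))-\mathbb{P}(u^\varepsilon(t_1))\|_{L^2}^2\lesssim\sqrt{t_2-t_1}+(t_2-t_1)$ directly in $L^2$, uniformly in $\varepsilon$ and in the location of the interval; no Aubin--Lions, no negative-Sobolev interpolation, and no identification-of-the-limit detour is needed for the equicontinuity itself. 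Your closing concern about upgrading locally uniform convergence to $\sup_{t\ge0}$ is legitimate --- equicontinuity plus pointwise convergence does not by itself prevent a profile from escaping to $t=\infty$ --- but the shift-compactness argument you propose to close it is only sketched, not carried out, and it is an addition to rather than a reconstruction of the paper's proof, which at this point simply invokes Arzel\`a--Ascoli on the strength of the horizon-independent modulus and the Rellich compactness of $\{\mathbb{P}(u^\varepsilon)(t)\}$ at each fixed $t$.
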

\begin{proof}
We apply the Helmholtz decomposition $\mathbb{P}$ to the second equation of system (\ref{ruipAE}) to obtain that
\begin{align} \label{StCon:t:u}
\partial_t \mathbb{P}(u^\varepsilon) + \frac{c_1^\varepsilon}{\varepsilon} \mathbb{P}\Big( \nabla_x \cdot \big\langle A_\varepsilon \mathcal{L}^\varepsilon(f_\varepsilon) \big\rangle \Big) = - \frac{\kappa^2}{\nu_\ast} \mathbb{P}\big( \langle \mathrm{e}_1^\varepsilon f_\varepsilon^3 \rangle \big).
\end{align}
Integrating equation (\ref{StCon:t:u}) from $t_1$ to $t_2$ for some time interval $[t_1, t_2] \subset [0,\infty)$ and then take its inner product with $\mathbb{P}\big( u^\varepsilon (t_2) \big) - \mathbb{P}\big( u^\varepsilon (t_1) \big)$ in the sense of $L^2(\mathbf{T}_x^3)$ , we have that
\begin{equation} \label{StCon:eq}
\begin{split}
&\big\| \mathbb{P}\big( u^\varepsilon (t_2) \big) - \mathbb{P}\big( u^\varepsilon (t_1) \big) \big\|_{L^2(\mathbf{T}_x^3)}^2 \\
&\ \ = - \int_{t_1}^{t_2} \int_{\mathbf{T}^3} \frac{c_1^\varepsilon}{\varepsilon} \mathbb{P}\Big( \nabla_x \cdot \big\langle A_\varepsilon \mathcal{L}^\varepsilon(f_\varepsilon) \big\rangle \Big) \Big( \mathbb{P}\big( u^\varepsilon (t_2) \big) - \mathbb{P}\big( u^\varepsilon (t_1) \big) \Big) \, dx \, dt \\
&\ \ \ \ - \int_{t_1}^{t_2} \int_{\mathbf{T}^3} \frac{\kappa^2}{\nu_\ast} \mathbb{P}\big( \big\langle \mathrm{e}_1^\varepsilon f_\varepsilon^3 \big\rangle \big) \Big( \mathbb{P}\big( u^\varepsilon (t_2) \big) - \mathbb{P}\big( u^\varepsilon (t_1) \big) \Big) \, dx \, dt = (\mathrm{I}) + (\mathrm{II}).
\end{split}
\end{equation}
There exists $\delta_\ast >0$ such that for any $\varepsilon \in (0, \delta_\ast)$, it holds simultaneously that
\begin{equation} \label{sml:ep:1} 
\begin{split}
| \big\langle \mathrm{e}_1^\varepsilon \mathcal{P}^\varepsilon(f_\varepsilon)^3 \big\rangle_i | &\leq 2 |u_i^\varepsilon|^3 + 4 |u_i^\varepsilon| |u^\varepsilon|^2 + 4 (\rho^\varepsilon)^2 |u_i^\varepsilon| + 11 (\theta^\varepsilon)^2 |u_i^\varepsilon| + 6 |\rho^\varepsilon \theta^\varepsilon u_i^\varepsilon|, \quad \forall \; 1 \leq i \leq 3, \\
\| \mathrm{e}_i^\varepsilon \|_{L^2(\Omega_v)} &\leq \| \mathrm{e}_i \|_{L^2(\Omega_v)} + 1, \quad \forall \; 0 \leq i \leq 2, \\
\| A_\varepsilon \|_{L^2(\Omega_v)} &\leq \| A \|_{L^2(\Omega_v)} + 1. \\
\end{split}
\end{equation}
Using the fact that $\mathbb{P}$ is self-adjoint with respect to inner product $\langle \cdot, \cdot \rangle_{L^2(\mathbf{T}_x^3)}$, we deduce that
\begin{align*}
&\int_{\mathbf{T}^3} \big| \big\langle A_\varepsilon \cdot \nabla_x \mathcal{L}^\varepsilon(f_\varepsilon) \big\rangle \big| \cdot \big| \mathbb{P}\big( u^\varepsilon (t_2) \big) - \mathbb{P}\big( u^\varepsilon (t_1) \big) \big| \, dx \\
&\ \ \lesssim \| A_\varepsilon \|_{L^2(\Omega_v)} \| \mathcal{L}^\varepsilon(f_\varepsilon) \|_{H^1( \mathbf{T}_x^3; L^2(\Omega_v) )} \| u^\varepsilon \|_{L^\infty( [0,\infty); L^2(\mathbf{T}_x^3) )}.
\end{align*}
Hence, for $\varepsilon < \delta_\ast$, the first integral on the right hand side of equation (\ref{StCon:eq}) follows the estimate
\begin{align} \label{Es:AdGLf}
|(\mathrm{I})| \lesssim \frac{1}{\varepsilon} \left( \int_{t_1}^{t_2} \mathcal{D}_\varepsilon\big( f_\varepsilon \big)^2 (t) \, dt \right)^{\frac{1}{2}} \sqrt{t_2 - t_1} \| u^\varepsilon \|_{L^\infty( [0,\infty); L^2(\mathbf{T}_x^3) )}.
\end{align}
To estimate $(\mathrm{II})$, we decompose $\big\langle \mathrm{e}_1^\varepsilon f_\varepsilon^3 \big\rangle$ into the sum of $\big\langle \mathrm{e}_1^\varepsilon \mathcal{P}^\varepsilon(f_\varepsilon)^3 \big\rangle$ and $\big\langle \mathrm{e}_i^\varepsilon \mathcal{R}_{\mathcal{L}^\varepsilon, 3} \big\rangle$.
By Lemma \ref{PuCov:L3}, we have that
\begin{align} \label{Es:eiRL3}
\left| \int_{t_1}^{t_2} \int_{\mathbf{T}^3} \big\langle \mathrm{e}_i^\varepsilon \mathcal{R}_{\mathcal{L}^\varepsilon, 3} \big\rangle \Big( \mathbb{P}\big( u^\varepsilon (t_2) \big) - \mathbb{P}\big( u^\varepsilon (t_1) \big) \Big) \, dx \, dt \right| \lesssim \sqrt{\nu_\ast (t_2 - t_1)} \varepsilon^{1 - 18 \gamma} \| f_0 \|_X^4.
\end{align}
If $\varepsilon < \delta_\ast$, the first inequality of (\ref{sml:ep:1}) and Remark \ref{L2Bd:RUT} imply that
\begin{align} \label{Es:eiPf3}
\left| \int_{t_1}^{t_2} \int_{\mathbf{T}^3} \big\langle \mathrm{e}_1^\varepsilon \mathcal{P}^\varepsilon(f_\varepsilon)^3 \big\rangle \Big( \mathbb{P}\big( u^\varepsilon (t_2) \big) - \mathbb{P}\big( u^\varepsilon (t_1) \big) \Big) \, dx \, dt \right| \lesssim (t_2 - t_1) \| f_0 \|_X^4.
\end{align}
Therefore, this shows that $\{ \mathbb{P}(u^\varepsilon) \}_\varepsilon \subset C( [0,\infty); L^2(\mathbf{T}_x^3) )$ and $\big\{ \| \mathbb{P}\big( u^\varepsilon \big) (t) \|_{L^2(\mathbf{T}_x^3)} \big\}_\varepsilon$ is equi-continuous in time $t$. By the Arzel$\grave{\text{a}}$-Ascoli theorem, we obtain Lemma \ref{StCon:Pu}.
\end{proof}

\subsection{Convergence to the incompressible Navier-Stokes-Fourier limit}
\label{sub:cvNSF}

We further define projection $\mathbb{Q} := \mathbb{I} - \mathbb{P}$ where $\mathbb{I}$ denotes the identity projection and expand
\[
\mathbb{P}\big( \nabla_x \cdot (u^\varepsilon \otimes u^\varepsilon) \big) = \mathbb{P}\Big( \nabla_x \cdot \big( \mathbb{P}(u^\varepsilon) \otimes \mathbb{P}(u^\varepsilon) \big) \Big) + R_\mathbb{P}(u^\varepsilon)
\]
with
\begin{align*}
R_\mathbb{P}(u^\varepsilon) &= \mathbb{P} \Big( \nabla_x \cdot \big( \mathbb{P}(u^\varepsilon) \otimes \mathbb{Q}(u^\varepsilon) \big) \Big) + \mathbb{P} \Big( \nabla_x \cdot \big( \mathbb{Q}(u^\varepsilon) \otimes \mathbb{P}(u^\varepsilon) \big) \Big) + \mathbb{P} \Big( \nabla_x \cdot \big( \mathbb{Q}(u^\varepsilon) \otimes \mathbb{Q}(u^\varepsilon) \big) \Big).
\end{align*}
Let 
\[
C_\sigma^\infty(\mathbf{T}^3) := \{ f \in C^\infty(\mathbf{T}^3)^3 \bigm| \operatorname{div} f = 0 \; \; \text{in} \; \; \mathbf{T}^3 \}.
\]

\begin{lemma} \label{RemHel:0}
For any $\Psi \in C^\infty( [0,T]; C_\sigma^\infty(\mathbf{T}_x^3) )$ with $0<T<\infty$, it holds that
\begin{align*}
\left| \int_0^T \int_{\mathbf{T}^3} R_\mathbb{P}(u^\varepsilon) \cdot \Psi \, dx \, dt \right| \to 0 \quad \text{as} \quad \varepsilon \to 0.
\end{align*}
\end{lemma}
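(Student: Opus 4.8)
The plan is to test $R_\mathbb{P}(u^\varepsilon)$ against a divergence free $\Psi$, exploit that $\mathbb{P}$ is self-adjoint on $L^2(\mathbf{T}_x^3)$ with $\mathbb{P}(\Psi)=\Psi$, and integrate by parts in $x$ to move the divergence onto $\Psi$. This turns each of the three summands of $R_\mathbb{P}(u^\varepsilon)$ into a bilinear expression of the form $-\int_0^T\int_{\mathbf{T}^3}(\,\cdot\otimes\cdot\,):\nabla_x\Psi\,dx\,dt$. Throughout I will use that $\{u^\varepsilon\}_\varepsilon$ is bounded in $L^\infty([0,\infty);H^1(\mathbf{T}_x^3))$ by Corollary \ref{Conv:RUT}, hence so are $\{\mathbb{P}(u^\varepsilon)\}_\varepsilon$ and $\{\mathbb{Q}(u^\varepsilon)\}_\varepsilon$ since the Helmholtz projection is bounded, and that $\mathbb{Q}(u^\varepsilon)=u^\varepsilon-\mathbb{P}(u^\varepsilon)\overset{\ast}{\rightharpoonup}0$ in $L^\infty([0,\infty);H^1(\mathbf{T}_x^3))$, which follows by combining the weak-$\ast$ convergence $u^\varepsilon\overset{\ast}{\rightharpoonup}u$ with the strong convergence $\mathbb{P}(u^\varepsilon)\to u$ of Lemma \ref{StCon:Pu}.

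For the two mixed terms $\mathbb{P}\big(\nabla_x\cdot(\mathbb{P}(u^\varepsilon)\otimes\mathbb{Q}(u^\varepsilon))\big)$ and $\mathbb{P}\big(\nabla_x\cdot(\mathbb{Q}(u^\varepsilon)\otimes\mathbb{P}(u^\varepsilon))\big)$ I would argue by weak--strong convergence. In each case one factor is $\mathbb{Q}(u^\varepsilon)$, which is weak-$\ast$ null in $L^\infty([0,\infty);H^1(\mathbf{T}_x^3))$, while the complementary factor $\mathbb{P}(u^\varepsilon)_i\,\partial_{x_j}\Psi_k$ converges strongly in $C([0,T];L^2(\mathbf{T}_x^3))\hookrightarrow L^1([0,T];H^{-1}(\mathbf{T}_x^3))$ by Lemma \ref{StCon:Pu} and the smoothness of $\Psi$. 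Pairing a weak-$\ast$ null sequence against a strongly convergent test sequence yields convergence to zero, so both mixed terms vanish in the limit.

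The genuine obstacle is the purely compressible term $\mathbb{P}\big(\nabla_x\cdot(\mathbb{Q}(u^\varepsilon)\otimes\mathbb{Q}(u^\varepsilon))\big)$, since both factors converge only weakly and the acoustic part $\mathbb{Q}(u^\varepsilon)$ does not converge strongly on the torus. Here I would use the gradient structure $\mathbb{Q}(u^\varepsilon)=\nabla_x\pi^\varepsilon$ together with the algebraic identity $\nabla_x\cdot(\nabla_x\pi\otimes\nabla_x\pi)=(\Delta_x\pi)\nabla_x\pi+\tfrac12\nabla_x|\nabla_x\pi|^2$. Applying $\mathbb{P}$ annihilates the exact gradient, and since $\Delta_x\pi^\varepsilon=\nabla_x\cdot\mathbb{Q}(u^\varepsilon)=\nabla_x\cdot u^\varepsilon$, the term collapses to $\mathbb{P}\big((\nabla_x\cdot u^\varepsilon)\,\mathbb{Q}(u^\varepsilon)\big)$. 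Using once more $\mathbb{P}(\Psi)=\Psi$, the quantity to control becomes $\int_0^T\int_{\mathbf{T}^3}(\nabla_x\cdot u^\varepsilon)\,(\mathbb{Q}(u^\varepsilon)\cdot\Psi)\,dx\,dt$. I then substitute the first conservation law of system (\ref{ruipAE}), namely $\nabla_x\cdot u^\varepsilon=-c_1^\varepsilon\varepsilon\,\partial_t\rho^\varepsilon-\tfrac{c_1^\varepsilon\varepsilon\kappa^2}{\nu_\ast}\langle f_\varepsilon^3\rangle$. The cubic contribution is of order $\varepsilon^{1-9\gamma}$ by the bound (\ref{E:fep3}) and hence negligible for $\gamma<\tfrac19$, while the term carrying $\partial_t\rho^\varepsilon$ is treated by integrating by parts in time: the boundary contributions carry a prefactor $\varepsilon$ and vanish, and the remaining integral is rewritten by inserting the momentum law of system (\ref{ruipAE}) for $\varepsilon\partial_t u^\varepsilon$.

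The delicate point, and the step I expect to be the main obstacle, is that this substitution produces, besides further $O(\varepsilon^{1-c\gamma})$ errors, resonant quadratic products of $\rho^\varepsilon,\theta^\varepsilon$ and $\nabla_x\rho^\varepsilon,\nabla_x\theta^\varepsilon$ whose naive size is $O(1)$. The resolution is that, after symmetrization, the surviving pieces are exact $x$-gradients: for instance $\int_{\mathbf{T}^3}\rho^\varepsilon\nabla_x\rho^\varepsilon\cdot\Psi\,dx=\tfrac12\int_{\mathbf{T}^3}\nabla_x(\rho^\varepsilon)^2\cdot\Psi\,dx=0$ because $\Psi$ is divergence free, and the cross term $\int_{\mathbf{T}^3}\rho^\varepsilon\nabla_x\theta^\varepsilon\cdot\Psi\,dx$ combines with $\int_{\mathbf{T}^3}\theta^\varepsilon\nabla_x\rho^\varepsilon\cdot\Psi\,dx$ into $\int_{\mathbf{T}^3}\nabla_x(\rho^\varepsilon\theta^\varepsilon)\cdot\Psi\,dx=0$. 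Making this bookkeeping airtight --- tracking that every persistent acoustic self-interaction contributes only a gradient to the divergence free projection, so that testing against $\Psi$ eliminates it, while all non-gradient remainders carry a power of $\varepsilon$ through the conservation laws --- is the core of the argument; the contribution of $\nabla_x\cdot\langle A_\varepsilon f_\varepsilon\rangle$ to $\varepsilon\partial_t u^\varepsilon$ is handled exactly as in Section \ref{sub:ReMome} and is likewise either a gradient or of order $o(1)$. Collecting the three terms then yields the claimed distributional convergence of $R_\mathbb{P}(u^\varepsilon)$ to zero.
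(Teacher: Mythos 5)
Your treatment of the two mixed terms is correct and is essentially what the paper does: pair the weak-$\ast$ null sequence $\mathbb{Q}(u^\varepsilon)$ against the strongly convergent factor coming from Lemma \ref{StCon:Pu}, using that $\mathbb{P}$ is self-adjoint and fixes $\Psi$. Your plan for the term $\mathbb{P}\big(\nabla_x\cdot(\mathbb{Q}(u^\varepsilon)\otimes\mathbb{Q}(u^\varepsilon))\big)$ is also the right \emph{kind} of argument --- it is precisely the standard proof of the Lions--Masmoudi compensated compactness lemma, which the paper instead invokes as a black box (\cite{LiMaI}, \cite[Theorem A.2]{GSR09}) applied to the acoustic system (\ref{WeCov:QQ}). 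But your execution of it contains a genuine gap.

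The problem is your choice of acoustic variable. You substitute $\nabla_x\cdot u^\varepsilon=-c_1^\varepsilon\varepsilon\,\partial_t\rho^\varepsilon+O(\varepsilon^{1-9\gamma})$ from the first conservation law, integrate by parts in time, and then replace $\varepsilon\partial_t\mathbb{Q}(u^\varepsilon)$ by $-\nabla_x\zeta^\varepsilon+o(1)$ with $\zeta^\varepsilon=\mu_2^\varepsilon\rho^\varepsilon+\mu_1^\varepsilon\theta^\varepsilon$ from the momentum law. The surviving resonant product is therefore $\rho^\varepsilon\,\nabla_x(\mu_2^\varepsilon\rho^\varepsilon+\mu_1^\varepsilon\theta^\varepsilon)\cdot\Psi$. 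The piece $\rho^\varepsilon\nabla_x\rho^\varepsilon\cdot\Psi=\tfrac12\nabla_x(\rho^\varepsilon)^2\cdot\Psi$ indeed vanishes against divergence-free $\Psi$, but the cross term $\mu_1^\varepsilon\rho^\varepsilon\nabla_x\theta^\varepsilon\cdot\Psi$ is $O(1)$, is not an exact gradient, and --- contrary to what you assert --- no companion term $\theta^\varepsilon\nabla_x\rho^\varepsilon\cdot\Psi$ is ever generated by this computation, because you only used the $\rho^\varepsilon$-equation to express $\operatorname{div}u^\varepsilon$. The resonance is left uncancelled. The fix is to express $\operatorname{div}u^\varepsilon=\operatorname{div}\mathbb{Q}(u^\varepsilon)$ through the evolution equation for the \emph{same} combination $\zeta^\varepsilon$ whose gradient drives $\varepsilon\partial_t\mathbb{Q}(u^\varepsilon)$, i.e.\ the first equation of the paper's system (\ref{WeCov:QQ}): $\mu_4^\varepsilon\operatorname{div}\mathbb{Q}(u^\varepsilon)=-\varepsilon\partial_t\zeta^\varepsilon+o(1)$ in $L^1_{loc}L^2$. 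Then the resonant product becomes $\zeta^\varepsilon\nabla_x\zeta^\varepsilon\cdot\Psi=\tfrac12\nabla_x(\zeta^\varepsilon)^2\cdot\Psi$, which integrates to zero against divergence-free $\Psi$, and all remainders carry a power of $\varepsilon$ exactly as you describe. With that substitution your argument closes and reproduces the compensated compactness step that the paper cites; as written, it does not.
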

\begin{proof}
Let $\zeta^\varepsilon := \mu_2^\varepsilon \rho^\varepsilon + \mu_1^\varepsilon \theta^\varepsilon$.
We can deduce from system (\ref{ruipAE}) that
\begin{equation} \label{WeCov:QQ}
\left\{
 \begin{aligned}
 \varepsilon \partial_t \zeta^\varepsilon + \mu_4^\varepsilon \operatorname{div} \mathbb{Q}(u^\varepsilon) &=& &- \frac{\varepsilon \mu_2^\varepsilon \kappa^2}{\nu_\ast} \langle f_\varepsilon^3 \rangle - c_2^\varepsilon \mu_1^\varepsilon \nabla_x \cdot \big\langle B_\varepsilon \mathcal{L}^\varepsilon(f_\varepsilon) \big\rangle - \frac{\varepsilon \mu_1^\varepsilon \kappa^2}{\nu_\ast} \langle \mathrm{e}_2^\varepsilon f_\varepsilon^3 \rangle,& \\
 \varepsilon \partial_t \mathbb{Q}(u^\varepsilon) + \nabla_x \zeta^\varepsilon &=& &- c_1^\varepsilon \mathbb{Q}\Big( \nabla_x \cdot \big\langle A_\varepsilon \mathcal{L}^\varepsilon(f_\varepsilon) \big\rangle \Big) - \frac{\varepsilon \kappa^2}{\nu_\ast} \mathbb{Q}\big( \langle \mathrm{e}_1^\varepsilon f_\varepsilon^3 \rangle \big).&
 \end{aligned}
\right.
\end{equation}
By Lemma \ref{Re:BerL}, for any $i \in \{0, 1, 2\}$, we have that
\begin{equation} \label{L2E:eif3}
\begin{split}
\| \langle \mathrm{e}_i^\varepsilon f_\varepsilon^3 \rangle \|_{L^2(\mathbf{T}_x^3)} &\leq \| \mathrm{e}_i^\varepsilon \|_{L^2(\Omega_v)} \big\| \| f_\varepsilon^3 \|_{L^2(\Omega_v)} \big\|_{L^2(\mathbf{T}_x^3)} \\
&\lesssim \frac{1}{\varepsilon^{9 \gamma}} \| \mathrm{e}_i^\varepsilon \|_{L^2(\Omega_v)} \big\| \| f_\varepsilon \|_{L^1(\Omega_v)}^3 \big\|_{L^2(\mathbf{T}_x^3)} \leq \frac{1}{\varepsilon^{9 \gamma}} \| \mathrm{e}_i^\varepsilon \|_{L^2(\Omega_v)} \| f_0 \|_X^3.
\end{split}
\end{equation}
In addition, for $C \in L^2(\Omega_v)$ be either a matrix or a vector and $0<T<\infty$, it holds that
\begin{equation} \label{L2E:GCLf}
\begin{split}
\int_0^T \big\| \big\langle C^\mathrm{T} \cdot \nabla_x \mathcal{L}^\varepsilon(f_\varepsilon) \big\rangle \big\|_{L^2(\mathbf{T}_x^3)} \, dt &\leq T^{\frac{1}{2}} \| C \|_{L^2(\Omega_v)} \left( \int_0^T \| \mathcal{L}^\varepsilon(f_\varepsilon) \|_X^2 \, dt \right)^{\frac{1}{2}} \\
&\leq \varepsilon \sqrt{T \nu_\ast} \| C \|_{L^2(\Omega_v)} \| f_0 \|_X.
\end{split}
\end{equation}
If $\gamma < \frac{1}{9}$, by combining (\ref{L2E:eif3}) and (\ref{L2E:GCLf}), we can conclude that the right hand side of both equations of (\ref{WeCov:QQ}) converge to $0$ in $L_{loc}^1( [0,\infty); L^2(\mathbf{T}_x^3) )$ as $\varepsilon \to 0$.
Since both $\{ \zeta^\varepsilon \}_\varepsilon$ and $\{ \mathbb{Q}(u^\varepsilon) \}_\varepsilon$ are uniformly bounded in $L^\infty( [0,\infty); L^2(\mathbf{T}_x^3) )$, by a compensated compactness result due to Lions and Masmoudi \cite{LiMaI} (see also \cite[Theorem A.2]{GSR09}), it holds that
\[
\mathbb{P}\Big( \nabla_x \cdot \big( \mathbb{Q}(u^\varepsilon) \otimes \mathbb{Q}(u^\varepsilon) \big) \Big) \to 0
\]
in the sense of distributions as $\varepsilon \to 0$, i.e., for any $\Psi \in C^\infty( [0,T]; C_\sigma^\infty(\mathbf{T}_x^3) )$ with $0<T<\infty$,
\begin{align*}
\int_0^T \int_{\mathbf{T}^3} \mathbb{Q}(u^\varepsilon) \otimes \mathbb{Q}(u^\varepsilon) : \nabla_x \Psi \, dx \, dt \to 0 \quad \text{as} \quad \varepsilon \to 0.
\end{align*}
Combining with the facts that $\mathbb{P}(u^\varepsilon) \to u$ strongly in $L^\infty([0, \infty); L^2(\mathbf{T}_x^3) )$, $\{ \mathbb{Q}(u^\varepsilon) \}_\varepsilon$ is uniformly bounded in $L^\infty([0, \infty); L^2(\mathbf{T}_x^3) )$ and (\ref{Qu:0}), we obtain Lemma \ref{RemHel:0}.
\end{proof}

Combining Lemma \ref{StCon:Pu} with Lemma \ref{RemHel:0}, it is not hard to show that for $\Psi \in C^\infty( [0,T]; C_\sigma^\infty(\mathbf{T}_x^3) )$ with $0<T<\infty$,
\begin{align*}
&\int_0^T \int_{\mathbf{T}^3} \bigg\{ \partial_t \mathbb{P} (u^\varepsilon) - \frac{\nu_\ast}{12} \Delta_x \mathbb{P} (u^\varepsilon) + \frac{\sqrt{3} \kappa}{3} \mathbb{P} \big( \nabla_x \cdot (u^\varepsilon \otimes u^\varepsilon) \big) \bigg\} \cdot \Psi \, dx \, dt \\
&\ \ \to \int_{\mathbf{T}^3} u_0 \cdot \Psi(x,0) \, dx - \int_0^T \int_{\mathbf{T}^3} u \cdot \partial_t \Psi + \frac{\sqrt{3} \kappa}{3} (u \otimes u) : \nabla_x \Psi - \frac{\nu_\ast}{12} u \cdot \Delta_x \Psi \, dx \, dt
\end{align*}
as $\varepsilon \to 0$.
By Remark \ref{L2Bd:RUT}, we observe that $\{ F^\varepsilon \}_\varepsilon$ is uniformly bounded in $L^\infty( [0,\infty); L^2(\mathbf{T}_x^3) )$. Hence, by suppressing subsequences again, there exists $\mathcal{F}_0 \in L^\infty( [0,\infty); L^2(\mathbf{T}_x^3) )$ such that
\begin{align*}
- F^\varepsilon \overset{\ast}{\rightharpoonup} \mathcal{F}_0 \quad \text{in} \quad L^\infty( [0,\infty); L^2(\mathbf{T}_x^3) )
\end{align*}
as $\varepsilon \to 0$.
Furthermore, we note by H$\ddot{\text{o}}$lder's inequality that for any $1 \leq i \leq 3$,
\begin{align} \label{L32E:upu}
\| u_i^\varepsilon \partial_{x_i} u_i^\varepsilon \|_{L^{\frac{3}{2}}(\mathbf{T}_x^3)} \leq \| u^\varepsilon \|_{L^6(\mathbf{T}_x^3)} \| \nabla_x u^\varepsilon \|_{L^2(\mathbf{T}_x^3)} \leq \| f_0 \|_X^2,
\end{align}
i.e., $\{ H^\varepsilon \}_\varepsilon$ is uniformly bounded in $L^\infty( [0,\infty); L^{\frac{3}{2}}(\mathbf{T}_x^3) )$.
Thus analogously, by suppressing subsequences once more, there exists $\mathcal{H} \in L^\infty( [0,\infty); L^{\frac{3}{2}}(\mathbf{T}_x^3) )$ such that 
\begin{align*}
H^\varepsilon \overset{\ast}{\rightharpoonup} \mathcal{H} \quad \text{in} \quad L^\infty( [0,\infty); L^{\frac{3}{2}}(\mathbf{T}_x^3) )
\end{align*}
as $\varepsilon \to 0$.

\begin{remark} \label{Est:FH}
By Remark \ref{L2Bd:RUT} and estimate (\ref{L32E:upu}), the weak-$\ast$ convergence of $\{ F^\varepsilon \}_\varepsilon$ and $\{ H^\varepsilon \}_\varepsilon$ imply that
\begin{equation*}
 \begin{aligned}
 &\| \mathcal{F} \|_{L^\infty( [0,\infty); L^2(\mathbf{T}_x^3) )}& &\leq& &\underset{\varepsilon \to 0}{\operatorname{liminf}} \, \| F^\varepsilon \|_{L^\infty( [0,\infty); L^2(\mathbf{T}_x^3) )}& &\leq& \| f_0 \|_X^3, \\
 &\| \mathcal{H} \|_{L^\infty( [0,\infty); L^{\frac{3}{2}}(\mathbf{T}_x^3) )}& &\leq& &\underset{\varepsilon \to 0}{\operatorname{liminf}} \, \| H^\varepsilon \|_{L^\infty( [0,\infty); L^{\frac{3}{2}}(\mathbf{T}_x^3) )}& &\leq& \| f_0 \|_X^2.
 \end{aligned}
\end{equation*}
\end{remark}

Since the Helmholtz projection $\mathbb{P}$ is self-adjoint, for any $\Psi \in C^\infty( [0,T]; C_\sigma^\infty(\mathbf{T}_x^3) )$ with $0<T<\infty$, we have that
\begin{align*}
&\int_0^T \int_{\mathbf{T}^3} \left\{ - \frac{\kappa^2}{\nu_\ast} \mathbb{P} (F^\varepsilon) + \mathbb{P} (H^\varepsilon) + \mathbb{P}(\nu_\ast J^\varepsilon) + \mathbb{P} (\mathcal{O}_1^\varepsilon) \right\} \cdot \Psi \, dx \, dt \\
&\ \ \to \int_0^T \int_{\mathbf{T}^3} \left( \frac{\kappa^2}{\nu_\ast} \mathcal{F}_0 + \mathcal{H} + \nu_\ast \mathcal{J}_0 \right) \cdot \Psi \, dx \, dt
\end{align*}
as $\varepsilon \to 0$ where $\mathcal{J}_0 := -\frac{1}{10} (\partial_{x_1}^2 u_1^\varepsilon, \partial_{x_2}^2 u_2^\varepsilon, \partial_{x_3}^2 u_3^\varepsilon)$.
Apart from $\mathbb{P}(u^\varepsilon)$, we have another strong convergence.

\begin{lemma} \label{StCon:therh}
Suppressing subsequences, 
\[
\theta^\varepsilon - \mu_5^\varepsilon \rho^\varepsilon \to \theta - \frac{2 \sqrt{5}}{5} \rho \quad \text{strongly in} \quad C( [0,\infty]; L^2(\mathbf{T}_x^3) ),
\]
i.e., it holds that
\begin{align*}
\left\| (\theta^\varepsilon - \mu_5^\varepsilon \rho^\varepsilon) - \left( \theta - \frac{2 \sqrt{5}}{5} \rho \right) \right\|_{L^\infty( [0,\infty); L^2(\mathbf{T}_x^3) )} \to 0 \quad \text{as} \quad \varepsilon \to 0.
\end{align*}
\end{lemma}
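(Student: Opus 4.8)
The plan is to mirror the proof of Lemma~\ref{StCon:Pu}: I establish that the family $\{w^\varepsilon\}_\varepsilon$, where $w^\varepsilon := \theta^\varepsilon - \mu_5^\varepsilon\rho^\varepsilon$, is uniformly bounded in $L^\infty([0,\infty);H^1(\mathbf{T}_x^3))$ and equi-continuous in time with values in $L^2(\mathbf{T}_x^3)$, and then invoke the Arzel$\grave{\text{a}}$--Ascoli theorem together with the Rellich compact embedding $H^1(\mathbf{T}_x^3)\hookrightarrow\hookrightarrow L^2(\mathbf{T}_x^3)$. The crucial preliminary observation is that the combination $\theta^\varepsilon - \mu_5^\varepsilon\rho^\varepsilon$ is engineered precisely to cancel the singular term $\tfrac{\mu_5^\varepsilon}{c_1^\varepsilon\varepsilon}\nabla_x\cdot u^\varepsilon$. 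Starting from system~(\ref{ruipAE}) and using the decomposition $\nabla_x\cdot\langle\mathrm{e}_2^\varepsilon v f_\varepsilon\rangle = c_2^\varepsilon\nabla_x\cdot\langle B_\varepsilon f_\varepsilon\rangle + \tfrac{\mu_5^\varepsilon}{c_1^\varepsilon}\nabla_x\cdot u^\varepsilon$ from Section~\ref{Sub:thetaeq} (recall $\langle v f_\varepsilon\rangle = \tfrac{1}{c_1^\varepsilon}u^\varepsilon$), I subtract $\mu_5^\varepsilon$ times the first equation of~(\ref{ruipAE}) from the third and divide by $\varepsilon$. Since $B_\varepsilon\in\mathrm{ker}^\perp(\mathcal{L}^\varepsilon)$ gives $\langle B_\varepsilon f_\varepsilon\rangle = \langle B_\varepsilon\mathcal{L}^\varepsilon(f_\varepsilon)\rangle$, this produces
\[
\partial_t w^\varepsilon + \frac{c_2^\varepsilon}{\varepsilon}\nabla_x\cdot\big\langle B_\varepsilon\mathcal{L}^\varepsilon(f_\varepsilon)\big\rangle = -\frac{\kappa^2}{\nu_\ast}\big(\langle\mathrm{e}_2^\varepsilon f_\varepsilon^3\rangle - \mu_5^\varepsilon\langle f_\varepsilon^3\rangle\big),
\]
which is the exact analogue of equation~(\ref{StCon:t:u}).

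Next I would establish the time equi-continuity. Fixing $[t_1,t_2]\subset[0,\infty)$, I integrate the displayed identity over $[t_1,t_2]$ and take the $L^2(\mathbf{T}_x^3)$ inner product with the increment $w^\varepsilon(t_2)-w^\varepsilon(t_1)$, so that the left-hand side becomes $\|w^\varepsilon(t_2)-w^\varepsilon(t_1)\|_{L^2(\mathbf{T}_x^3)}^2$. For the singular term I move the gradient onto $\mathcal{L}^\varepsilon(f_\varepsilon)$ inside the velocity average (as $B_\varepsilon$ depends on $v$ only) and apply Cauchy--Schwarz: the factor $\varepsilon^{-1}$ is absorbed by $(\int_{t_1}^{t_2}\mathcal{D}_\varepsilon(f_\varepsilon)^2\,dt)^{1/2}\le\varepsilon\sqrt{\nu_\ast}\|f_0\|_X$ coming from the global energy estimate~(\ref{GloEE:ep}), leaving a bound of order $\sqrt{t_2-t_1}$ (here $\|B_\varepsilon\|_{L^2(\Omega_v)}$ and $\|w^\varepsilon\|_{L^\infty([0,\infty);L^2(\mathbf{T}_x^3))}$ are controlled via Lemma~\ref{vec:Bep} and Corollary~\ref{Conv:RUT}). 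For the cubic forcing I decompose $f_\varepsilon^3 = \mathcal{P}^\varepsilon(f_\varepsilon)^3 + \mathcal{R}_{\mathcal{L}^\varepsilon,3}$; the remainder contributes a term of order $\sqrt{t_2-t_1}\,\varepsilon^{1-18\gamma}$ by Lemma~\ref{PuCov:L3}, while $\langle\mathrm{e}_2^\varepsilon\mathcal{P}^\varepsilon(f_\varepsilon)^3\rangle - \mu_5^\varepsilon\langle\mathcal{P}^\varepsilon(f_\varepsilon)^3\rangle = (G_\varepsilon-\mu_5^\varepsilon E_\varepsilon) + (Q_3^\varepsilon - \mu_5^\varepsilon W_3^\varepsilon)$ is uniformly bounded in $L^\infty([0,\infty);L^2(\mathbf{T}_x^3))$ by Remark~\ref{L2Bd:RUT}, contributing a term of order $t_2-t_1$. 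Altogether $\|w^\varepsilon(t_2)-w^\varepsilon(t_1)\|_{L^2(\mathbf{T}_x^3)}^2 \lesssim \sqrt{t_2-t_1} + (t_2-t_1)$ with constants independent of $\varepsilon$, which is the desired equi-continuity.

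Finally, Corollary~\ref{Conv:RUT} furnishes the uniform bound of $\{w^\varepsilon\}_\varepsilon$ in $L^\infty([0,\infty);H^1(\mathbf{T}_x^3))$, so for each fixed $t$ the family is relatively compact in $L^2(\mathbf{T}_x^3)$; combined with the equi-continuity above, the Arzel$\grave{\text{a}}$--Ascoli theorem yields a subsequence converging strongly in $C([0,T];L^2(\mathbf{T}_x^3))$ for every $T$, and hence, after a diagonal extraction, strongly in $C([0,\infty);L^2(\mathbf{T}_x^3))$. The limit is identified as $\theta - \tfrac{2\sqrt{5}}{5}\rho$ because $\rho^\varepsilon\overset{\ast}{\rightharpoonup}\rho$, $\theta^\varepsilon\overset{\ast}{\rightharpoonup}\theta$ (Corollary~\ref{Conv:RUT}) and $\mu_5^\varepsilon\to\tfrac{2\sqrt{5}}{5}$, so that $w^\varepsilon\overset{\ast}{\rightharpoonup}\theta-\tfrac{2\sqrt{5}}{5}\rho$ and the strong and weak-$\ast$ limits must coincide. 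The main point to get right --- and the only genuinely delicate one --- is the cancellation of the $\varepsilon^{-1}\nabla_x\cdot u^\varepsilon$ term through the specific coefficient $\mu_5^\varepsilon$, since it is this cancellation that reduces the surviving singular contribution to $\tfrac{c_2^\varepsilon}{\varepsilon}\nabla_x\cdot\langle B_\varepsilon\mathcal{L}^\varepsilon(f_\varepsilon)\rangle$, whose $\varepsilon^{-1}$ is then tamed by the $\mathcal{O}(\varepsilon)$ smallness of $\mathcal{L}^\varepsilon(f_\varepsilon)$ in $L^2_t X$; everything else runs exactly parallel to Lemma~\ref{StCon:Pu}.
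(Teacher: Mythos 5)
Your proposal is correct and follows essentially the same route as the paper: the paper likewise subtracts $\mu_5^\varepsilon$ times the first equation of (\ref{ruipAE}) from the third to cancel the $\varepsilon^{-1}\nabla_x\cdot u^\varepsilon$ term, integrates over $[t_1,t_2]$, pairs with the increment, bounds the singular term via the global energy estimate (\ref{GloEE:ep}) and the cubic terms via Lemma \ref{PuCov:L3} and Remark \ref{L2Bd:RUT}, and concludes by Arzel\`a--Ascoli. Your explicit mention of the Rellich embedding for the pointwise-in-time compactness is a detail the paper leaves implicit, but it is not a different argument.
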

\begin{proof}
The proof is basically the same as the proof of Lemma \ref{StCon:Pu}. 
Subtract $\mu_5^\varepsilon$ times the first equation from the third equation in system (\ref{ruipAE}), we obtain that
\begin{align} \label{StCon:eqdif}
\partial_t \big( \theta^\varepsilon - \mu_5^\varepsilon \rho^\varepsilon \big) + \frac{c_2^\varepsilon}{\varepsilon} \nabla_x \cdot \big\langle B_\varepsilon \mathcal{L}^\varepsilon(f_\varepsilon) \big\rangle = - \frac{\kappa^2}{\nu_\ast} \big\langle \mathrm{e}_2^\varepsilon f_\varepsilon^3 \big\rangle + \mu_5^\varepsilon \frac{\kappa^2}{\nu_\ast} \big\langle f_\varepsilon^3 \big\rangle.
\end{align}
Integrate equation (\ref{StCon:eqdif}) from $t_1$ to $t_2$ for some time interval $[t_1,t_2] \subset [0,\infty)$ and then take its inner product with 
\[
J_{t_2, t_1} := \big( \theta^\varepsilon(t_2) - \mu_5^\varepsilon \rho^\varepsilon(t_2) \big) - \big( \theta^\varepsilon(t_1) - \mu_5^\varepsilon \rho^\varepsilon(t_1) \big)
\]
in the sense of $L^2(\mathbf{T}_x^3)$, we have that
\begin{equation} \label{SCon:enerJ21}
\begin{split}
\| J_{t_2, t_1} \|_{L^2(\mathbf{T}_x^3)}^2 &= - \int_{t_1}^{t_2} \int_{\mathbf{T}^3} \frac{c_2^\varepsilon}{\varepsilon} \nabla_x \cdot \big\langle B_\varepsilon \mathcal{L}^\varepsilon(f_\varepsilon) \big\rangle J_{t_2, t_1} \, dx \, dt - \frac{\kappa^2}{\nu_\ast} \int_{t_1}^{t_2} \int_{\mathbf{T}^3} \big\langle \mathrm{e}_2^\varepsilon f_\varepsilon^3 \big\rangle J_{t_2, t_1} \, dx \, dt \\
&\ \ + \mu_5^\varepsilon \frac{\kappa^2}{\nu_\ast} \int_{t_1}^{t_2} \int_{\mathbf{T}^3} \big\langle f_\varepsilon^3 \big\rangle J_{t_2, t_1} \, dx \, dt = (\mathrm{I}) + (\mathrm{II}) + (\mathrm{III}).
\end{split}
\end{equation}
There exists $\zeta_\ast>0$ such that for any $\varepsilon \in (0, \zeta_\ast)$, it holds simultaneously that
\begin{equation*}
 \begin{aligned}
 | \big\langle \mathrm{e}_2^\varepsilon \mathcal{P}^\varepsilon(f_\varepsilon)^3 \big\rangle | &\leq& &25 |\theta^\varepsilon|^3 + 3 |\rho^\varepsilon| |u^\varepsilon|^2 + 3 |\theta^\varepsilon| |u^\varepsilon|^2 + 10 |\theta^\varepsilon| |\rho^\varepsilon|^2 + 6 |\rho^\varepsilon| |\theta^\varepsilon|^2,& \\
 | \big\langle \mathcal{P}^\varepsilon(f_\varepsilon)^3 \big\rangle | &\leq& &|\rho^\varepsilon|^3 + 2 |\theta^\varepsilon|^3 + 4 |\rho^\varepsilon| |u^\varepsilon|^2 + 3 |\theta^\varepsilon| |u^\varepsilon|^2 + 10 |\rho^\varepsilon| |\theta^\varepsilon|^2.&
 \end{aligned}
\end{equation*}
Then, by estimating the right hand side of equation (\ref{SCon:enerJ21}) analogously as in the proof of Lemma \ref{StCon:Pu}, we can show that if we restrict $\varepsilon < \mathrm{min} \, \{ \delta_\ast, \zeta_\ast \}$, then $\{ \theta^\varepsilon - \mu_5^\varepsilon \rho^\varepsilon \}_\varepsilon \subset C( [0,\infty); L^2(\mathbf{T}_x^3) )$ and $\{ \theta^\varepsilon - \mu_5^\varepsilon \rho^\varepsilon \}_\varepsilon$ is equi-continuous in time $t$. 
Indeed, $|(\mathrm{I})|$ follows estimate (\ref{Es:AdGLf}) and by Lemma \ref{PuCov:L3} and Remark \ref{L2Bd:RUT}, $|(\mathrm{II})|$ and $|(\mathrm{III})|$ can be estimated by (\ref{Es:eiRL3}) and (\ref{Es:eiPf3}).
By the Arzel$\grave{\text{a}}$-Ascoli theorem, we obtain Lemma \ref{StCon:therh}.
\end{proof}

Subtracting $\mu_5^\varepsilon$ times the first equation from the third equation in system (\ref{Re:Conser}), we obtain that
\begin{align} \label{thetaeq:ori}
\partial_t (\theta^\varepsilon - \mu_5^\varepsilon \rho^\varepsilon) - \frac{97 \nu_\ast}{420} \Delta_x \theta^\varepsilon + \frac{97 \sqrt{3} \kappa}{105} \operatorname{div} (u^\varepsilon \theta^\varepsilon) = - \frac{\kappa^2}{\nu_\ast} G_\varepsilon + \frac{\mu_5^\varepsilon \kappa^2}{\nu_\ast} E_\varepsilon + \mathcal{O}_{2,\varepsilon} - \mu_5^\varepsilon \mathcal{O}_{0,\varepsilon}.
\end{align}
Let $\widetilde{\theta^\varepsilon} := \theta^\varepsilon - \mu_5^\varepsilon \rho^\varepsilon$. We have that
\begin{align} \label{div:Qurho}
\operatorname{div} \big( \mathbb{Q}(u^\varepsilon) \rho^\varepsilon) = \frac{1}{\mu_2^\varepsilon + \mu_1^\varepsilon \mu_5^\varepsilon} \operatorname{div} \big( \mathbb{Q}(u^\varepsilon) \zeta^\varepsilon \big) - \frac{\mu_1^\varepsilon}{\mu_2^\varepsilon + \mu_1^\varepsilon \mu_5^\varepsilon} \operatorname{div} \big( \mathbb{Q}(u^\varepsilon) \widetilde{\theta^\varepsilon} \big)
\end{align}
and
\begin{align} \label{the:til}
\Big( 1 + \frac{2 \sqrt{5} \mu_5^\varepsilon}{15} \Big) \nabla_x \theta^\varepsilon = \nabla_x \widetilde{\theta^\varepsilon} + \frac{\sqrt{5} \mu_5^\varepsilon}{15} \mathcal{M}(\rho^\varepsilon, u^\varepsilon, \theta^\varepsilon)
\end{align}
where $\mathcal{M}(\rho^\varepsilon, u^\varepsilon, \theta^\varepsilon)$ is the right hand side of (\ref{Boussi:ep}).
Using (\ref{div:Qurho}) and (\ref{the:til}), equation (\ref{thetaeq:ori}) can be rewritten as
\begin{equation} \label{theta:eq}
\begin{split}
&\partial_t \widetilde{\theta^\varepsilon} - \frac{97 \nu_\ast \mu_6^\varepsilon}{420} \Delta_x \widetilde{\theta^\varepsilon} + \frac{97 \sqrt{3} \kappa}{105} \operatorname{div} (u^\varepsilon \widetilde{\theta^\varepsilon}) \\
&\ \ = \frac{\sqrt{5} \mu_5^\varepsilon \mu_6^\varepsilon}{15} \mathcal{M}(\rho^\varepsilon, u^\varepsilon, \theta^\varepsilon) - \frac{97 \sqrt{3} \kappa \mu_5^\varepsilon}{105 \mu_7^\varepsilon} \operatorname{div} \big( \mathbb{Q}(u^\varepsilon) \zeta^\varepsilon \big) + \frac{97 \sqrt{3} \kappa \mu_1^\varepsilon \mu_5^\varepsilon}{105 \mu_7^\varepsilon} \operatorname{div} \big( \mathbb{Q}(u^\varepsilon) \widetilde{\theta^\varepsilon} \big) \\
&\ \ \ \ - \frac{97 \sqrt{3} \kappa \mu_5^\varepsilon}{105} \operatorname{div} \big( \mathbb{P}(u^\varepsilon) \rho^\varepsilon \big) - \frac{\kappa^2}{\nu_\ast} G_\varepsilon + \frac{\mu_5^\varepsilon \kappa^2}{\nu_\ast} E_\varepsilon + \mathcal{O}_{2,\varepsilon} - \mu_5^\varepsilon \mathcal{O}_{0,\varepsilon}
\end{split}
\end{equation}
where $\mu_6^\varepsilon := \big( 1 + \frac{2 \sqrt{5} \mu_5^\varepsilon}{15} \big)^{-1}$ and $\mu_7^\varepsilon := \mu_2^\varepsilon + \mu_1^\varepsilon \mu_5^\varepsilon$.
Since $u^\varepsilon \overset{\ast}{\rightharpoonup} u$ in $L^\infty( [0,\infty); H^1(\mathbf{T}_x^3) )$ and $\widetilde{\theta^\varepsilon} \to \widetilde{\theta}$ in $C( [0,\infty); L^2(\mathbf{T}_x^3) )$ where $\widetilde{\theta} := \theta - \frac{2 \sqrt{5}}{5} \rho$, for any $\Phi \in C^\infty( [0,T] \times \mathbf{T}^3 )$ with $0<T<\infty$, it is easy to deduce that
\begin{align*}
&\int_0^T \int_{\mathbf{T}^3} \left\{ \partial_t \widetilde{\theta^\varepsilon} - \frac{97 \nu_\ast \mu_6^\varepsilon}{420} \Delta_x \widetilde{\theta^\varepsilon} + \frac{97 \sqrt{3} \kappa}{105} \operatorname{div} (u^\varepsilon \widetilde{\theta^\varepsilon}) \right\} \Phi \, dx \, dt \\
&\ \ \to \int_{\mathbf{T}^3} \left( \theta_0 - \frac{2 \sqrt{5}}{5} \rho_0 \right) \Phi(0,x) \, dx - \int_0^T \int_{\mathbf{T}^3} \widetilde{\theta} \partial_t \Phi \, dx \, dt \\
&\ \ \ \ - \frac{97 \nu_\ast}{532} \int_0^T \int_{\mathbf{T}^3} \widetilde{\theta} \Delta_x \Phi \, dx \, dt - \frac{97 \sqrt{3} \kappa}{105} \int_0^T \int_{\mathbf{T}^3} (\widetilde{\theta} u) \cdot \nabla_x \Phi \, dx \, dt
\end{align*}
as $\varepsilon \to 0$.

By considering the compensated compactness result by Lions and Masmoudi \cite{LiMaI} (see also \cite[Theorem A.2]{GSR09}) for system (\ref{WeCov:QQ}), we also know that $\operatorname{div} \big( \mathbb{Q}(u^\varepsilon) \zeta^\varepsilon   \big)$ converges to $0$ in the sense of distributions as $\varepsilon \to 0$.
In addition, since $\widetilde{\theta^\varepsilon} \to \widetilde{\theta}$ strongly in $C( [0,\infty); L^2(\mathbf{T}_x^3) )$ and $\mathbb{Q}(u^\varepsilon) \overset{\ast}{\rightharpoonup} 0$ in $L^\infty( [0,\infty); H^1(\mathbf{T}_x^3) )$, we can deduce that $\operatorname{div} \big( \mathbb{Q}(u^\varepsilon) \widetilde{\theta^\varepsilon} \big)$ also converges to $0$ in the sense of distributions as $\varepsilon \to 0$.
By Remark \ref{L2Bd:RUT} and further suppressing subsequences, there exist $\mathcal{G}_0, \mathcal{E}_0 \in L^\infty( [0,\infty): L^2(\mathbf{T}_x^3) )$ such that 
\begin{equation*}
- G_\varepsilon \overset{\ast}{\rightharpoonup} \mathcal{G}_0 \quad \text{and} \quad E_\varepsilon \overset{\ast}{\rightharpoonup} \mathcal{E}_0 \quad \text{in} \quad L^\infty( [0,\infty); L^2(\mathbf{T}_x^3) )
\end{equation*}
as $\varepsilon \to 0$.
Since $\operatorname{div} \big( \mathbb{P}(u^\varepsilon) \rho^\varepsilon \big)$ can be estimated in exactly the same way as (\ref{L32E:upu}), there exists $\mathcal{K}_0 \in L^\infty( [0,\infty); L^{\frac{3}{2}}(\mathbf{T}_x^3) )$ such that 
\begin{align*}
- \operatorname{div} \big( \mathbb{P}(u^\varepsilon) \rho^\varepsilon \big) \overset{\ast}{\rightharpoonup} \mathcal{K}_0 \quad \text{in} \quad L^\infty( [0,\infty); L^{\frac{3}{2}}(\mathbf{T}_x^3) )
\end{align*}
as $\varepsilon \to 0$.
Therefore, for any $\Phi \in C^\infty( [0,T] \times \mathbf{T}^3 )$ with $0<T<\infty$, it holds that
\begin{align*}
\int_0^T \int_{\mathbf{T}^3} \Pi(\rho^\varepsilon, u^\varepsilon, \theta^\varepsilon) \Phi \, dx \,dt \to \int_0^T \int_{\mathbf{T}^3} \left( \frac{194 \kappa \sqrt{15}}{525} \mathcal{K}_0 + \frac{\kappa^2}{\nu_\ast} \mathcal{G}_0 + \frac{2 \sqrt{5} \kappa^2}{5 \nu_\ast} \mathcal{E}_0 \right) \Phi \, dx \, dt
\end{align*}
as $\varepsilon \to 0$ where $\Pi(\rho^\varepsilon, u^\varepsilon, \theta^\varepsilon)$ is defined to be the right hand side of (\ref{theta:eq}).

Finally, we set $\nu := \frac{\nu_\ast}{12}$ and $\kappa = \sqrt{3}$. 
Summarizing all convergence results that we have derived in this paper, we show that $(u, \widetilde{\theta})$ satisfy the Navier-Stokes-Fourier type system
\begin{equation} \label{LNSF:ori}
\left\{
 \begin{aligned}
 \partial_t u - \nu \Delta_x u + u \cdot \nabla_x u + \nabla_x p &=& &\frac{1}{4 \nu} \mathcal{F}_0 + \mathcal{H} + 12 \nu \mathcal{J}_0,& \\ 
 \nabla_x \cdot u &=& &0,& \\
 \partial_t \widetilde{\theta} - \frac{291}{133} \nu \Delta_x \widetilde{\theta} + \frac{97}{35} u \cdot \nabla_x \widetilde{\theta} &=& &\frac{194 \sqrt{5}}{175} \mathcal{K}_0 + \frac{1}{4 \nu} \mathcal{G}_0 + \frac{\sqrt{5}}{10 \nu} \mathcal{E}_0& \\
 \end{aligned}
\right.
\end{equation} 
weakly globally.
As for the cutoff constant $\gamma$ in the definition of $\Lambda_\varepsilon$, it is sufficient to pick $\gamma < \frac{1}{18}$ for the whole theory to work.

\section*{Acknowledgement}
We would like to thank Professor Yoshikazu Giga and Professor Shota Sakamoto for many helpful advices regarding this paper.
The research of Tsuyoshi Yoneda was partly supported by the JSPS Grants-in-Aid for Scientific Research 20H01819.


\end{document}